\newif\ifpreview
\def\1{{\mhpastefig{root}}}
\def\2{{\mhpastefig[2/3]{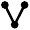}}}
\def\3{{\!\mhpastefig[1/2]{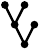}}}
\def\9{{\mhpastefig[1/2]{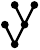}}}
\def\4{{\mhpastefig[1/2]{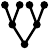}}}
\def\5{{\!\mhpastefig[1/2]{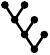}}}
\def\6{{\!\mhpastefig[1/2]{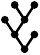}}}
\def\8{{\mhpastefig[1/2]{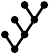}}}
\def\7{{\mhpastefig[1/2]{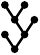}}}
\def\10{{\mhpastefig[1/2]{tree}}}
\renewcommand{\aa}{{\boldsymbol\alpha}}
\theoremstyle{plain}
\newtheorem{theorem}{Theorem}[section]
\newtheorem{corollary}[theorem]{Corollary}
\newtheorem{lemma}[theorem]{Lemma}
\newtheorem{proposition}[theorem]{Proposition}
\newtheorem{definition}[theorem]{Definition}
\newtheorem{assumption}[theorem]{Assumption}
\newtheorem*{definition*}{Definition}
\theoremstyle{remark}
\newtheorem{remark}[theorem]{Remark}
\newtheorem{example}[theorem]{Example}
\newtheorem*{claim*}{Claim}
\newtheorem*{remark*}{Remark}
\newtheorem*{example*}{Example}
\newtheorem*{notation*}{Notation}
\numberwithin{equation}{section}
\def\N{{\mathbb N}}
\def\R{{\mathbb R}}
\def\C{{\mathbb C}}
\def\M{{\mathbb M}}
\def\H{{\mathbb H}}
 \newcommand{\Dens}{{\mathfrak P}}
\newcommand{\one}{{{\bf 1}}}
\newcommand{\sK}{\mathscr{K}}
\newcommand{\eps}{\varepsilon}
\renewcommand{\phi}{\varphi}
\newcommand{\Cln}{\mathfrak{C}^n}
\newcommand{\dd}{\; \mathrm{d}}
\newcommand{\cC}{\mathcal{C}}
\newcommand{\cone}{\cP}
\newcommand{\ot}{\otimes}
\DeclareMathOperator{\Tr}{Tr}
\DeclareMathOperator{\Ran}{\mathsf{Ran}}
\DeclareMathOperator{\Ker}{\mathsf{Ker}}
\DeclareMathOperator{\lin}{lin}
\newcommand{\ip}[1]{\langle {#1}\rangle}
\newcommand{\bip}[1]{\big\langle {#1}\big\rangle}
\newcommand{\abs}[1]{\vert {#1}\vert}
\DeclareMathOperator{\Ric}{Ric}
\DeclareMathOperator{\Hess}{Hess}
\DeclareMathOperator{\spec}{sp}
\DeclareMathOperator{\Ent}{Ent}
\newcommand{\rmD}{\mathrm{D}}
\newcommand{\KMS}[1]{\langle {#1}\rangle_{L_{\rm KMS}^{2}(\sigma)}}
\newcommand{\KMSj}[1]{\langle {#1}\rangle_{L_{{\rm KMS}, j}^{2}(\sigma)}}
\def\tand{\quad{\rm and}\quad}
\newcommand{\ddt}{\frac{\mathrm{d}}{\mathrm{d}t}}
\newcommand{\ddtr}{\frac{\mathrm{d}^+}{\mathrm{d}t}}
\newcommand{\cQ}{\mathcal{Q}}
\newcommand{\cN}{\mathcal{N}}
\newcommand{\cM}{\mathscr{M}}
\newcommand{\cH}{\mathcal{H}}
\newcommand{\cB}{\mathcal{B}}
\newcommand{\cD}{\mathcal{D}}
\newcommand{\cW}{\mathscr{W}}
\newcommand{\cI}{\mathcal{I}}
\newcommand{\cL}{\mathscr{L}}
\newcommand{\Proj}{\mathsf{P}}
\newcommand{\sH}{\mathscr{H}}
\newcommand{\cG}{\mathscr{G}}
\newcommand{\cK}{\mathscr{K}}
\newcommand{\cJ}{\mathcal{J}}
\newcommand{\cX}{\mathcal{X}}
\newcommand{\cT}{\mathcal{T}}
\newcommand{\cA}{\mathcal{A}}
\newcommand{\sA}{\mathscr{A}}
\newcommand{\cU}{\mathcal{U}}
\newcommand{\sfL}{\mathsf{L}}
\newcommand{\sfR}{\mathsf{R}}
\newcommand{\sE}{\mathscr{E}}
\newcommand{\bbA}{\mathbf{A}}
\newcommand{\bbB}{\mathbf{B}}
\newcommand{\bbC}{\mathbf{C}}
\DeclareMathOperator{\dive}{div}
\newcommand{\cF}{\mathcal{F}}
\newcommand{\cP}{\mathscr{P}}
\newcommand{\PX}{\cP(\cX)}
\newcommand{\hrho}{\widehat\rho}
\newcommand{\hrhop}{\widehat{\rho_{j}}}
\newcommand{\lrho}{\check{\rho}}
\renewcommand{\tilde}{\widetilde}
\newcommand{\BB}{\mathbf{B}}
\newcommand{\sigmax}{\left[\begin{array}{cc}0 & 1 \\1 & 0\end{array}\right]}
\newcommand{\sigmay}{\left[\begin{array}{cc}0 & -i \\i & 0\end{array}\right]}
\newcommand{\sigmaz}{\left[\begin{array}{cc}1 & 0 \\0 & -1\end{array}\right]}
\DeclareMathOperator{\HcWI}{H{\cW}I}
\DeclareMathOperator{\mLSI}{MLSI}
\DeclareMathOperator{\mTal}{T_\cW}
\DeclareMathOperator{\Poinc}{P}
\newcommand{\ddtrz}{\left.\frac{\mathrm{d}^+}{\mathrm{d}t}\right\vert_{t=0}}
\newcommand{\treelr}[3]
{
\begin{adjustbox}{trim=0.1cm 0.2cm 0.1cm 0.9cm}
\begin{tikzpicture}
\tikzstyle{level 1}=[level distance=0.25cm, sibling distance=1.3cm]
\tikzstyle{level 2}=[level distance=0.25cm, sibling distance=1.3cm]
  \node {\tiny{$#1$}} [grow'=up] 
    child {node {\tiny{$#2$}}}
    child { node {\tiny{$#3$}}
    }; 
\end{tikzpicture}
\end{adjustbox}
}
\newcommand{\widetreelr}[3]
{
\begin{adjustbox}{trim=0.1cm 0.25cm 0.1cm 0.0cm}
\begin{tikzpicture}
\tikzstyle{level 1}=[level distance=0.25cm, sibling distance=1.7cm]
\tikzstyle{level 2}=[level distance=0.25cm, sibling distance=1.7cm]
  \node {\tiny{$#1$}} [grow'=up] 
    child {node {\tiny{$#2$}}}
    child { node {\tiny{$#3$}}
    }; 
\end{tikzpicture}
\end{adjustbox}
}
\newcommand{\treellr}[5]
{
\begin{adjustbox}{trim=0.0cm 0.3cm 0.0cm 0.0cm}
\begin{tikzpicture}
\tikzstyle{level 1}=[level distance=0.25cm, sibling distance=1.3cm]
\tikzstyle{level 2}=[level distance=0.25cm, sibling distance=1.3cm]
  \node {\tiny{$#1$}} [grow'=up] 
    child {node {\tiny{$#3$}} 
      child {node {\tiny{$#4$}}} 
      child {node {\tiny{$#5$}}}} 
    child { node {\tiny{$#2$}}
    }; 
\end{tikzpicture}
\end{adjustbox}
}
\newcommand{\widetreellr}[5]
{
\begin{adjustbox}{trim=0.1cm 0.37cm 0.1cm 0.0cm}
\begin{tikzpicture}
\tikzstyle{level 1}=[level distance=0.25cm, sibling distance=1.7cm]
\tikzstyle{level 2}=[level distance=0.25cm, sibling distance=1.7cm]
  \node {\tiny{$#1$}} [grow'=up] 
    child {node {\tiny{$#3$}} 
      child {node {\tiny{$#4$}}} 
      child {node {\tiny{$#5$}}}} 
    child { node {\tiny{$#2$}}
    }; 
\end{tikzpicture}
\end{adjustbox}
}
\newcommand{\treelllr}[7]
{
\begin{adjustbox}{trim=0.0cm 0.5cm 0.0cm 0.0cm}
\begin{tikzpicture}
\tikzstyle{level 1}=[level distance=0.25cm, sibling distance=1.3cm]
\tikzstyle{level 2}=[level distance=0.25cm, sibling distance=1.3cm]
  \node {\tiny{$#1$}} [grow'=up] 
    child {node {\tiny{$#2$}} 
		      child { node {\tiny{$#3$}} 
		      			      child {node {\tiny{$#4$}}} 
						      child {node {\tiny{$#5$}}} 
		      		} 
		      child { node {\tiny{$#6$}} } 
      	  } 
    child { node {\tiny{$#7$}} }; 
\end{tikzpicture}
\end{adjustbox}
}
\newcommand{\treelrr}[5]
{
\begin{adjustbox}{trim=0.0cm 0.3cm 0.0cm 0.0cm}
\begin{tikzpicture}
\tikzstyle{level 1}=[level distance=0.25cm, sibling distance=1.3cm]
\tikzstyle{level 2}=[level distance=0.25cm, sibling distance=1.3cm]
  \node {\tiny{$#1$}} [grow'=up] 
    child {node {\tiny{$#2$}}} 
    child {node {\tiny{$#3$}} 
      child {node {\tiny{$#4$}}} 
      child {node {\tiny{$#5$}}} 
    }; 
\end{tikzpicture}
\end{adjustbox}
}
\newcommand{\widetreelrr}[5]
{
\begin{adjustbox}{trim=0.1cm 0.37cm 0.1cm 0.0cm}
\begin{tikzpicture}
\tikzstyle{level 1}=[level distance=0.25cm, sibling distance=1.7cm]
\tikzstyle{level 2}=[level distance=0.25cm, sibling distance=1.7cm]
  \node {\tiny{$#1$}} [grow'=up] 
    child {node {\tiny{$#2$}}} 
    child {node {\tiny{$#3$}} 
      child {node {\tiny{$#4$}}} 
      child {node {\tiny{$#5$}}} 
    }; 
\end{tikzpicture}
\end{adjustbox}
}
\newcommand{\err}{{r}}
\definecolor{jan}{rgb}{0.0,0.3,0.8}
\newcommand{\phys}[2]{\,|#1\rangle\langle#2|\,}
\begin{document}

      \ifpreview {
          \definecolor{personal}{rgb}{0.95, 0.92, 0.88}
                  \pagecolor{personal}
         }
      \fi

\title
[Quantum transport metrics]
{Non-commutative calculus, optimal transport and functional inequalities in dissipative quantum systems}
\author{Eric A. Carlen}
\address{Department of Mathematics\\ 
Hill Center\\
Rutgers University\\
110 Frelinghuysen Road\\
Piscataway\\
NJ 08854-8019\\
USA}
\email{carlen@math.rutgers.edu}

\author{Jan Maas}
\address{
Institute of Science and Technology Austria (IST Austria)\\
Am Campus 1\\ 
3400 \newline Klos\-ter\-neu\-burg\\ 
Austria}
\email{jan.maas@ist.ac.at}


 \begin{abstract}
We study dynamical optimal transport metrics between density matrices associated to symmetric Dirichlet forms on finite-dimensional $C^*$-algebras. 
Our setting covers arbitrary skew-derivations and it provides a unified framework that simultaneously generalizes recently constructed transport metrics for Markov chains, Lindblad equations, and the Fermi Ornstein--Uhlenbeck semigroup. 
We develop a non-nommutative differential calculus that allows us to obtain non-commutative Ricci curvature bounds, logarithmic Sobolev inequalities, transport-entropy inequalities, and spectral gap estimates.
 \end{abstract} 


\maketitle
 
\setcounter{tocdepth}{1}
{\footnotesize\renewcommand{\contentsname}{}\vspace*{-2em} \tableofcontents}

\section{Introduction}
\label{sec:intro}

In the context of diffusion semigroups, a great deal of recent progress has been made based on two different gradient flow interpretations of the heat flow, namely as
\begin{enumerate}
\item the gradient flow of the Dirichlet energy in $L^2$;
\item the gradient flow of the Boltzmann entropy in the space of probability measures endowed with the $2$-Kantorovich metric.
\end{enumerate}
In this paper we study the analogs of (1) and (2) for non-commutative probability, in the setting von Neumann algebras, and we establish the equivalence  of (1) and (2)  in this setting. 
This naturally involves the construction of non-commutative analogs of the $2$-Kantorovich metric, a topic that was investigated in our earlier papers \cite{CM12,2016-Carlen-Maas} and in the independent work \cite{Mie12,MiMi17}. 
Recently the subject received the attention of a number of authors; see \cite{CGGT17,CGT16} for noncommutative transport metrics, \cite{RD17a,RD17,GJLR18} for functional inequalities, and \cite{Horn18,Wirt18} for results in infinite dimensions. 
We refer to \cite{GMP16,BrVo18} for different non-commutative variants of the $2$-Kantorovich metric in other contexts.

Our focus in this paper is on developing the relations between (1) and (2) in the non-commutative setting with the aim of proving functional inequalities relevant to the study of the rate of approach to equilibrium for quantum Markov semigroups, in close analogy with what has been accomplished along these lines in the classical setting in recent years.  

In order not to obscure the main ideas we shall work in a finite-dimensional setting and postpone the infinite-dimensional extension to a future work. The finite-dimensional case is of direct interest in quantum information theory, and the essential aspects of our new results are interesting even in this setting where they can be explained to a wider audience that is not thoroughly familiar with the Tomita--Takesaki theory. We now briefly describe the content of the paper. Any unfamiliar terminology is explained in the next subsection, but hopefully many readers will not need to look ahead.

The central object of study in this paper is a quantum Markov semigroup (QMS)  $(\cP_t)_{t>0}$ on $\cA$, a finite-dimensional $C^*$-algebra containing the identity $\one$. That is, for each $t$, $\cP_t\one = \one$ and $\cP_t$ is completely positive. The generators ${\cL}$  of such semigroups have been characterized in \cite{Lin76,GKS76}.

We are concerned with the case in which there is a unique faithful  invariant state $\sigma$  for the dual semigroup; i.e., $\cP_t^\dagger \sigma = \sigma$ for all $t$. The paper \cite{SpLe77} is an excellent source for the physical context and makes it clear that assuming that the invariant state $\sigma$ is tracial, which we do not do, would preclude a great many physical applications. 
Let $\Dens_+$ denote the space of faithful states. We would like to know, for instance, when there is a Riemannian metric $g$  on $\Dens_+$ such that the flow  on $\Dens_+$ given by the dual semigroup $(\cP_t^\dagger)_{t>0}$ is the gradient flow driven by the relative entropy functional
$\Ent_\sigma(\rho) = \Tr[\rho(\log \rho - \log \sigma)]$ with respect to the Riemannian metric.  In \cite{2016-Carlen-Maas,MiMi17}, it is shown that  when each $\cP_t$ is self-adjoint with respect to the Gelfand-Naimark-Segal (GNS) inner product induced on $\cA$ by $\sigma$,
this is the case. We constructed the metric using ideas from optimal mass transport, and showed that, 
as in the classical case, the framework provided an efficient means for proving functional inequalities. 
This has been taken up and further developed by other authors, in particular Rouz\'e and Datta \cite{RD17a,RD17}.
As in the classical case, Ricci curvature bounds are essential for the framework to be used to obtain functional 
inequalities. 
As shown in \cite{2016-Carlen-Maas,RD17}, once one has Ricci curvature bounds, a host of functional 
inequalities follow. A central problem then is to prove such bounds. A main contribution of the present paper is a
flexible framework for doing this. It turns out that there are many ways to write a given QMS generator 
$\cL$ (that is self-adjoint in the GNS sense)  in ``divergence form'' for non-commutative derivatives. Each of the different ways of doing this can be associated to a Riemannian metric on $\Dens_+$. Different ways of writing  $\cL$ 
in divergence form may have advantages over others, for example in proving Ricci curvature bounds. 
Hence it is important to have as much flexibility here as possible.  We shall use this flexibility to give new 
examples in which we can obtain sharp Ricci  curvature  bounds. The machinery is useful for other 
functionals and other flows; the methods of this paper are not by any means restricted to gradient flow for 
relative entropy, despite our focus on this example here in the introduction. 

An interesting problem remains:
For each way of writing $\cL$ in divergence form, we have a Riemannian metric. The formulas 
are different, but in principle, all of the metrics might be the same. That is, they might all be determined by 
$\cL$, and not the particular way of writing in divergence form, even though doing this one way or 
another may facilitate certain computations.  

The problem of writing QMS as gradient flow for the relative entropy was also taken up independently by Mittnenzweig and Mielke \cite{MiMi17}, and although their framework is somewhat different, their approach also works in the case that each $\cP_t$ is self-adjoint with respect to the GNS inner product induced on $\cA$ by $\sigma$. 
Here, we shall show that if $(\cP_t)_{t\geq 0}$ can be written as gradient flow for $\Ent_\sigma$ with respect to some continuously differentiable Riemannian metric, then each $\cP_t$ is necessarily self-adjoint with respect to another inner product associated to $\sigma$, the Boguliobov-Kubo-Mori (BKM) inner product. 
As we show, the class of QMS with this self-adjointness property is strictly larger than the class of QMS with the GNS self-adjointness property. Thus, there is at present an interesting gap between the known necessary condition for the construction of the Riemannian metric, and the known sufficient condition. 
Of course, in the classical setting, the two notions of self-adjointness coincide, and one has a pleasing characterization of reversible Markov chains in terms of gradient flow \cite{D15}.

\subsection{Notation}

Let $\cA$ be finite-dimensional $C^*$-algebra containing the identity $\one$. In the finite-dimensional setting, all topologies one might impose on $\cA$ are equivalent, and $\cA$ is also a von Neumann algebra. In particular, it is generated by the projections it contains. We may regard any such algebra as a $*$-subalgebra of $\M_n(\C)$, the set of all complex $n \times n$ matrices. 
Let $\cA_h$ be the subset of hermitian elements in $\cA$, and let $\cA_+ \subseteq \cA$ denote the class of elements that are positive definite (i.e., $\spec(A) \subseteq (0,\infty)$ for $A \in \cA_+$. For $\cA = \M_n(\C)$ we write $\cA_+ = \M_n^+(\C)$.

Throughout this section we fix a positive linear functional $\tau$ on $\cA$ that is \emph{tracial} (i.e., $\tau[AB] = \tau[BA]$ for all $A, B \in \cA$) and \emph{faithful} (i.e., $A=0$ whenever $\tau[A^*A] =0$). Under these assumptions, $\tau$ induces a scalar product on $\cA$ given by $\ip{A,B}_{L^2(\cA,\tau)} = \tau[A^* B]$ for $A, B \in \cA$.
In our applications, $\tau$ will often be the usual trace $\Tr$ on $\M_n(\C)$ in which case the scalar product is the Hilbert--Schmidt scalar product, but it will be useful to include different situations, e.g., the trace induced by a non-uniform probability measure on a finite set.

A \emph{state} on $\cA$ is a positive linear functional $\phi$ on $\cA$ such that $\varphi(\one) =1$. 
If $\varphi$ is a state, there is a uniquely determined $\sigma\in \cA$ such that 
	$\varphi(A) = \tau[\sigma A]$ 
for all $A \in \cA$. 
Note that $\sigma$ is a \emph{density matrix}; i.e., it is positive semidefinite and $\tau[\sigma] =1$. 
Let $\Dens(\cA)$ denote the set of density matrices.
We write $\Dens_+(\cA) = \{ \rho \in 
\Dens(\cA) : \rho \text{ is positive definite} \}$.  We will simply write $\Dens = \Dens(\cA)$ and $\Dens_+ 
= \Dens_+(\cA)$ if the algebra $\cA$ is clear from the context.

We always use $\dagger$ to denote the adjoint of a linear transformation on $\cA$ with respect to the scalar product $\ip{\cdot, \cdot}_{L^2(\cA,\tau)}$. If $\sK$ is such a linear transformation,
\begin{equation}\label{dagger}
\langle A, \sK B\rangle_{L^2(\cA,\tau)} = \langle \sK^\dagger A  ,   B\rangle_{L^2(\cA,\tau)}  \ .
\end{equation}

Though we suppose no familiarity with the Tomita--Takesaki Theory of standard forms of von Neumann algebras, we will make use of the so-called modular and relative modular operators that arise there. In our setting, these operators have a simple direct definition:

\begin{definition}[The relative modular operator] \label{modular} Let $\sigma,\rho\in \Dens_+$. The corresponding  {\em relative modular operator} $\Delta_{\sigma,\rho}$ is the linear transformation on $\cA$ defined by 
\begin{equation}\label{modef}
\Delta_{\sigma,\rho}(A) = \sigma A \rho^{-1}\ .
\end{equation}
The {\em modular operator}  corresponding to $\sigma$, $\Delta_\sigma$, is  defined by $\Delta_{\sigma} := 
\Delta_{\sigma,\sigma}$.
\end{definition}

Since $\langle B, \Delta_{\sigma,\rho} A\rangle_{L^2(\cA,\tau)} = \tau[(\sigma^{1/2}B\rho^{-1/2})^*(\sigma^{1/2}A\rho^{-1/2})]$ for all $A,B\in \cA$, the operator $\Delta_{\sigma,\rho}$ is positive definite on $L^2(\cA,\tau)$. 
In case that $\tau$ is the restriction of the usual trace $\Tr$ to $\cA \subseteq \M_n(\C)$, the operators $\sigma$ and $\rho$ are also positive density matrices in $\M_n(\C)$, and the same computations are valid for all $A,B\in \M_n(\C)$. We may regard $\Delta_\sigma$ as an operator on $\M_n(\C)$, equipped with the Hilbert--Schmidt inner product, and then, so extended, it is still positive definite. 

We are interested in evolution equations on $\Dens_+(\cA)$ that correspond to  forward Kolmogorov equations for ergodic Markov processes satisfying a detailed balance condition, or in other words a reversibility condition, with respect to their unique invariant probability measure. 
Before presenting our results, we introduce the class of  quantum Markov semigroups satisfying a detailed balance condition that are the focus of our investigation.

\section{Quantum Markov semigroups with detailed balance}
\label{sec:QMS}

Let $\cA \subseteq B(\sH)$ be a $C^*$-algebra of operators acting on a finite-dimensional Hilbert space $\sH$. 
Let $\tau$ be a tracial and faithful positive linear functional on $\cA$.
A \emph{quantum Markov semigroup} on $\cA$ is a $C_0$-semigroup of operators $(\cP_t)_{t \geq 0}$ acting on $\cA$, satisfying
\begin{enumerate}
	\item \label{item:Markov1} $\cP_t \one = \one$;
	\item \label{item:Markov2} $\cP_t$ is \emph{completely positive}, i.e., $\cP_t \otimes I_{\M_n(\C)}$ is a positivity preserving operator on $\cA \otimes \M_n(\C)$ for all $n \in \N$.
\end{enumerate}
Note that \eqref{item:Markov2} implies that $\cP_{t}$ is \emph{real}, i.e., $ (\cP_t A)^* = \cP_t A^*$ for all $A \in \cA$.
Let $\cP_t^\dagger$ be the Hilbert--Schmidt adjoint of $\cP_t$ satisfying $\tau[A^* \cP_t^\dagger B] = \tau[(\cP_t A)^*B]$ for all $A, B \in \cA$. It follows that $\cP_t^\dagger$  is trace-preserving and completely positive.

It is well known \cite{GKS76,Lin76} that the generator $\cL$ of the semigroup $\cP_t = e^{t \cL}$ can be written in \emph{Lindblad form}
\begin{align}\label{eq:Lindblad-form}
  \cL A & =  i [\widetilde{H},A] + \sum_{j \in \cJ}  V_j^* [A, V_j] +  [V_j^* , A] V_j \ , \\
  \cL^\dagger \rho & = - i [\widetilde{H},\rho] + \sum_{j \in \cJ}  [V_j, \rho V_j^*] +  [V_j \rho, V_j^*] \ ,
 \end{align}
where $\cJ$ is a finite index set, $V_j \in B(\sH)$ (not necessarily belonging to $\cA$) for all $j \in \cJ$, and the Hamiltonian $\widetilde{H} \in B(\sH)$ is self-adjoint.

\subsection{Detailed balance}
\label{sec:detailed-balance}
The starting point of our investigations is the assumption that $(\cP_t)_{t \geq 0}$ satisfies the condition of \emph{detailed balance}.

In the commutative setting, if $P = (P_{ij})$ is the transition matrix of a Markov chain on $\{1,\dots, n\}$ with invariant probability vector $\sigma$, we say that detailed balance holds if $\sigma_i P_{ij} = \sigma_j P_{ji}$ for all $i,j$. 
An analytic way to formulate this condition is that $P$ is self-adjoint with respect to the weighted inner product on $\C^n$ given by 
$\ip{f,g}_\sigma = \sum_{j=1}^n \sigma_j \overline{f_j}g_j$.

In the quantum setting, with a reference density matrix $\sigma$ that is not a multiple of the identity, there are many candidates for such a weighted inner product. E.g., given $\sigma\in \Dens_+$, and $s\in [0,1]$ one can define an inner product on $\cA$ by
\begin{equation}\label{sinner}
	\ip{X,Y}_s = \tau[X^* \sigma^{s} Y\sigma^{1-s}]\ .
\end{equation}
Note that by cyclicity of the trace, $\ip{X,X}_s = \tau[|\sigma^{s/2}X \sigma^{(1-s)/2}|^2] \geq 0$,
so that $\ip{\cdot, \cdot}_s$ is indeed a positive definite sesquilinear form.
The inner products for $s=0$ and $s=\frac12$ will come up frequently in what follows, and they have their own names:
$\ip{\cdot,\cdot}_0$ is the Gelfand--Naimark--Segal inner product, denoted $\ip{\cdot, \cdot}_{L^{2}_{\rm GNS}(\sigma)}$, and $\ip{\cdot, \cdot}_{1/2}$ is the Kubo--Martin--Schwinger inner product, denoted $\ip{\cdot, \cdot}_{L^2_{\rm KMS}(\sigma)}$. 
We shall write $\cA = L^2_{\rm GNS}(\cA, \sigma)$ (resp. $\cA = L^2_{\rm KMS}(\cA, \sigma)$) if we want to stress this Hilbert space structure.

Suppose, for some $s\in [0,1]$, that $\cP_t$ is self-adjoint with respect to the $\langle \cdot,\cdot\rangle_s$ inner product. Then, for all $A \in \cA$,
\begin{multline*} 
	\tau[(\cP_t^\dagger \sigma) A] 
		= \tau[\sigma \cP_t A ] 
		= \tau[\sigma^{1-s} \one \sigma^s \cP_t A ] 
		=\langle \one, \cP_t A\rangle_s 
	\\
		= \langle \cP_t\one,  A\rangle_s
	    =  \langle \one,  A\rangle_s
		= \tau[\sigma A ] \ .
\end{multline*}
Hence for each of these inner products, self-adjointness of $\cP_t$ implies that $\sigma$ is invariant under $\cP_t^\dagger$. 

The following lemma of Alicki \cite{Alic76} relates some of the possible definitions of detailed balance; a proof may be found in \cite{2016-Carlen-Maas}.

\begin{lemma}\label{Allem} Let $\sK$ be a real linear transformation on $\cA$. If $\sK$ is self-adjoint with respect to the $\langle \cdot, \cdot\rangle_s$ inner product for some $s\in [0,1/2)\cup(1/2,1]$, then $\sK$ commutes with $\Delta_\sigma$, and 
$\sK$ is self-adjoint with respect to $\langle \cdot, \cdot\rangle_s$ for all $s\in [0,1]$, including $s=1/2$. 
\end{lemma}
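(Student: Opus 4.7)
The plan is to reformulate self-adjointness of $\sK$ in $\ip{\cdot,\cdot}_s$ as an algebraic identity involving the modular operator $\Delta_\sigma$, and then to exploit a $*$-symmetry of the family $\{\ip{\cdot,\cdot}_s\}_s$ together with the reality of $\sK$ to pair $s$ with $1-s$. Concretely, I first write $\ip{X,Y}_s = \ip{X, M_s Y}_{L^2(\cA,\tau)}$ with $M_s Y := \sigma^s Y \sigma^{1-s}$. Since left- and right-multiplication $\sfL_\sigma X = \sigma X$ and $\sfR_\sigma X = X\sigma$ are commuting, self-adjoint and positive on $L^2(\cA,\tau)$, one has $M_s = \sfL_\sigma^s \sfR_\sigma^{1-s}$ while $\Delta_\sigma = \sfL_\sigma \sfR_\sigma^{-1}$. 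In particular $M_s$ commutes with $\Delta_\sigma$, and self-adjointness of $\sK$ with respect to $\ip{\cdot,\cdot}_s$ is equivalent to the single operator identity $\sK^\dagger M_s = M_s \sK$.

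The central step is the symmetry $\ip{X,Y}_{1-s} = \ip{Y^*, X^*}_s$, which follows from cyclicity of $\tau$. Writing $JX := X^*$, reality of $\sK$ reads $J\sK = \sK J$, and combining this with self-adjointness in $\ip{\cdot,\cdot}_s$ yields
\begin{align*}
\ip{X, \sK Y}_{1-s}
&= \ip{(\sK Y)^*, X^*}_s = \ip{\sK JY, JX}_s \\
&= \ip{JY, \sK JX}_s = \ip{JY, J\sK X}_s = \ip{\sK X, Y}_{1-s}.
\end{align*}
Thus self-adjointness in $\ip{\cdot,\cdot}_s$ automatically forces self-adjointness in $\ip{\cdot,\cdot}_{1-s}$, and this is the place where the reality hypothesis is used essentially.

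With both $\sK^\dagger M_s = M_s \sK$ and $\sK^\dagger M_{1-s} = M_{1-s} \sK$ in hand, equating the two expressions for $\sK^\dagger$ shows that $\sK$ commutes with $M_{1-s}^{-1} M_s$. A direct calculation gives $M_{1-s}^{-1} M_s = \sfL_\sigma^{2s-1} \sfR_\sigma^{1-2s} = \Delta_\sigma^{2s-1}$. Since $s \in [0,1/2)\cup(1/2,1]$, the exponent $2s-1$ is nonzero, and $\Delta_\sigma$ is strictly positive; the functional calculus then recovers $\Delta_\sigma$ from $\Delta_\sigma^{2s-1}$, so $\sK$ in fact commutes with $\Delta_\sigma$ and hence with every real power $\Delta_\sigma^r$.

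Finally, I would propagate self-adjointness from the given $s$ to every $t \in [0,1]$ via the factorisation $M_t = M_s \Delta_\sigma^{t-s}$: commutation of $\sK$ with $\Delta_\sigma^{t-s}$ gives $\sK^\dagger M_t = \sK^\dagger M_s \Delta_\sigma^{t-s} = M_s \sK \Delta_\sigma^{t-s} = M_s \Delta_\sigma^{t-s} \sK = M_t \sK$, which is the required self-adjointness in $\ip{\cdot,\cdot}_t$ (in particular for $t=1/2$). I expect the hardest conceptual point to be the $s \leftrightarrow 1-s$ symmetry argument; the remainder is routine operator calculus built on the commuting positive family $\{\sfL_\sigma, \sfR_\sigma\}$.
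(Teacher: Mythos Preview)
Your argument is correct. The paper does not supply its own proof of this lemma; it only cites Alicki and refers to \cite{2016-Carlen-Maas} for the details. Your route via the operators $M_s = \sfL_\sigma^s\sfR_\sigma^{1-s}$, the $*$-symmetry $\ip{X,Y}_{1-s}=\ip{Y^*,X^*}_s$, and the identity $M_{1-s}^{-1}M_s = \Delta_\sigma^{2s-1}$ is exactly the standard mechanism behind this result, and every step checks out.
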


As we have remarked, for a QMS $(\cP_t)_{t\geq 0}$, each $\cP_t$ is real, and so $\cP_t$ is self-adjoint with respect to the GNS inner product if and only if it is self-adjoint with respect to the $\langle \cdot, \cdot\rangle_s$ inner product
for all $s\in [0,1]$. However, if each $\cP_t$ is self-adjoint with respect to the KMS inner product, then it need not be self-adjoint with respect to the GNS inner product: 
There exist QMS for which each $\cP_t$ is self-adjoint with respect to the KMS inner product, but for which $\cP_t$ does not commute with $\Delta_\sigma$, and therefore cannot be self-adjoint with respect to the GNS inner product. A simple example is provided in appendix B of \cite{2016-Carlen-Maas}. The generators of QMS such that $\cP_t$ is self-adjoint with respect to the KMS inner product have been investigated by Fagnola and Umanita \cite{FU07}. However, there is a third notion of detailed balance that is natural in the present context, namely the requirement that each $\cP_t$ be self-adjoint with respect to the Boguliobov--Kubo--Mori inner product:

\begin{definition}[BKM inner product]\label{bkmdef}  The {\em BKM inner product} is defined by
\begin{equation}\label{bkmip}
\ip{A, B}_{L^2_{\rm BKM}(\sigma)} = \int_0^1 \ip{A,B}_s \dd s\ .
\end{equation}
\end{definition}

By what we have remarked above, if each $\cP_t$ is self-adjoint with respect  to the GNS inner product, then each $\cP_t$ is self-adjoint with respect to the BKM inner product. 
However, as will be discussed at the end of this section, the converse is not in general true. 
The relevance of the BKM version of detailed balance is due to the following result that we show in Theorem \ref{necBKM}: If the forward Kolmogorov equation for an ergodic QMS $(\cP_t)_{t\geq 0}$ with invariant state $\sigma\in \Dens_+$ is gradient flow for the quantum relative entropy $\Ent_{\sigma}(\rho) := \tau[\rho ( \log \rho - \log \sigma) ]$ with respect to some continuously differentiable Riemannian metric on $\Dens_+$, then each $\cP_t$ is self-adjoint with respect to the BKM inner product.   
The BKM inner product is closely connected to the relative entropy functional, and for this reason it appears in some of the functional inequalities that we consider in Section~\ref{sec:functional}.

On the other hand, only when each $\cP_t$  is self-adjoint with respect to the GNS inner product do we have a construction of such a Riemannian metric. 
The same is true for other constructions of Riemannian metrics on $\Dens_+$ for which QMS become gradient flow for  $\Ent_{\sigma}(\rho)$, in particular see \cite{MiMi17}.
Since most of this paper is concerned with our construction and its consequences, we make the following definition:

\begin{definition}[Detailed balance]\label{detbaldef}
Let $\sigma \in \cA$ be non-negative. 
We say that a quantum Markov semigroup $(\cP_t)_{t\geq 0}$ satisfies the {\em detailed balance condition} with respect to $\sigma$ if for each $t>0$, $\cP_t$ is self-adjoint with respect to the GNS inner product on $\cA$ induced by $\sigma$, i.e.,
	\begin{align*}
		\tau[\sigma A^* \cP_t B ] 
		= \tau[\sigma (\cP_t A)^* B ] 
		\quad \text{ for all } A, B \in \cA \ .
	\end{align*}
We shall write that $(\cP_t)_t$ satisfies $\sigma$-DBC for brevity.
\end{definition} 

The following result gives the general form of the generator of quantum Markov semigroups on $B(\sH)$ satisfying detailed balance. 
This result is due to Alicki \cite[Theorem 3]{Alic76}; see \cite{2016-Carlen-Maas} for a detailed proof.

\begin{theorem}[Structure of Lindblad operators with detailed balance] \label{thm:structure}
Let $\cP_t = e^{t \cL}$ be a quantum Markov semigroup on $B(\sH)$ satisfying detailed balance with respect to $\sigma \in \Dens_{+}$.
Then the generator $\cL$ and its adjoint $\cL^\dagger$ have the form
\begin{align}\label{eq:L-general}
	\cL & = \sum_{j \in \cJ}  e^{-\omega_j/2} \cL_j \ , \qquad
	\cL_{j}(A) =  V_j^* [A, V_j] +  [V_j^* , A] V_j \ , \\
\label{eq:L-dagger-general}
	\cL^\dagger & = \sum_{j \in \cJ}  e^{-\omega_j/2} \cL_{j}^\dagger \ , \qquad 
		\cL^\dagger_{j}(\rho) = 
		  [V_j, \rho V_j^*] + [V_j \rho, V_j^* ] \ ,
\end{align}
where $\cJ$ is a finite index set, the operators $V_j \in B(\sH)$ satisfy $\{ V_{j} \}_{j \in \cJ} = \{ V_{j}^{*} \}_{j \in \cJ}$, and $\omega_{j} \in \R$ satisfies
\begin{align}
\label{eq:log-relation} 
\Delta_\sigma V_j & = e^{-\omega_j} V_j \quad \text{for all } j \in \cJ \ .
\end{align}
\end{theorem}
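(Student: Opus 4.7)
The plan is to follow Alicki's strategy: start from a generic Lindblad representation and use the GNS symmetry to reduce it to the canonical form. I would begin with any Lindblad representation
\[
  \cL(A) = i[\tilde H, A] + \sum_{k \in \cJ'} W_k^* [A, W_k] + [W_k^*, A] W_k,
\]
and exploit the well-known gauge freedom of such a representation: the operators $W_k$ are determined only modulo addition of scalar multiples of $\one$ (absorbable into $\tilde H$) and modulo unitary mixing among themselves, with a corresponding conjugation of the Kossakowski coefficient matrix. My goal is to choose this gauge so that each new Lindblad operator is a pure eigenvector of the modular operator $\Delta_\sigma$.

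The key input is Lemma~\ref{Allem}: GNS self-adjointness forces $[\cL, \Delta_\sigma] = 0$. Since $\sigma$ is positive definite, $\Delta_\sigma$ acts diagonalizably on $B(\sH)$ with eigenvalues $e^{-\omega}$, $\omega \in \{\log(\lambda_i/\lambda_j) : \lambda_i,\lambda_j \in \spec(\sigma)\}$, and its eigenspaces $B_\omega = \{V : \sigma V = e^{-\omega} V \sigma\}$ satisfy $V \in B_\omega \iff V^* \in B_{-\omega}$. Writing the dissipative part in Kossakowski form $\sum_{k,\ell} C_{k\ell}(W_k^*[A,W_\ell] + [W_k^*, A] W_\ell)$ with $C$ positive semidefinite, the commutation $[\cL,\Delta_\sigma] = 0$ makes $C$ block-diagonal with respect to the decomposition $B(\sH) = \bigoplus_\omega B_\omega$. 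Diagonalizing each block in a basis of modular eigenvectors and absorbing the square roots of the eigenvalues of $C$ into the operators themselves produces a family $(V_j)_{j \in \cJ}$ with $V_j \in B_{\omega_j}$. Complete positivity supplies the reality condition $\cL A^* = (\cL A)^*$, and since $*$ maps $B_\omega$ to $B_{-\omega}$ and is compatible with the modular decomposition, this reality forces $\{V_j\}_{j \in \cJ} = \{V_j^*\}_{j \in \cJ}$ after a relabelling.

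With this normal form secured, fixing the prefactor $e^{-\omega_j/2}$ becomes a direct computation. For $V \in B_\omega$, set $\cL_V(A) := V^*[A,V] + [V^*,A] V$. Using $\sigma V = e^{-\omega} V \sigma$ and the GNS inner product $\langle A, B\rangle_0 = \tau[\sigma A^* B]$, one checks that $\cL_V$ is not by itself GNS-symmetric; rather, its GNS-adjoint is a scalar multiple of $\cL_{V^*}$, with the scalar being an explicit function of $e^{-\omega}$. Pairing the $j$ term with its partner $j^*$ (indexed so that $V_{j^*} = V_j^*$, hence $\omega_{j^*} = -\omega_j$) and demanding that the combination $\sum_j c_j \cL_{V_j}$ be GNS-symmetric yields $c_j/c_{j^*} = e^{-\omega_j/2}/e^{\omega_j/2}$, which (after a harmless absorption of an overall positive factor into $V_j$) forces $c_j = e^{-\omega_j/2}$. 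The Hamiltonian remainder $i[\tilde H, \cdot]$ is GNS-antisymmetric unless $\tilde H$ commutes with $\sigma$, and any commuting Hamiltonian piece can be rewritten as a sum of dissipators of the stated form (or removed by an additional gauge adjustment), producing the expression \eqref{eq:L-general} without a separate Hamiltonian contribution.

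The hardest step is the simultaneous diagonalization in the second paragraph: one must use the positivity of the Kossakowski matrix together with its compatibility with the modular action to realize a gauge in which each $V_j$ is a pure $\Delta_\sigma$-eigenvector while preserving the $*$-symmetry of the index set. Once that normal form is in hand, the derivation of the prefactor $e^{-\omega_j/2}$ and the verification of \eqref{eq:log-relation} reduce to algebraic manipulations based on $\sigma V_j = e^{-\omega_j} V_j \sigma$ and the explicit formula for the GNS adjoint.
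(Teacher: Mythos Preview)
The paper does not actually prove Theorem~\ref{thm:structure}; it simply attributes the result to Alicki and refers to the authors' earlier paper for a detailed proof. Your outline follows Alicki's strategy, and the core steps---using Lemma~\ref{Allem} to get $[\cL,\Delta_\sigma]=0$, decomposing $B(\sH)$ into modular eigenspaces $B_\omega$, then diagonalizing the (positive semidefinite) Kossakowski matrix block-by-block to produce Lindblad operators that are pure $\Delta_\sigma$-eigenvectors---are the right ones and would lead to the claimed structure.

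There is, however, a slip in your handling of the Hamiltonian part. A direct computation gives
\[
\langle A, i[\tilde H,B]\rangle_0 + \langle i[\tilde H,A],B\rangle_0
= i\,\tau\big[[\sigma,\tilde H]\,A^*B\big],
\]
so $i[\tilde H,\cdot]$ is GNS-\emph{antisymmetric} precisely when $[\tilde H,\sigma]=0$, the opposite of what you wrote. The correct argument then runs: once the dissipative part is in the normalized form $\sum_j e^{-\omega_j/2}\cL_j$ (which one checks is GNS-symmetric), the residual Hamiltonian piece $i[\tilde H,\cdot]$ must itself be GNS-symmetric; combined with the antisymmetry just noted (one first shows $[\tilde H,\sigma]=0$ from $[\cL,\Delta_\sigma]=0$ and the gauge normalization), this forces $\tilde H$ to be a scalar, so the Hamiltonian term simply disappears. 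It is not that a commuting Hamiltonian ``can be rewritten as a sum of dissipators''---it must vanish outright. With this correction your sketch is sound.
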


For $j \in \cJ$, let $j^{*} \in \cJ$ be an index such that $V_{j^{*}} = V_{j}^{*}$. It follows from \eqref{eq:log-relation} that 
\begin{align*}
\omega_{j^{*}} = - \omega_{j} \ .
\end{align*}
Moreover, if we define $H = -\log \sigma$, \eqref{eq:log-relation} is equivalent to the commutator identity 
$
[V_j,H]  = - \omega_{j}V_j
$.
Furthermore, in our finite-dimensional context, the identity 
\begin{align}\label{eq:log-relation-sg}
\Delta_\sigma^t V_j  = e^{- \omega_j t} V_j
\end{align}
is valid for some $t\neq 0$ in $\R$ if and only if it is valid for all $t \in \C$.

\subsection{Gradient flow structure for the non-commutative Dirichlet energy}

Let $(\cP_t)_{t\geq 0}$ be a quantum Markov semigroup satisfying detailed balance with respect to $\sigma \in \Dens_+(\cA)$. 
Let $\cL$ be the generator, so that for each $t>0$, $\cP_t = e^{t\cL}$.  As explained in the discussion leading up to
Definition~\ref{detbaldef}, for each $t$,  $\cP_t$ is self-adjoint with respect to {\em both} the GNS  and the KMS inner products induced by $\sigma$. Therefore, we may define a {\em Dirichlet form} $\sE$ on $\cA$ by 
\begin{equation}\label{dirform}
\sE(A,A) = \lim_{t\downarrow 0}\frac{1}{t} \langle A, (I - \cP_t )A\rangle
\end{equation}
where the inner product is either the GNS or the KMS inner product.  Then, either way, 
the \emph{Kolmogorov backward equation} $\partial_t A = \cL A$ is a gradient flow equation for the energy 
$\sE(A,A)$ with respect to the chosen $L^2$ metric. 

The class of bilinear forms $\sE$ defined in terms of a self-adjoint QMS $(\cP_t)_{t\geq 0}$  through \eqref{dirform} 
is, by definition, the class of {\em conservative completely Dirichlet forms} on $\cA$ in the specified inner product. The abstract Beurling--Deny Theorem, discussed in the next section, provides an intrinsic characterization of such bilinear forms. 

Although Definition~\ref{detbaldef} might seem to suggest that the natural choice of the $L^2$ metric is the one given by
the GNS inner product, we shall show that in some sense it is the KMS inner product that is more natural: 
The Dirichlet form defined by \eqref{dirform} using the KMS inner product induced by $\sigma$ can be expressed in terms of a ``squared gradient'', and the associated non-commutative differential calculus will turn out to be very useful for investigating properties of the flow specified by $\partial_t A = \cL A$. 
A somewhat different construction leading to the representation of Dirichlet forms with respect to the KMS metric in terms of derivations has 
been given by Cipriani and Sauvageot \cite{CipSau03}. 
Our ``derivatives'' are not always derivations, and this more general 
structure is suited to applications. 
Indeed, one of the first non-commutative Dirichlet forms to be investigated in mathematical physics, the {\em Clifford Dirichlet form} of Gross, is most naturally expressed in terms of a sum of squares of {\em skew derivations}. 
The flexibility of our framework will be essential to our later applications. 
In this part of the introduction, we present only some of the key computations in a simple setting involving derivations to explain the roles of the KMS inner product. 
Our more general framework will be presented in Section 4.

Consider a Lindblad generator $\cL$ given as in Theorem~\ref{thm:structure}. 
To bring out the analogy with classical Kolmogorov backward diffusion equations of the form
\begin{equation}\label{kolfor}
\frac{\partial}{\partial t}f(x,t) = \Delta f(x,t) + (\nabla \log \sigma(x))\cdot \nabla f(x,t)\ ,
\end{equation}
where $\sigma$ is a smooth, strictly positive probability density on $\R^n$, 
we define the following {\em partial derivative operators on $\cA$}:
\begin{align}\label{simpleder}
\partial_j A  = [V_j, A] \ ,
\end{align}
$j\in \cJ$. 
Note that $\partial_{j}^\dagger = \partial_{j^{*}}$, where we recall that $j^{*}$ denotes an index such that $V_{j^*} = V_j^*$. 
An easy computation shows that the adjoint of $\partial_{j}$ with respect to $\KMS{\cdot,\cdot}$ is given by 
\begin{align}\label{eq:partial-KMS-adjoint}
	\partial_{j,\sigma}^\dagger A = 
	  \sigma^{-1/2} \partial_{j}^\dagger \big(  \sigma^{1/2} A \sigma^{1/2}  \big) \sigma^{-1/2} \ .
\end{align}

\begin{proposition}[Divergence form representation of $\cL$]\label{prop:Dirichlet-KMS}
For all $A \in \cA$ we have 
\begin{align*}
 \cL A = - \sum_{j \in \cJ} \partial_{j,\sigma}^\dagger \partial_{j} A \ .
\end{align*}
\end{proposition}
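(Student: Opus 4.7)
The plan is to compute $\partial_{j,\sigma}^\dagger \partial_j A$ explicitly and to sum over $j \in \cJ$, using the Lindblad symmetry $\{V_j\}_{j\in\cJ}=\{V_j^*\}_{j\in\cJ}$ together with $\omega_{j^*}=-\omega_j$, to match the expression for $\cL A$ obtained from Theorem~\ref{thm:structure}.

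First, I would use the commutation relation \eqref{eq:log-relation-sg} at $t=\pm 1/2$, namely $\sigma^{1/2} V_j \sigma^{-1/2}=e^{-\omega_j/2}V_j$ and, after taking adjoints or equivalently using $V_j^*=V_{j^*}$ with $\omega_{j^*}=-\omega_j$, the dual identity $\sigma^{-1/2}V_j^*\sigma^{1/2}=e^{-\omega_j/2}V_j^*$ together with $\sigma^{1/2}V_j^*\sigma^{-1/2}=e^{\omega_j/2}V_j^*$. Starting from \eqref{eq:partial-KMS-adjoint} and the fact that $\partial_j^\dagger=\partial_{j^*}=[V_j^*,\,\cdot\,]$ (adjoint with respect to $\tau$), I would compute
\begin{align*}
\partial_{j,\sigma}^\dagger \partial_j A
&= \sigma^{-1/2}\bigl(V_j^*\sigma^{1/2}[V_j,A]\sigma^{1/2}-\sigma^{1/2}[V_j,A]\sigma^{1/2}V_j^*\bigr)\sigma^{-1/2}\\
&= e^{-\omega_j/2}V_j^*[V_j,A]-e^{\omega_j/2}[V_j,A]V_j^*,
\end{align*}
which, after expanding the commutators, gives
\begin{equation*}
\partial_{j,\sigma}^\dagger \partial_j A
= e^{-\omega_j/2}(V_j^*V_jA - V_j^*AV_j) + e^{\omega_j/2}(AV_jV_j^* - V_jAV_j^*).
\end{equation*}

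Next, I would sum over $j\in\cJ$ and perform a relabeling step: since the map $j\mapsto j^*$ is an involution of $\cJ$, one has $\sum_j F(V_j,V_j^*,\omega_j)=\sum_j F(V_j^*,V_j,-\omega_j)$ for any expression $F$. Applying this to the $e^{\omega_j/2}$ term converts it into $e^{-\omega_j/2}(V_j^*AV_j-AV_j^*V_j)$. Combining,
\begin{equation*}
-\sum_{j\in\cJ}\partial_{j,\sigma}^\dagger \partial_j A
= \sum_{j\in\cJ} e^{-\omega_j/2}\bigl(2V_j^*AV_j-V_j^*V_jA-AV_j^*V_j\bigr),
\end{equation*}
which is exactly $\sum_j e^{-\omega_j/2}\bigl(V_j^*[A,V_j]+[V_j^*,A]V_j\bigr)=\cL A$ by \eqref{eq:L-general}.

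\textbf{Expected obstacle.} The only nontrivial step is the bookkeeping: the expression for a single $\partial_{j,\sigma}^\dagger\partial_j A$ is asymmetric (one term with $e^{-\omega_j/2}$ and one with $e^{+\omega_j/2}$), and the Lindbladian terms all appear with the single weight $e^{-\omega_j/2}$. The reconciliation relies critically on the detailed-balance structural property $\{V_j\}=\{V_j^*\}$ with $\omega_{j^*}=-\omega_j$; without the involutive relabeling the identity does not close term by term. Everything else is a direct computation from the definitions.
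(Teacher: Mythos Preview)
Your proof is correct and follows essentially the same route as the paper: compute $\partial_{j,\sigma}^\dagger\partial_j A$ via \eqref{eq:partial-KMS-adjoint} and \eqref{eq:log-relation-sg}, then use the involution $j\mapsto j^*$ with $\omega_{j^*}=-\omega_j$ to absorb the $e^{\omega_j/2}$ term into the $e^{-\omega_j/2}$ one, arriving at $\sum_j e^{-\omega_j/2}\cL_j(A)$. One minor slip: the relabeled $e^{\omega_j/2}$ term should read $e^{-\omega_j/2}(AV_j^*V_j - V_j^*AV_j)$ (you wrote its negative), though your final displayed formula is nonetheless the correct one.
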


\begin{proof}
Using \eqref{eq:partial-KMS-adjoint} and \eqref{eq:log-relation-sg} we obtain
\begin{align*}
	\sum_{j \in \cJ}  \partial_{j,\sigma}^\dagger \partial_{j} A 
	&  = \sum_{j \in \cJ} \partial_{j,\sigma}^\dagger (V_j A - A V_j )
	\\& = \sum_{j \in \cJ}
			\sigma^{-1/2} \partial_{j}^\dagger \big(  \sigma^{1/2}  (V_j A - A V_j )
 \sigma^{1/2}  \big) \sigma^{-1/2}
	\\& = \sum_{j \in \cJ}
			\sigma^{-1/2} 
			\Big( V_j^* \sigma^{1/2}  (V_j A - A V_j ) \sigma^{1/2} 
				-  \sigma^{1/2}  (V_j A - A V_j ) \sigma^{1/2} V_j^* \Big)
			 \sigma^{-1/2}
	\\& = \sum_{j \in \cJ} \Big(
 e^{-\omega_j/2} V_j^*  (V_j A - A V_j )
					- e^{\omega_j/2} (V_j A - A V_j ) V_j^* \Big)
	\\& = \sum_{j \in \cJ} \Big(
 e^{-\omega_j/2} V_j^*  (V_j A - A V_j )
					- e^{-\omega_j/2} (V_j^* A - A V_j^* ) V_j \Big)
	\\& = -\sum_{j \in \cJ} 
 e^{-\omega_j/2} \cL_j(A) = -\cL A\ ,
\end{align*}
as desired. 
\end{proof}

Proposition \ref{prop:Dirichlet-KMS} can be stated equivalently as an integration by parts identity
\begin{align}\label{eq:IBP}
 \sum_{j \in \cJ} \KMS{\partial_j A, \partial_j B} = - \KMS{A,\cL B} \quad \text{ for } A, B \in \cA \ .
\end{align}
 
It is now immediate that the backward  equation $\partial_t A = \cL A$ with $\cL$ given by
\eqref{eq:Lindblad-form}, is the gradient flow equation for the energy 
$\sE(A,A)$ with respect to the KMS inner product induced by $\sigma$.  What makes this particular gradient flow representation especially useful is that the Dirichlet form $\sE$ is written, in  \eqref{eq:IBP}, as the expectation of a squared gradient.
That is, the gradient flow structure given here is analogous to the gradient flow formulation for the Kolmogorov backward  equation \eqref{kolfor} for the Dirichlet energy $\cD_{class}(f) = \frac12\int_{\R^n} |\nabla f(x)|^2 \sigma(x)\dd x$.
This would not be the case if we had considered the Dirichlet form based on the GNS inner product: 
We would have a gradient flow structure, but the Dirichlet form would not be the expectation of a squared gradient in any meaningful sense;  see however, Proposition \ref{prop:D-detailed-balance} below for a related representation.

In the next section we show how the non-commutative differential calculus associated to the Dirichlet from $\sE$ allows us to write the corresponding {\em forward equation} as gradient flow for the relative entropy with respect to a Riemannian metric constructed in terms of this differential calculus.

\subsection{A gradient flow structure for the quantum relative entropy}\label{umegak}

Consider the \emph{ quantum relative entropy functionals} $\Ent_{\sigma}: \Dens_{+} \to \R$ given by
\begin{align*}
	\Ent_{\sigma}(\rho)  := \tau[\rho ( \log \rho - \log \sigma) ] \ .
\end{align*}

Our goal is to sketch a proof of one of the results of \cite{2016-Carlen-Maas,MiMi17}, namely that the quantum master equation $\partial_t \rho = \cL^\dagger \rho$, which is a Kolmogorov {\em forward equation},  can be formulated as the gradient flow equation for $\Ent_{\sigma}$ with respect to a suitable Riemannian metric on $\Dens_{+}$.
The construction of the Riemannian metric will make use of the ``quantum directional derivatives'' $\partial_j$ introduced in the last subsection. 

Since $\Dens_{+}$ is a relatively open subset of the $\R$-affine subspace $\{ A \in \cA_{h} : \tau[A] = 1 \}$, we may identify, at each point in $\rho \in \Dens_{+}$, its tangent space $T_{\rho} \Dens_{+}$ with $\cA_{0} := \{ A \in \cA_{h} : \tau[A] = 0 \}$. The cotangent space $T_{\rho}^\dagger \Dens_{+}$ may also be identified with $\cA_{0}$ through the duality pairing $\ip{A,B} = \tau[A B]$ for $A, B \in \cA_{0}$.

Let $(g_{\rho})_{\rho \in \Dens_{+}}$ be a Riemannian metric on $\Dens_{+}$, i.e., a collection of positive definite bilinear forms $g_{\rho} : T_{\rho}\Dens_{+} \times T_{\rho}\Dens_{+} \to \R$ depending smoothly on $\rho \in \Dens_{+}$. 
Consider the associated operator $\cG_{\rho} :  T_{\rho}\Dens_{+} \to  T_{\rho}^{\dagger}\Dens_{+}$ defined by $\ip{A, \cG_{\rho} B} = g_{\rho}(A, B)$ for $A, B \in T_{\rho} \Dens_{+}$. 
Clearly, $\cG_{\rho}$ is invertible and self-adjoint with respect to the Hilbert--Schmidt inner product on $\cA_{0}$.
Define $\cK_{\rho} :  T_{\rho}^{\dagger}\Dens_{+} \to T_{\rho} \Dens_{+}$ by $\cK_{\rho} = (\cG_{\rho})^{-1}$, so that 
\begin{align}\label{eq:metric-K}
g_{\rho}(A,B) = \ip{ A, \cK_{\rho}^{-1} B} \ .
\end{align}
In many situations of interest it is convenient to define the metric $g_{\rho}$ by specifying the operator $\cK_{\rho}$. 
In such cases, there is often no explicit formula available for $\cG_{\rho}$ and $g_{\rho}$.

For a smooth functional $\cF : \Dens_{+} \to \R$ and $\rho \in \Dens_{+}$, its \emph{differential} $\rmD\cF(\rho) \in T_{\rho}^{\dagger}\Dens_{+}$ is defined by $\lim_{\eps \to 0} \eps^{-1}(\cF(\rho + \eps A) - \cF(\rho) ) = \ip{A,\rmD \cF(\rho)}$ for $A \in T_{\rho}\Dens_{+}$ (independently of the Riemannian metric $g_{\rho}$).
Its \emph{gradient} $\nabla_g \cF(\rho) \in T_{\rho}\Dens_{+}$ depends on the Riemannian metric through the duality formula $g_{\rho}(A, \nabla_g \cF(\rho)) = \ip{A, \rmD \cF(\rho)}$ for $A \in T_{\rho}\Dens_{+}$. It follows that $\cG_{\rho}\nabla_g \cF(\rho) = \rmD \cF(\rho)$, or equivalently
\begin{align*}
	\nabla_g \cF(\rho) = \cK_{\rho} \rmD \cF(\rho) \ .
\end{align*}
The gradient flow equation $\partial_{t}\rho = - \nabla_g \cF(\rho)$ takes the form 
\begin{align*}
\partial_{t}\rho = - \cK_{\rho} \rmD \cF(\rho) \ .
\end{align*}

Let us now focus on the relative entropy functional $\Ent_\sigma$ for some $\sigma\in \Dens_+$, and note that its differential is given by 
\begin{align}\label{eq:diff-entropy}
\rmD \Ent_{\sigma}(\rho) = \log \rho - \log \sigma \ .
\end{align}
Consider a generator $\cL^\dagger$ written in the form \eqref{eq:L-dagger-general}, i.e., 
\begin{align*}
	\cL^\dagger & = \sum_{j \in \cJ}  e^{-\omega_j/2} \cL_{j}^\dagger \ , \qquad 
		\cL^\dagger_{j}(\rho) = 
		  [V_j, \rho V_j^*] + [V_j \rho, V_j^* ] \ ,
\end{align*}
where $\{V_j\}_{j\in \cJ}$ is a finite set of eigenvectors of $\Delta_\sigma$ such that $\{V_j^*\}_{j\in \cJ} = \{V_j\}_{j\in \cJ}$, and where $\Delta_\sigma V_j = e^{-\omega_j}V_j$ for some $\omega_j \in \R$. 
As before, we use the notation $\partial_j A := [V_j, A]$.

For $\rho \in \Dens$ we define $\hrho_j\in \cA \otimes \cA$ by
\begin{align*}
   \hrhop = \int_0^1 \big(e^{\omega_j /2} \rho\big)^{1-s}
   			 \otimes  \big(e^{-\omega_j /2} \rho\big)^{s}\dd s  \ .
\end{align*}
We shall frequently make use of the \emph{contraction operator} $\# : (\cA \ot \cA) \times \cA \to \cA$ defined by
 \begin{align}\label{eq:contr}
  (A \ot B) \# C &:= ACB
   \end{align}
and linear extension. 
A crucial step towards obtaining the gradient flow structure is the following chain rule for the commutators $\partial_j$, which involves the differential of the entropy.

\begin{lemma}[Chain rule for the logarithm]
\label{lem:chain-log}
For all $\rho \in \Dens_+$ and $j \in \cJ$ we have
	\begin{align}\label{eq:chain-rule-log-LB}
		 e^{-\omega_j/2}V_j \rho  - e^{\omega_j/2}\rho V_j
		 &  =    \hrhop \# \partial_{j}(\log \rho - \log \sigma) \ .
	\end{align}
\end{lemma}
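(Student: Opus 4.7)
My plan is to expand both sides and reduce the identity to a non-commutative chain rule for the logarithm combined with an integration by parts in the parameter $s$.

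First, I would extract the commutation consequences of $\Delta_\sigma V_j = e^{-\omega_j} V_j$. Since this is equivalent to $\sigma V_j = e^{-\omega_j} V_j \sigma$, functional calculus gives $\sigma^s V_j = e^{-s\omega_j} V_j \sigma^s$ for all $s$. Differentiating at $s=0$ yields $[V_j, \log\sigma] = \omega_j V_j$, so that
\begin{equation*}
	\partial_j(\log\rho - \log\sigma) = [V_j, \log\rho] - \omega_j V_j .
\end{equation*}
Using the explicit form of $\hrhop$ and the definition of $\#$, the right-hand side of \eqref{eq:chain-rule-log-LB} becomes
\begin{equation*}
	\int_0^1 e^{\omega_j(1-2s)/2}\, \rho^{1-s}\bigl([V_j,\log\rho] - \omega_j V_j\bigr)\rho^s \dd s .
\end{equation*}

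Next I would establish the key chain rule. Since $\log\rho$ commutes with every power of $\rho$, direct differentiation gives
\begin{equation*}
	\frac{\mathrm{d}}{\mathrm{d}s}\bigl(\rho^{1-s} V_j \rho^s\bigr)
	= -\rho^{1-s}\log\rho\, V_j \rho^s + \rho^{1-s} V_j \log\rho\, \rho^s
	= \rho^{1-s}[V_j,\log\rho]\rho^s .
\end{equation*}
Substituting this into the first piece of the right-hand side gives
\begin{equation*}
	I := \int_0^1 e^{\omega_j(1-2s)/2}\, \frac{\mathrm{d}}{\mathrm{d}s}\bigl(\rho^{1-s} V_j \rho^s\bigr) \dd s .
\end{equation*}

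Now I would integrate by parts. Since $\frac{\mathrm{d}}{\mathrm{d}s}e^{\omega_j(1-2s)/2} = -\omega_j e^{\omega_j(1-2s)/2}$, one finds
\begin{equation*}
	I = \Bigl[e^{\omega_j(1-2s)/2}\rho^{1-s}V_j \rho^s\Bigr]_{s=0}^{s=1} + \omega_j \int_0^1 e^{\omega_j(1-2s)/2}\, \rho^{1-s} V_j \rho^s \dd s .
\end{equation*}
The boundary term equals $e^{-\omega_j/2} V_j \rho - e^{\omega_j/2} \rho V_j$, which is precisely the left-hand side, while the bulk term exactly cancels the $-\omega_j V_j$ contribution, proving the identity. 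The only substantive step is recognizing that the non-commutative chain rule $\frac{\mathrm{d}}{\mathrm{d}s}(\rho^{1-s}V_j\rho^s)= \rho^{1-s}[V_j,\log\rho]\rho^s$ turns the integrand into a total derivative, so that the integration by parts produces boundary data matching the LHS; after that, all terms cancel by algebra.
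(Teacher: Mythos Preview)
Your proof is correct but takes a genuinely different route from the paper's. The paper expands $\rho$ in its spectral decomposition $\rho=\sum_\ell \lambda_\ell E_\ell$, rewrites $\hrho_j$ as $\sum_{\ell,m}\Lambda(e^{\omega_j/2}\lambda_\ell,e^{-\omega_j/2}\lambda_m)\,E_\ell\otimes E_m$ with $\Lambda$ the logarithmic mean, and then uses the algebraic identity $\Lambda(\xi,\eta)(\log\eta-\log\xi)=\eta-\xi$ entrywise to collapse the double sum into $e^{-\omega_j/2}V_j\rho-e^{\omega_j/2}\rho V_j$. Your argument is basis-free: you keep the integral form of $\hrho_j$, observe that $\rho^{1-s}[V_j,\log\rho]\rho^s=\frac{\mathrm d}{\mathrm ds}(\rho^{1-s}V_j\rho^s)$, and integrate by parts in $s$, with the exponential weight $e^{\omega_j(1-2s)/2}$ producing exactly the $\omega_j V_j$ term needed for cancellation. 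The paper's approach is an instance of the discrete-derivative chain rule they develop systematically in Section~6 (essentially Proposition~\ref{prop:chain-rule} specialized to $f=\log$), so it fits their general functional-calculus machinery. Your approach is arguably more transparent about \emph{why} the logarithmic mean is the right weight here---it is the unique integrand that makes the identity an exact integration by parts---and it would generalize without change to infinite-dimensional settings where spectral sums are unavailable.
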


\begin{proof}
	Using \eqref{eq:log-relation} we infer that	
	\begin{align*}
		\partial_{j}( \log \rho - \log \sigma  ) 
		   = V_j \log (e^{-\omega_j/2} \rho) - \log (e^{\omega_j/2} \rho) V_j \ .
	\end{align*}
	Consider the spectral decomposition $\rho = \sum_\ell \lambda_\ell E_\ell$, where $\lambda_\ell > 0$ for all $i$, and $\{E_\ell\}_\ell$ are the spectral projections, so that  $E_\ell E_m = \delta_{\ell m} E_\ell$ and $\sum_\ell E_\ell = \one$. Observe that
	\begin{align*}
		\hrho_{j} = \sum_{\ell,m} \Lambda(e^{\omega_j/2} \lambda_\ell, e^{-\omega_j/2} \lambda_m) E_\ell \ot E_m \ , 
	\end{align*}
	where $\Lambda(\xi,\eta) = \int_0^1 \xi^{1-s} \eta^s \dd s = \frac{\xi - \eta}{\log \xi - \log \eta}$ denotes the logarithmic mean of $\xi$ and $\eta$.
	Thus, 
	\begin{align*}
		&\hrhop \# \big( \partial_{j}( \log \rho - \log \sigma  )  \big)
		\\& = \sum_{\ell, m, p}  \Lambda(e^{\omega_j/2} \lambda_\ell, e^{-\omega_j/2} \lambda_m) E_\ell 
		 \Big( \log (e^{-\omega_j/2} \lambda_p)  V_j E_p - \log (e^{\omega_j/2} \lambda_p) E_p V_j \Big) E_m
		 \\& = \sum_{\ell, m}  \Lambda(e^{\omega_j/2} \lambda_\ell, e^{-\omega_j/2} \lambda_m) 
		 \Big( \log (e^{-\omega_j/2} \lambda_m) - \log (e^{\omega_j/2} \lambda_\ell)  \Big)  E_\ell V_j E_m
		 \\& = \sum_{\ell, m}   
		 \big( e^{-\omega_j/2} \lambda_m - e^{\omega_j/2} \lambda_\ell  \big)  E_\ell V_j E_m
		 \\& =  e^{-\omega_j/2} V_j \rho - e^{\omega_j/2} \rho V_j\ ,
	\end{align*}
which proves \eqref{eq:chain-rule-log-LB}.
\end{proof}

For $\rho \in \Dens_+$ we define the operator $\cK_\rho : \cA \to \cA$ by
\begin{align}\label{eq:K-rho}
	\cK_\rho A := \sum_{j \in \cJ} \partial_{j}^\dagger \big( \hrhop \#  \partial_{j} A \big) \ .
\end{align}
Since $\Tr(A^* \cK_\rho B) = \overline{\Tr(B^* \cK_\rho A)}$ for $A, B \in \cA$, it follows that $\cK_\rho$ is a non-negative self-adjoint operator on $L^2(\cA, \tau)$ for each $\rho \in \Dens_+$. 
Assuming that $\cP_t$ is ergodic, the operator $\cK_\rho : \cA_0 \to \cA_0$ is invertible for each $\rho \in \Dens_+$ (see Corollary \ref{cor:Krho-bij} below for a proof of this statement). Since $\cK_\rho$ depends smoothly on $\rho$, it follows that $\cK_\rho$ induces a Riemannian metric on $\Dens_+$ defined by \eqref{eq:metric-K}.

The following result shows that the Kolmogorov forward equation $\partial_t \rho = \cL^\dagger \rho$ can be formulated as the gradient flow equation for $\Ent_{\sigma}$.

\begin{proposition}\label{prop:Lindblad-grad-flow}
For $\rho \in \Dens_+$ we have the identity
	\begin{align*}
		 \cL^\dagger \rho = - \cK_\rho \rmD \Ent_{\sigma}(\rho)\ ,
	\end{align*}
	hence the gradient flow equation of $\Ent_\sigma$ with respect to the Riemannian metric induced by $(\cK_\rho)_\rho$ is the master equation $\partial_t \rho = \cL^\dagger \rho$. 
\end{proposition}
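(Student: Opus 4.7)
The plan is to reduce the identity $\cL^\dagger \rho = -\cK_\rho \rmD \Ent_\sigma(\rho)$ to a purely algebraic check using the pieces already assembled above. Since $\rmD \Ent_\sigma(\rho) = \log\rho - \log\sigma$ by \eqref{eq:diff-entropy}, and $\cK_\rho$ is defined in \eqref{eq:K-rho} by $\cK_\rho A = \sum_j \partial_j^\dagger(\hrhop \# \partial_j A)$, the chain rule of Lemma~\ref{lem:chain-log} instantly collapses the inner factor:
\[
\hrhop \# \partial_j(\log\rho - \log\sigma) = e^{-\omega_j/2} V_j \rho - e^{\omega_j/2} \rho V_j.
\]
So all that remains is to prove
\[
\cL^\dagger \rho = -\sum_{j\in\cJ} \partial_j^\dagger\bigl( e^{-\omega_j/2} V_j \rho - e^{\omega_j/2} \rho V_j \bigr).
\]

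First I would compute the Hilbert--Schmidt adjoint of the commutator $\partial_j A = [V_j,A]$ by a one-line use of cyclicity of $\tau$, obtaining $\partial_j^\dagger B = [V_j^*, B]$; equivalently $\partial_j^\dagger = \partial_{j^*}$. Substituting this and expanding the commutators turns the right-hand side above into a sum of four terms of the form $V_j^* V_j \rho$, $V_j \rho V_j^*$, $V_j^* \rho V_j$, $\rho V_j V_j^*$, each weighted by $e^{\pm \omega_j/2}$.

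Next, to match this against the Lindblad expression \eqref{eq:L-dagger-general}, i.e., $\cL^\dagger \rho = \sum_j e^{-\omega_j/2}(2 V_j \rho V_j^* - \rho V_j^* V_j - V_j^* V_j \rho)$, I would reindex the two terms carrying the factor $e^{\omega_j/2}$ via $j\mapsto j^*$. Under this reindexing the structure assumption $\{V_j\} = \{V_j^*\}$ from Theorem~\ref{thm:structure} together with the relation $\omega_{j^*} = -\omega_j$ converts $e^{\omega_j/2}(V_j^* \rho V_j - \rho V_j V_j^*)$ into $e^{-\omega_j/2}(V_j \rho V_j^* - \rho V_j^* V_j)$. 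Adding the terms yields exactly the symmetrized Lindblad form, completing the identity.

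The second statement, that $\partial_t \rho = \cL^\dagger \rho$ is the gradient flow of $\Ent_\sigma$ for the metric induced by $(\cK_\rho)$, is then immediate from the abstract formula $\partial_t \rho = -\cK_\rho \rmD \cF(\rho)$ derived just before the proposition, applied with $\cF = \Ent_\sigma$. The only conceptual point is that this requires $\cK_\rho$ to be invertible on $\cA_0$, which is promised by the forward reference to Corollary~\ref{cor:Krho-bij}. The main (mild) obstacle is purely bookkeeping: tracking the $e^{\pm \omega_j/2}$ prefactors through the $j \mapsto j^*$ reindexing without sign errors, and checking that the chain rule lemma indeed applies term by term, which is why I would execute the reduction via Lemma~\ref{lem:chain-log} first and keep the symmetrization as the last step.
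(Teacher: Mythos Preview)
Your proposal is correct and takes essentially the same approach as the paper: apply \eqref{eq:diff-entropy}, use Lemma~\ref{lem:chain-log} to collapse $\hrhop \# \partial_j(\log\rho-\log\sigma)$, then use $\partial_j^\dagger = \partial_{j^*}$ together with the reindexing $j\mapsto j^*$ and $\omega_{j^*}=-\omega_j$ to recover the Lindblad form \eqref{eq:L-dagger-general}. The paper organizes the last step as a symmetrization (replacing the sum by half of the sum plus its $j\mapsto j^*$ image) rather than reindexing only the $e^{\omega_j/2}$ terms as you do, but this is a purely cosmetic difference.
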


\begin{proof}
	Using the identity \eqref{eq:diff-entropy}, the chain rule from Lemma \ref{lem:chain-log}, and the fact that $\{V_j\} = \{V_j^*\}$ and $\omega_{j^*} = -\omega_j$, we obtain 
	\begin{align*}
		\cK_\rho \rmD \Ent_{\sigma}(\rho)
		 & =  \sum_{j \in \cJ} 
 	\partial_{j}^\dagger \big( \hrhop \#  \partial_{j} (\log \rho - \log \sigma) \big)  
		\\& =   \sum_{j \in \cJ} 
 	\partial_{j}^\dagger \big(  e^{-\omega_j/2}V_j \rho  - e^{\omega_j/2}\rho V_j \big)   
		\\&  = \frac12 \sum_{j \in \cJ} \Big(
 	\partial_{j}^\dagger \big(  e^{-\omega_j/2}V_j \rho  - e^{\omega_j/2}\rho V_j \big)   
 	    + 
 	\partial_{j} \big(  e^{\omega_{j}/2}V_j^* \rho  - e^{-\omega_j/2}\rho V_j^* \big)   \Big)
		 \\& = - \frac12\sum_{j \in \cJ}
		 e^{-\omega_j/2} \Big( [V_j, \rho V_j^*] +  [V_j \rho, V_j^* ] \Big) 
		+ e^{\omega_j/2} \Big(  [V_j^*, \rho V_j] +  [V_j^* \rho, V_j ]\Big) 
		 \\& = - \sum_{j \in \cJ}
		 e^{-\omega_j/2} \Big( [V_j, \rho V_j^*] +  [V_j \rho, V_j^* ] \Big) 
		 \\& = - \cL^\dagger \rho \ ,
	\end{align*}
which is the desired identity.
\end{proof}

In this paper we extend this result into various directions: we consider more general entropy functionals, more general Riemannian metrics, and nonlinear evolution equations. 

\begin{remark}\label{rem:classical-JKO}
The gradient flow structure given in Proposition \ref{prop:Lindblad-grad-flow} can be viewed as a non-commutative analogue of the Kantorovich gradient flow structure obtained by Jordan, Kinderlehrer and Otto \cite{JKO98} for the \emph{Kolmogorov backward equation}
\begin{align*}
\frac{\partial}{\partial t}\rho(x,t) = \Delta \rho(x,t) - \nabla \cdot ( \rho(x,t) \nabla \log \sigma(x) ) \ .
\end{align*} 
This structure is formally given in terms of the operator $K_\rho$ defined by 
\begin{align*}
 K_{\rho} \psi  = - \nabla\cdot(\rho \nabla \psi) \ ,
\end{align*}
for probability densities $\rho$ on $\R^n$ and suitable functions $\psi : \R^n \to \R$ in analogy with \eqref{eq:K-rho}.
As the differential of the relative entropy $\Ent_\sigma(\rho) = \int_{\R^n} \rho(x)\log \frac{\rho(x)}{\sigma(x)} \dd x$ is given by $\rmD \Ent_\sigma(\rho) = 1 + \log \frac{\rho}{\sigma}$, we have
\begin{align*}
   K_\rho \rmD \Ent_\sigma(\rho) = - \Delta \rho + \nabla\cdot (\rho \nabla \log \sigma)\ ,
\end{align*}
which is the commutative counterpart of Proposition \ref{prop:Lindblad-grad-flow}.
\end{remark}

\subsection{The necessity of BKM-detailed balance}

In the classical setting of irreducible  finite Markov chain, Dietert \cite{D15} has proven that if the Kolmogorov forward equation for a Markov semigroup can be written as gradient flow for the relative entropy with respect to the unique invariant measure for some continuously differentiable Riemannian metric, then the Markov chain is necessarily reversible. 
That is, it satisfies the classical detailed balance condition. 

\begin{theorem}\label{necBKM}  Let $(\cP_t)_{t\geq 0}$ be an ergodic QMS with generator $\cL$ and invariant state $\sigma\in \Dens_+$. 
If there exists a continuously differentiable Riemannian metric $(g_\rho)$ on $\Dens_+$ such that the quantum master equation $\partial \rho = \cL^\dagger \rho$ is the gradient flow equation for $\Ent_\sigma$ with respect to $(g_\rho)$, then each $\cP_t$ is self-adjoint with respect to the BKM inner product associated to $\sigma$. 
\end{theorem}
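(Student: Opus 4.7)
The proof follows the pattern of Dietert's classical argument: linearize the gradient flow equation at the stationary point $\sigma$, read off an operator identity for $\cL^\dagger$, and convert it into BKM-symmetry. The one new ingredient compared with the classical setting is that the BKM inner product can be expressed in terms of the Fr\'echet derivative of the logarithm at $\sigma$.

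I would first record the following. Let $\Omega_\sigma : \cA \to \cA$ denote the Fr\'echet derivative of $X \mapsto \log X$ at $X = \sigma$, i.e.,
\[
\Omega_\sigma A = \int_0^\infty (\sigma+t)^{-1} A (\sigma+t)^{-1}\dd t\,.
\]
The operator $\Omega_\sigma$ is positive and Hilbert--Schmidt self-adjoint on $\cA$, and satisfies $\Omega_\sigma \sigma = \one$. A spectral calculation in an eigenbasis of $\sigma$, using the integral representation $\int_0^\infty \frac{\dd t}{(\lambda+t)(\mu+t)} = 1/\Lambda(\lambda,\mu)$ of the logarithmic mean that already appears in Lemma~\ref{lem:chain-log}, yields
\[
\BKM{A,B} = \tau\bigl[A^* \Omega_\sigma^{-1} B\bigr]\,,
\]
so that BKM-self-adjointness of $\cL$ is equivalent to the operator identity $\cL\,\Omega_\sigma = \Omega_\sigma\,\cL^\dagger$ on $\cA$.

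To derive this identity, I would linearize the gradient flow hypothesis $\cL^\dagger\rho = -\cK_\rho\,\rmD\Ent_\sigma(\rho)$ at $\rho = \sigma$. I extend $\cK_\sigma : \cA_0 \to \cA_0$ to an HS-self-adjoint operator on all of $\cA$ by declaring $\cK_\sigma \one = 0$; then $\cK_\sigma$ is insensitive to the gauge freedom in representing the cotangent vector $\rmD\Ent_\sigma(\rho)$ by $\log\rho - \log\sigma \in \cA$. Since $\rmD\Ent_\sigma(\sigma) = 0$, the derivative of $\rho \mapsto \cK_\rho$ contributes nothing, while continuous differentiability of $g$ justifies the chain rule, giving
\[
\cL^\dagger A = -\cK_\sigma\,\Omega_\sigma A \qquad (A \in \cA_0)\,.
\]
This identity is also satisfied at $A = \sigma$, where both sides vanish ($\cL^\dagger \sigma = 0$ on the left, and $\cK_\sigma\,\Omega_\sigma \sigma = \cK_\sigma \one = 0$ on the right). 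Because $\cA = \C\sigma \oplus (\cA_0 + i\cA_0)$, linearity promotes it to the operator identity $\cL^\dagger = -\cK_\sigma\,\Omega_\sigma$ on all of $\cA$.

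Taking HS-adjoints, and using that $\cK_\sigma$ and $\Omega_\sigma$ are both HS-self-adjoint, yields $\cL = -\Omega_\sigma\,\cK_\sigma$ on $\cA$, and therefore
\[
\cL\,\Omega_\sigma = -\Omega_\sigma\,\cK_\sigma\,\Omega_\sigma = \Omega_\sigma\,\cL^\dagger\,.
\]
This is the desired BKM-symmetry, and exponentiating gives BKM-self-adjointness of each $\cP_t$. The main subtlety to be careful about is the bookkeeping between $\cA$ and its real tangent subspace $\cA_0$: the linearization naturally lives on $\cA_0$, and extending it to an operator identity on $\cA$ requires both the convention $\cK_\sigma \one = 0$ and the separate check at $A = \sigma$. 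Once those two points are in place the remainder of the argument is purely algebraic.
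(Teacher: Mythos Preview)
Your proof is correct and follows essentially the same approach as the paper's. Your $\Omega_\sigma$ is the paper's $\cM^{-1}$; both arguments linearize the gradient-flow identity $\cL^\dagger\rho = -\cK_\rho\,\rmD\Ent_\sigma(\rho)$ at $\rho=\sigma$ to obtain $\cL^\dagger = -\cK_\sigma\,\cM^{-1}$ and then invoke the Hilbert--Schmidt self-adjointness of $\cK_\sigma$. The only differences are cosmetic: the paper concludes by directly computing $\ip{\cL A,B}_{L^2_{\rm BKM}(\sigma)} = -\tau[A^*\cK_\sigma B]$ and observing this is symmetric, whereas you take adjoints to reach the operator identity $\cL\,\Omega_\sigma = \Omega_\sigma\,\cL^\dagger$; and you are more explicit than the paper about the bookkeeping needed to extend the linearized identity from $\cA_0$ to all of $\cA$ (the convention $\cK_\sigma\one = 0$ and the check at $A=\sigma$), which the paper glosses over.
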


Before beginning the proof, we recall some relevant facts, and introduce some notation. Regarding $\sigma$ as an element of $\M_n(\C)$, we define the operator $\cM$ on $\M_n(\C)$ by 
\begin{align*}
\cM A = \int_0^1 \sigma^{1-s}A \sigma^s \dd s \ .
\end{align*}
A simple calculation shows that ${\cM}$ is the derivative of the matrix exponential function. Its inverse is the derivative of the matrix logarithm function:
\begin{align*}
\cM^{-1} A = \int_0^\infty \frac{1}{t+\sigma} A \frac{1}{t+\sigma} \dd t \ ,
\end{align*}
(see Example~\ref{ex:entropy} below for more details).
While the matrix logarithm function is monotone, the matrix exponential is not. Thus $\cM^{-1}$ preserves positivity, but $\cM$ does not. In fact $A \mapsto \cM^{-1} A$ is evidently completely positive. 
The BKM inner product can now be written as 
$$\ip{A,B}_{L^2_{\rm BKM}(\sigma)} 
	= \tau[A^* \cM B ]
	= \tau[\cM(A^*) B ] \ .$$

\begin{proof}[Proof of Theorem~\ref{necBKM}]

As before, it will be convenient to consider the operators $(\cK_\rho)$ defined by \eqref{eq:metric-K}.
Since $\rmD \Ent_{\sigma}(\rho) = \log \rho - \log \sigma$, the gradient flow equation $\partial_t \rho = - \cK_{\rho} \rmD \Ent_{\sigma}(\rho)$ becomes 
\begin{align}\label{eq:gf}
	\cL^\dagger \rho = - \cK_\rho (\log \rho - \log \sigma) \ . 
\end{align}
Applying this identity to $\rho_\eps = \sigma + \eps A$ for $A \in \cA_0$, and differentiating at $\eps = 0$, we obtain using the identity
$\partial_\eps|_{\eps = 0} \log \rho_\eps = \cM^{-1} A$ that
\begin{align}\label{db1}
	\cL^\dagger A = - \cK_\sigma \cM^{-1} A \ ,
\end{align}
Consequently, for $A, B \in \cA$,
\begin{align*}
	\ip{\cL A, B}_{L^2_{\rm BKM}(\sigma)}
	= \tau[ (\cL A)^* \cM B] 
	= \tau[ A^* \cL^\dagger \cM B] 
	= - \tau[ A^* \cK_\sigma B] \ .
\end{align*}
As $g_\sigma$ is a symmetric bilinear form, the operator $\cK_\sigma$ is self-adjoint with respect to the Hilbert-Schmidt scalar product. This implies the result.
\end{proof}

We are unaware of any investigation of the nature of the class of QMS generators that are self-adjoint for the BKM inner product associated to their invariant state $\sigma$. Therefore we briefly demonstrate that this class {\em strictly} includes the class of QMS generators  that are self-adjoint for the GNS inner product associated to their invariant state $\sigma$. 

Let $\cP$ be a unital completely positive map such that $\cP^\dagger \sigma = \sigma$, and define 
$$\widetilde\cP(A) =  {\cM}^{-1}(\sigma^{1/2} \cP(A) \sigma^{1/2})\ .$$
Note that 
$$ {\cM}^{-1}(\sigma^{1/2} A \sigma^{1/2}) = \int_0^\infty \frac{\sigma^{1/2}}{t + \sigma} A \frac{\sigma^{1/2}}{t + \sigma} \dd t$$
defines a completely positive and unital operator, and hence $\widetilde\cP$ is completely positive and unital. Moreover, 
$$\widetilde\cP^\dagger(A) = \cP^\dagger( {\cM}^{-1}(\sigma^{1/2} A \sigma^{1/2}))\ ,$$
and hence $\widetilde\cP^\dagger \sigma = \sigma$.  Now observe that $\widetilde\cP$ is self-adjoint with respect to the BKM inner product if and only if 
$\cP$ is self-adjoint for the KMS inner product. In fact, for all $A, B \in \cA$,
$$\ip{\widetilde\cP A , B}_{L^2_{\rm BKM}(\sigma)} 
	= \tau[ \cM( \cM^{-1} (\sigma^{1/2} \cP(A^*) \sigma^{1/2})) B] = \ip{\cP A, B}_{L^2_{\rm KMS}(\sigma)}\ .$$

Next, it is clear that $\widetilde\cP$ commutes with $\Delta_\sigma$ if and only if $\cP$ commutes with $\Delta_\sigma$.  Since there exist completely positive unital maps $\cP$ satisfying $\cP^\dagger \sigma = \sigma$ that are KMS symmetric but do not commute with $\Delta_\sigma$, there exists 
completely positive unital maps $\widetilde\cP$ satisfying $\widetilde\cP^\dagger \sigma = \sigma$ that are BKM symmetric but do not commute with $\Delta_\sigma$.

Moreover, the class of completely positive unital maps $\widetilde\cP$ satisfying 
$\widetilde\cP^\dagger \sigma = \sigma$ that are BKM symmetric is in some sense larger 
than the class of   completely positive unital maps $\cP$ satisfying $\cP^\dagger \sigma = \sigma$ 
that are KMS symmetric: The map
$\cP\mapsto \widetilde\cP$  is invertible, but ${\cM}$ is not even positivity 
preserving, let alone completely positive, so that 
$$\cP(A) =  \sigma^{-1/2}  {\cM}(\widetilde\cP(A) )\sigma^{-1/2}  $$
need not be completely positive.  
It is therefore an interesting problem to characterize the QMS generators that are self-adjoint with respect to the 
BKM inner product.

\section{Beurling--Deny Theory in finite-dimensional von Neumann algebras}
\label{sec:Beurling--Deny}

In this section we recall some key results of Beurling--Deny theory that will be used in our construction of 
Dirichlet forms in Section \ref{sec:Dirichlet}. We present some proofs of known results for the reader's 
convenience, especially when available references suppose a familiarity with the Tomita--Takesaki theory. 
However, Theorem~\ref{kmssd}, which singles out the KMS inner product, is new. 

\subsection{Abstract Beurling--Deny Theory}
\label{sec:abstract-Beurling--Deny}

In this subsection,  $\cH$ always denotes a {\em real} Hilbert space with inner product $\langle \cdot,\cdot\rangle$. 
Let $\cone$ be a cone in $\cH$.  That is, $\cone$ is a convex subset of $\cH$ such that if 
$\phi\in \cone$, then $\lambda \phi\in \cone$ for all $\lambda>0$.   The cone $\cone$ is {\em pointed} in case $\phi\in \cone$ 
and $-\phi\in \cone$ together imply that $\phi =0$. In particular, a subspace of $\cH$ is a cone, but it is not a pointed cone. 

\begin{definition}[Dual cone]  The {\em dual cone} $\cone^{\circ}$ of a cone $\cone$ is the set
\begin{equation}\label{dualcone}
\cone^\circ := \{ \psi\in \cH\ :\ \langle \psi,\phi\rangle\geq 0\quad{\rm for\ all}\ \phi\in \cone\ \}\ .
\end{equation}
A cone $\cone$ is {\em self-dual} in case $\cone^\circ = \cone$. 
\end{definition}  

Let $\cone$ be a non-empty self-dual cone in $\cH$, and take $\phi \in \cH$. 
Since $\cone$ is a non-empty 
closed, convex set, the Projection Lemma ensures the existence of $\Proj_{\cone}(\varphi)\in \cone$ such that
\begin{equation}\label{abd2}
\|\phi - \Proj_{\cone}(\varphi)\| < \| \phi - \psi\| \quad{\rm for\ all}\ \psi\in \cone,\  \psi \neq \Proj_{\cone}(\varphi)\ .
\end{equation}

\begin{theorem}[Decomposition Theorem]
\label{thm:Moreau} Let $\cone$ be a non-empty self-dual cone in $\cH$. Then for each $\phi\in \cH$, there exists a unique pair $\phi_+,\phi_-\in \cone$
such that 
\begin{equation}\label{abd1}
\phi = \phi_+ - \phi_-\quad{\rm and}\quad \langle \phi_+,\phi_-\rangle = 0 \ .
\end{equation}
In fact, $\phi_+ = \Proj_{\cone}(\phi)$ and  $\phi_- = \Proj_{\cone}(-\phi)$, where $\Proj_\cone$ denotes projection onto (the closed convex set) $\cone$.  
\end{theorem}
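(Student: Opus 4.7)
The plan is to follow the classical Moreau decomposition strategy, using the projection lemma (already invoked above to define $\Proj_\cone$) together with the hypothesis of self-duality. First I would set $\phi_+ := \Proj_\cone(\phi)$ and recall the variational characterization of projection onto the non-empty closed convex set $\cone$: $\phi_+$ is the unique element of $\cone$ satisfying
\begin{equation*}
\langle \phi - \phi_+, \psi - \phi_+\rangle \leq 0 \quad \text{for all } \psi \in \cone.
\end{equation*}
This is derived in standard fashion by expanding $\|\phi - ((1-t)\phi_+ + t\psi)\|^2$ for $\psi \in \cone$, $t \in [0,1]$, and differentiating at $t = 0$.

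The key point is then to exploit the cone structure to turn this inequality into an equality on the ``diagonal.'' Testing the variational inequality with $\psi = 2\phi_+ \in \cone$ and with $\psi = 0 \in \cone$ gives the two one-sided inequalities $\langle \phi - \phi_+, \phi_+\rangle \leq 0$ and $\langle \phi - \phi_+, -\phi_+\rangle \leq 0$, hence
\begin{equation*}
\langle \phi - \phi_+, \phi_+\rangle = 0.
\end{equation*}
Subtracting this from the variational inequality yields $\langle \phi - \phi_+, \psi\rangle \leq 0$ for all $\psi \in \cone$, i.e., $\phi_+ - \phi \in \cone^\circ$. Here is where I invoke self-duality: $\cone^\circ = \cone$, so setting $\phi_- := \phi_+ - \phi$ places $\phi_-$ in $\cone$. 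The decomposition $\phi = \phi_+ - \phi_-$ then holds by construction, and $\langle \phi_+, \phi_-\rangle = \langle \phi_+, \phi_+ - \phi \rangle = 0$ by the equality established above.

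For uniqueness and for the characterization $\phi_- = \Proj_\cone(-\phi)$, I would argue in reverse: suppose $\phi = \phi_+ - \phi_-$ with $\phi_\pm \in \cone$ and $\langle \phi_+, \phi_-\rangle = 0$. For any $\psi \in \cone$,
\begin{equation*}
\langle \phi - \phi_+, \psi - \phi_+\rangle = -\langle \phi_-, \psi\rangle + \langle \phi_-, \phi_+\rangle = -\langle \phi_-, \psi\rangle \leq 0,
\end{equation*}
where the last inequality uses $\phi_- \in \cone = \cone^\circ$ and $\psi \in \cone$. By the uniqueness in the variational characterization of the projection, $\phi_+ = \Proj_\cone(\phi)$, and a symmetric argument (writing $-\phi = \phi_- - \phi_+$) gives $\phi_- = \Proj_\cone(-\phi)$. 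I expect the one subtle point — and the only place self-duality is really used — to be the step showing $\phi_+ - \phi \in \cone$; everything else is convex geometry in a real Hilbert space and should be essentially automatic once the projection variational inequality is written down.
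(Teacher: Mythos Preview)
Your proof is correct and follows essentially the same approach as the paper: define $\phi_+$ as the projection onto $\cone$, use the variational properties of the projection together with the cone structure to show $\phi_+ - \phi \in \cone^\circ = \cone$ and $\langle \phi_+, \phi_-\rangle = 0$. The only minor difference is packaging: the paper expands squared distances directly (testing with $\phi_+ + \epsilon\psi$ and $(1+\epsilon)\phi_+$) and then cites Moreau for uniqueness and the identification $\phi_- = \Proj_\cone(-\phi)$, whereas you use the equivalent variational-inequality form of the projection and give a direct, self-contained argument for uniqueness.
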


\begin{proof}  Define $\phi_+ := \Proj_{\cone}(\phi)$.  Then define $-\phi_- := \phi - \phi_+$. 
We claim that $\phi_-\in \cone$. Indeed, for any $\psi\in \cone$ and any $\epsilon>0$, $\phi_+  + \epsilon \psi\in \cone$, and hence,
\begin{eqnarray*}
\|\phi_-\|^2
  = \|\phi - \phi_+\|^2 
   < \|\phi - (\phi_+ +\epsilon\psi)\|^2
= \|\phi_-\|^2 + 2\epsilon\langle \phi_-,\psi\rangle + \epsilon^2\|\psi\|^2\ .
\end{eqnarray*}
Therefore, $\langle \phi_-,\psi\rangle\geq 0$ for all $\psi\in \cone$. Since $\cone$ is self-dual, the claim follows. 

To see that $\phi_+$ and $\phi_-$ are orthogonal, let $\epsilon\in (-1,1)$, so that $(1+\epsilon)\phi_+ \in \cone$.  It follows that
$
\|\phi_-\|^2
 = \|\phi- \phi_+\|^2 \leq \|\phi - (1+\epsilon)\phi_+\|^2
 = \|\phi_-\|^2 + 2\epsilon\langle \phi_-,\phi_+\rangle + \epsilon^2 \|\phi_+\|^2$
which yields a contradiction for negative  $\epsilon$ sufficiently close to zero, unless $\langle \phi_-,\phi_+\rangle = 0$. This proves existence of the decomposition.  Now the fact that $\phi_- =  \Proj_{\cone}(-\phi)$ follows from a theorem of Moreau \cite{Moreau-1962}, as does the uniqueness of the decomposition, though both points can be proved directly by variations on the arguments just provided. 
\end{proof}

\begin{definition} Let $\cH$ be a real Hilbert space with a non-empty self-dual cone $\cone$. For $\phi$ in $\cH$, define $\phi_+$ and $\phi_-$ as in Theorem \ref{thm:Moreau}. 
Then $\phi_+$ is the {\em positive part of} $\phi$, $\phi_-$ is the {\em negative part of} $\phi$, and $|\phi|:= \phi_+ + \phi_-$ is the {\em absolute value of} $\phi$. If $\phi_- = 0$, we write $\phi\geq 0$.
\end{definition}

We next recall some elements of the abstract theory of symmetric Dirichlet forms.
A \emph{bilinear form} on a real Hilbert space $\cH$ is a bilinear mapping $\sE: \cD \times \cD \to \R$ where $\cD \subseteq \cH$ is a linear subspace (called the \emph{domain} of $\sE$).
We say that $\sE$ is 
\emph{non-negative} if $\sE(\phi,\phi)\geq 0$ for all $\phi \in \cD$; 
\emph{symmetric} if $\sE(\phi,\psi) = \sE(\psi,\phi)$ for all $\psi, \psi \in \cD$;
\emph{closed} if $\cD$ is complete when endowed with the norm $\| \phi \|_\sE = (\| \phi \|^2 + \sE(\phi,\phi))^{1/2}$;
and \emph{densely defined} if $\cD$ is dense in $\cH$.

\begin{definition}[Dirichlet form]  Let $\cH$ be a real Hilbert space  with a non-empty self-dual cone $\cone$. A non-negative, symmetric, closed bilinear form $\sE$ on $\cH$ with dense domain $\mathcal{D}$  is a {\em Dirichlet form} in case $|\phi|  \in {\mathcal D}$ for all $\phi \in {\mathcal D}$ , and
\begin{equation}\label{abd21}
\sE(|\phi|,|\phi|) \leq \sE(\phi,\phi) \ ,
\end{equation}
or equivalently, if for all $\phi\in \cD$, 
\begin{equation}\label{abd21b}
\sE(\phi_+,\phi_-) \leq 0 \ .
\end{equation}
\end{definition}

To see the equivalence of \eqref{abd21} and \eqref{abd21b}, note that
$$\sE(|\phi|,|\phi|) - \sE(\phi,\phi)  =4\sE(\phi_+,\phi_-)\ .$$

Given a non-negative, symmetric, closed bilinear form $\sE$, the \emph{operator $\cL : \cD_\cL \subseteq \cH \to \cH$ associated to $\sE$} is defined by
\begin{align*}
\cD_{\cL} &:= \{ \psi \in \cD  \ | \ \exists \xi \in \cH :  \sE(\phi,\psi) = -\ip{\phi, \xi} \ \ \forall \phi \in \cD \} \ , \qquad
\cL \psi := \xi \ .
\end{align*}
This operator is well-defined since $\cD_\cL$ is dense. Moreover, $\cL$ is non-positive and self-adjoint. 

The following abstract result by Ouhabaz \cite{Ouha96} characterizes the invariance of closed convex sets under the associated semigroup (in a more general setting that includes nonsymmetric Dirichlet forms).

\begin{theorem}[Ouhabaz' Theorem]
\label{thm:Ouhabaz}
Let $\cH$ be a real Hilbert space, and let $\sE$ be a non-negative, symmetric, closed bilinear form with domain $\cD$ and associated operator $\cL$.  
Let $\cC \subseteq \cH$ be closed and convex. 
Then, the following assertions are equivalent: \begin{enumerate}
\item $e^{t\cL} \phi \in \cC$ for all $\phi \in \cC$ and all $t \geq 0$;
\item $\Proj_\cC \phi \in \cD$ and $\sE(\Proj_\cC \phi, \phi - \Proj_\cC \phi) \leq 0$ for all $\phi \in \cD$.
\end{enumerate}
\end{theorem}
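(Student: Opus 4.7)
The standard approach goes through the Yosida resolvent $J_t := (I - t\cL)^{-1}$, characterized variationally as the unique minimizer over $\cD$ of the strictly convex functional
\[
\Phi_t^\phi(\psi) := \tfrac{1}{2t}\|\psi - \phi\|^2 + \tfrac{1}{2}\sE(\psi,\psi),
\]
equivalently by the Euler--Lagrange identity
\[
\tfrac{1}{t}\langle J_t\phi - \phi, \eta\rangle + \sE(J_t\phi, \eta) = 0 \quad \forall \eta \in \cD.
\]
By the Hille--Yosida exponential formula $e^{t\cL}\phi = \lim_{n\to\infty} J_{t/n}^n\phi$ and the Laplace representation $J_t\phi = \int_0^\infty e^{-s} e^{st\cL}\phi \dd s$, the closedness and convexity of $\cC$ reduce assertion (1) to the static statement that $J_t \cC \subseteq \cC$ for all $t > 0$.

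For (2) $\Rightarrow$ (1), I would fix $\phi \in \cC$ and set $\psi := J_t\phi$, $\psi_0 := \Proj_\cC\psi \in \cD$, where membership in $\cD$ is given by (2). Testing the Euler--Lagrange equation with $\eta := \psi - \psi_0 \in \cD$, decomposing $\sE(\psi, \psi - \psi_0) = \sE(\psi_0, \psi - \psi_0) + \sE(\psi - \psi_0, \psi - \psi_0)$, and using the metric-projection inequality $\langle \psi - \phi, \psi - \psi_0\rangle \geq \|\psi - \psi_0\|^2$ (which holds because $\phi \in \cC$), the non-negativity of $\sE$ combined with the hypothesis on $\sE(\psi_0, \psi - \psi_0)$ from (2) forces $\|\psi - \psi_0\|^2 = 0$, so $\psi \in \cC$.

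For (1) $\Rightarrow$ (2), the Laplace representation immediately gives $J_t \cC \subseteq \cC$. Given $\phi \in \cD$, set $\phi_0 := \Proj_\cC \phi$; by invariance, $e^{t\cL}\phi_0 \in \cC$ for $t \geq 0$, so the scalar function $t \mapsto \|e^{t\cL}\phi_0 - \phi\|^2$ attains its minimum at $t=0$ by the defining property of the projection. Its right derivative at $0^+$ therefore has a definite sign, and the computation
\[
\tfrac{d}{dt}\Big|_{t=0^+}\|e^{t\cL}\phi_0 - \phi\|^2 = 2\langle \phi_0 - \phi, \cL \phi_0\rangle = 2\sE(\phi_0, \phi - \phi_0),
\]
valid provided $\phi_0 \in \cD_\cL$, yields the inequality claimed in (2).

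The main obstacle is the assertion $\Proj_\cC \phi \in \cD$ in general Hilbert spaces, since the differentiation above actually requires $\phi_0 \in \cD_\cL$ and is not a priori justified for $\phi_0 \in \cH$ alone. The standard remedy is a Moreau--Yosida regularization: one works with the approximants $\phi^\eps := \Proj_\cC J_\eps \phi \in \cD \cap \cC$, derives uniform $\sE$-bounds from the Euler--Lagrange identity combined with the already-established resolvent invariance, and extracts a weak limit in $\cD$, identifying it with $\Proj_\cC \phi$ via continuity of the projection. In the finite-dimensional setting to which this paper ultimately applies the result, one has $\cD = \cH$, all differentiations are justified without further argument, and the sole content is the variational inequality verified in the preceding paragraph.
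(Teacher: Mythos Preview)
The paper does not prove this result; it is quoted from Ouhabaz \cite{Ouha96}. Your argument is essentially the standard one from that source, via resolvent invariance and the Hille--Yosida and Laplace representations, and your acknowledgment that the domain assertion $\Proj_\cC\phi\in\cD$ requires an approximation step in infinite dimensions (while being vacuous in the finite-dimensional setting the paper actually uses) is appropriate.

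That said, your own computations expose a sign error in the paper's statement of condition (2): the inequality should read $\sE(\Proj_\cC\phi,\phi-\Proj_\cC\phi)\ge 0$, not $\le 0$. In your $(1)\Rightarrow(2)$ paragraph the function $t\mapsto\|e^{t\cL}\phi_0-\phi\|^2$ is minimised at $t=0$, so its right derivative there is \emph{non-negative}; your identity then yields $\sE(\phi_0,\phi-\phi_0)\ge 0$, the opposite of what is printed. (This corrected sign is also what is needed for Corollary~\ref{cor:BD} to follow: with $\cC=\cone$ one has $\phi-\Proj_\cone\phi=-\phi_-$, so $\ge 0$ becomes exactly $\sE(\phi_+,\phi_-)\le 0$, which is \eqref{abd21b}.)

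The same issue is what makes your $(2)\Rightarrow(1)$ argument close. With the correct hypothesis $\sE(\psi_0,\psi-\psi_0)\ge 0$, the three terms in the tested Euler--Lagrange identity
\[
0=\tfrac{1}{t}\langle\psi-\phi,\psi-\psi_0\rangle+\sE(\psi_0,\psi-\psi_0)+\sE(\psi-\psi_0,\psi-\psi_0)
\]
are each non-negative (the first by the projection inequality you state, the second by hypothesis, the third by non-negativity of $\sE$), so each must vanish and $\psi=\psi_0\in\cC$. With the paper's printed $\le 0$ the middle term has the wrong sign and nothing forces $\|\psi-\psi_0\|=0$. Your sketch is therefore correct for the \emph{correct} statement, but you should have noticed that the sign you derive in $(1)\Rightarrow(2)$ contradicts the printed inequality rather than confirming it.
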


Combining Theorem \ref{thm:Moreau} and Theorem \ref{thm:Ouhabaz} we obtain the following result.

\begin{corollary}[Abstract Beurling--Deny Theorem] 
\label{cor:BD}
Let $\cH$ be a real Hilbert space with a non-empty self-dual cone $\cone$. Let $\sE$ be a non-negative, symmetric, closed bilinear form with domain $\cD$. Then, $\sE$ is a Dirichlet form if and only if  $e^{t\cL}\phi \geq 0$ for all $t \geq 0$ and all $\phi\geq 0$.
\end{corollary}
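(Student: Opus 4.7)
The plan is to apply Ouhabaz's Theorem (Theorem~\ref{thm:Ouhabaz}) with the closed convex set $\cC = \cone$. The self-dual cone $\cone$ is convex by definition and is automatically closed, since $\cone = \cone^\circ$ equals the intersection of the closed half-spaces $\{\psi \in \cH : \ip{\psi,\phi} \geq 0\}$ indexed by $\phi \in \cone$. With this choice, condition (1) of Theorem~\ref{thm:Ouhabaz} is exactly the statement that $e^{t\cL}\phi \geq 0$ for all $\phi \geq 0$ and all $t \geq 0$, because $\phi \geq 0$ is by definition $\phi \in \cone$.

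The crucial step is to rewrite condition (2) of Ouhabaz using the decomposition $\phi = \phi_+ - \phi_-$. The Decomposition Theorem (Theorem~\ref{thm:Moreau}) identifies $\Proj_\cone(\phi) = \phi_+$ for every $\phi \in \cH$, so that $\phi - \Proj_\cone(\phi) = -\phi_-$. Substituting into $\sE(\Proj_\cone\phi, \phi - \Proj_\cone\phi)$, Ouhabaz's inequality becomes, up to sign, a sign condition on $\sE(\phi_+,\phi_-)$; by the identity $\sE(|\phi|,|\phi|) - \sE(\phi,\phi) = 4\sE(\phi_+,\phi_-)$ recorded after \eqref{abd21b}, this is equivalent to the contractivity $\sE(|\phi|,|\phi|) \leq \sE(\phi,\phi)$ in the definition of a Dirichlet form. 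Moreover, the domain condition $\Proj_\cone\phi = \phi_+ \in \cD$ is equivalent, given $\phi \in \cD$, to $|\phi| \in \cD$, since $|\phi| = 2\phi_+ - \phi$.

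Combining these two identifications, conditions (1) and (2) of Theorem~\ref{thm:Ouhabaz} translate respectively into positivity preservation of the semigroup and the Dirichlet form property of $\sE$, which is precisely the corollary. No new analytic work is required beyond Theorems~\ref{thm:Moreau} and \ref{thm:Ouhabaz} themselves; the corollary is a bookkeeping consequence of feeding the explicit form of the projection onto a self-dual cone into the abstract invariance criterion. The only point requiring care, and arguably the main obstacle if one is hasty, is the sign bookkeeping around $\phi - \Proj_\cone\phi = -\phi_-$ and the use of the identity above to pass between $\sE(\phi_+,\phi_-)$ and the contraction inequality $\sE(|\phi|,|\phi|) \leq \sE(\phi,\phi)$.
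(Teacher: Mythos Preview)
Your approach is exactly the paper's: its entire proof is the sentence ``Combining Theorem~\ref{thm:Moreau} and Theorem~\ref{thm:Ouhabaz} we obtain the following result,'' and you have correctly unpacked what that combination amounts to. One word of caution on the sign bookkeeping you flag but do not fully execute: substituting $\Proj_\cone\phi=\phi_+$ and $\phi-\Proj_\cone\phi=-\phi_-$ into the inequality $\sE(\Proj_\cC\phi,\phi-\Proj_\cC\phi)\leq 0$ \emph{as stated} in Theorem~\ref{thm:Ouhabaz} yields $\sE(\phi_+,\phi_-)\geq 0$, the opposite of \eqref{abd21b}. This is a sign typo in the paper's formulation of Ouhabaz's criterion (the original reads $\geq 0$), after which your computation goes through cleanly; your ``up to sign'' glosses over precisely this point.
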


\subsection{Completely Dirichlet forms} 
\label{sec:completely-Dirichlet}

Let $\sE$ be a Dirichlet form on $\big(\cA, \ip{\cdot, \cdot}_{L^2_{\rm KMS}(\sigma)}\big)$ with the KMS inner product specified by a faithful state 
$\sigma$. 
Here, the notion of Dirichlet form is understood with respect to the self-dual cone consisting of all positive semidefinite matrices belonging to $\cA$; see Lemma \ref{lem:pos-selfdual-A} below.
Let $\cP_t = e^{t\cL}$ where $\cL$ is the semigroup generator associated to $\sE$. 
Recall that the Dirichlet form $\sE$ is said to be {\em completely Dirichlet} in case for each $t$, $\cP_t$ is completely positive. 

The condition that $\sE$ be completely Dirichlet may be expressed in terms of $\sE$ itself, permitting one to check the property directly from a specification of $\sE$. 

For $m\in \N$, let $E_{ij}$ denote the matrix whose $(i,j)$-entry is $1$, with all other entries being $0$. Alternatively, $E_{ij}$ represents the linear transformation taking ${\bf e}_j$ to ${\bf e}_i$, while annihilating ${\bf e}_k$ for $k\neq j$. (Here $\{{\bf e}_1,\dots,{\bf e}_m\}$ is the standard orthonormal basis of $\C^m$.)  It follows that $E_{ij}E_{k\ell} = \delta_{jk}E_{i\ell}$.
The general element of $\cA \otimes \M_m(\C)$ can be written as
\begin{equation}\label{comdir1}
{\bf A} = \sum_{i,j=1}^m A_{ij}\otimes E_{ij}
\end{equation}
where each $A_{ij}\in \cA$. 
With $\tau_m$ denoting the normalized trace on $\M_m(\C)$, the state $\sigma\otimes \tau_m$ on $\cA \otimes \M_m(\C)$ is defined by
\begin{align*}
	\sigma\otimes \tau_m({\bf A}) := 
	 \frac1m 
	 \sum_{j=1}^m \sigma(A_{jj}) \ ,
\end{align*}
where ${\bf A}$ is given by \eqref{comdir1}. 
The corresponding KMS inner product on $\cA \otimes \M_m(\C)$ is denoted $\langle \cdot,\cdot\rangle_{L^2_{\rm KMS}(\sigma\otimes \tau_m)}$.
One readily checks that for ${\bf A},{\bf B}\in \cA \otimes \M_m(\C)$,
$$\langle {\bf B},{\bf A}\rangle_{L^2_{\rm KMS}(\sigma\otimes \tau_m)} 
	= 
	 \frac1m 
	\sum_{i,j=1}^m 
	\ip{ B_{ij}, A_{ij} }_{L^2_{\rm KMS}(\sigma)} \ .$$
Define $\cP_t^{(m)}$ on $\cA \otimes \M_m(\C)$ by
\begin{equation}\label{comdir2}
\cP_t^{(m)} {\bf A} = \sum_{i,j=1}^m \cP_t A_{ij}\otimes E_{ij}
\end{equation}
where ${\bf A}$ is given by \eqref{comdir1}. 
One then computes
\begin{align*}
 - \frac{{\rm d}}{{\rm d}t} 
 	\langle {\bf A},\cP_t^{(m)} {\bf A} \rangle_{L^2_{\rm KMS}	(\sigma\otimes \tau_m)}\bigg|_{t=0} 
  = 
  -  	 \frac1m 
	\sum_{i,j=1}^m 
		\langle A_{ij},
			 \cL A_{ij}\rangle_{L^2_{\rm KMS}(\sigma)} 
  =  
  	\frac1m 
   \sum_{i,j=1}^m\sE(A_{ij}, A_{ij})\ .
\end{align*}
Thus, we define $\sE^{(m)}$ on $\big(\cA \otimes \M_m(\C), \langle \cdot,\cdot\rangle_{L^2_{\rm KMS}(\sigma\otimes \tau_m)}\big)$ by 
\begin{equation}\label{comdir3}
\sE^{(m)} ({\bf A},{\bf A}) 
	= 
	  	 \frac1m 
\sum_{i,j=1}^m\sE(A_{ij}, A_{ij})
\end{equation}
where ${\bf A}$ is given by \eqref{comdir1}. In view of Corollary \ref{cor:BD}, $\sE$ is completely Dirichlet if and only if for each $m\in \N$, $\sE^{(m)}$ is Dirichlet. 

A QMS $(\cP_t)_t$ is not only completely positive; it also satisfies $\cP_t\one = \one$ for all $t$. This too may be expressed in terms of the Dirichlet form $\sE$: A Dirichlet form $\sE$ is {\em conservative} in case $\sE(A, \one) = 0$ for all $A \in \cA$, and one readily sees that this is equivalent to the condition that $\cP_t\one = \one$ for all $t$.

\subsection{Moreau decomposition with respect to the cone of positive matrices}
\label{sec:Moreau-decomposition}

Let $\H_n(\C)$ denote the set of self-adjoint $n \times n$ matrices, which contains a distinguished pointed cone $\cone$, namely the cone of positive semidefinite matrices $A$.
If we equip $\H_n(\C)$ with the Hilbert--Schmidt inner product $\langle X,Y\rangle = \Tr[X Y]$, then $\cone$ is self-dual: for $X\in \H_n(\C)$, $\langle X,A\rangle \geq 0$ for all $A\in \cone$ if and only if $\langle v,Xv\rangle \geq 0$ for all $v \in \C^n$, as one sees by considering rank one projections and using the spectral theorem. 

The next result characterizes the Moreau decomposition in $(\H_n(\C),\ip{\cdot,\cdot})$ in spectral terms. For $X\in \H_n(\C)$, there is the {\em spectral decomposition}  $X = X_{(+)} - X_{(-)}$ where
\begin{equation}\label{specde}
X_{(+)} = X\one_{(0,\infty)}(X)  \quad {\rm and}\quad   X_{(-)} = -X\one_{(-\infty,0)}(X) \ .
\end{equation}

\begin{theorem}[Moreau decomposition for  Hilbert--Schmidt] \label{HScase} Let $\cH$ be $\H_n(\C)$ equipped with the Hilbert--Schmidt inner product, and let $\cone$ be the cone of positive semidefinite matrices.
Then the spectral decomposition of $X\in \cH$ coincides with the decomposition of $X$ into its positive and negative parts with respect to $\cone$.
\end{theorem}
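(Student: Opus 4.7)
The plan is to invoke the uniqueness part of the Decomposition Theorem (Theorem~\ref{thm:Moreau}) and verify that the spectral parts $X_{(+)}$ and $X_{(-)}$ defined in \eqref{specde} satisfy the three characterizing properties of the positive/negative parts with respect to $\cone$. Namely, I would check:
\begin{enumerate}
\item $X_{(+)}, X_{(-)} \in \cone$;
\item $X = X_{(+)} - X_{(-)}$;
\item $\langle X_{(+)}, X_{(-)}\rangle = 0$ in the Hilbert--Schmidt inner product.
\end{enumerate}
Once these three are established, Theorem~\ref{thm:Moreau} (applied in the real Hilbert space $\H_n(\C)$ with self-dual cone $\cone$, which we already observed is self-dual via the rank-one-projection argument) forces $X_{(+)} = \Proj_\cone(X)$ and $X_{(-)} = \Proj_\cone(-X)$.

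First I would diagonalize $X$ via the spectral theorem: $X = \sum_k \lambda_k P_k$ with real eigenvalues $\lambda_k$ and mutually orthogonal spectral projections $P_k$ summing to the identity. Then by definition $X_{(+)} = \sum_{\lambda_k > 0} \lambda_k P_k$ and $X_{(-)} = -\sum_{\lambda_k < 0} \lambda_k P_k = \sum_{\lambda_k < 0} |\lambda_k| P_k$. Both are non-negative linear combinations of orthogonal projections, hence positive semidefinite, giving item (1). Item (2) is the identity $X = \sum_k \lambda_k P_k = X_{(+)} - X_{(-)}$, which is immediate from the spectral decomposition together with $\lambda_k = \lambda_k \one_{(0,\infty)}(\lambda_k) - (-\lambda_k) \one_{(-\infty,0)}(\lambda_k)$ (noting that the $\lambda_k = 0$ eigenspaces contribute nothing to either term).

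For item~(3), the functions $t \mapsto t \one_{(0,\infty)}(t)$ and $t \mapsto -t \one_{(-\infty,0)}(t)$ have disjoint supports, so by the functional calculus $X_{(+)} X_{(-)} = 0$ as an operator identity; taking the trace gives $\langle X_{(+)}, X_{(-)}\rangle = \Tr[X_{(+)} X_{(-)}] = 0$. This is the only nontrivial point, and it is not really hard---it just amounts to the observation that $P_k P_\ell = \delta_{k\ell} P_k$ so that the cross terms in $X_{(+)} X_{(-)} = \sum_{\lambda_k > 0,\, \lambda_\ell < 0} \lambda_k |\lambda_\ell| P_k P_\ell$ all vanish. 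With (1)--(3) in hand, the uniqueness clause of Theorem~\ref{thm:Moreau} concludes the proof.
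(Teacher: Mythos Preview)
Your proof is correct. It differs from the paper's argument in direction: you start from the spectral parts $X_{(+)}, X_{(-)}$, verify they satisfy the three properties of the Moreau decomposition, and conclude by the uniqueness clause of Theorem~\ref{thm:Moreau}. The paper instead starts from the Moreau parts $X_+, X_-$ and shows they must be spectral: for any $v$ in the range of $X_+$ one perturbs $X_+ \mapsto X_+ - \epsilon |v\rangle\langle v|$ (which stays in $\cone$ for small $\epsilon$), and the minimality of $\|X - X_+\|^2$ forces $\langle v, X_- v\rangle \leq 0$, hence $=0$ since $X_- \geq 0$; this gives $X_- X_+ = 0$, so $X_+$ and $X_-$ commute with $X$ and are built from its spectral projectors. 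Your route is shorter and more transparent, since the heavy lifting (uniqueness) is already done in Theorem~\ref{thm:Moreau}; the paper's route is a bit more self-contained in that it re-derives the key orthogonality $X_+ X_- = 0$ directly from the projection property rather than quoting Moreau's uniqueness.
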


\begin{proof}
Let $X \in \H_n(\C)$, and let $X = X_+-X_-$ be the decomposition determined by $\cone$.  
Then, for $v$ in the range of $X_+$, we have
$X_+ - \epsilon|v\rangle\langle v| \in \cone$ for all sufficiently small $\epsilon > 0$. Therefore, 
\begin{align*}
\|X_-\|^2
 =    \| X - X_+ \|^2 
 \leq \|X - (X_+ - \epsilon|v\rangle\langle v|)\|^2 
 =   \|X_-\|^2  -2\epsilon \langle v, X_-v\rangle   + \epsilon^2\|v\|^2\ .
\end{align*}
It follows that $\langle v, X_-v\rangle \leq 0$, but since $X_-\in \cone$, this yields $\langle v, X_-v\rangle = 0$.  
Hence the range of $X_+$ lies in the null-space of $X_-$, so that $X_-X_+ =0$. Taking the adjoint, we find that $X_+ X_- =0$. Therefore, $X_-$ and $X_+$ commute with each other, 
and hence with $X$. 
Thus, the projectors onto the ranges of $X_+$ and $X_-$ are both spectral projectors of $X$. 
Since $X = X_+ - X_-$ it follows that $X_+ = X_{(+)}$ and $X_- = X_{(-)}$.
\end{proof}

The situation is more interesting for other inner products on $\H_n(\C)$. Let $\sigma$ be
an invertible density matrix. For $s\in [0,1]$, let $\langle \cdot, \cdot\rangle_s$ be the inner product on $\M_n(\C)$ given by $\langle A, B\rangle_s = \Tr[ A^*\sigma^s B \sigma^{1-s}]$. 

\begin{theorem}\label{kmssd}  
Let $\sigma$ be an invertible $n\times n$ density matrix that is not a multiple of the identity. Then the cone $\cone$ of positive matrices in $\H_n(\C)$ is self-dual with respect to the inner product
$\langle \cdot, \cdot\rangle_s$ determined by $\sigma$ if and only if $s=\frac12$.
\end{theorem}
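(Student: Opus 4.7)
The plan is to reduce self-duality of $\cone$ with respect to $\langle\cdot,\cdot\rangle_s$ to whether a single explicit linear operator preserves $\cone$. For $A, B \in \H_n(\C)$, Hermiticity of $A,B,\sigma$ and cyclicity of the trace give $\overline{\langle A,B\rangle_s} = \langle A, B\rangle_{1-s}$, so the real symmetric bilinear form on $\H_n(\C)$ induced by $\langle\cdot,\cdot\rangle_s$ is
\[
\mathrm{Re}\,\langle A,B\rangle_s \;=\; \tfrac12\bigl(\langle A,B\rangle_s + \langle A,B\rangle_{1-s}\bigr) \;=\; \Tr\bigl[A\,T_s(B)\bigr],
\]
where $T_s(B) := \tfrac12\bigl(\sigma^{s}B\sigma^{1-s}+\sigma^{1-s}B\sigma^{s}\bigr)$. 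Diagonalizing $\sigma = \sum_i \lambda_i |e_i\rangle\langle e_i|$ shows that $T_s$ acts on the matrix units $|e_i\rangle\langle e_j|$ by the strictly positive scalar $\mu_{ij}^{(s)} = \tfrac12(\lambda_i^s \lambda_j^{1-s} + \lambda_i^{1-s}\lambda_j^s)$, so $T_s$ is a self-adjoint linear bijection of $\bigl(\H_n(\C),\langle\cdot,\cdot\rangle_{\mathrm{HS}}\bigr)$, and in particular a homeomorphism.

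The first step is the identification $\cone^\circ_s = T_s(\cone)^{\circ,\mathrm{HS}}$, immediate from the display above. Since $T_s$ is a homeomorphism, $T_s(\cone)$ is a closed convex cone, and since $\cone$ is HS-self-dual (Theorem~\ref{HScase}), the bipolar theorem (duality is an inclusion-reversing involution on closed convex cones in a finite-dimensional Hilbert space) yields
\[
\cone^\circ_s = \cone \iff T_s(\cone) = \cone.
\]

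It thus suffices to show $T_s(\cone) = \cone$ iff $s = \tfrac12$. The ``if'' direction is trivial: $T_{1/2}(B) = \sigma^{1/2}B\sigma^{1/2}$ is a congruence bijection of $\cone$. For ``only if'', assume $s \neq \tfrac12$ and pick eigenvectors $e_1, e_2$ of $\sigma$ with $\sigma e_i = \lambda_i e_i$ and $\lambda_1 \neq \lambda_2$, which exist by hypothesis. With $A := |e_1+e_2\rangle\langle e_1+e_2| \in \cone$, a direct calculation shows that on the $\{e_1,e_2\}$-block,
\[
T_s(A) = \begin{pmatrix} \lambda_1 & c_s \\ c_s & \lambda_2 \end{pmatrix}, \qquad c_s := \tfrac12\bigl(\lambda_1^s \lambda_2^{1-s} + \lambda_1^{1-s} \lambda_2^s\bigr).
\]
The positive numbers $\lambda_1^s \lambda_2^{1-s}$ and $\lambda_1^{1-s} \lambda_2^s$ have product $\lambda_1\lambda_2$ and ratio $(\lambda_1/\lambda_2)^{2s-1} \neq 1$, so strict AM--GM gives $c_s > \sqrt{\lambda_1 \lambda_2}$. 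Hence the block has determinant $\lambda_1\lambda_2 - c_s^2 < 0$, so $T_s(A) \notin \cone$, proving $T_s(\cone) \neq \cone$.

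The main obstacle is pinning down the correct real-inner-product interpretation of $\langle\cdot,\cdot\rangle_s$ on $\H_n(\C)$ (the form is not real-valued there for $s \neq 1/2$, and the symmetrization is encoded by $T_s$), and the ensuing reduction of the question to whether $T_s$ preserves $\cone$. Once this is in place, the remaining work is the explicit block computation above, whose sole substantive input is the strict AM--GM inequality.
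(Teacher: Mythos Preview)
Your proof is correct, but the route differs from the paper's. The paper does not symmetrize: it reads $\langle X,A\rangle_s \geq 0$ literally (i.e., real and nonnegative), so that $X\in\cone^\circ_s$ iff $\sigma^{1-s}X\sigma^s\in\cone$. For $s\neq\tfrac12$ the key observation is that membership in $\cone$ forces Hermiticity, whence $\sigma^{1-s}X\sigma^s=\sigma^{s}X\sigma^{1-s}$, i.e.\ $[\sigma^{1-2s},X]=0$; taking $X=|v\rangle\langle v|$ with $v$ not an eigenvector of $\sigma$ then gives $X\in\cone\setminus\cone^\circ_s$ in one line. Your approach instead passes to the real form $\Tr[A\,T_s(B)]$, reduces self-duality to $T_s(\cone)=\cone$ via bipolarity, and exhibits a concrete $2\times 2$ counterexample through AM--GM. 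The paper's commutator argument is shorter and avoids any block computation; your version has the merit of making the real Hilbert space structure explicit (which the paper's abstract Beurling--Deny setup formally requires but its proof glosses over), and the determinant-plus-AM--GM witness is an elementary and transparent alternative.
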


\begin{proof}  Let $X\in \H_n(\C)$ and $A\in \cone$. 
Then
$\langle X,A\rangle_s =  \Tr[ X\sigma^s A \sigma^{1-s}] = \Tr[ (\sigma^{1-s} X\sigma^s) A ]$.
Therefore, $\langle X,A\rangle_s \geq 0$ for all $A\in \cone$ if and only if $\sigma^{1-s} X\sigma^s \in \cone$.  If $\sigma^{1-s} X\sigma^s \in \cone$, then $\sigma^{1-s} X\sigma^s$ is self-adjoint, and hence
$\sigma^{1-s} X\sigma^s = \sigma^{s} X\sigma^{1-s}$, or, what is the same, 
$[\sigma^{1-2s},X] = 0$.
Let $X := |v \rangle \langle v|$ with $v$ chosen {\em not} to be an eigenvector of $\sigma$. Then for $s\neq \frac12$,  $[\sigma^{1-2s},X] \neq 0$.
Therefore, $X\in \cone$, but $X\notin \cone^\circ$.  Hence, $\cone$ is not self-dual  when $\cH$ is equipped with the inner product $\langle \cdot, \cdot\rangle_s$ for $s\neq \frac12$. 

One the other hand, 
$$\langle X,A\rangle_{1/2} =  \Tr[ X\sigma^{1/2} A \sigma^{1/2}] = 
\Tr[ (\sigma^{1/4} X\sigma^{1/4}) (\sigma^{1/4}A\sigma^{1/4}) ]\ . $$
Since $\sigma$ is invertible, as $A$ ranges over $\cone$, $\sigma^{1/4}A\sigma^{1/4}$ 
ranges over $\cone$,  and so $\langle X,A\rangle_{1/2}\geq 0$ for all $A\in \cone$  if and only if
$\sigma^{1/4} X\sigma^{1/4} \in \cone$. Again, since $\sigma$ is invertible, this is the case if and only if $X\in \cone$. Hence, $\cone$ is self-dual for $\langle \cdot, \cdot\rangle_{1/2}$, the KMS inner product. 
\end{proof}

The Moreau decomposition for the KMS scalar product can easily be obtained from Theorem \ref{HScase}
by a unitary transformation.

\begin{theorem}[Moreau decomposition for KMS] \label{closestMN} Let $\sigma$ be an invertible $n\times n$ density matrix and let $X\in \H_n(\C)$. Then, with respect to the KMS norm  on $\H_n(\C)$,
\begin{equation}\label{abd31} 
\| X -   \sigma^{-1/4}(\sigma^{1/4}X\sigma^{1/4})_{(+)} \sigma^{-1/4}\|_{L^2_{\rm KMS}(\sigma)} \leq \|X - A\|_{L^2_{\rm KMS}(\sigma)} 
\end{equation}
for all $A\in \cone$. Consequently, the positive part of $X$ in the decomposition according to $\cone$, $X_+$, is given by  
\begin{equation}\label{abd31b}
X_+ = \sigma^{-1/4}(\sigma^{1/4}X\sigma^{1/4})_{(+)} \sigma^{-1/4}\ .
\end{equation}
\end{theorem}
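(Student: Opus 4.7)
My plan is to reduce the KMS case to the Hilbert--Schmidt case (Theorem \ref{HScase}) via a congruence by $\sigma^{1/4}$. Define the $\R$-linear map $\Phi : \H_n(\C) \to \H_n(\C)$ by $\Phi(X) = \sigma^{1/4} X \sigma^{1/4}$. Because $\sigma$ is invertible and positive definite, $\sigma^{1/4}$ is invertible and positive definite as well, so $\Phi$ is a bijection of $\H_n(\C)$ onto itself, and it maps the cone $\cone$ of positive semidefinite matrices bijectively onto $\cone$: indeed, $A\geq 0$ if and only if $\sigma^{1/4} A \sigma^{1/4} \geq 0$, so $\Phi(\cone) = \cone$.

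Next I would verify that $\Phi$ is an isometry from $\big(\H_n(\C), \ip{\cdot,\cdot}_{L^2_{\rm KMS}(\sigma)}\big)$ to $\big(\H_n(\C), \ip{\cdot,\cdot}_{HS}\big)$. By cyclicity of the trace,
\begin{align*}
\ip{\Phi(X), \Phi(Y)}_{HS}
= \Tr\bigl[\sigma^{1/4} X \sigma^{1/2} Y \sigma^{1/4}\bigr]
= \Tr\bigl[X \sigma^{1/2} Y \sigma^{1/2}\bigr]
= \ip{X, Y}_{L^2_{\rm KMS}(\sigma)}
\end{align*}
for all $X, Y \in \H_n(\C)$. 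Consequently, for every $X \in \H_n(\C)$ and $A \in \cone$,
\begin{align*}
\| X - A \|_{L^2_{\rm KMS}(\sigma)} = \| \Phi(X) - \Phi(A) \|_{HS}.
\end{align*}

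Minimizing the left-hand side over $A\in \cone$ is therefore equivalent, via the bijection $\Phi|_{\cone}$, to minimizing $\|\Phi(X) - B\|_{HS}$ over $B\in \cone$. By Theorem \ref{HScase}, that HS-minimizer is uniquely given by the spectral positive part $(\Phi(X))_{(+)} = \bigl(\sigma^{1/4}X\sigma^{1/4}\bigr)_{(+)}$. Pulling back through $\Phi^{-1}$, the unique minimizer over $A\in \cone$ of $\|X-A\|_{L^2_{\rm KMS}(\sigma)}$ is
\begin{align*}
A_{*} = \Phi^{-1}\Bigl(\bigl(\sigma^{1/4} X \sigma^{1/4}\bigr)_{(+)}\Bigr)
= \sigma^{-1/4} \bigl(\sigma^{1/4} X \sigma^{1/4}\bigr)_{(+)} \sigma^{-1/4},
\end{align*}
which gives the inequality \eqref{abd31}. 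Since by Theorem \ref{thm:Moreau} the positive part $X_+$ in the decomposition associated to $\cone$ is precisely $\Proj_\cone(X)$, i.e.\ this unique KMS-minimizer, identity \eqref{abd31b} follows immediately.

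I do not anticipate any serious obstacle: the whole argument is a change of variables. The one point worth checking carefully is that $\Phi$ really is a cone isomorphism (which uses only that $\sigma^{1/4}$ is invertible and self-adjoint) and that $\Phi$ is a KMS-to-HS isometry, which is cyclicity of the trace; the heavy lifting is done by Theorem \ref{HScase}, which we invoke on $\Phi(X)$.
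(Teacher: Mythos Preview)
Your proof is correct and is essentially the same as the paper's: both define the congruence $\Phi(Y)=\sigma^{1/4}Y\sigma^{1/4}$, note that it is a bijective isometry from $(\H_n(\C),\ip{\cdot,\cdot}_{L^2_{\rm KMS}(\sigma)})$ to $(\H_n(\C),\ip{\cdot,\cdot}_{HS})$ mapping $\cone$ onto $\cone$, and then invoke Theorem~\ref{HScase} on $\Phi(X)$. Your write-up is slightly more explicit about the cone bijection and the pull-back through $\Phi^{-1}$, but the argument is identical in substance.
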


\begin{proof} The map $Y \mapsto \sigma^{1/4}Y\sigma^{1/4}$ is unitary from $\H_n(\C)$ equipped with the KMS inner product to $\H_n(\C)$ equipped with the Hilbert--Schmidt inner product. 
That is,
\begin{align*}
	\|X -A\|_{L^2_{\rm KMS}(\sigma)}^2 = \Tr[  \sigma^{1/4}X\sigma^{1/4} - \sigma^{1/4}A\sigma^{1/4}]^2
\end{align*}
for $X, A \in \H_n(\C)$.
By Theorem~\ref{HScase},
$\min\{ \Tr[  \sigma^{1/4}X\sigma^{1/4} - B]^2\ :\ B\in \cone\}$ is achieved at $B = (\sigma^{1/4}X\sigma^{1/4})_{(+)}$.
\end{proof}

We conclude the section by extending the results above to an arbitrary $*$-subalgebra $\cA$ of $\M_n(\C)$. Let $\sigma$ be an invertible $n\times n$ density matrix belonging to $\cA$.

\begin{lemma} \label{lem:pos-selfdual-A} Let $\cH$ be  $\cA_{h}$ equipped with the KMS inner product induced by $\sigma$, 
and let $\cone$ be the 
positive matrices in $\M_n(\C)$, and let $\cone_\cA := \cone\cap \cA$.   Then $\cone_\cA$ is 
self-dual in $\cH$.
\end{lemma}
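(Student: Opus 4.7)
The plan is to deduce self-duality of $\cone_\cA$ from the corresponding result for $\cone \subseteq \H_n(\C)$, namely Theorem~\ref{closestMN}, using the fact that a finite-dimensional $C^*$-algebra is closed under continuous functional calculus of self-adjoint elements.

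First I would verify the easy inclusion $\cone_\cA \subseteq \cone_\cA^\circ$: for any $X,A \in \cone_\cA$, cyclicity of $\Tr$ gives
\begin{equation*}
\langle X,A\rangle_{L^2_{\rm KMS}(\sigma)} = \Tr[X\sigma^{1/2}A\sigma^{1/2}] = \Tr\bigl[(\sigma^{1/4}X\sigma^{1/4})(\sigma^{1/4}A\sigma^{1/4})\bigr] \geq 0,
\end{equation*}
since each of the two factors inside the trace is positive semidefinite in $\M_n(\C)$.

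For the reverse inclusion $\cone_\cA^\circ \subseteq \cone_\cA$, fix $X \in \cone_\cA^\circ \subseteq \cA_h$. Since $\sigma \in \cA$ is positive, $\sigma^{1/4} \in \cA$ (e.g.\ by the continuous functional calculus, or simply because it is a polynomial in $\sigma$), so $\sigma^{1/4}X\sigma^{1/4} \in \cA_h$. Applying Theorem~\ref{closestMN} in the ambient algebra $\M_n(\C)$, the Moreau decomposition of $X$ with respect to $\cone$ in the KMS inner product is $X = X_+ - X_-$ with
\begin{equation*}
X_\pm = \sigma^{-1/4}\bigl(\sigma^{1/4}X\sigma^{1/4}\bigr)_{(\pm)}\sigma^{-1/4}.
\end{equation*}
The key observation is that $(\sigma^{1/4}X\sigma^{1/4})_{(\pm)}$ lies in $\cA$, because $\cA$ is closed under the continuous functional calculus of self-adjoint elements (applied to $x \mapsto x_\pm$). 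Hence $X_\pm \in \cA \cap \cone = \cone_\cA$.

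Now I would use the defining property of $X \in \cone_\cA^\circ$ by testing against $A := X_- \in \cone_\cA$. On one hand, this forces $\langle X, X_-\rangle_{L^2_{\rm KMS}(\sigma)} \geq 0$. On the other hand, the orthogonality $\langle X_+, X_-\rangle_{L^2_{\rm KMS}(\sigma)} = 0$ from Theorem~\ref{thm:Moreau} (applied in $\H_n(\C)$) gives
\begin{equation*}
\langle X, X_-\rangle_{L^2_{\rm KMS}(\sigma)} = \langle X_+ - X_-, X_-\rangle_{L^2_{\rm KMS}(\sigma)} = -\|X_-\|^2_{L^2_{\rm KMS}(\sigma)} \leq 0.
\end{equation*}
Combining the two inequalities yields $X_- = 0$, so $X = X_+ \in \cone_\cA$, completing the proof.

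The only substantive point — and the step I would check most carefully — is that the positive part in the $\M_n(\C)$-decomposition stays inside the subalgebra $\cA$; everything else is then a transparent application of the already established Moreau decomposition for the KMS inner product on $\H_n(\C)$.
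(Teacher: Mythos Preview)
Your proof is correct, but it takes a different route from the paper's. The paper argues more directly: if $X\in\cone_\cA^\circ$, then $\Tr[X\,\sigma^{1/2}A\sigma^{1/2}]\ge 0$ for all $A\in\cone_\cA$, and since $\sigma^{1/2}$ is invertible in $\cA$, the map $A\mapsto\sigma^{1/2}A\sigma^{1/2}$ is a bijection of $\cone_\cA$, so $\Tr[XB]\ge 0$ for all $B\in\cone_\cA$; testing with the spectral projections of $X$ (which lie in $\cA$) then forces $\spec(X)\subseteq[0,\infty)$. Your argument instead pulls in the explicit Moreau decomposition from Theorem~\ref{closestMN}, uses closure of $\cA$ under continuous functional calculus to keep $X_\pm$ inside $\cA$, and then tests against $X_-$. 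Both are short; the paper's version is slightly more self-contained (it does not need the Moreau machinery at all), while yours has the pleasant side effect of already proving that the $\M_n(\C)$-Moreau decomposition restricts to $\cA$, which is precisely the content of the next lemma (Lemma~\ref{closestMA}).
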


\begin{proof} 
%
Let $X \in \cP_\cA$. For any $A \in \cone_\cA$ we have $\sigma^{1/2}A\sigma^{1/2} \geq 0$, hence
$
	\ip{X,A}_{L^2_{\rm KMS}(\sigma)} 
	= \Tr[X
		  \sigma^{1/2}A\sigma^{1/2}] 
	\geq 0		  
$,
which shows that $X \in \cP_\cA^\circ$.

Conversely, suppose that $X \in \cA_h$ belongs to $\cP_\cA^\circ$. For every $A \in \cP_\cA$ we then have
$
	\Tr[X \sigma^{1/2} A \sigma^{1/2}]
	= \ip{X,A}_{L^2_{\rm KMS}(\sigma)}
	\geq 0$.
Since $\sigma$ is invertible, it follows that 
	$\Tr[X B] \geq 0$
for every $B \in \cP_\cA$. Therefore, the spectrum of $X$ is non-negative, which implies that $X$ belongs to $\cP$ and hence to $\cP_\cA$. 
\end{proof}

\begin{lemma} 
\label{closestMA}
Let $X$ be a self-adjoint element of $\cA$. Then the decomposition of $X$ with respect to
$\cone_\cA$ is given by $X = X_+ - X_-$ where
$$X_+ := \sigma^{-1/4}(\sigma^{1/4}X\sigma^{1/4})_{(+)}\sigma^{-1/4}\quad{\rm and}\quad
X_- := \sigma^{-1/4}(\sigma^{1/4}X\sigma^{1/4})_{(-)}\sigma^{-1/4}\ .$$
\end{lemma}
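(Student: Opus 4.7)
The plan is to deduce the lemma from Theorem \ref{closestMN} and Theorem \ref{thm:Moreau}, by observing that the explicit candidates already produced by the former automatically lie inside the subalgebra $\cA$, and then invoking uniqueness of the Moreau decomposition in the Hilbert space $\cA_h$.

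Concretely, set $Y_\pm := \sigma^{-1/4}(\sigma^{1/4}X\sigma^{1/4})_{(\pm)}\sigma^{-1/4}$. Theorem \ref{closestMN} applied to $X\in\H_n(\C)$ already yields that $Y_+,Y_-\geq 0$ (hence belong to $\cone$), that $X=Y_+-Y_-$, and that $\ip{Y_+,Y_-}_{L^2_{\rm KMS}(\sigma)}=0$. On the other hand, by Lemma \ref{lem:pos-selfdual-A} the cone $\cone_\cA$ is self-dual in $(\cA_h,\ip{\cdot,\cdot}_{L^2_{\rm KMS}(\sigma)})$, so Theorem \ref{thm:Moreau} applied in this Hilbert space provides a unique decomposition $X=X_+-X_-$ with $X_\pm\in\cone_\cA$ and $\ip{X_+,X_-}_{L^2_{\rm KMS}(\sigma)}=0$. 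It therefore suffices to check that $Y_\pm\in\cA$; uniqueness in $\cA_h$ then forces $X_\pm=Y_\pm$, which is exactly the claimed identity.

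The only real content is the stability $Y_\pm\in\cA$, which I expect to be the main (but short) obstacle. The key observation is that the finite-dimensional unital $*$-subalgebra $\cA$ is stable under functional calculus of its self-adjoint elements, because in finite dimensions continuous functional calculus reduces to polynomial calculus via Lagrange interpolation on the finite spectrum. Since $\sigma\in\cA$, the fractional powers $\sigma^{\pm 1/4}$ coincide with polynomials in $\sigma$ and hence lie in $\cA$; therefore $Y:=\sigma^{1/4}X\sigma^{1/4}$ is a self-adjoint element of $\cA$; applying polynomial functional calculus once more identifies its spectral projections onto the positive and negative eigenspaces as polynomials in $Y$, so that $Y_{(\pm)}\in\cA$; and sandwiching by $\sigma^{-1/4}\in\cA$ yields $Y_\pm\in\cA$.

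In summary, the proof is essentially a bookkeeping combination of the already established Theorem \ref{closestMN}, Theorem \ref{thm:Moreau}, and Lemma \ref{lem:pos-selfdual-A}, with the single nontrivial input being that finite-dimensional $*$-subalgebras are preserved by spectral functional calculus of their self-adjoint elements. Nothing in the argument uses the specific KMS inner product beyond the self-duality of $\cone_\cA$ that was already secured in Lemma \ref{lem:pos-selfdual-A}, so no further analytic estimates are needed.
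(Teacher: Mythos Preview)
Your proof is correct and follows essentially the same route as the paper: both hinge on the observation that the candidate $Y_\pm=\sigma^{-1/4}(\sigma^{1/4}X\sigma^{1/4})_{(\pm)}\sigma^{-1/4}$ already lies in $\cA$, so that the decomposition in the ambient $\H_n(\C)$ restricts to one in $\cA_h$. The only cosmetic difference is that the paper phrases the conclusion via the minimization characterization of $\Proj_{\cone_\cA}$ (the minimizer over $\cone$ lies in $\cone_\cA$, hence minimizes over $\cone_\cA$ as well), whereas you invoke the uniqueness part of Theorem~\ref{thm:Moreau}; these are equivalent, and your justification of $Y_\pm\in\cA$ via polynomial functional calculus is in fact more explicit than the paper's bare assertion.
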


\begin{proof}
Let $X$ be a self-adjoint element of $\cA$.  
Then by Theorem~\ref{closestMN}, $\min\{\|X - A\|_{L^2_{\rm KMS}(\sigma)}\ : A \in \cone\}$ is achieved at 
$A = \sigma^{-1/4}(\sigma^{1/4}X\sigma^{1/4})_{(+)}\sigma^{-1/4}$, and since this belongs to $\cA$,
this same choice of $A$ also achieves the minimum in $\min\{\|X - A\|_{L^2_{\rm KMS}(\sigma)}\ : A \in \cone_\cA\}$.
\end{proof}

\section{Construction of Dirichlet forms on a finite-dimensional von Neumann algebra}
\label{sec:Dirichlet}

Motivated by the results in Section \ref{sec:QMS} and Section~\ref{sec:Beurling--Deny} we introduce a general framework in which various gradient flow structures can be studied naturally. This setting unifies and extends several previous approaches to gradient flows, in particular for reversible Markov chains on finite spaces \cite{Ma11,Mie11b}, the fermionic Fokker-Planck equation \cite{CM12}, and Lindblad equations with detailed balance \cite{2016-Carlen-Maas,MiMi17}

While the results in Section \ref{sec:QMS} show that the  general  QMS satisfying the $\sigma$-DBC can be represented in terms of a Dirichlet form specified in terms of derivations,   our applications require us to work with  representations for the 
generator $\cL$ in terms of  ``partial derivative operators'' $\partial_j$ that are not simply derivations. 
The reason is that, to obtain functional inequalities and sharp rates of convergence to equilibrium, 
it will be important to obtain commutation relations of the form $[\partial_j, \cL] = -a \partial_j$ for 
$a \in \R$.
We shall demonstrate that such commutation relations may hold for the general class of representations introduced in this section, but not for the 
simpler representation in terms of derivations discussed in Section \ref{sec:QMS}.

Our starting point is a finite-dimensional von Neumann algebra $\cA$ which we may regard as a subalgebra of 
$\M_n(\C)$ for some $n\in \N$.  On account of the finite-dimensionality of $\cA$, there is always a tracial positive linear functional $\tau$ on $\cA$: One choice is the normalized trace $\tau[A] = n^{-1}\Tr[A]$. However, if $\cA$ is commutative (hence isomorphic to $\ell_n^\infty$), there will be many other tracial positive linear functionals --- any positive measure on $\{1,\ldots, n\}$ specifies such a positive linear functional.   In what follows, $\tau$ will denote any faithful positive 
linear functional on $\cA$ that is tracial; i.e., such that $\tau[AB] = \tau[BA]$ for all $A,B\in \cA$. 
Since $\tau$ is faithful, every state $\sigma$ on $\cA$ can be represented as $\sigma(A) = \tau[\sigma A]$, where on the right side $\sigma\in \cA \subseteq \M_n(\C)$ is the $n\times n$ density matrix belonging to $\cA$ determined by the state $\sigma$. 

The basic operation in terms of which we shall construct completely Dirichlet forms on $\cA$ has several components.

Let $\cB$ be another finite-dimensional von Neumann algebra with tracial state $\tau_\cB$. 
A unital $*$-homomorphism $\ell$ from $(\cA,\tau)$ to $(\cB,\tau_\cB)$ is  ($\tau,\tau_\cB$)-\emph{compatible} in case for all $A\in \cA$, 
\begin{equation}\label{compat}
\tau_{\cB}[\ell(A)] = \tau[A]\ .
\end{equation}
Equivalently, $\ell$ is $(\tau,\tau_\cB)$-compatible in case its adjoint $\ell^\dagger:L^2(\cB,\tau_\cB) \to L^2(\cA,\tau)$ satisfies
$\ell^\dagger(\one_\cB) = \one_\cA$. 

Let $0 \neq V\in \cB$, and let $\ell$ and $r$ be a pair of $(\tau,\tau_\cB)$-compatible unital $*$-homomorphisms from $\cA$ into $\cB$. Then define the operator $\partial_V: \cA \to \cB$ by
\begin{equation}\label{pardef}
\partial_V A := Vr(A) - \ell(A)V\ .
\end{equation}
If $\cB = \cA$ and both $\ell$ and $r$ are the identity, this reduces to \eqref{simpleder}. 
The following Leibniz rule shows that $\partial_V$ is an $(\ell, \err)$-skew derivation.

\begin{lemma}[Leibniz rule for $\partial_V$]\label{lem:prod-rule}
For $A, B \in \cA$ we have
\begin{align}\label{eq:prod-rule}
 \partial_V (AB) = (\partial_V A )  \err (B) + \ell (A) \partial_V B \ .
\end{align}
\end{lemma}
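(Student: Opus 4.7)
The plan is to verify the identity by direct computation, exploiting only the fact that $\ell$ and $r$ are $*$-homomorphisms (so in particular multiplicative). There is no genuine obstacle; the statement is the standard ``skew derivation'' identity, and the proof amounts to an add-and-subtract trick.

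First I would expand the left-hand side using the definition \eqref{pardef} of $\partial_V$ and the multiplicativity of $r$ and $\ell$:
\begin{align*}
\partial_V(AB) \;=\; V r(AB) - \ell(AB) V \;=\; V r(A) r(B) - \ell(A)\ell(B) V.
\end{align*}
Next I would expand the right-hand side of \eqref{eq:prod-rule} analogously:
\begin{align*}
(\partial_V A)\,r(B) + \ell(A)\,\partial_V B
\;=\; \bigl(V r(A) - \ell(A) V\bigr) r(B) + \ell(A)\bigl(V r(B) - \ell(B) V\bigr).
\end{align*}
The cross terms $-\ell(A) V r(B)$ and $+\ell(A) V r(B)$ cancel, leaving exactly $V r(A)r(B) - \ell(A)\ell(B) V$, which matches the left-hand side. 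This completes the verification.

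The only property of $\ell$ and $r$ actually used is multiplicativity; the unital and $*$-properties, as well as the $(\tau,\tau_\cB)$-compatibility, play no role in this identity. The cleanest presentation is therefore to perform the two expansions in a single display and simply observe the cancellation.
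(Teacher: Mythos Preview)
Your proof is correct and essentially identical to the paper's: both expand $\partial_V(AB)$ using the multiplicativity of $\ell$ and $r$, then use the add-and-subtract trick to regroup as $(\partial_V A)\,r(B) + \ell(A)\,\partial_V B$. Your remark that only multiplicativity is used is also accurate.
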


\begin{proof}
Since $\ell$ and $\err$ are algebra homomorphisms, 
\begin{align*}
 \partial_V (AB)
&  =  V \err(AB) - \ell(AB) V
\\&  =  \big(V \err(A) - \ell(A) V\big) \err(B) 
    + \ell(A)\big( V \err(B) - \ell(B) V\big)
 = (\partial_VA) \err(B) + \ell(A) \partial_V B\ ,   
\end{align*}
which is the desired identity.
\end{proof}

\begin{remark}\label{rem:homo}
Since $\ell$ and $\err$ are algebra $*$-homomorphisms, it follows that
\begin{align}\label{eq:dual-algebra}
 \ell^\dagger\big(\ell(A_1) B \ell(A_2)\big) = A_1 \ell^\dagger(B) A_2 
  \quad \text{and} \quad
 \err^\dagger\big(\err(A_1) B \err(A_2)\big) = A_1 \err^\dagger(B) A_2  
\end{align} 
for all $A_1, A_2 \in \cA$ and $B \in \cB$. 
Moreover, $\ell^\dagger(B)^* = \ell^\dagger(B^*)$ and $\err^\dagger(B)^* = \err^\dagger(B^*)$ for all $B \in \cB$.
\end{remark}

Let $\sigma\in \cA$ be the density matrix (with respect to $\tau$)  of a faithful state on $\cA$. 
Since $\ell$ and $r$ are $(\tau,\tau_\cB)$-compatible, $\ell(\sigma)$ and $r(\sigma)$ 
are density matrices (with respect to $\tau_\cB$ on $\cB$). The inner product 
that we use on $\cB$ is a KMS inner product based on both $\ell(\sigma)$ and $r(\sigma)$ defined in terms of the {\em relative modular operator} $\Delta_{\ell(\sigma),r(\sigma)}$:
\begin{equation}\label{relmod}
\Delta_{\ell(\sigma),r(\sigma)}(B) := \ell(\sigma) B r(\sigma)^{-1}\ .
\end{equation}
It is easily verified that $\Delta_{\ell(\sigma),r(\sigma)}$ is a positive operator on $L^2(\cB,\tau_\cB)$, and hence we may define an inner product on $\cB$ through
\begin{equation}\begin{aligned}\label{^{1/2}}
 \langle B_1,B_2\rangle_{L^2_{\rm KMS}(\cB,\ell(\sigma),\err(\sigma))} 
  &:= \langle  B_1r (\sigma)^{1/2},  
 \Delta_{\ell(\sigma),r(\sigma)}^{1/2}(B_2\err(\sigma)^{1/2})\rangle_{L^2(\cB,\tau_\cB)}\nonumber\\
 &\mbox{\phantom{:}}=
 \tau_\cB[ B_1^* \ell(\sigma^{1/2}) B_2 \err(\sigma^{1/2})] \ .
\end{aligned}\end{equation}

Given a faithful state $\sigma$ on $\cA$,  $V\in \cB$, and {\em two} pairs $(\ell,\err)$ and $(\ell_*,\err_*)$ of ($\tau,\tau_\cB$)-compatible $*$-homomorphisms of $\cA$ into $\cB$, define $\partial_V$ by \eqref{pardef}, and define
$$\partial_{V^*} = V^*r_*(A) - \ell_*(A)V^*$$
in accordance with \eqref{pardef}, but using $V^*$, $\ell_*$ and $\err_*$. 
Then define a sesquilinear form $\sE$ on $\cA$  by 
\begin{equation}\label{skewdirdef}
\sE(A_1,A_2) = \langle \partial_V A_1,\partial_V A_2\rangle_{L^2_{\rm KMS}(\cB,\ell(\sigma),\err(\sigma))} +
\langle \partial_{V^*} A_1,\partial_{V^*} A_2\rangle_{L^2_{\rm KMS}(\cB,\ell_*(\sigma),\err_*(\sigma))}\ .
\end{equation}
Our immediate goal in this section is to determine conditions on $V$, $(\ell,\err)$ and $(\ell_*,\err_*)$ under which $\sE$ is a conservative completely Dirichlet form on $\cA$ equipped with the KMS inner product induced by $\sigma$. 

It is first of all necessary that the operator
$\cL$ determined by $\sE$ through $\sE(A_1,A_2) = -\langle B,\cL A\rangle_{L^2_{\rm KMS}(\sigma)}$ be real; i.e., $(\cL(A))^* = \cL A^*$.  Since 
$\langle A_1, A_2\rangle_{L^2_{\rm KMS}(\sigma)} = 
\langle A_2^*,A_1^*\rangle_{L^2_{\rm KMS}(\sigma)}$ for all $A_1,A_2\in \cA$, it is easily seen that 
$\cL$ is real if and only if $\sE(A_1,A_2) = \sE(A_2^*,A_1^*)$ for all $A_1,A_2\in \cA$.

\begin{lemma}\label{lem:symm}  
Under the condition that for all $A_1, A_2 \in \cA$,
\begin{align}\label{eq:symmetryA}
\tau_\cB[ V^* \ell (A_1) V \err(A_2) ] 
 =  \tau_\cB[  V^* \err_{*} (A_1) V \ell_{*} (A_2)  ] \ ,
\end{align}
we have $\sE(A_1, A_2) = \sE(A_2^*, A_1^*)$ for all $A_1, A_2\in \cA$.
\end{lemma}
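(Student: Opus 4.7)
The plan is to expand both $\sE(A_1, A_2)$ and $\sE(A_2^*, A_1^*)$ completely using the definitions
\begin{align*}
\sE(A_1,A_2) = \tau_\cB\bigl[(\partial_V A_1)^* \ell(\sigma^{1/2}) (\partial_V A_2) \err(\sigma^{1/2})\bigr]
   + \tau_\cB\bigl[(\partial_{V^*} A_1)^* \ell_*(\sigma^{1/2}) (\partial_{V^*} A_2) \err_*(\sigma^{1/2})\bigr]
\end{align*}
and the formulas $\partial_V A = V\err(A) - \ell(A) V$, $\partial_{V^*} A = V^* \err_*(A) - \ell_*(A) V^*$. Each inner product contributes four terms (two ``pure'' terms of type $V\cdots V^*$ and two ``cross'' terms of type $V^*\cdots V$ once cyclicity is used), yielding eight terms $T_i, S_j$ ($i,j = 1,\ldots,4$) in $\sE(A_1,A_2)$. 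Correspondingly there are eight terms $T_i', S_j'$ in $\sE(A_2^*, A_1^*)$, obtained by the substitution $A_1 \leftrightarrow A_2^*$. The task is to match them up in pairs.

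The main workhorse is the combination of three identities: (i) cyclicity of $\tau_\cB$; (ii) the fact that $\ell,\err,\ell_*,\err_*$ are algebra homomorphisms, so that for instance $\ell(\sigma^{1/2})\ell(A_2) = \ell(\sigma^{1/2}A_2)$; (iii) the hypothesis \eqref{eq:symmetryA}, which after cyclic rearrangement reads
\begin{align*}
\tau_\cB\bigl[V\err(B_2)V^*\ell(B_1)\bigr] = \tau_\cB\bigl[V\ell_*(B_2) V^* \err_*(B_1)\bigr] \quad \text{for all } B_1,B_2\in\cA \ .
\end{align*}
The first two produce equalities of the form $\tau_\cB[V^*\ell(P)Vr(Q)] = \tau_\cB[V^*\ell(\tilde P)Vr(\tilde Q)]$ where $\tilde P,\tilde Q$ are cyclic permutations (with $\sigma^{1/2}$-factors absorbed), while (iii) is used to convert a $(\ell,\err)$-cross term into a $(\ell_*,\err_*)$-cross term and vice versa. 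This is precisely where the hypothesis enters, and it is the only place where any relation between the two pairs $(\ell,\err)$ and $(\ell_*,\err_*)$ is invoked.

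Concretely, I expect the matching to be $T_1 = S_4'$, $T_2 = S_2'$, $T_3 = S_3'$, $T_4 = S_1'$, and symmetrically $S_1 = T_4'$, $S_2 = T_2'$, $S_3 = T_3'$, $S_4 = T_1'$. For example, $T_1 = \tau_\cB[\err(A_1^*) V^* \ell(\sigma^{1/2}) V \err(A_2\sigma^{1/2})]$ is rewritten by cyclicity as $\tau_\cB[V \err(A_2\sigma^{1/2}A_1^*) V^* \ell(\sigma^{1/2})]$; applying \eqref{eq:symmetryA} with $B_2 = A_2\sigma^{1/2}A_1^*$ and $B_1 = \sigma^{1/2}$ produces $\tau_\cB[V \ell_*(A_2\sigma^{1/2}A_1^*) V^* \err_*(\sigma^{1/2})] = S_4'$. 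The pure terms $T_1,T_4,S_1,S_4$ pair across the two inner products via a single application of \eqref{eq:symmetryA}, while the cross terms $T_2,T_3,S_2,S_3$ pair up among themselves in the analogous fashion.

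The main obstacle is bookkeeping: one must keep track of where the $\sigma^{1/2}$ factors sit after absorption into $\ell$ or $\err$, and of the order of the arguments when applying cyclicity. Once this is handled, summing the eight identities yields $\sE(A_1, A_2) = \sE(A_2^*, A_1^*)$. No additional structural assumption on $V$ or on the relation between $(\ell,\err)$ and $(\ell_*,\err_*)$ is needed beyond \eqref{eq:symmetryA}, which is exactly the minimal ingredient for the cross-term swap.
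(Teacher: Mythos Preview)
Your proposal is correct and follows essentially the same approach as the paper: expand the two KMS inner products into four terms each, then use cyclicity of $\tau_\cB$, the homomorphism property of $\ell,\err,\ell_*,\err_*$, and the hypothesis \eqref{eq:symmetryA} to match the terms of $\sE(A_1,A_2)$ with those of $\sE(A_2^*,A_1^*)$. Your worked example $T_1 = S_4'$ is exactly the computation the paper carries out explicitly; the paper then observes that this single matching already shows the block identity
\[
\langle \partial_V A_1,\partial_V A_2\rangle_{L^2_{\rm KMS}(\cB,\ell(\sigma),\err(\sigma))}
= \langle \partial_{V^*} A_2^*,\partial_{V^*} A_1^*\rangle_{L^2_{\rm KMS}(\cB,\ell_*(\sigma),\err_*(\sigma))}
\]
(the remaining three term-matchings being ``similar computations''), from which $\sE(A_1,A_2)=\sE(A_2^*,A_1^*)$ follows immediately. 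Note only that your tentative pairing $T_2=S_2'$, $T_3=S_3'$ and the pure/cross distinction are not quite right under the natural labeling---every one of the four matchings requires an application of \eqref{eq:symmetryA}, and the actual permutation is $(T_1,T_2,T_3,T_4)\leftrightarrow(S_4',S_1',S_2',S_3')$---but this is a bookkeeping detail and does not affect the validity of the argument.
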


\begin{remark} One can satisfy \eqref{eq:symmetryA} in a trivial way by taking $\ell$, $r$, $\ell_*$ and $\err_*$ each to be the identity.  
Almost as trivially, one may take $\ell_* = \err$ and $\err_* = \ell$. 
However, we shall see that one can also satisfy \eqref{eq:symmetryA} with $\ell_* = \ell$ and $\err_* = \err = I_\cB$ with a non-trivial $*$-homomorphism $\ell$; see the discussion in the next section on the Clifford Dirichlet form. Other non-trivial realizations of \eqref{eq:symmetryA} arise in practice.
\end{remark}

\begin{proof}[Proof of Lemma~\ref{lem:symm}]We compute
\begin{eqnarray}
\langle \partial_V A_1,\partial_V A_2\rangle_{L^2_{\rm KMS}(\cB,\ell(\sigma),\err(\sigma))} 
&=& \tau_\cB[\err(A_1^*)V^*\ell(\sigma^{1/2})V\err(A_2)\err(\sigma^{1/2})]\label{pos1}\\
&+& \tau_\cB[V^*\ell(A_1^*)\ell(\sigma^{1/2})\ell(A_2)V\err(\sigma^{1/2})]\label{pos2}\\
&-& \tau_\cB[\err(A_1^*)V^*\ell(\sigma^{1/2})\ell(A_2)V\err(\sigma^{1/2})]\nonumber\\
&-& \tau_\cB[V^*\ell(A_1^*)\ell(\sigma^{1/2})V\err(A_2)\err(\sigma^{1/2})] \ .\nonumber
\end{eqnarray}
By cyclicity of the trace $\tau_\cB$, the homomorphism property of $\ell$ and 
$\err$, and \eqref{eq:symmetryA},
\begin{eqnarray*}
\tau_\cB[\err(A_1^*)V^*\ell(\sigma^{1/2})V\err(A_2)\err(\sigma^{1/2})] &=&
\tau_\cB[\err(A_2\sigma^{1/2}A_1^*)V^*\ell(\sigma^{1/2})V] \\
&=&\tau_\cB[\ell_*(A_2\sigma^{1/2}A_1^*)V^*\err_*(\sigma^{1/2})V]\\
&=&\tau_\cB[V\ell_*(A_2)\ell_*(\sigma^{1/2})\ell_*(A_1^*)V^*\err_*(\sigma^{1/2})] \ .
\end{eqnarray*}
This shows that the quantity in \eqref{pos1} is what we obtain from the quantity in \eqref{pos2} if we replace $\ell$ by $\ell_*$, $\err$ by $\err_*$,  $V$ by $V^*$, $A_1$ by $A_2^*$, and $A_2$ by $A_1^*$.  
Similar computations then yield the identity
$$\langle \partial_V A_1,\partial_V A_2\rangle_{L^2_{\rm KMS}(\cB,\ell(\sigma),\err(\sigma))}  = 
\langle \partial_{V^*} A_2^*,\partial_{V^*} A_1^*\rangle_{L^2_{\rm KMS}(\cB,\ell_*(\sigma),\err_*(\sigma))} \ ,$$
and this implies $\sE(A_1,A_2) = \sE(A_2^*,A_1^*)$.
\end{proof}

Thus, the condition \eqref{eq:symmetryA} suffices to ensure that the sesquilinear form $\sE$ defined in \eqref{skewdirdef} is real. 
In the rest of this section, we suppose that this condition is satisfied, and then since $\sE$ is real, it suffices to consider its bilinear restriction to $\cA_h$. 

One further condition is required to ensure that $\sE$ be a Dirichlet form on $\cA_h$, and we shall see that under this same condition $\sE$ is actually a completely Dirichlet form. 
The assumption is that $V$ (resp. $V^*$) is an eigenvector of the relative modular operator $\Delta_{\ell(\sigma),r(\sigma)}$ (resp. $\Delta_{\ell_*(\sigma),r_*(\sigma)}$).
Since the relative modular operator is positive, there exist $\omega, \omega_*\in \R$ such that
\begin{equation}\label{modeig}
\Delta_{\ell(\sigma),r(\sigma)}V = e^{-\omega} V 
\quad \text{and} \quad 
\Delta_{\ell_*(\sigma),r_*(\sigma)}V^* = e^{-\omega_*} V^* \ .
\end{equation}
There are several equivalent formulations of this condition that will be useful.

\begin{lemma}\label{prop:symmetry-properties} The first condition in \eqref{modeig} is equivalent to the condition
\begin{align} \label{eq:modular-ev}
\partial_V  \log \sigma = \omega V \ ,
\end{align}
and to the condition that 
for all $t \in \R$,
\begin{align} \label{eq:modular-group}
	\Delta^t_{\ell(\sigma),r(\sigma)} V
  = e^{- t \omega}  V \ .
\end{align}
Moreover, \eqref{modeig} implies that 
\begin{align} \label{eq:omega-symmetry}
\omega_* = - \omega \ .
\end{align}
\end{lemma}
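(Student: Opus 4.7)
The plan is to handle the three equivalences and the symmetry relation separately, reducing everything to manipulations of the intertwining identity encoded by the eigenvalue equation.

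First, by Definition~\ref{modular}, the hypothesis $\Delta_{\ell(\sigma),r(\sigma)} V = e^{-\omega} V$ is equivalent to the intertwining identity
\[
\ell(\sigma)\, V = e^{-\omega}\, V\, r(\sigma).
\]
Since $\ell$ and $r$ are unital $*$-homomorphisms on a finite-dimensional algebra, they commute with continuous functional calculus on positive elements; in particular $\ell(\sigma^t) = \ell(\sigma)^t$ and $\ell(\log\sigma) = \log \ell(\sigma)$, and likewise for $r$. Iterating the intertwining identity inductively shows $\ell(\sigma)^n V = e^{-n\omega} V r(\sigma)^n$ for every $n \in \Z$, and a spectral-decomposition argument applied to the commuting left- and right-multiplication operators by $\ell(\sigma)$ and $r(\sigma)$ upgrades this to $\ell(\sigma)^t V = e^{-t\omega} V r(\sigma)^t$ for all $t \in \R$, which is \eqref{eq:modular-group}.

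Differentiating \eqref{eq:modular-group} at $t = 0$ and rearranging gives $V \log r(\sigma) - \log \ell(\sigma)\, V = \omega V$, which, using $\ell(\log\sigma) = \log\ell(\sigma)$ and $r(\log\sigma) = \log r(\sigma)$, is exactly $\partial_V \log \sigma = \omega V$. Conversely, if \eqref{eq:modular-ev} holds, then setting $X := \log\ell(\sigma)$ and $Y := \log r(\sigma)$ we have $XV = V(Y - \omega\one)$; by induction $X^n V = V(Y - \omega\one)^n$, and summing the exponential series yields $e^{tX} V = e^{-t\omega} V e^{tY}$, which is \eqref{eq:modular-group}. Evaluating at $t = 1$ recovers the original eigenvalue equation, so all three conditions are equivalent.

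Finally, to derive $\omega_* = -\omega$, I will extract it from the symmetry condition \eqref{eq:symmetryA}, which is in force throughout this section, combined with both eigenvalue relations in \eqref{modeig}. Specialising \eqref{eq:symmetryA} to $(A_1, A_2) = (\sigma, \one)$ and $(\one, \sigma)$ gives
\[
\tau_\cB[V^* \ell(\sigma) V] = \tau_\cB[V^* r_*(\sigma) V],
\qquad
\tau_\cB[V^* V r(\sigma)] = \tau_\cB[V^* V \ell_*(\sigma)].
\]
Substituting $\ell(\sigma) V = e^{-\omega} V r(\sigma)$ on the left of the first identity, and using cyclicity of $\tau_\cB$ together with $\ell_*(\sigma) V^* = e^{-\omega_*} V^* r_*(\sigma)$ on the right of the second, these chain together to give
\[
\tau_\cB[V^* \ell(\sigma) V] = e^{-(\omega + \omega_*)}\, \tau_\cB[V^* \ell(\sigma) V].
\]
Since $\ell(\sigma)$ is positive and invertible and $V \neq 0$, the scalar $\tau_\cB[V^* \ell(\sigma) V] = \|\ell(\sigma)^{1/2} V\|^2_{L^2(\cB,\tau_\cB)}$ is strictly positive, so $\omega + \omega_* = 0$. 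The only real piece of bookkeeping is keeping the pairs $(\ell, r)$ and $(\ell_*, r_*)$ distinct while recognising that the symmetry condition is precisely what couples their eigenvalues into a single scalar identity.
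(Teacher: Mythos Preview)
Your proof is correct and follows essentially the same approach as the paper. For the three-way equivalence, the paper simply observes that $(\Delta^t_{\ell(\sigma),r(\sigma)})_{t\in\R}$ is a one-parameter group with generator $\cG B = \ell(\log\sigma)B - B\,r(\log\sigma) = -\partial_V\log\sigma$ (when applied to $V$) and invokes ``basic spectral theory''; your explicit induction-plus-exponential-series argument is exactly what that invocation unpacks to.

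For the relation $\omega_* = -\omega$, the paper takes a slightly shorter route: it applies the symmetry condition \eqref{eq:symmetryA} once with $(A_1,A_2) = (\sigma,\sigma^{-1})$ and uses the two eigenvalue relations to get
\[
e^{-\omega}\,\tau_\cB[V^*V] \;=\; \tau_\cB[V^*\ell(\sigma)V\,r(\sigma^{-1})] \;=\; \tau_\cB[V\,\ell_*(\sigma^{-1})V^*\,r_*(\sigma)] \;=\; e^{\omega_*}\,\tau_\cB[VV^*],
\]
whence $\omega_* = -\omega$. Your variant with the pair of substitutions $(\sigma,\one)$ and $(\one,\sigma)$ and subsequent chaining is equally valid, just one step longer.
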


\begin{proof}
Note that $( \Delta^t_{\ell(\sigma),r(\sigma)})_{t\in\R}$ is a group of linear operators on $\cB$, and the generator $\cG$ of this group  is given by $\cG  B = \ell(\log \sigma) B - B \err(\log \sigma)$, thus $\cG V = - \partial_V \log \sigma$.
The equivalences thus follow from basic spectral theory.

Using \eqref{eq:symmetryA} with $A_1 = \sigma$ and $A_2 = \sigma^{-1}$, and two applications of \eqref{eq:modular-group}, we obtain
\begin{align*}
	e^{-\omega} \tau_\cB[ V^* V] 
	& = \tau_\cB[ V^* \ell(\sigma) V \err(\sigma^{-1}) ] 
	= \tau_\cB[ V \ell_*(\sigma^{-1}) V^* \err_*(\sigma) ] 
	= e^{\omega_*} \tau_\cB[ V V^*] \ .
\end{align*}
Since $V \neq 0$, this yields \eqref{eq:omega-symmetry}.
\end{proof}

We are now ready to state the main result of this section.

\begin{theorem}\label{thm:Dirichlet} Let $\sigma$ be a faithful state on $\cA$. 
Let $V\in \cB$ and two pairs $(\ell,\err)$ and $(\ell_*,\err_*)$ of $(\tau,\tau_\cB)$-compatible $*$-homomorphisms be given. 
Suppose also that \eqref{eq:symmetryA} is satisfied, and suppose that $V$ (resp. $V^*$) is an eigenvector of the relative modular operator $\Delta_{\ell(\sigma),r(\sigma)}$ (resp. $\Delta_{\ell_*(\sigma),r_*(\sigma)}$) satisfying \eqref{modeig}. 
Then the sesquilinear form $\sE : \cA \times \cA \to \C$ given by \eqref{skewdirdef} defines a conservative completely Dirichlet form on $L_{\rm KMS}^2(\cA_h,\sigma)$.
\end{theorem}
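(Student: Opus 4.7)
My plan proceeds in four steps: I will verify in turn that $\sE$ is (i) a non-negative, symmetric, real, closed, densely defined bilinear form on $L^2_{\rm KMS}(\cA_h,\sigma)$, (ii) conservative, (iii) Dirichlet, and (iv) completely Dirichlet. For (i), non-negativity and sesquilinearity are immediate from \eqref{skewdirdef}, closedness is automatic since $\cA$ is finite-dimensional, and density of the domain $\cA$ is trivial. Realness of $\sE$ on $\cA_h$ is exactly the content of Lemma \ref{lem:symm} under hypothesis \eqref{eq:symmetryA}. For (ii), unitality of $\ell$ and $r$ gives $\partial_V \one_\cA = V r(\one_\cA) - \ell(\one_\cA) V = V - V = 0$, and analogously $\partial_{V^*}\one_\cA = 0$, so $\sE(\cdot,\one_\cA)=0$.

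The substantive step (iii) is verifying the Beurling--Deny inequality $\sE(A_+, A_-) \leq 0$ for all $A\in \cA_h$, where $A = A_+ - A_-$ is the Moreau decomposition in $L^2_{\rm KMS}(\cA_h,\sigma)$ relative to the self-dual cone $\cone_\cA$ (Lemma \ref{lem:pos-selfdual-A}). Lemma \ref{closestMA} yields $A_\pm = \sigma^{-1/4} B_\pm \sigma^{-1/4}$, where $B := \sigma^{1/4}A\sigma^{1/4}$ has spectral decomposition $B = B_+ - B_-$ with $B_+B_-=B_-B_+=0$. To control the first summand of $\sE(A_+,A_-)$, I would use the modular eigenvector identity \eqref{eq:modular-group} at $t=\pm 1/4$ to commute $\ell(\sigma^{\pm 1/4})$ and $r(\sigma^{\pm 1/4})$ past $V$; the resulting twist factors cancel between $(\partial_V A_+)^*$ and $\ell(\sigma^{1/2})(\partial_V A_-)r(\sigma^{1/2})$, collapsing the KMS inner product to
\begin{align*}
\langle \partial_V A_+, \partial_V A_-\rangle_{L^2_{\rm KMS}(\cB,\ell(\sigma),r(\sigma))} = \tau_\cB\bigl[(\tilde\partial_V B_+)^*\,\tilde\partial_V B_-\bigr],
\end{align*}
with $\tilde\partial_V B := e^{-\omega/4} V r(B) - e^{\omega/4}\ell(B)V$. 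Expanding this product gives four terms: the two ``diagonal'' ones contain $\ell(B_+B_-)$ and $r(B_-B_+)$ and vanish since $\ell,r$ are homomorphisms and $B_+B_-=0$; the remaining ``cross'' terms are $-\tau_\cB[r(B_+)V^*\ell(B_-)V]$ and $-\tau_\cB[V^*\ell(B_+)V r(B_-)]$, each of which is manifestly non-positive because, by cyclicity, $\tau_\cB[r(B_+)V^*\ell(B_-)V] = \tau_\cB[M^* M]$ with $M := \ell(B_-^{1/2})V r(B_+^{1/2})$, and analogously for the other. The identical argument with $V^*, \ell_*, r_*$ in place of $V, \ell, r$ (using $\omega_*=-\omega$ from \eqref{eq:omega-symmetry}) controls the second summand of $\sE(A_+,A_-)$.

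For (iv), I would check that the whole construction is stable under tensoring with $(\M_m(\C),\tau_m)$. Setting $\tilde V := V\otimes \one_m$, $\tilde \ell := \ell\otimes \id$, $\tilde r := r\otimes \id$, $\tilde \sigma := \sigma \otimes \one_m$ (and analogously for the starred objects), the hypotheses of the theorem transfer verbatim: $(\tau\otimes\tau_m, \tau_\cB\otimes\tau_m)$-compatibility is immediate, \eqref{eq:symmetryA} for the tensored data follows from bilinearity and factorization of the trace, and $\Delta_{\tilde\ell(\tilde\sigma),\tilde r(\tilde\sigma)}(\tilde V) = e^{-\omega}\tilde V$ by direct computation. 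A short bookkeeping calculation shows that the form built from the tensored data via \eqref{skewdirdef} coincides with $\sE^{(m)}$ defined in \eqref{comdir3}. Applying (i)--(iii) to the tensored setup then shows $\sE^{(m)}$ is a conservative Dirichlet form for every $m\in\N$, which is exactly the complete Dirichlet property of $\sE$.

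The main obstacle is (iii): the modular eigenvector hypothesis \eqref{modeig} is precisely the input needed to absorb the modular twist implicit in the KMS Moreau decomposition of $\cone_\cA$, thereby reducing the computation to a plain trace on $\cB$ where the spectral orthogonality $B_+B_-=0$ can be used directly. Without \eqref{modeig}, the factors of $\sigma^{\pm 1/4}$ arising from $A_\pm = \sigma^{-1/4}B_\pm\sigma^{-1/4}$ would not cancel against the KMS weights, and the simplification to the ``untwisted'' derivation $\tilde\partial_V B_\pm$ would fail.
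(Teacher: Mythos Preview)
Your proof is correct and follows essentially the same approach as the paper: the key step (iii) is the same computation, using the modular eigenvector identity \eqref{eq:modular-group} at $t=\pm 1/4$ to absorb the $\sigma^{\pm 1/4}$ twists from Lemma~\ref{closestMA} and reduce to an untwisted trace where $B_+B_-=0$ kills the diagonal terms and positivity handles the cross terms. Your treatment of (iv) by tensoring the entire differential structure and reapplying (iii) is slightly cleaner than the paper's direct verification of $\sE^{(m)}({\bf H}_+,{\bf H}_-)\leq 0$, but the content is the same.
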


\begin{proof}
To explain the crucial role of the assumption that $V$ is an eigenvector of the relative modular operator, so that \eqref{modeig} is satisfied,  we fix $V, W \in \cB$ and (temporarily) define the operators $\partial, \partial_*: \cA \to \cB$ by $\partial A :=  V \err(A) - \ell(A) W$ and $\partial_* A :=  V^* \err_*(A) - \ell_*(A) W^*$, and set
\begin{align*}
\sE(A_1,A_2) = \langle \partial A_1,\partial A_2\rangle_{L^2_{\rm KMS}(\cB,\ell(\sigma),\err(\sigma))} +
\langle \partial_* A_1,\partial_* A_2\rangle_{L^2_{\rm KMS}(\cB,\ell_*(\sigma),\err_*(\sigma))}\ .
\end{align*}
 We will show:
\begin{enumerate}
\item If $W = e^{\omega/2} \Delta^{1/2}_{\ell(\sigma),r(\sigma)} V$ and $W^* = e^{\omega_*/2} \Delta^{1/2}_{\ell_*(\sigma),r_*(\sigma)} V^*$ for some $\omega, \omega_* \in \R$, then $\sE$ defines a Dirichlet form on $L_{\rm{KMS}}^2(\cA_h,\sigma)$.
\item If, in addition, \eqref{modeig} holds, then  
$\sE(\one,A) = 0$ for all $A \in \cA_h$, hence $\sE$ is conservative.
\end{enumerate}
 
Consider the unitary transformation $\cU : L_{\rm KMS}^2(\cA,\sigma) \to  L^2(\cA, \tau)$ given by 
$\cU A := \sigma^{1/4} A \sigma^{1/4}$. For brevity we write $\cT B := \Delta^{1/4}_{\ell(\sigma),r(\sigma)} B = 
\ell(\sigma^{1/4}) B \err(\sigma^{-1/4})$, and likewise, $\cT_* B := \Delta^{1/4}_{\ell_*(\sigma),r_*(\sigma)} B = 
\ell_*(\sigma^{1/4}) B \err_*(\sigma^{-1/4})$.

For $A \in \cA_h$ we need to show that $\sE(A_+, A_-) \leq 0$. 
For $A_1, A_2 \in \cA$ we have
\begin{equation}\begin{aligned}\label{eq:partialA12}
	  &\langle  \partial A_1, \partial A_2\rangle_{L^2_{\rm KMS}(\cB,\ell(\sigma),\err(\sigma))}
	\\& = \tau_\cB\Big[ 
			\err (\sigma^{1/4}) 
			\Big( \err (A_1^* ) V^*  -  W^* \ell (A_1^*) \Big)
			\ell (\sigma^{1/2}) 
			 \Big(V  \err (A_2) - \ell (A_2) W \Big) 
			 \err (\sigma^{1/4}) \Big]			 
	\\& =  \tau_\cB\Big[ 
			\Big( \err (\cU A_1^* ) (\cT  V )^* 
				- (\cT^{-1} W )^* \ell (\cU A_1^*) \Big)
			 \Big( ( \cT V ) \err (\cU A_2) 
			 		 -  \ell (\cU A_2) \cT ^{-1} W   \Big)  \Big]	
	\\& = 
	\tau_\cB\Big[ 
			 (\cT  V  ) \err (\cU A_2 \cU A_1^* ) (\cT  V )^* 
		-
	 (\cT  V )^* \ell (\cU A_2) \cT ^{-1} W   \err (\cU A_1^* )
	\\& \qquad\qquad	-
	 \err (\cU A_2) (\cT^{-1} W )^* \ell (\cU A_1^*) (\cT  V  )
		+			 
		(\cT^{-1} W )^* \ell (\cU A_1^* \cU A_2) \cT ^{-1} W  
							  \Big]	\ .
\end{aligned}\end{equation}
For $A \in \cA_h$ we have $A_\pm = \cU^{-1}(\cU A)_{(\pm)}$ by Lemma \ref{closestMA}, thus
\begin{align*}
	\cU A_+ \cU A_-
	  = (\cU A)_{(+)}(\cU A)_{(-)} = 0 
	  \quad \text{and} \quad
	  	\cU A_- \cU A_+
	  = (\cU A)_{(-)}(\cU A)_{(+)} = 0 \ .
\end{align*}
We obtain
\begin{equation}\begin{aligned}\label{abd34}
	\langle \partial A_+, \partial A_-\rangle_{L^2_{\rm KMS}(\cB,\ell(\sigma),\err(\sigma))}
		& = - 	\tau_\cB\Big[ 
	 (\cT  V )^* \ell (\cU A_-) \cT ^{-1} W   \err (\cU A_+ )
		\\& \qquad\qquad	+
	  (\cT ^{-1} W )^* \ell (\cU A_+) (\cT  V  ) \err (\cU A_-)
							  \Big]	\ .
\end{aligned}\end{equation}	
Since $\err(\cU A_{\pm}) \geq 0$, it follows that 
$\langle \partial A_+, \partial A_-\rangle_{L^2_{\rm KMS}(\cB,\ell(\sigma),\err(\sigma))}\leq 0$ if we can show that 
\begin{align}\label{eq:positivity}
 (\cT V)^* \ell(\cU A_-) \cT^{-1} W \geq 0
 	  \quad \text{and} \quad
	  (\cT^{-1} W)^* \ell(\cU A_+) (\cT V ) \geq 0 \ .
\end{align}
To show this, we make the assumption that $W = e^{\omega/2} \Delta_{\ell(\sigma),r(\sigma)}^{1/2} V$ for some 
$\omega \in \R$. Equivalently, this means that $\cT^{-1} W = e^{\omega/2} \cT V$, and since 
$\ell(\cU A_\pm) \geq 0$, we obtain \eqref{eq:positivity}. This proves that 
$\langle \partial A_+, \partial A_-\rangle_{L^2_{\rm KMS}(\cB,\ell(\sigma),\err(\sigma))} \leq 0$.

An entirely analogous argument shows that  
$\langle \partial_* A_+, \partial_* A_- \rangle_{L^2_{\rm KMS}(\cB,\ell_*(\sigma),\err_*(\sigma))} \leq 0$, and this proves that 
$\sE(A_+,A_-)$ is a Dirichlet form. 

Observe now that $\partial \one = V - W$ and $\partial_* \one = V^* - W^*$. Thus, to conclude that $\partial \one = \partial_* \one = 0$, we need to assume that $V$ is an eigenvector of $\Delta_{\ell(\sigma),r(\sigma)}$ with eigenvalue $e^{-\omega }$, and that $V^*$ is an eigenvector of $\Delta_{\ell_*(\sigma),r_*(\sigma)}$ with eigenvalue $e^{-\omega_*}$. It immediately follows that $\sE(\one,A) = 0$ for all $A \in \cA_h$, hence $\sE$ is conservative. 

\medskip

It remains to prove that under the given conditions, $\sE$ is completely Dirichlet. 
Let $\Tr$ be the standard trace on $\M_m(\C)$.
Let ${\bf H}$ be a self-adjoint element of $\cA \otimes \M_m(\C)$, and let ${\bf H}_+$ and ${\bf H}_-$ be the elements of its decomposition ${\bf H} = {\bf H}_+ - {\bf H}_-$ in $L^2_{\rm KMS}(\sigma\otimes \Tr)$, where ${\bf H}_+$ and ${\bf H}_-$ are positive and such that $ \langle {\bf H}_+,{\bf H}_-\rangle_{L^2_{\rm KMS}(\sigma\otimes \Tr)} =0$.

Let ${\bm \sigma} = \sum_{j=1}^m \sigma \otimes E_{jj}$ and write $\widetilde{\bf H} = \bm \sigma^{1/4} {\bf H}  \bm \sigma^{1/4}$ for brevity.  By Theorem~\ref{closestMN}, ${\bf H}_+ = \bm \sigma^{-1/4} \widetilde{\bf H}_{(+)} \bm \sigma^{-1/4}$, hence $[{\bf H}_+]_{ij} = \sigma^{-1/4} [\widetilde{\bf H}_{(+)}]_{ij} \sigma^{-1/4}$. It follows that 
\begin{align*}
\sum_{i,j =1}^m \tau\big[ \cU( [{\bf H}_+]_{ij} ) \cU( [{\bf H}_-]_{ij} )\big]
  = \ip{\widetilde{\bf H}_{(+)} , \widetilde{\bf H}_{(-)} }_{L^2(\tau \otimes \Tr)} 
  = 0 \ .
\end{align*}
Using this identity, \eqref{abd34} with $V = W$ yields
\begin{align*}
&\sum_{i, j = 1}^m \langle  \partial [{\bf H}_+]_{ij}, \partial [{\bf H}_-]_{ij}\rangle_{L^2_{\rm KMS}(\cB,\ell(\sigma),\err(\sigma))}
 \\ & = 
    - \sum_{i, j = 1}^m
    	\tau_\cB\Big[ 
	  V^* \ell (\cU [{\bf H}_-]_{ij}) V \err (\cU [{\bf H}_+]_{ij})
	+
	  V^* \ell (\cU [{\bf H}_+]_{ij}) V \err (\cU [{\bf H}_-]_{ij})
							  \Big]	
 \\ & = 
    - \sum_{i, j = 1}^m
    	\tau_\cB\Big[ 
	  V^* \ell ([\widetilde{\bf H}_{(-)}]_{ij}) V \err ([\widetilde{\bf H}_{(+)}]_{ij})
	+
	  V^* \ell ([\widetilde{\bf H}_{(+)}]_{ij}) V \err ([\widetilde{\bf H}_{(-)}]_{ij})
							  \Big]	 \ 
 \\ & = -  \tau_\cB \otimes \Tr  
\Big[ 
	  (V \otimes \one_m)^* {\ell} (\widetilde{\bf H}_{(-)}) (V \otimes \one_m) {\err} (\widetilde{\bf H}_{(+)})
	{\,+\,}
 (V \otimes \one_m)^* {\ell} (\widetilde{\bf H}_{(+)}) (V \otimes \one_m) {\err} (\widetilde{\bf H}_{(-)})
							  \Big],
\end{align*}
where $\one_m$ denotes the identity matrix in $\M_m(\C)$, and in the last line, we simply write $\ell$ and $\err$ to denote their canonical extensions  $\ell \otimes I$ and $\err \otimes I$. Since ${\err} (\widetilde{\bf H}_{(\pm)}) \geq 0$ and $ (V \otimes I)^* {\ell} (\widetilde{\bf H}_{(\mp)}) (V \otimes \one_m) \geq 0$, it is now evident that the right-hand side is non-positive. An analogous argument applies if we replace $\partial$ by $\partial_*$, and therefore,
\begin{align*}
 \sE^{(m)}({\bf H}_+, {\bf H}_-) 
  	= \sum_{i,j = 1}^m  \sE([{\bf H}_+]_{ij}, [{\bf H}_-]_{ij}) \leq 0 \ .
\end{align*}
In summary, this proves that $\sE^{(m)}$ is a Dirichlet form for all $m\in \N$, and hence that $\sE$ is completely Dirichlet. 
\end{proof}

Evidently, the sum of a finite set of conservative completely Dirichlet forms on $\cA$ is a 
conservative completely Dirichlet form. Thus, we may construct a large class of conservative completely 
Dirichlet forms by taking sums of forms of the type considered in Theorem~\ref{thm:Dirichlet}.  
In the remainder of this section, we consider such a conservative, completely Dirichlet form and 
the associated QMS $\cP_t$. 

It will be convenient going forward to streamline our notation. In the rest of this section we are working in the framework specified as follows:

\begin{definition}\label{ass:setup}
Let $\cA$ be a finite-dimensional von Neumann algebra $\cA$ endowed with a faithful tracial positive linear functional $\tau$. A {\em differential structure} on $\cA$ consists of the following:
 
\begin{enumerate}
\item A finite index set $\cJ$, and for each $j\in \cJ$, a finite
dimensional von Neumann algebra $\cB_j$ endowed with a faithful tracial positive linear functional $\tau_j$.
\item For each $j\in \cJ$, a pair $(\ell_j,\err_j)$ of unital $*$-homomorphisms from $\cA$ to $\cB_j$ such that for each $A\in \cA$ and each $j\in \cJ$, $\tau_j(\ell_j(A)) = \tau_j(\err_j(A)) = \tau(A)$, and a non-zero $V_j\in \cB_j$. 
\item  It is further required that for each $j\in \cJ$, there is a unique $j^*$ such that $V_j^* = V_{j^*}$, hence $\{ V_j \}_{j \in \cJ} = \{ V_j^* \}_{j \in \cJ}$ and $\cB_{j^*} = \cB_j$.  Moreover, for $j\in \cJ$ and $A_1, A_2 \in \cA$,
\begin{align}\label{eq:symmetry-equiv}
\tau_j[ V_j^* \ell_j (A_1) V_j \err_j(A_2) ] 
 =  \tau_j[  V_j^* \err_{j^*}(A_1) V_j \ell_{j^*} (A_2)  ]  \ .
\end{align}
\item An invertible density matrix $\sigma \in \Dens_+$, such that, for each $j\in \cJ$, $V_j$ is an eigenvector of the relative modular operator $\Delta_{\ell_j(\sigma),\err_j(\sigma)}$ on $\cB_j$
with
\begin{equation}\label{relmod3}
\Delta_{\ell_j(\sigma),\err_j(\sigma)}(V_j) = e^{-\omega_j}V_j 
\end{equation}
for some $\omega_j \in \R$.
\end{enumerate}
Then for each $j\in \cJ$, we define the linear operator $\partial_j: \cA \to \cB_j$ by
\begin{align}\label{eq:partial-j}
 \partial_{j} A := V_j \err_j(A) - \ell_j(A) V_j
\end{align}
for $A \in \cA$, and set
\begin{align*}
\nabla A := (\partial_j A)_{j \in \cJ} \in \cB\ ,
\qquad \cB =\prod_{j \in \cJ}  \cB_j\ .
\end{align*}
We refer to $\nabla A$ as the gradient of $A$, or derivative of $A$, with respect to the differential structure on $\cA$ defined above. 
We will denote the differential structure by the triple $(\cA, \nabla, \sigma)$.
\end{definition}

For $s \in [0,1]$ we endow $\cB_j$ with the inner product 
\begin{align*}
\ip{B_1, B_2}_{s,j} := \tau_j[B_1^* \ell_j(\sigma^s) B_2 \err_j(\sigma^{1-s})] \ .
\end{align*}
The most relevant case for our purposes is $s = \frac12$, in which case we write
\begin{align*}
\KMSj{B_1, B_2} := \ip{B_1, B_2}_{1/2,j} \ .
\end{align*}
It follows immediately from Theorem~\ref{thm:Dirichlet} that 
\begin{equation}\label{eq:Dirichlet}
\sE(A_1,A_2) := 
	  \sum_{j \in \cJ} \KMSj{\partial_{j} A_1, \partial_{j} A_2} 
	 \end{equation}
is a conservative completely Dirichlet form on $L^2_{\rm KMS}(\cA_h, \sigma)$. 

\begin{remark} As we have seen earlier in this section, {\it (3)} ensures that the sesquilinear form $\sE$ defined by 
\eqref{eq:Dirichlet} is real  and leads to the symmetry condition \eqref{eq:omega-symmetry}, and then {\it (4)} ensures that $\sE$ is completely Dirichlet.
\end{remark}

Having the gradient $\nabla$ at our disposal, we can define a corresponding divergence operator by trace duality. 
For $\BB = (B_j)_{j\in \cJ} \in \cB$ we shall use the notation
\begin{align}\label{eq:def-div}
 \dive \BB = - \sum_{j\in \cJ}   \partial_j^\dagger B_j \ .
\end{align}

\begin{proposition}\label{prop:sigma-adjoint}
Let $s \in [0,1]$.
The adjoint of the differential operator $\partial_j :  (\cA,\ip{\cdot,\cdot}_s) \to  (\cB_j,\ip{\cdot,\cdot}_{s,j})$ is given by 
\begin{align}\label{eq:sigma-adjoint-s}
	 \partial_{j,\sigma}^{\dagger,(s)} B
  = e^{-s\omega_j} \err_j^\dagger(V_j^*B)
       - e^{(1-s)\omega_j} \ell_j^\dagger(B V_j^*) \ .
\end{align}
In particular, the adjoint of the operator $\partial_j :  L_{\rm KMS }^2(\cA,\sigma) \to  L_{{\rm KMS},j}^2(\cB_j,\sigma)$ is given by 
\begin{align}\label{eq:sigma-adjoint}
	 \partial_{j,\sigma}^\dagger B
  = e^{-\omega_j/2} \err_j^\dagger(V_j^*B)
       - e^{\omega_j/2} \ell_j^\dagger(B V_j^*)
\end{align}
for $B \in \cB_j$.
\end{proposition}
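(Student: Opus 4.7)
The plan is to compute $\ip{\partial_j A, B}_{s,j}$ directly and manipulate it into the form $\ip{A, C}_s$ by reading off $C = \partial_{j,\sigma}^{\dagger,(s)} B$. Since the second identity \eqref{eq:sigma-adjoint} is just \eqref{eq:sigma-adjoint-s} at $s = 1/2$, it suffices to prove the general formula.

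By definition,
\begin{align*}
\ip{\partial_j A, B}_{s,j}
  = \tau_j\bigl[(V_j \err_j(A))^* \ell_j(\sigma^s) B \err_j(\sigma^{1-s})\bigr]
  - \tau_j\bigl[(\ell_j(A) V_j)^* \ell_j(\sigma^s) B \err_j(\sigma^{1-s})\bigr].
\end{align*}
The key ingredient is Lemma \ref{prop:symmetry-properties}, which (via \eqref{eq:modular-group}) gives the intertwining relation $\ell_j(\sigma^t) V_j = e^{-\omega_j t} V_j \err_j(\sigma^t)$ for all $t \in \R$; taking adjoints and using that $\ell_j, \err_j$ are $*$-homomorphisms yields $V_j^* \ell_j(\sigma^t) = e^{-\omega_j t} \err_j(\sigma^t) V_j^*$ and $\err_j(\sigma^t) V_j^* = e^{\omega_j t} V_j^* \ell_j(\sigma^t)$.

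In the first term, I would apply $V_j^* \ell_j(\sigma^s) = e^{-\omega_j s} \err_j(\sigma^s) V_j^*$, collapse the two adjacent $\err_j$-factors via the homomorphism property into $\err_j(\sigma^{1-s} A^* \sigma^s)$ after cyclicity, and then use the duality $\tau_j[\err_j(X)Y] = \tau[X \err_j^\dagger(Y)]$ coming from $\tau$-compatibility. In the second term, I would similarly move $\err_j(\sigma^{1-s})$ past $V_j^*$ using $\err_j(\sigma^{1-s}) V_j^* = e^{\omega_j(1-s)} V_j^* \ell_j(\sigma^{1-s})$ after a cyclic rotation of $\tau_j$, combine the two $\ell_j$-factors into $\ell_j(\sigma^{1-s} A^* \sigma^s)$, and then apply $\tau_j[\ell_j(X)Y] = \tau[X \ell_j^\dagger(Y)]$.

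After these manipulations and a final cyclic rearrangement of $\tau$, the two terms become
\begin{align*}
 e^{-\omega_j s} \tau\bigl[A^* \sigma^s \err_j^\dagger(V_j^* B) \sigma^{1-s}\bigr]
  - e^{\omega_j(1-s)} \tau\bigl[A^* \sigma^s \ell_j^\dagger(B V_j^*) \sigma^{1-s}\bigr],
\end{align*}
which is precisely $\ip{A,\, e^{-s\omega_j}\err_j^\dagger(V_j^* B) - e^{(1-s)\omega_j} \ell_j^\dagger(B V_j^*)}_s$, establishing \eqref{eq:sigma-adjoint-s}. The only real bookkeeping obstacle is to keep track of left- versus right-placement of the $V_j$, $V_j^*$ and of the fractional powers $\sigma^s$, $\sigma^{1-s}$, and to invoke the correct side of the intertwining relation each time; everything else is cyclicity of $\tau_j$ together with Remark \ref{rem:homo}.
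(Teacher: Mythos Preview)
Your proposal is correct and follows essentially the same computation as the paper. The only cosmetic difference is the order of operations: you apply the intertwining relation $V_j^* \ell_j(\sigma^t) = e^{-\omega_j t} \err_j(\sigma^t) V_j^*$ first and then invoke the duality $\tau_j[\err_j(X)Y] = \tau[X\,\err_j^\dagger(Y)]$, whereas the paper applies duality first and then pulls out the $\sigma$-powers via the bimodule identity \eqref{eq:dual-algebra} before using the intertwining---both routes arrive at the same formula with the same ingredients.
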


\begin{proof}
For $A \in \cA$ we obtain using \eqref{eq:dual-algebra} and \eqref{eq:modular-group},
\begin{align*}
	&  \ip{\partial_j A , B}_{s,j}
\\&  =  \tau_j\big[ \big(V_j \err_j(A) - \ell_j(A) V_j\big)^* \ell_j(\sigma^{s}) B \err_j(\sigma^{1-s})\big] 
\\&
  =  \tau_j\big[ \err_j(A)^*V_j^*  \ell_j(\sigma^{s}) B \err_j(\sigma^{1-s}) 
			- \ell_j(A)^*   \ell_j(\sigma^{s}) B \err_j(\sigma^{1-s}) V_j^* \big] 
\\&  =  \tau\big[ A^* \err_j^\dagger\big(V_j^*  \ell_j(\sigma^{s}) B \err_j(\sigma^{1-s})\big) 
	 - A^* \ell_j^\dagger\big(  \ell_j(\sigma^{s}) B \err_j(\sigma^{1-s}) V_j^*\big) \big] 
\\&  =  \tau\Big[ A^* \sigma^{s} \Big( \err_j^\dagger\big( \err_j(\sigma^{-s})V_j^*  \ell_j(\sigma^{s}) B \big) 
	 -  \ell_j^\dagger\big(  B \err_j(\sigma^{1-s}) V_j^* \ell_j(\sigma^{s-1})\big) \Big) \sigma^{1-s} \Big]
\\&  =  \tau \big[  A^* \sigma^{s} \big( e^{-s\omega_j} \err_j^\dagger(V_j^* B)
	 -  e^{(1-s)\omega_j} \ell_j^\dagger(B V_j^*) \big) \sigma^{1-s} \big] 
\\&  =  \ip{A,  e^{-s\omega_j}   \err_j^\dagger(V_j^* B)
	 -  e^{(1-s)\omega_j} \ell_j^\dagger(B V_j^*) }_s \ ,
\end{align*}
which proves \eqref{eq:sigma-adjoint-s}.
\end{proof}

The following result provides an explicit expression for $\cL$.

\begin{proposition}\label{prop:generator-formula}
The operator $\cL$ associated to the Dirichlet form \eqref{eq:Dirichlet} is given by
\begin{align*}
 \cL A 
& =  \sum_{j \in \cJ}
  	e^{-\omega_j/2} \err_j^\dagger\Big( - \err_j(A) V_j^* V_j + 2 V_j^* \ell_j(A) V_j  - V_j^*  V_j \err_j(A) \Big)
 \\& = \sum_{j \in \cJ} 
  	e^{\omega_j/2} \ell_j^\dagger\Big(V_j \err_j(A) V_j^* - \ell_j(A) V_j V_j^* \Big) 
  -  e^{-\omega_j/2} \err_j^\dagger\Big(V_j^*  V_j \err_j(A) - V_j^*\ell_j(A) V_j\Big)
\end{align*}
for $A \in \cA$. Its Hilbert space adjoint with respect to $L^2(\cA, \tau)$ is given by
\begin{align*}
 \cL^\dagger \rho 
 & 
 = \sum_{j \in \cJ} 
  	e^{-\omega_j/2} \Big(  - \err_j^\dagger\big(\err_j(\rho) V_j^* V_j \big) + 2 \ell_j^\dagger\big(V_j \err_j(\rho) V_j^*\big) - \err_j^\dagger\big( V_j^* V_j \err_j(\rho) \big)  \Big)
 \\& 
 = \sum_{j \in \cJ}
  	e^{\omega_j/2} \Big( \err_j^\dagger\big(V_j^* \ell_j(\rho) V_j\big) - \ell_j^\dagger\big(\ell_j(\rho) V_j V_j^* \big)  \Big)
  \\& \qquad\quad  
  -  e^{-\omega_j/2} \Big( \err_j^\dagger \big(V_j^*  V_j \err_j(\rho) \big) 
  					 - \ell_j^\dagger \big( V_j \err_j(\rho) V_j^* \big) \Big)
\end{align*}
for $\rho \in \cA$. 
\end{proposition}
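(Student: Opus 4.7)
The plan is to obtain both displayed formulas by expanding the identity $\cL = -\sum_{j \in \cJ} \partial_{j,\sigma}^\dagger \partial_j$, which is forced by the relations $\sE(A_1,A_2) = -\KMS{A_1, \cL A_2}$ and $\sE(A_1,A_2) = \sum_{j} \KMSj{\partial_j A_1, \partial_j A_2}$. First I would substitute $\partial_j A = V_j \err_j(A) - \ell_j(A) V_j$ into the KMS adjoint formula \eqref{eq:sigma-adjoint} of Proposition \ref{prop:sigma-adjoint} and distribute; after collecting the minus sign, this produces the \emph{second} displayed formula for $\cL A$ directly, as a sum of $e^{\omega_j/2} \ell_j^\dagger$ terms and $e^{-\omega_j/2} \err_j^\dagger$ terms.

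To derive the \emph{first} (symmetric) formula from the second, I would symmetrize by the involution $j \mapsto j^*$. Since $\{V_j\}_{j \in \cJ} = \{V_j^*\}_{j \in \cJ}$ with $V_{j^*} = V_j^*$ and $\omega_{j^*} = -\omega_j$ (Lemma \ref{prop:symmetry-properties}), relabeling $j \to j^*$ in the $\ell_j^\dagger$-summand of the second formula rewrites it as
\begin{align*}
\sum_{j \in \cJ} e^{-\omega_j/2}\, \ell_{j^*}^\dagger\bigl(V_j^* \err_{j^*}(A) V_j - \ell_{j^*}(A) V_j^* V_j\bigr) \ .
\end{align*}
Read at index $j^*$, the hypothesis \eqref{eq:symmetry-equiv} becomes $\tau_j[V_j \ell_{j^*}(A_1) V_j^* \err_{j^*}(A_2)] = \tau_j[V_j \err_j(A_1) V_j^* \ell_j(A_2)]$. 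Testing this trace identity against an arbitrary $C \in \cA$ through the pairing $\tau[C^*\,\cdot\,]$ (and, for the $V_j^*V_j$ piece, using \eqref{eq:dual-algebra} to pull $A$ out of $\ell_{j^*}$ and $\err_j$) yields the key operator identities
\begin{align*}
\ell_{j^*}^\dagger\bigl(V_j^* \err_{j^*}(A) V_j\bigr) = \err_j^\dagger\bigl(V_j^* \ell_j(A) V_j\bigr) \ , \qquad
\ell_{j^*}^\dagger\bigl(\ell_{j^*}(A) V_j^* V_j\bigr) = \err_j^\dagger\bigl(\err_j(A) V_j^* V_j\bigr) \ .
\end{align*}
Substituting these and merging with the original $\err_j^\dagger$-summand then consolidates everything into $\sum_{j} e^{-\omega_j/2}\, \err_j^\dagger\bigl(-\err_j(A) V_j^* V_j + 2 V_j^* \ell_j(A) V_j - V_j^* V_j \err_j(A)\bigr)$, which is the first formula.

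For $\cL^\dagger$ I would take the Hilbert--Schmidt adjoint of each summand, using $(\ell_j^\dagger)^\dagger = \ell_j$, $(\err_j^\dagger)^\dagger = \err_j$, cyclicity of $\tau_j$, and the module identity \eqref{eq:dual-algebra} to absorb $A$ into $\ell_j(\cdot)$ or $\err_j(\cdot)$ whenever it appears there. As a representative computation,
\begin{align*}
\tau\bigl[\bigl(\err_j^\dagger(V_j^* \ell_j(A) V_j)\bigr)^* \rho\bigr]
  = \tau_j\bigl[V_j^* \ell_j(A^*) V_j \err_j(\rho)\bigr]
  = \tau_j\bigl[\ell_j(A^*) V_j \err_j(\rho) V_j^*\bigr]
  = \tau\bigl[A^* \ell_j^\dagger\bigl(V_j \err_j(\rho) V_j^*\bigr)\bigr] \ ,
\end{align*}
so the adjoint of $A \mapsto \err_j^\dagger(V_j^* \ell_j(A) V_j)$ is $\rho \mapsto \ell_j^\dagger(V_j \err_j(\rho) V_j^*)$; the remaining terms transcribe analogously. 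Applying this term-by-term to the first (respectively second) formula for $\cL$ delivers the first (respectively second) formula for $\cL^\dagger$.

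The main obstacle is the bookkeeping in the symmetrization step: correctly reindexing \eqref{eq:symmetry-equiv} at $j^*$ (which interchanges $V_j \leftrightarrow V_j^*$ and $(\ell_j,\err_j) \leftrightarrow (\ell_{j^*},\err_{j^*})$) and promoting the resulting trace identities into the operator identities above. Everything else reduces to routine application of Proposition \ref{prop:sigma-adjoint}, the module identities \eqref{eq:dual-algebra}, and cyclicity of $\tau$ and $\tau_j$.
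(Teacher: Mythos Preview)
Your proposal is correct and follows essentially the same approach as the paper: derive the second formula for $\cL$ by expanding $-\sum_j \partial_{j,\sigma}^\dagger \partial_j$ via Proposition~\ref{prop:sigma-adjoint}, then obtain the first formula from the symmetry condition \eqref{eq:symmetry-equiv} together with $\omega_{j^*} = -\omega_j$, and finally compute $\cL^\dagger$ term by term. The paper's proof is terser (it simply says the first expression ``is obtained using \eqref{eq:symmetry-equiv} and the fact that $\omega_{j^*} = -\omega_j$'' and that the formulas for $\cL^\dagger$ ``follow by direct computation''), whereas you have spelled out the operator identities and the representative adjoint calculation explicitly.
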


\begin{proof}
Using Proposition \ref{prop:sigma-adjoint} we obtain
\begin{align*}
  \cL A & = - \sum_{j \in \cJ}  \partial_{j,\sigma}^\dagger \partial_j A
  	 \\ & = - \sum_{j \in \cJ} e^{-\omega_j/2} \err_j^\dagger\big(V_j^*\partial_j A\big)
		       - e^{\omega_j/2}  \ell_j^\dagger\big((\partial_j A) V_j^*\big) 
	 \\ & = - \sum_{j \in \cJ} e^{-\omega_j/2} \err_j^\dagger\big(V_j^*  V_j \err_j(A) - V_j^*\ell_j(A) V_j\big)
		       - e^{\omega_j/2}  \ell_j^\dagger\big(V_j \err_j(A) V_j^* - \ell_j(A) V_j V_j^* \big) \ ,
\end{align*}
which yields the second expression for $\cL$. The first expression is  obtained using \eqref{eq:symmetry-equiv} and the fact that $\omega_{j^*} = - \omega_j$.
The formulas for $\cL^\dagger$ follow by direct computation.
\end{proof}

The following result is an immediate consequence.

\begin{proposition}\label{prop:domains-Dirichlet}
We have
\begin{align*}
  \Ker(\cL) = \Ker(\nabla)   \quad \text{and} \quad
  \Ran(\cL^\dagger) = \Ran(\dive) \ .
\end{align*}
\end{proposition}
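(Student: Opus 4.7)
The plan is to first establish $\Ker(\cL) = \Ker(\nabla)$ by an energy argument, and then deduce $\Ran(\cL^\dagger) = \Ran(\dive)$ from standard Hilbert-space duality.

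For the first identity, I would begin by recording the divergence-form representation $\cL A = -\sum_{j \in \cJ} \partial_{j,\sigma}^\dagger \partial_j A$, which drops out of the definition of $\cL$ as the operator associated to the Dirichlet form \eqref{eq:Dirichlet} combined with the KMS-adjoint formula from Proposition \ref{prop:sigma-adjoint} (it is the first identity in the proof of Proposition \ref{prop:generator-formula}). The inclusion $\Ker(\nabla) \subseteq \Ker(\cL)$ is then immediate. For the converse, if $\cL A = 0$ then pairing against $A$ in the KMS inner product gives
\begin{align*}
0 = -\KMS{A, \cL A} = \sum_{j \in \cJ} \KMSj{\partial_j A, \partial_j A}.
\end{align*}
Each $\KMSj{\cdot,\cdot}$ is a genuinely positive definite inner product on $\cB_j$: since $\ell_j$ and $\err_j$ are unital $*$-homomorphisms they preserve invertibility, so $\sigma \in \Dens_+$ yields invertible positive elements $\ell_j(\sigma^{1/4})$ and $\err_j(\sigma^{1/4})$, and one may rewrite
\begin{align*}
\KMSj{B,B} = \tau_j\bigl[\bigl(\ell_j(\sigma^{1/4}) B \err_j(\sigma^{1/4})\bigr)^*\bigl(\ell_j(\sigma^{1/4}) B \err_j(\sigma^{1/4})\bigr)\bigr];
\end{align*}
faithfulness of $\tau_j$ plus invertibility forces $B = 0$ whenever the norm vanishes. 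Hence every term in the sum is zero, so $\partial_j A = 0$ for all $j \in \cJ$, i.e. $A \in \Ker(\nabla)$.

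For the second identity, I would use the fact that by \eqref{eq:def-div} we have $\dive = -\nabla^\dagger$, where $\nabla^\dagger : \cB \to \cA$ is the $\tau$-adjoint of $\nabla : \cA \to \cB$. Since $\dagger$ always denotes the $\tau$-adjoint and all spaces are finite-dimensional, the standard range-kernel orthogonality applied in $L^2(\cA,\tau)$ yields
\begin{align*}
\Ran(\cL^\dagger) = \Ker(\cL)^{\perp_\tau} \quad\text{and}\quad \Ran(\dive) = \Ran(\nabla^\dagger) = \Ker(\nabla)^{\perp_\tau}.
\end{align*}
Combined with the first identity, this gives $\Ran(\cL^\dagger) = \Ran(\dive)$.

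The only substantive step is the strict positivity of the KMS inner products $\KMSj{\cdot,\cdot}$, which is where the hypothesis $\sigma \in \Dens_+$ is used; the remaining two ingredients (integration by parts and range-kernel duality) are essentially bookkeeping. Note that the argument never requires $A$ to be self-adjoint, since the sesquilinear extension of $\sE$ to all of $\cA$ remains positive semidefinite.
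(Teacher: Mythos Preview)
Your proof is correct and follows essentially the same approach as the paper: divergence form for one inclusion, the energy identity $-\KMS{\cL A, A} = \sum_j \KMSj{\partial_j A, \partial_j A}$ for the other, and range--kernel duality for the second statement. You simply supply more detail where the paper is terse, in particular the explicit verification that $\KMSj{\cdot,\cdot}$ is positive definite and the orthogonal-complement formulation of the duality step.
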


\begin{proof}
The identity $\cL A = - \sum_{j \in \cJ}  \partial_{j,\sigma}^\dagger \partial_j A
$ implies that $\Ker(\nabla) \subseteq \Ker(\cL)$. The reverse inclusion follows from the identity $- \KMS{\cL A, A} = \sum_{j \in \cJ} \KMSj{\partial_j A, \partial_j A}$. The identification of the ranges is a consequence of duality.
\end{proof}

\begin{proposition}\label{prop:D-detailed-balance}
For $s \in [0,1]$ and $A_1, A_2 \in \cA$ we have the identity
\begin{align*}
 - \ip{\cL A_1, A_2}_s = \sum_{j \in \cJ} e^{(s-\frac12)\omega_j} \ip{\partial_j A_1, \partial_j A_2}_{s,j} \ .
\end{align*}
Consequently, the operator $\cL$ is self-adjoint with respect to $\ip{\cdot, \cdot}_s$ for all $s \in [0,1]$, and in particular, the detailed balance condition holds in the sense of Definition \ref{detbaldef}.
\end{proposition}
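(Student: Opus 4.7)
The plan is to identify the factor $e^{(s-\frac12)\omega_j}$ as precisely what converts the $s$-adjoint $\partial_{j,\sigma}^{\dagger,(s)}$ of Proposition~\ref{prop:sigma-adjoint} into the KMS-adjoint $\partial_{j,\sigma}^\dagger$. Substituting the explicit formula \eqref{eq:sigma-adjoint-s} and multiplying through by $e^{(s-\frac12)\omega_j}$ gives
\[
e^{(s-\frac12)\omega_j}\,\partial_{j,\sigma}^{\dagger,(s)} B
 = e^{-\omega_j/2}\err_j^\dagger(V_j^* B) - e^{\omega_j/2}\ell_j^\dagger(B V_j^*)
 = \partial_{j,\sigma}^\dagger B,
\]
which is exactly \eqref{eq:sigma-adjoint}. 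This one-line operator identity is the only real computation in the whole proof.

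I would then apply the defining property of the $s$-adjoint, moving $\partial_j$ into the first slot of $\ip{\cdot,\cdot}_s$,
\[
\ip{\partial_j A_1,\partial_j A_2}_{s,j} = \ip{\partial_{j,\sigma}^{\dagger,(s)}\partial_j A_1,\,A_2}_s,
\]
multiply by $e^{(s-\frac12)\omega_j}$, sum over $j\in\cJ$, and invoke the divergence-form representation $\cL = -\sum_{j\in\cJ}\partial_{j,\sigma}^\dagger\partial_j$ that was established (and used) at the opening of the proof of Proposition~\ref{prop:generator-formula}. Combining the two steps gives
\[
\sum_{j\in\cJ} e^{(s-\frac12)\omega_j}\ip{\partial_j A_1,\partial_j A_2}_{s,j}
 = \Bigl\langle \sum_{j\in\cJ}\partial_{j,\sigma}^\dagger\partial_j A_1,\,A_2\Bigr\rangle_s
 = -\ip{\cL A_1,A_2}_s,
\]
which is the stated identity.

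For the self-adjointness claim I would perform the same manipulation but send $\partial_j$ into the second slot instead, obtaining $\ip{\partial_j A_1,\partial_j A_2}_{s,j} = \ip{A_1,\partial_{j,\sigma}^{\dagger,(s)}\partial_j A_2}_s$, so that the very same weighted sum also equals $-\ip{A_1,\cL A_2}_s$. Comparing the two expressions yields $\ip{\cL A_1,A_2}_s = \ip{A_1,\cL A_2}_s$ for every $s\in[0,1]$, and specializing to $s=0$ produces the GNS self-adjointness required by Definition~\ref{detbaldef}. No step poses a genuine obstacle; the weights were evidently chosen to make a one-line duality argument work. The only point that demands minor care is the sesquilinear-conjugation bookkeeping when moving an operator across $\ip{\cdot,\cdot}_s$, so that we truly compare $\ip{\cL A_1,A_2}_s$ with $\ip{A_1,\cL A_2}_s$ rather than with their complex conjugates.
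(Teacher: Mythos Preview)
Your proposal is correct and is precisely the ``direct computation using \eqref{eq:sigma-adjoint-s}'' that the paper alludes to: the key observation $e^{(s-\frac12)\omega_j}\partial_{j,\sigma}^{\dagger,(s)} = \partial_{j,\sigma}^\dagger$ together with $\cL = -\sum_j \partial_{j,\sigma}^\dagger \partial_j$ is exactly the intended route. The sesquilinear bookkeeping you flag is harmless here since both $\ip{\cdot,\cdot}_s$ and $\ip{\cdot,\cdot}_{s,j}$ are Hermitian (by cyclicity of the traces), so moving $\partial_j$ to either slot is legitimate.
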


\begin{proof}
This follows from a direct computation using \eqref{eq:sigma-adjoint-s}.
\end{proof}

\section{Examples} 
\label{sec:examples}

We provide a number of examples of conservative completely Dirichlet forms defined in the context of a differential structure on a finite-dimensional von Neumann algebra $\cA$ equipped with a faithful state $\sigma$.

\subsection{Generators of quantum Markov semigroups in Lindblad form} We have seen in Section \ref{sec:QMS} that generators of quantum Markov semigroups satisfying detailed balance (see Theorem \ref{thm:structure}) naturally fit into the framework of Section \ref{sec:Dirichlet} by taking $\cA = \cB_j = B(\sH)$ and $\ell_j = \err_j = I_\cA$.

The framework also includes quantum Markov semigroups on subalgebras $\cA$ of $B(\sH)$. In this case we set $\cB_j = B(\sH)$, so that the situation in which $V_j \notin \cA$ is covered. Such a situation also arises naturally in the following example.

\subsection{Classical reversible Markov chains in the Lindblad framework}
For $n \geq 2$, Let $\{e_1, \ldots, e_n\}$ be an orthonormal basis of $\R^n$ and set $E_{k p} = \phys{e_k}{e_p}$. 
Note that $E_{k p}E_{r s} = \delta_{p r} E_{ks}$ and $E_{k p}^* = E_{p k}$.
We consider the algebra $\cA \subseteq \M_n(\C)$ consisting of all operators that are diagonal in the basis given by the $e_i$'s: 
\begin{align*}
	\cA = \bigg\{ \sum_{i = 1}^n \psi_i E_{ii} \ : \ 
			\psi_1, \ldots, \psi_n \in \C \bigg\} \	.
\end{align*}
Furthermore, for each $k, p$, we set $\cB_{k p} = \M_n(\C)$, and we endow $\cA$ and $\cB_{k p}$ with the usual normalized trace given by $\tau(B) = \frac1n \sum_i \ip{B e_i, e_i}$.
Let $\ell_{k p} = \err_{k p}$ be the canonical embedding from $\cA$ into $\cB_{k p}$. It then follows that $\ell_{k p}^\dagger(B) = \err_{k p}^\dagger(B) = \sum_{i} \ip{Be_i,e_i}E_{ii}$.

For $k \neq p$, let $q_{k p} \geq 0$ be the transition rate of a continuous-time 
Markov chain on $\{1, \ldots, n\}$. We set $V_{k p} = 2^{-1/2}(q_{k p} q_{p k})^{1/4} 
E_{k p}$ so that $V_{k p}^* = V_{p k}$. Moreover, it is immediate to see that the identity in \eqref{eq:symmetry-equiv} holds. 
Fix positive weights $\pi_1, \ldots, \pi_n$. It then follows that $\sigma = \sum_i \pi_i E_{ii}$ satisfies \eqref{relmod3} with $\omega_{k p} = \log(\pi_p /\pi_k)$.

By Proposition \ref{prop:generator-formula}, the operator $\cL$ associated to the Dirichlet form \eqref{eq:Dirichlet} is given by
\begin{align*}
 \cL A 
 & = \frac12 \sum_{k \neq p} \sqrt{\frac{q_{k p} q_{p k} \pi_k}{\pi_p}}
    \Big(  E_{k p}^*[A, E_{k p}] 
 + [E_{k p}^*, A] E_{k p}  \Big) 
\end{align*} 
for $A \in \cA$. 
Assume now that $\pi_1, \ldots, \pi_n$ satisfy the classical detailed balance condition, i.e.,  $\pi_k q_{kp} = \pi_p q_{p k}$ for all $k, p$. Then we have 
\begin{align*}
 \cL A 
 = \frac12 \sum_{k \neq p} q_{p k}   
 		 \Big(  E_{k p}^*[A, E_{k p}] 
 + [E_{k p}^*, A] E_{k p}  \Big) \ .
 \end{align*}
More explicitly, 
\begin{align*}
\cL\Big(\sum_i \psi_i E_{ii}\Big) = \sum_{k,p} q_{k p} (\psi_p - \psi_k) E_{kk} \ .
\end{align*}
Hence, under the identification $(\psi_1, \ldots, \psi_n) \leftrightarrow \sum_{i = 1}^n \psi_i E_{ii}$, the operator $\cL$ corresponds to the operator $\cL_{\rm M}$ given by $(\cL_{\rm M} \psi)_k = \sum_{p} q_{kp} (\psi_p - \psi_k)$, which is 
the generator of the continuous-time Markov chain on $\{1, \ldots, n\}$ with transition rates from $k$ to $p$ given by $q_{kp}$.

\subsection{Another approach to reversible Markov chains}\label{ex:Markov}

Let us now give an alternative way to put reversible Markov chains in the framework of this paper, which corresponds to the construction in \cite{Ma11,Mie11a}.
As above, let $q_{kp} \geq 0$ be the transition rate of a continuous-time Markov chain on $\{1, \ldots, n\}$, and assume that the positive weights $\pi_1, \ldots, \pi_n$ satisfy the detailed balance condition $\pi_k q_{kp} = \pi_p q_{p k}$.
Let $\cJ := \{ (k,p) \ : \ q_{kp} > 0 \}$ be the edge set of the associated graph.
We consider the (non-)commutative probability spaces $(\cA,\tau)$ and $(\cB_{kp}, \tau_{kp})$ given by
\begin{align*}
 \cA := \ell_n^\infty\ ,
  \quad  \tau(A) := \sum_{i=1}^n A_i \pi_i\ , 
  \qquad \cB_{kp} = \C\ ,
  \quad  \tau_{kp}(B) := \frac{B}{2} \pi_k q_{kp} \ .
\end{align*} 
The operators $\partial_{kp}$ are determined by
$V_{kp} = 1$, $\ell_{kp} (A) = A_k$, and $\err_{kp} (A) = A_p$ for $A \in \ell_n^\infty$.
It follows that $\ell_{kp}^\dagger(B) = \frac{B}{2} q_{kp} e_k$ and  $\err_{kp}^\dagger(B) = \frac{B}{2} q_{pk} e_p$, where $e_k$ is the $k$'th unit vector in $\ell_n^\infty$. Therefore, 
\begin{align*}
 \partial_{kp} A = A_p - A_k \quad \text{and} \quad
 \partial_{kp}^\dagger B = 
    \frac{B}{2} ( q_{pk} e_p - q_{kp} e_k ) \ .
\end{align*}
Moreover, as $\sigma = \one$ satisfies  \eqref{relmod3} with $\omega_{kp} = 0$, it is readily checked that this defines a differentiable structure in the sense of Definition \ref{ass:setup}. 
Using Proposition \ref{prop:generator-formula}, we infer that the operator $\cL$ is given by
\begin{align*}
 (\cL A)_k &=  \sum_{p} q_{kp} (A_p - A_k) \ ,
\end{align*}
so that $\cL$ is indeed the generator of the continuous time Markov chain with transition rates $q_{kp}$.

\subsection{The discrete hypercube}

For a given Markov chain generator, there are different ways to write the generator in the framework of this paper, and it is often useful to represent $\cL$ using set $\cJ$ that is smaller than in Example \ref{ex:Markov}; see also \cite{FM16}. 
We illustrate this for the simple random walk on the discrete hypercube $\cQ^n = \{-1, 1\}^n$. Set $\cJ = \{1, \ldots, n\}$, and let $s_j:\cQ^n \to \cQ^n$ define the $j$-th coordinate swap defined by $s_j (x_1,  \ldots, x_n) = (x_1, \ldots, - x_j, \ldots, x_n)$.

Consider the (non-)commutative probability spaces $(\cA,\tau)$ and $(\cB_j, \tau_j)$ determined by
\begin{align*}
 \cA := \ell^\infty(\cQ^n)\ ,
  \quad  \tau(A) := 2^{-n}\sum_{x \in \cQ^n} A(x) \ , 
  \qquad \cB_{j} = \cA \ ,
  \quad  \tau_{j} := \tau \ .
\end{align*}  
Furthermore, set $\sigma = \one$ and $\omega_j = 0$.
We define
$V_{j} = 1$, $\ell_{j} = I$, and $\err_j A(x) = A(s_j x)$, so that $\partial_j A(x) = A(s_j x) - A(x)$. This defines a differential structure with $\sigma = \one$. It follows that $\err_j^\dagger = \err_j$ and 
\begin{align*}
 \partial_j A(x) =  \partial_j^\dagger A(x) = A(s_j x) - A(x)\ . 
\end{align*}
It follows that
\begin{align*}
 \cL A(x) = 2\sum_{j=1}^n (A(s_j x) - A(x))\ ,
\end{align*}
which is the discrete Laplacian on $\cQ^n$ that generates the simple random walk.

\subsection{The Fermionic Ornstein-Uhlenbeck equation}
\label{sec:fermion}

A non-commutative example in which it is advantageous to work with $\ell_j$ not equal to the identity, is the Fermionic Ornstein-Uhlenbeck operator, for which a non-commutative transport metric was constructed in \cite{CM12}.
Let $(Q_1, \ldots, Q_n)$ be self-adjoint operators on a finite-dimensional Hilbert space satisfying the \emph{canonical anti-commutation relations} (CAR):
\begin{align*}
 Q_i Q_j + Q_j Q_i = 2 \delta_{ij}\ .
\end{align*}
The \emph{Clifford algebra} $\Cln$ is the $2^n$-dimensional algebra generated by $\{Q_j\}_{j=1}^n$. Let $\Gamma : \Cln \to \Cln$ be the principle automorphism on $\Cln$, i.e., the unique algebra homomorphism  satisfying $\Gamma(Q_j) = - Q_j$ for all $j$. Let $\tau$ be the canonical trace on $\Cln$, determined by $\tau(Q_1^{\alpha_1} \cdots Q_n^{\alpha_n}) := \delta_{0,|\aa|}$ for all $\aa = (\alpha_j)_j \in \{0,1\}^n$, where $|\aa| := \sum_j \alpha_j$.
We then set $\cJ = \{1, \ldots, n\}$, $\cA := \cB_{j} := \Cln$, and $\tau_{j} := \tau$.
Furthermore we set $V_{j} = Q_j$, $\ell_{j} = \Gamma$, and $\err_{j} = I$. 
Then $\ell_j^\dagger = \Gamma$, and the operators $\partial_j$ and $\partial_j^\dagger$ are skew-derivations given by
\begin{align*}
 \partial_j A =  Q_j A - \Gamma(A) Q_j\ , \qquad
 \partial_j^\dagger A =  Q_j A + \Gamma(A) Q_j\ .
\end{align*}
Taking $\sigma = \one$ and $\omega_j = 0$ we obtain
\begin{align*}
 \cL A = 2\sum_{j=1}^n (Q_j A Q_j - A)\ ,
\end{align*}
which implies that $\cL = - 4\mathcal{N}$, where $\mathcal{N}$ is the \emph{fermionic number operator} (see \cite{CL1,CM12} for more details).

\subsection{The depolarizing channel}
\label{sec:depolarizing-generator}

This is one of the simplest non-commutative examples. Given a non-commutative probability space $(\cA, \tau)$ and $\gamma > 0$, the generator is defined by 
\begin{align}\label{eq:depolarizing-generator}
 \cL A = \gamma\big(\tau[A] \one - A\big) \ .
\end{align}

In the case where $\cA = \cB_j = \M_2(\C)$ and $\tau$ is the usual trace, this operator can be written in Lindblad form using the Pauli matrices
\begin{align*}
  \sigma_x = \sigmax\ ,\qquad
  \sigma_y = \sigmay\ ,\qquad
  \sigma_z = \sigmaz\ .
\end{align*}
We set $V_j = \sqrt{\gamma} \sigma_j$ and $\ell_j = r_j = I_\cA$, so that the differential operators $\partial_x, \partial_y$ and $\partial_z$ are the commutators
\begin{align*}
	\partial_j A = \sqrt{\gamma} [\sigma_j, A]
\end{align*}
for $j \in \{ x, y, z\}$.
This yields a differentiable structure with $\sigma = \one$ and $\omega_j = 0$, and a direct computation shows that $\cL$ is indeed given by \eqref{eq:depolarizing-generator}.

\section{Non-commutative functional calculus}

Let $\cA$ be a finite-dimensional $C^*$-algebra. Let $A, B \in \cA$ be self-adjoint with spectral decompositions
\begin{align}\label{eq:spec-decomp}
 A = \sum_i \lambda_i  A_i\quad \text{ and } \quad
 B = \sum_k \mu_k  B_k
\end{align}
 for some eigenvalues $\lambda_i, \mu_k \in \R$ and spectral projections $ A_i,  B_k \in \cA$ satisfying $A_i A_k = \delta_{ik} A_i$, $B_i B_k = \delta_{ik} B_i$, and
$\sum_i  A_i = \sum_k  B_k = \one_{\cA}$.
For a function $\theta : \spec(A) \times \spec(B) \to \R$ we define $\theta(A,B) \in \cA \times \cA$ to be the \emph{double operator sum}
 \begin{align}\label{eq:funct-calc}
    \theta(A,B) = \sum_{i,k} \theta(\lambda_i,\mu_k)
     A_i \ot  B_k\ .
  \end{align}
  
\begin{remark}\label{rem:doi}
A systematic theory of infinite-dimensional generalizations of $\theta(A,B)$ has been developed under the name of \emph{double operator integrals}, see, e.g., \cite{BirSol03,PotSuk11}.
\end{remark}
Double operator sums are compatible with the usual functional calculus, in the sense that
\begin{align}\label{eq:fc}
 \theta(f(A), g(B)) = (\theta \circ (f,g))(A,B)
\end{align} 
for all $f : \spec(A) \to \R$, $g : \spec(B) \to \R$ and $\theta : \R \times \R \to \R$. Moreover, recalling that the contraction operator has been defined in \eqref{eq:contr}, we have
\begin{align}\label{eq:fc2}
   \theta_2(A,B) \# \big( \theta_1(A,B) \# C\big) =  (\theta_2 \cdot \theta_1)(A,B) \# C
\end{align}
The straightforward proof of these identities is left to the reader.
  
Let $\cI \subseteq \R$ be an interval. Of particular relevance for our purposes is the special case where $\theta = \delta f : \cI \times \cI \to \R$ is the \emph{discrete derivative} of a differentiable function $f : \cI \to \R$, defined by 
  \begin{align}\label{eq:qd}
   \delta f(\lambda,\mu) := 
\left\{ \begin{array}{ll}
\displaystyle   \frac{f(\lambda) - f(\mu)}{\lambda-\mu}\ , \quad & \lambda \neq \mu\ ,\\
  f'(\lambda)\ ,        \quad &\lambda = \mu\ .
\end{array} \right.
\end{align}
Using the contraction operator we can write the following useful chain rule:
\begin{align}\label{eq:chain-rule-one}
 f(A) - f(B) = \delta f(A,B) \# (A - B)\ .
\end{align}
We can also formulate a chain rule for the operator $\partial_V$ defined in \eqref{pardef}, which plays a crucial role in the sequel.

 \begin{proposition}[Chain rule for $\partial_V$]\label{prop:chain-rule}
Let $A \in \cA_h$. For any function $f : \spec(A) \to \R$ we have
 \begin{align} \label{eq:chain-rule-two}
  \partial_V f(A)  = \delta f(\ell(A),\err(A)) \# \partial_V A\ .
 \end{align}
 \end{proposition}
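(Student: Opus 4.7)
The plan is to verify the identity by direct computation using the spectral decomposition of $A$, exploiting that $\ell$ and $\err$ are unital $*$-homomorphisms so that they transport spectral resolutions.

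First I would write $A = \sum_i \lambda_i A_i$ with $A_i A_k = \delta_{ik}A_i$ and $\sum_i A_i = \one$, and observe that applying $\ell$ (resp.\ $\err$) gives a resolution $\ell(A) = \sum_i \lambda_i \ell(A_i)$ (resp.\ $\err(A) = \sum_k \lambda_k \err(A_k)$), where the families $\{\ell(A_i)\}_i$ and $\{\err(A_k)\}_k$ are again orthogonal projections summing to $\one_{\cB}$. Consequently
\begin{align*}
\delta f(\ell(A),\err(A)) \,\#\, \partial_V A
 = \sum_{i,k} \delta f(\lambda_i,\lambda_k)\,
   \ell(A_i)\bigl( V\err(A) - \ell(A)V\bigr)\err(A_k)\ .
\end{align*}

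Next I would push $\ell(A)$ and $\err(A)$ past the projections using the eigen-relations $\ell(A_i)\ell(A) = \lambda_i \ell(A_i)$ and $\err(A)\err(A_k) = \lambda_k \err(A_k)$, obtaining
\begin{align*}
\delta f(\ell(A),\err(A)) \,\#\, \partial_V A
 = \sum_{i,k} \delta f(\lambda_i,\lambda_k)\,(\lambda_k-\lambda_i)\,\ell(A_i)V\err(A_k)\ .
\end{align*}
Substituting the definition \eqref{eq:qd} of $\delta f$ (the boundary case $\lambda_i=\lambda_k$ contributes nothing since the factor $\lambda_k-\lambda_i$ vanishes) collapses this to $\sum_{i,k}\bigl(f(\lambda_k)-f(\lambda_i)\bigr)\ell(A_i)V\err(A_k)$.

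Finally I would telescope: using $\sum_i \ell(A_i) = \ell(\one)=\one_{\cB}$ and $\sum_k \err(A_k)=\one_{\cB}$, the double sum separates into $V\sum_k f(\lambda_k)\err(A_k) - \sum_i f(\lambda_i)\ell(A_i)V = V\err(f(A)) - \ell(f(A))V$, which is exactly $\partial_V f(A)$. The only potentially delicate point is that degenerate pairs $\lambda_i=\lambda_k$ do not cause issues, but since the factor $\lambda_k-\lambda_i$ annihilates the contribution before $\delta f$ is invoked, nothing has to be said about $f'$; there is no real obstacle, the argument is a clean spectral calculation relying on the homomorphism property of $\ell$ and $\err$.
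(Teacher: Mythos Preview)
Your proof is correct and follows essentially the same approach as the paper: both arguments write the spectral decomposition of $A$, transport it through the homomorphisms $\ell$ and $\err$, and reduce $\delta f(\ell(A),\err(A))\#\partial_V A$ to $\sum_{i,k}(f(\lambda_k)-f(\lambda_i))\ell(A_i)V\err(A_k)$. The only cosmetic difference is that the paper first expands $\partial_V A$ itself into the double sum $\sum_{i,k}(\lambda_k-\lambda_i)\ell(A_i)V\err(A_k)$ before applying $\delta f$, whereas you insert $\partial_V A$ directly and use the eigen-relations to extract the scalar factor $\lambda_k-\lambda_i$; the computations are otherwise identical.
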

 
\begin{proof}
Let $A = \sum_i \lambda_i  A_i$ be the spectral decomposition with eigenvalues $\lambda_i \in \R$ and spectral projections $ A_i \in \cA$ satisfying $A_i A_k = \delta_{ik} A_i$ and $\sum_i  A_i = \one_\cA$.
Since $\ell(\one_\cA) = \err(\one_\cA) = \one_{\cA}$ by assumption, it follows that $\sum_i \ell(A_i) = \sum_i \err(A_i) = \one_{\cB}$ for all $j$. Therefore, 
\begin{align*}
 \partial_V A
  & = \sum_i \lambda_i 
  		\Big(V  \err(A_i) - \ell(A_i) V\Big)
\\& = \sum_{i,k} (\lambda_k - \lambda_i)
		\ell(A_i) V \err(A_k)\ .	
\end{align*}
Consequently, since $ 
\ell(A_p) \ell(A_i) 
= \ell(A_p A_i)
= \delta_{pi}\ell(A_i)$ and $\err(A_k) \err(A_m) =  \delta_{km}\err(A_k)$,
\begin{equation}\begin{aligned}\label{eq:chain-comp}
\delta f(\ell(A),\err(A)) \# \partial A
&  = \sum_{i,k,p,m} \delta f(\lambda_p,\lambda_m) 
						(\lambda_k - \lambda_i)
  				 \ell(A_p A_i) V \err(A_k A_m)
\\&= \sum_{i,k} \delta f(\lambda_i,\lambda_k) 
						(\lambda_k - \lambda_i)
  				 \ell(A_i) V \err(A_k)
\\&= \sum_{i,k} \big(f(\lambda_k) - f(\lambda_i) \big)
  				 \ell(A_i) V \err(A_k)
\\&= \partial_V f(A) \ .
\end{aligned}\end{equation}
\end{proof}

\begin{remark}\label{rem:non-differentiable}
Note that the function $f$ is not required to be differentiable in Proposition \ref{prop:chain-rule}. In this case, $\delta f$ is not defined on the diagonal, but the second line in \eqref{eq:chain-comp} shows that its diagonal value is irrelevant.
\end{remark}

The following well-known chain rule can also be formulated in terms of $\delta f$. 

\begin{proposition}\label{prop:chain-t}
Let $A: \cI \to \cA_{h}$ be differentiable on an interval $\cI \subseteq \R$ and let $f$ be a real-valued function on an interval containing $\spec(A(t))$ for all $t \in \cI$. Then: 
\begin{align}
 \label{eq:chain-t}
 \ddt f(A(t)) &= \delta f\big(A(t), A(t)\big) \# A'(t)\ ,\\
 \label{eq:chain-trace}
  \ddt \tau\big[f(A(t))\big] &= \tau\big[f'(A(t))A'(t)\big]\ .
\end{align}
\end{proposition}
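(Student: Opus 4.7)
The plan is to derive \eqref{eq:chain-t} from the finite-difference identity \eqref{eq:chain-rule-one} by passing to the limit, and then to obtain \eqref{eq:chain-trace} by tracing and exploiting cyclicity.

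For \eqref{eq:chain-t}, I would apply \eqref{eq:chain-rule-one} with arguments $A(t+h)$ and $A(t)$ to write
\begin{equation*}
	\frac{f(A(t+h))-f(A(t))}{h} = \delta f\big(A(t+h),A(t)\big)\#\frac{A(t+h)-A(t)}{h}.
\end{equation*}
As $h\to 0$, the difference quotient $\tfrac{1}{h}(A(t+h)-A(t))$ converges to $A'(t)$ by hypothesis, and the bilinearity of $\#$ is automatic. So the result reduces to the continuity assertion $\delta f(A(t+h),A(t)) \to \delta f(A(t),A(t))$ in $\cA\otimes\cA$ as $h\to 0$. This is the main technical obstacle, since although eigenvalues depend continuously on the operator, spectral projections may jump at eigenvalue crossings, so one cannot pass to the limit term-by-term in \eqref{eq:funct-calc}. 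Assuming $f \in C^1$ so that $\delta f$ is continuous on a neighborhood of the relevant spectra, I would resolve this via a Cauchy integral representation
\begin{equation*}
	\delta f(B,C) = -\frac{1}{(2\pi)^2}\oint\!\oint \delta f(z,w)\,(z-B)^{-1}\otimes(w-C)^{-1}\, dz\, dw
\end{equation*}
over rectifiable contours enclosing $\spec(B)\cup\spec(C)$; continuity of $(B,C)\mapsto\delta f(B,C)$ then follows from the locally uniform continuity of the resolvent map off the spectrum. An algebraic alternative would be to verify \eqref{eq:chain-t} first for polynomials, by induction on degree using $\ddt(A^n) = \sum_{k=0}^{n-1}A^k A' A^{n-1-k}$ and checking that this matches $\delta(p_n)(A,A)\#A'$ with $p_n(x)=x^n$, and then to extend by Weierstrass approximation on a compact interval containing $\spec(A(t))$ for all nearby $t$.

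For \eqref{eq:chain-trace}, I would take the trace of \eqref{eq:chain-t} and expand via the spectral decomposition $A(t)=\sum_i \lambda_i A_i$ to get
\begin{equation*}
	\tau\big[\delta f(A,A)\# A'\big] = \sum_{i,k}\delta f(\lambda_i,\lambda_k)\,\tau[A_i A' A_k].
\end{equation*}
By cyclicity of $\tau$ and the orthogonality $A_kA_i=\delta_{ik}A_i$, one has $\tau[A_iA'A_k]=\tau[A_kA_iA']=\delta_{ik}\tau[A_iA']$, so only diagonal terms survive. Since $\delta f(\lambda_i,\lambda_i)=f'(\lambda_i)$ by definition, the sum collapses to $\sum_i f'(\lambda_i)\tau[A_i A'] = \tau[f'(A)A']$, yielding \eqref{eq:chain-trace}.
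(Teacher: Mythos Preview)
Your proposal is correct and follows essentially the same route as the paper: the paper likewise obtains \eqref{eq:chain-t} by passing to the limit in \eqref{eq:chain-rule-one}, and derives \eqref{eq:chain-trace} from the definition of $\delta f$ together with cyclicity of $\tau$. Your write-up supplies the continuity argument for $\delta f(A(t+h),A(t))\to\delta f(A(t),A(t))$ and the spectral computation that the paper leaves implicit, but the underlying strategy is identical.
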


\begin{proof}
The first assertion follows by passing to the limit in \eqref{eq:chain-rule-one}. The second identity follows easily using the definition of $\delta f$ and the cyclicity of the trace.
\end{proof}

\begin{example}\label{ex:entropy}
We illustrate the proposition above with a well-known computation that will be useful below.
For $\rho, \sigma \in \Dens_+(\cA)$ and $\nu \in \cA_h$ with $\tau[\nu] = 0$, set $\rho_t := \rho + t \nu$. It follows from \eqref{eq:chain-trace} that 
\begin{align}\label{eq:Ent-first}
\partial_t \Ent_\sigma(\rho_t) = \tau[\nu(\log \rho_t - \log \sigma)] \ .
\end{align}
Since $\delta \log(r,s) = \frac{\log r - \log s}{r - s} = \int_0^\infty (x + r)^{-1} (x + s)^{-1} \dd x$, we have $\delta \log(R,S) = \int_0^\infty (x + R)^{-1} \otimes (x + S)^{-1} \dd x$. Thus, \eqref{eq:chain-t} yields
\begin{align}\label{eq:Ent-expansion}
  \partial_t^2 \Ent_\sigma(\rho_t) =  \int_0^\infty \tau \Big[\nu \frac{1}{x + \rho_t} \nu \frac{1}{x + \rho_t} \Big] \dd x \ .
\end{align}
\end{example}

We finish this subsection with some useful properties of the sesquilinear form $(A,B)\mapsto  \ip{ A, \phi(R,S) \# B }_{L^2(\tau)}$ on $\cA$.

\begin{lemma}\label{lem:Schur-bound}
Let $R, S \in \cA$ be self-adjoint and let
 $\phi : \spec(R) \times \spec(S) \to \R_+$ be given. Then, for all $A \in \cA$,
\begin{align*}
 \ip{ A, \phi(R,S) \# A }_{L^2(\tau)} 
 \geq 0\ .
\end{align*}
\end{lemma}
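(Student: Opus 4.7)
The plan is to diagonalize $R$ and $S$ simultaneously in the double operator sum representation and then observe that each resulting term is manifestly non-negative.

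First, I would write the spectral decompositions $R = \sum_i \lambda_i R_i$ and $S = \sum_k \mu_k S_k$, where $\{R_i\}$ and $\{S_k\}$ are families of orthogonal projections in $\cA$ summing to $\one_\cA$. By the definition of the double operator sum \eqref{eq:funct-calc} and the contraction operator \eqref{eq:contr}, I obtain
\begin{equation*}
\phi(R,S) \# A = \sum_{i,k} \phi(\lambda_i, \mu_k)\, R_i A S_k \ ,
\end{equation*}
and consequently
\begin{equation*}
\ip{A, \phi(R,S) \# A}_{L^2(\tau)} = \sum_{i,k} \phi(\lambda_i, \mu_k)\, \tau[A^* R_i A S_k] \ .
\end{equation*}

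Next, I would show that each individual summand is non-negative. Since $\phi(\lambda_i, \mu_k) \geq 0$ by hypothesis, it suffices to verify that $\tau[A^* R_i A S_k] \geq 0$. Using that $R_i$ is a self-adjoint projection ($R_i = R_i^2 = R_i^* R_i$), that $S_k$ is likewise a self-adjoint projection, and that $\tau$ is tracial, I compute
\begin{equation*}
\tau[A^* R_i A S_k] = \tau[S_k A^* R_i^* R_i A S_k] = \tau\bigl[(R_i A S_k)^* (R_i A S_k)\bigr] \geq 0 \ ,
\end{equation*}
where the positivity in the last step comes from faithfulness (or at least positivity) of $\tau$ applied to $X^* X$ with $X = R_i A S_k \in \cA$. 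Summing over $i, k$ gives the claim.

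There is no real obstacle here: the argument is purely algebraic and reduces everything to the elementary fact that $\tau[X^* X] \geq 0$, the only ingredients being the spectral theorem (which holds trivially in finite dimensions), the multiplicativity of projections in the decompositions, and cyclicity of $\tau$. No smoothness or differentiability hypothesis on $\phi$ is required; the argument in fact shows that $\phi(R,S) \# (\cdot)$ is a positive operator on $L^2(\cA,\tau)$ whenever $\phi \geq 0$, which is the non-commutative analogue of the classical Schur multiplier positivity suggested by the name of the lemma.
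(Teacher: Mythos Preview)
Your proof is correct and essentially identical to the paper's own argument: both expand via the spectral decompositions of $R$ and $S$, reduce to the sum $\sum_{i,k} \phi(\lambda_i,\mu_k)\,\tau[A^* R_i A S_k]$, and then use $\tau[A^* R_i A S_k] = \tau[(R_i A S_k)^*(R_i A S_k)] \geq 0$. Your write-up merely spells out the use of the projection property and cyclicity in slightly more detail.
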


\begin{proof}
Using the spectral decompositions $R = \sum_i \lambda_i R_i$ and $S = \sum_k \mu_k S_k$ we may write
\begin{align*}
 \ip{ A, \phi(R,S) \# A }_{L^2(\tau)}  
& = \sum_{i,k} \phi(\lambda_i,\mu_k)
				 \tau [A^* R_i A S_k] \ .
\end{align*}
Since $\tau [A^* R_i A S_k] = \tau[(R_i A S_k)^*(R_i A S_k)] \geq 0$ the result follows.
\end{proof}

\begin{proposition}\label{prop:ip}
Let $R, S \in \cA$ be self-adjoint and suppose that
 $\phi : \spec(R) \times \spec(S) \to \R$ is strictly positive.
Then the sequilinear form 
\begin{align*}
	(A,B)\mapsto  \ip{ A, \phi(R,S) \# B }_{L^2(\tau)}
\end{align*}
defines a scalar product on $\cA$.
\end{proposition}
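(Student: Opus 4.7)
The plan is to verify the three defining properties of a scalar product in turn: sesquilinearity, conjugate symmetry, and positive definiteness. Sesquilinearity is immediate from bilinearity of $\#$ and sesquilinearity of $\ip{\cdot,\cdot}_{L^2(\tau)}$, so the real content lies in the other two.

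First, I would establish conjugate symmetry by expanding in terms of the spectral decompositions $R = \sum_i \lambda_i R_i$ and $S = \sum_k \mu_k S_k$. By the definition \eqref{eq:funct-calc} of the double operator sum,
\begin{align*}
\ip{A, \phi(R,S)\# B}_{L^2(\tau)} = \sum_{i,k} \phi(\lambda_i,\mu_k)\, \tau[A^* R_i B S_k].
\end{align*}
Using cyclicity of $\tau$, self-adjointness of the spectral projections $R_i$ and $S_k$, and the fact that $\phi$ is real-valued, one checks directly that this quantity equals $\overline{\ip{B,\phi(R,S)\# A}_{L^2(\tau)}}$.

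Next, positive semi-definiteness is exactly Lemma~\ref{lem:Schur-bound}, whose proof rewrites
\begin{align*}
\ip{A, \phi(R,S)\# A}_{L^2(\tau)} = \sum_{i,k} \phi(\lambda_i,\mu_k)\, \tau[(R_i A S_k)^*(R_i A S_k)]
\end{align*}
as a non-negative combination of traces of positive operators. The step that needs the strict positivity hypothesis is the definiteness. If the sum above vanishes, then since $\phi(\lambda_i,\mu_k) > 0$ for every pair $(i,k)$ and $\tau[(R_i A S_k)^*(R_i A S_k)] \geq 0$ (and equals zero only when $R_i A S_k = 0$, because $\tau$ is faithful), we conclude $R_i A S_k = 0$ for all $i,k$. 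Summing over $i$ and $k$ and using $\sum_i R_i = \sum_k S_k = \one$ then forces $A = \sum_{i,k} R_i A S_k = 0$.

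I do not expect any serious obstacle here: the result is essentially bookkeeping built on the Schur-type expansion already recorded in the proof of Lemma~\ref{lem:Schur-bound}. The only point that requires any care is verifying conjugate symmetry, which depends on $\phi$ being real-valued (otherwise the form would only be Hermitian after conjugating $\phi$ as well).
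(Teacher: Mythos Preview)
Your proof is correct and follows essentially the same route as the paper: spectral expansion for conjugate symmetry (using cyclicity of $\tau$ and that $\phi$ is real-valued), and Lemma~\ref{lem:Schur-bound} for positivity. The only cosmetic difference is in the definiteness step: the paper observes that $\phi \geq \eps > 0$ on the finite set $\spec(R)\times\spec(S)$ and applies Lemma~\ref{lem:Schur-bound} to $\phi - \eps$ to get the quantitative bound $\ip{A,\phi(R,S)\# A}_{L^2(\tau)} \geq \eps\|A\|_{L^2(\tau)}^2$, whereas you argue termwise that each $R_i A S_k$ must vanish; both are equally valid.
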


\begin{proof}
Consider the spectral decompositions $R = \sum_i \lambda_i  R_i$ and $S = \sum_k \mu_k S_k$.
Using basic properties of the trace, we obtain 
\begin{align*}
   		\overline{ \tau[A^* R_i B S_k ]} 
		  =  \tau[(A^* R_i B S_k)^*] 
		  =  \tau[ S_k  B^* R_i A] 
		  =  \tau[ B^* R_i A S_k] \ ,
\end{align*}
and therefore, since $\phi$ is real-valued,
\begin{align*}
\overline{ \ip{ A, \phi(R,S) \# B }_{L^2(\tau)}}
&  = \sum_{i,k} \phi(\lambda_i, \mu_k) 
  		\overline{ \tau[A^* R_i B S_k ]}
\\&  = \sum_{i,k} \phi(\lambda_i, \mu_k) 
  		\tau[ B^* R_i A S_k] 
 	 =  \ip{ B, \phi(R,S) \# A }_{L^2(\tau)}\ .
\end{align*}
Moreover, since $\phi$ is strictly positive on the finite set $\spec(R) \times \spec(S)$, we have $\phi \geq \eps$ for some $\eps > 0$. Thus Lemma \ref{lem:Schur-bound}  implies that $ \ip{ A, \phi(R,S) \# A } \geq \eps \| A \|_{L^2(\tau)}^2$. It follows that $ \ip{ A, \phi(R,S) \# A } \geq 0$, with equality if and only if $A = 0$.  
\end{proof}

\subsection*{Higher order expressions}

In the sequel we will use versions of Propositions \ref{prop:chain-rule} and \ref{prop:chain-t} for higher order derivatives, for which we need to introduce more notation.  
For $x = (x_1, \ldots, x_n) \in \R^n$ and $1 \leq i \leq m \leq n$ we will use the shorthand notation $x_i^m = (x_i, x_{i+1}, \ldots, x_{m-1}, x_m)$.
For a function $\phi : \R^n \to \R$ and $j = 1, \ldots, n$ we consider the discrete derivative $\delta_j \phi : \R^{n+1} \to \R$ defined by
\begin{align}
	\label{eq:qpd}
 \delta_j \phi(x_1^{j-1}, (x_j, \tilde x_j), x_{j+1}^n)
  := \delta \phi(x_1^{j-1}, \cdot, x_{j+1}^n) (x_j, \tilde x_j)\ ,
\end{align} 
where $\delta$ denotes the discrete derivative given by \eqref{eq:qd}.
Iterating this procedure, one arrives at expressions that can be naturally encoded using rooted planar binary trees. Indeed, for a given function $\theta : \R \times \R \to \R$ and $x, y \in \R$, we write 
\begin{align*}
\theta(x,y) & = \theta\big(\treelr{\bullet}{x}{y}\big) \ .
\end{align*}
The left and right child in this tree correspond to the variables $x$ and $y$ in $\theta(x,y)$ respectively. More complicated trees are then constructed by iteratively replacing one of the children $\bullet$ by $\2$. This will correspond to discrete differentiation with respect to the respective variables, e.g., 
\begin{align}
\label{eq:tree-1}
\delta \theta((x,y),z)
 & = \delta\theta\Big(\treellr{\bullet}{z}{\bullet}{x}{y}\Big) 
 := 
 {\frac{\theta(x, z)- \theta(y, z)}{x - y}}  \ , \\
 \label{eq:tree-2} 
\delta \theta(x,(y,z)) 
& 
 = \delta\theta\Big(\treelrr{\bullet}{x}{\bullet}{y}{z}\Big) 
 := 
 {\frac{\theta(x, y)- \theta(x, z)}{y - z}} \ , \\
\delta \theta \big(\big((x,y),z\big),w\big)
& =       \delta\theta\Big(\treelllr{\bullet}{\bullet}{\bullet}{x}{y}{z}{w}\Big) 
  := 
 \frac{{\frac{\theta(x, w)- \theta(z, w)}{x - z}} 
        - {\frac{\theta(y, w)- \theta(z, w)}{y - z}} } 
        {x - y}
\ .
\label{eq:tree-3}
\end{align}
The middle expressions are valid whenever the variables are distinct. 
If some of the variables are equal, finite differences are to be interpreted as derivatives. For instance, if $x = y \neq z$ in \eqref{eq:tree-3}, we have
\begin{align*}
\delta\theta\Big( \treelllr{\bullet}{\bullet}{\bullet}{x}{x}{z}{w} \Big)
&  = \partial_x  \Big(\delta\theta \Big( \treellr{\bullet}{w}{\bullet}{x}{z} \Big)\Big)
\\& = \frac{(x-z) \partial_1 \theta(x,w) - (\theta(x,w) - \theta(z,w))}{(x-z)^2}\ .
\end{align*}
If $x = y = z$ in \eqref{eq:tree-3}, then the formula above becomes 
\begin{align*}
 \delta\theta\Big(\treelllr{\bullet}{\bullet}{\bullet}{x}{x}{x}{w} \Big) = \frac12 \partial_1^2\theta(x,w)\ .
\end{align*}

The functional calculus \eqref{eq:funct-calc} generalizes naturally to functions of several variables.
Let $A^{(1)}, \ldots, A^{(n)}$ be self-adjoint elements in $\cA$ with spectral decompositions
\begin{align*}
 A^{(k)} = \sum_i \lambda_i^{(k)}  A_i^{(k)}
\end{align*}
for some eigenvalues $\lambda_i^{(k)} \in \R$ and spectral projections $A_i^{(k)} \in \cA$ with 
$\sum_i  A_i^{(k)} =  \one_{\cA}$.
For a function $\theta : \spec(A^{(1)}) \times \cdots \times \spec(A^{(n)}) \to \R$ we define $\theta(A_1,  \ldots ,A_n) \in \cA^{\ot n}$ to be the \emph{multiple operator sum}
 \begin{align}\label{eq:mult-funct-calc}
    \theta(A_1, \ldots, A_n) = \sum_{i_1, \ldots, i_n} \theta\big(\lambda_{i_1}^{(1)},\ldots, \lambda_{i_n}^{(n)}\big)
     A_{i_1}^{(1)} \ot \cdots \ot  A_{i_n}^{(n)}\ .
\end{align}
In the sequel we shall apply this definition to $\delta \theta$ in order to define expressions such as $\delta \theta(A,B)$. The tree notation is useful when considering generalizations of the contraction operation \eqref{eq:contr} to higher order tensor products. Each of the nodes that is a parent can be used to indicate the position at which an operator for contraction is inserted: e.g., we write
\begin{align*}
  \theta\big(\widetreelr{B}{A^{(1)}}{A^{(2)}}\big) &= \theta(A^{(1)},A^{(2)}) \# B = 
   \sum_{i, j} 
     \theta\big(\lambda_{i}^{(1)}, \lambda_{j}^{(2)}\big)
  	     A_{i}^{(1)} B A_{j}^{(2)} \ , \\
 \delta\theta\Big( \widetreellr{\bullet}{A^{(3)}}{B}{A^{(1)}}{A^{(2)}} \Big)
& =  \sum_{i, j, k} 
    {\frac{\theta(\lambda_{i}^{(1)}, \lambda_{k}^{(3)})- \theta(\lambda_{j}^{(2)}, \lambda_{k}^{(3)})}{\lambda_{i}^{(1)} - \lambda_{j}^{(2)}}}  
  	     A_{i}^{(1)} B A_{j}^{(2)} \ot  A_{k}^{(3)} \ , \\
\delta\theta\Big( \widetreelrr{\bullet}{A^{(1)}}{B}{A^{(2)}}{A^{(3)}}  \Big)
& =  \sum_{i, j, k} 
    {\frac{\theta(\lambda_{i}^{(1)}, \lambda_{j}^{(2)})- \theta(\lambda_{i}^{(1)}, \lambda_{k}^{(3)})}{\lambda_{j}^{(2)} - \lambda_{k}^{(3)}}}  
  	     A_{i}^{(1)} \ot A_{j}^{(2)} B  A_{k}^{(3)} \ , \\
\end{align*}
where the fractions at the right-hand side are to be understood in the sense of limits if the denominator vanishes.
These expressions appear naturally in the following chain rule that will be useful in Section \ref{sec:Riemann}.

\begin{proposition}\label{prop:chain-double-simple}
Let $A, B : \cI \to \cA_{h}$ be differentiable on an interval $\cI \subseteq \R$, and let $\theta: \R \times \R \to \R$ be differentiable. Then: 
\begin{align*}
 \partial_t \theta(A_t, B_t) 
   =  \delta\theta\Big(\widetreellr{\bullet}{B_t}{\partial_t A_t}{A_t}{A_t} \Big)
     + \delta\theta\Big(\widetreelrr{\bullet}{A_t}{\partial_t B_t}{B_t}{B_t}\Big) \ .
\end{align*}
\end{proposition}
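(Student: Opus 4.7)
My plan is to reduce the two-variable statement to the one-variable Proposition~\ref{prop:chain-t} by splitting the increment at time $t+h$ as
\begin{align*}
 \theta(A_{t+h}, B_{t+h}) - \theta(A_t, B_t)
  = \Delta_1(h) + \Delta_2(h),
\end{align*}
where $\Delta_1(h) := \theta(A_{t+h}, B_{t+h}) - \theta(A_t, B_{t+h})$ and $\Delta_2(h) := \theta(A_t, B_{t+h}) - \theta(A_t, B_t)$, then analyzing each term separately.

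To handle $\Delta_1(h)$, I would freeze the second argument. Using the spectral decomposition $B_{t+h} = \sum_k \mu_k^{t+h} B_k^{t+h}$ and setting $f_k^{t+h}(x) := \theta(x, \mu_k^{t+h})$, the definition \eqref{eq:funct-calc} of a double operator sum factorizes as
\begin{align*}
 \theta(A, B_{t+h}) = \sum_k f_k^{t+h}(A) \otimes B_k^{t+h} \qquad (A \in \cA_h).
\end{align*}
Applying the scalar chain rule \eqref{eq:chain-rule-one} to each $f_k^{t+h}(A_{t+h}) - f_k^{t+h}(A_t)$ and unfolding the definition \eqref{eq:mult-funct-calc} of the multiple operator sum gives the discrete identity
\begin{align*}
 \Delta_1(h) = \delta_1 \theta\bigl((A_{t+h}, A_t), B_{t+h}\bigr) \# (A_{t+h} - A_t),
\end{align*}
where $\delta_1 \theta((a,a'),b) = \frac{\theta(a,b) - \theta(a',b)}{a-a'}$, with contraction inserted in the first slot.

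To pass to the limit $h\to 0$, I would use joint continuity of multiple operator sums in their operator arguments. This is automatic in finite dimensions: identifying $\theta(C,D) \in \cA \otimes \cA$ with $\widetilde\theta(C \otimes \one, \one \otimes D)$ for the ordinary continuous functional calculus of the commuting self-adjoint elements $C \otimes \one$ and $\one \otimes D$ of $\cA \otimes \cA$, continuity in $(C,D)$ reduces to continuity of the standard functional calculus. Since $\theta$ is differentiable, $\delta_1\theta$ extends continuously across the diagonal $a=a'$, so the analogous continuity holds for $\delta_1\theta$ as well. Dividing by $h$ and letting $h \to 0$ therefore yields
\begin{align*}
 \tfrac{1}{h}\Delta_1(h) \longrightarrow \delta_1 \theta\bigl((A_t, A_t), B_t\bigr) \# \partial_t A_t = \delta\theta\Big(\widetreellr{\bullet}{B_t}{\partial_t A_t}{A_t}{A_t}\Big).
\end{align*}

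An entirely symmetric argument, freezing $A_t$ and factorizing $\theta(A_t, B) = \sum_i A_i^t \otimes g_i^t(B)$ via the spectral decomposition of $A_t$, produces
\begin{align*}
 \tfrac{1}{h}\Delta_2(h) \longrightarrow \delta_2 \theta\bigl(A_t, (B_t, B_t)\bigr) \# \partial_t B_t = \delta\theta\Big(\widetreelrr{\bullet}{A_t}{\partial_t B_t}{B_t}{B_t}\Big),
\end{align*}
and adding the two limits gives the claimed identity. The only delicate point is the continuous dependence of the multiple operator sums on their arguments when eigenvalues collide and denominators in $\delta_1\theta, \delta_2\theta$ formally vanish; differentiability of $\theta$ ensures that these extensions remain continuous, so the whole argument reduces to the standard finite-dimensional functional calculus on $\cA \otimes \cA$.
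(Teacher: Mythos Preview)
Your proof is correct and follows essentially the same approach as the paper: split into two pieces where only one argument varies, freeze the other via its spectral decomposition, and reduce to the one-variable chain rule. The only cosmetic difference is that the paper first asserts the splitting $\partial_t \theta(A_t,B_t) = \partial_s|_{s=t}\theta(A_s,B_t) + \partial_s|_{s=t}\theta(A_t,B_s)$ and then applies the already-differentiated Proposition~\ref{prop:chain-t} to each piece, whereas you work with finite differences and \eqref{eq:chain-rule-one} first, then pass to the limit using continuity of the multiple operator sum; in effect you are also supplying a proof of that splitting identity rather than taking it for granted.
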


\begin{proof}
We have $\partial_t \theta(A_t, B_t) = \partial_s|_{s = t} \theta(A_s, B_t) + \partial_s|_{s = t} \theta(A_t, B_s)$. Since we can write $\theta(A_t,B_s) = \sum_{k} \theta(A_t,\mu_{s,k})\otimes F_{s,k}$, where $B_s = \sum_{k} \mu_{s,k} F_{s,k}$ denotes the spectral decomposition of $B_s$, the result follows by applying \eqref{eq:chain-t} from Proposition \ref{prop:chain-t} twice.
\end{proof}

Higher order derivatives can also be naturally expressed in terms of trees, but since this will not be needed in the sequel, we will not go into details here.

\section{Riemannian structures on the space of density matrices}

\label{sec:Riemann}

In this section we shall analyze a large class of Riemannian metrics on the space of density matrices. Throughout the section we fix a differentiable structure $(\cA, \nabla, \sigma)$ in the sense of Definition \ref{ass:setup}. The generator of the associated quantum Markov semigroup $(\cP_t)_t$ will be denoted by $\cL$.

\subsection{Riemannian structures on density matrices}
\label{sec:Riemann-dens}

Consider the $\R$-linear subspace
\begin{align*}
 \cA_0 := \Ran(\cL^\dagger) \cap \cA_h \ .
\end{align*} 
We shall study Riemannian structures on relatively open subsets of $\Dens_+$, the set of all strictly positive elements in $\Dens$. These subsets are of the form 
\begin{align*}
 \cM_\rho := \Big( \rho +  \cA_0 \Big) \cap \Dens_+ \ ,
\end{align*}
where $\rho \in \Dens_+$. At each point of $\cM_\rho$, the tangent space of $\cM_\rho$ is thus naturally given by $\cA_0$.

\begin{remark}\label{rem:ergodic}
Of special interest is the \emph{ergodic} case, i.e., the case where $\Ker(\cL) = \lin\{\one\}$. In this case we have $\cA_0 = \{ A \in \cA_h : \tau[A] = 0 \}$, and therefore $\cM_\rho = \Dens_+$ for all $\rho \in \Dens_+$.
\end{remark}
In order to define a Riemannian structure, we shall fix for each $j \in \cJ$ a function $\theta_j : [0,\infty) \times [0,\infty) \to \R$ satisfying the following properties:

\begin{assumption}\label{ass:theta}
For $j \in \cJ$ the functions $\theta_j : [0,\infty) \times [0,\infty) \to \R$ are continuous. Moreover, on $(0,\infty) \times (0,\infty)$, the function $\theta_j$ is $C^\infty$ and strictly positive, and we have the symmetry condition
\begin{align}\label{eq:theta-sym}
	\theta_j(r,s) = \theta_{j^*}(s,r) \ .
\end{align}
\end{assumption}
Recalling the definition of the double operator sum in \eqref{eq:funct-calc}, we will use the shorthand notation
\begin{align}\label{eq:iota}
 \hrho_j &= \theta_j(\ell_j(\rho), \err_j(\rho)) \in \cB_j \ot \cB_j\ , \qquad \hrho = (\hrho_j )_{j \in \cJ}\ \quad \text{for } \rho \in \Dens\ ,\\ 
 \lrho_j &= \frac{1}{\theta_j}(\ell_j(\rho), \err_j(\rho)) \in \cB_j \ot \cB_j\,,\qquad
 \lrho = (\lrho_j)_{j \in \cJ} \ \quad \text{for } \rho \in \Dens_+ \ .
\end{align}Let us now define the class of quantum transport metrics that we are interested in.
For $\rho \in \Dens$, we define the operator $\cK_\rho : \cA \to \cA$ by
\begin{align}\label{eq:def-K}
  \cK_\rho A := - \dive(\widehat\rho \# \nabla A) = \sum_{j \in \cJ} \partial_j^\dagger(\widehat\rho_j \# \partial_j A) \ ,
\end{align}
where we use the vector notation
$\widehat\rho \# \nabla A = (  \widehat\rho_j \# \partial_j A  )_{j \in \cJ}$ and we recall that the divergence operator has been defined in \eqref{eq:def-div}.
To define the Riemannian metric we need a lemma concerning the unique solvability of the continuity equation in the class of ``gradient vector fields''. Therefore we need to identify the kernel and the range of the linear operator $\cK_\rho$.

\begin{lemma}[Mapping properties of $\cK_\rho$]\label{lem:Ran-Ker}
For $\rho \in \Dens_+$ the operator $\cK_\rho$ is non-negative and self-adjoint on $L^2(\cA,\tau)$. Moreover,  we have
\begin{align}\label{eq:Ran-Ker}
 \Ran(\cK_\rho) = \Ran(\cL^\dagger) = \Ran(\dive)\ , \qquad
 \Ker(\cK_\rho) =  \Ker(\cL) = \Ker(\nabla)\ .
\end{align}
Furthermore, $\cK_\rho$ is real, i.e., for $A \in \cA$ we have $(\cK_\rho A)^* = \cK_\rho A^*$.
\end{lemma}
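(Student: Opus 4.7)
The plan is to establish the three assertions separately, working throughout from the bilinear form representation
\begin{align*}
\langle A,\cK_\rho B\rangle_{L^2(\cA,\tau)}=\sum_{j\in\cJ}\langle\partial_j A,\widehat\rho_j\#\partial_j B\rangle_{L^2(\cB_j,\tau_j)},
\end{align*}
which is immediate from the adjoint relation $\langle A,\partial_j^\dagger Y\rangle_{L^2(\cA,\tau)}=\langle\partial_j A,Y\rangle_{L^2(\cB_j,\tau_j)}$ applied termwise to the definition of $\cK_\rho$. Non-negativity is then immediate from Lemma~\ref{lem:Schur-bound} upon setting $A=B$: since $\rho\in\Dens_+$ and $\ell_j,\err_j$ are unital $*$-homomorphisms, $\spec(\ell_j(\rho))$ and $\spec(\err_j(\rho))$ lie in $(0,\infty)$ where $\theta_j$ is non-negative. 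For Hermitian symmetry, I would expand $(\widehat\rho_j\#X)^*=\theta_{j^*}(\err_j(\rho),\ell_j(\rho))\# X^*$ using $(L_iXR_k)^*=R_kX^*L_i$ together with the symmetry $\theta_j(r,s)=\theta_{j^*}(s,r)$, and then observe via the cyclicity of $\tau_j$ that $\tau_j[X^*\widehat\rho_j\# Y]=\tau_j[(\widehat\rho_j\# X)^* Y]$ summand-by-summand, which delivers $\langle A,\cK_\rho B\rangle=\overline{\langle B,\cK_\rho A\rangle}$.

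For the kernel and range identifications, Proposition~\ref{prop:domains-Dirichlet} already supplies $\Ker(\cL)=\Ker(\nabla)$ and $\Ran(\cL^\dagger)=\Ran(\dive)$, so it suffices to identify $\Ker(\cK_\rho)$ with $\Ker(\nabla)$ and $\Ran(\cK_\rho)$ with $\Ran(\dive)$. The inclusion $\Ker(\nabla)\subseteq\Ker(\cK_\rho)$ is immediate from the definition. For the reverse, if $\cK_\rho A=0$ then $\sum_j\langle\partial_j A,\widehat\rho_j\#\partial_j A\rangle=0$; the crucial input here is that $\rho\in\Dens_+$ forces $\theta_j$ to be \emph{strictly} positive on $\spec(\ell_j(\rho))\times\spec(\err_j(\rho))$ by Assumption~\ref{ass:theta}, so Proposition~\ref{prop:ip} promotes each term to a genuine scalar product and forces $\partial_j A=0$ for every $j$. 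Combining with self-adjointness then yields
\begin{align*}
\Ran(\cK_\rho)=\Ker(\cK_\rho)^\perp=\Ker(\nabla)^\perp=\Ran(\nabla^\dagger)=\Ran(\dive),
\end{align*}
where the final equality uses $\dive=-\sum_j\partial_j^\dagger$ together with the closedness of ranges in finite dimensions.

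For realness, the identity $(\cK_\rho A)^*=\cK_\rho A^*$ is equivalent, by $\mathbb C$-linearity and the decomposition $A=A_h+iA_a$ into self-adjoint and anti-self-adjoint parts, to the statement that $\cK_\rho$ maps $\cA_h$ into $\cA_h$. Given self-adjointness from the first step, this further reduces to checking that the sesquilinear form $(B,A)\mapsto\langle B,\cK_\rho A\rangle_{L^2(\cA,\tau)}$ is real-valued on $\cA_h\times\cA_h$. Using the bilinear form representation, I would expand $(\partial_j B)^*=\err_j(B)V_{j^*}-V_{j^*}\ell_j(B)$ and $\widehat\rho_j\#\partial_j A$ in the spectral bases of $\ell_j(\rho)$ and $\err_j(\rho)$, conjugate the resulting trace expression via $\overline{\tau_j[X]}=\tau_j[X^*]$, and show that after reindexing $j\leftrightarrow j^*$ and applying the trace identity~\eqref{eq:symmetry-equiv} together with the symmetry $\theta_j(r,s)=\theta_{j^*}(s,r)$, each summand is matched with a corresponding summand in the original sum.

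The main obstacle is precisely this realness step. In contrast to self-adjointness, which went through summand-by-summand using only the pointwise symmetry of $\theta_j$, realness demands genuine interplay between the indices $j$ and $j^*$: because the pairs $(\ell_j,\err_j)$ and $(\ell_{j^*},\err_{j^*})$ are not in general related by swapping entries, $(\partial_j A)^*$ admits no pointwise identification of the form $\pm\partial_{j'}A^*$. The condition~\eqref{eq:symmetry-equiv} built into the differential structure is an integrated symmetry that is designed exactly for this situation, and it only yields realness after summing over~$\cJ$.
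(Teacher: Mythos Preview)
Your proposal is correct and follows essentially the same route as the paper: self-adjointness and non-negativity via the bilinear form representation and Lemma~\ref{lem:Schur-bound}/Proposition~\ref{prop:ip}; the kernel via strict positivity of $\theta_j$ on $\spec(\ell_j(\rho))\times\spec(\err_j(\rho))$; the range by orthogonal complements; and realness by a spectral expansion, reindexing $j\leftrightarrow j^*$, and the symmetry~\eqref{eq:symmetry-equiv} together with $\theta_j(r,s)=\theta_{j^*}(s,r)$. The only cosmetic difference is that the paper verifies $\langle B,\cK_\rho A^*\rangle=\langle B,(\cK_\rho A)^*\rangle$ directly for arbitrary $A,B$, whereas you first reduce to real-valuedness of the form on $\cA_h\times\cA_h$---the underlying computation is the same.
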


\begin{proof}
For $A, B \in \cA$, Lemma \ref{prop:ip} yields
\begin{align*}
  \ip{\cK_\rho A, B}_{L^2(\tau)}
    & = \sum_{j \in \cJ} \ip{ \widehat\rho_j \# \partial_j A,\partial_j B }_{L^2(\tau_j)}
      = \sum_{j \in \cJ} \ip{\partial_j A, \widehat\rho_j \# \partial_j B }_{L^2(\tau_j)}  
      = \ip{A, \cK_\rho B}_{L^2(\tau)} \ ,
\end{align*}
hence $\cK_\rho$ is self-adjoint on $L^2(\cA, \tau)$.

The identities $\Ker(\cL) = \Ker(\nabla)$ and $\Ran(\cL^\dagger) = \Ran(\dive)$ have already been proved in Proposition \ref{prop:domains-Dirichlet}.
Clearly, $\Ker(\nabla) \subseteq \Ker(\cK_\rho)$. To prove the opposite inclusion, we note that since $\rho \in \Dens_+$, there exists $c > 0$ with $\theta_j|_{\spec(\rho)} \geq c > 0$ for all $j \in \cJ$. Lemma \ref{lem:Schur-bound} implies that 
\begin{align*}
\ip{ A, \cK_\rho A}_{L^2(\cA,\tau)} 
  = \sum_{j \in \cJ} \ip{\partial_j A, \hrho_j \# \partial_j A}_{L^2(\cB_j,\tau_j)}
  \geq c \sum_{j \in \cJ} \| \partial_j A \|_{L^2(\cB_j,\tau_j)}^2 \ ,
\end{align*}
from which we infer that $\Ker(\cK_\rho)\subseteq \Ker(\nabla)$. This proves the second identity in \eqref{eq:Ran-Ker}, and the nonnegativity of $\cK_\rho$ follows as well. The first identity in \eqref{eq:Ran-Ker} follows using elementary linear algebra, since the self-adjointness of $\cK_\rho$ in $L^2(\cA, \tau)$ yields
\begin{align*}
 \Ran(\cK_\rho) = ( \Ker(\cK_\rho) )^\perp
    =   (\Ker(\nabla))^\perp
    = \Ran(\dive)\ .
\end{align*}
To prove that $\cK_\rho$ preserves self-adjointness, we consider the spectral decomposition $\rho = \sum_k \lambda_k E_k$, and write $\theta_j^{km} := \theta_j(\lambda_k, \lambda_m)$ for brevity. 
We have
\begin{equation}\begin{aligned}\label{eq:Krho-adjoint}
	  &   \ip{B,\cK_\rho A^*}_{L^2(\tau)}
	\\& = \sum_{j \in \cJ} \tau_j[(\partial_j B)^* \hrho_j \# \partial_j A^*]
	\\& = \sum_{j \in \cJ}\sum_{k,m}  \theta_j^{km} \tau_j\Big[\Big(  V_j \err_j(B) - \ell_j(B) V_j \Big)^* \ell_j(E_k) \Big(V_j \err_j(A^*) - \ell_j(A^*) V_j\Big)  \err_j(E_m)\Big]
	\\& = \sum_{j \in \cJ}\sum_{k,m} \theta_j^{km}\tau_j \Big[
        \err_j(B^*)  V_j^*  \ell_j(E_k) V_j \err_j(A^*) \err_j(E_m)
   - 	   V_j^*\ell_j(B^*) \ell_j(E_k) V_j \err_j(A^*) \err_j(E_m)
	\\&	   \qquad   \qquad  \qquad\quad 
	     - \err_j(B^*) V_j^* \ell_j(E_k) \ell_j(A^*) V_j   \err_j(E_m)
         + V_j^* \ell_j(B^*) \ell_j(E_k) \ell_j(A^*) V_j  \err_j(E_m)
	 \Big]
	\\& = \sum_{j \in \cJ}\sum_{k,m} \theta_j^{km}\tau_j \Big[
           V_j^* \ell_j(E_k) V_j \err_j(A^* E_m B^*) 
   		-  V_j^* \ell_j(B^* E_k) V_j \err_j(A^* E_m)
	\\&	   \qquad   \qquad  \qquad\quad  
		-  V_j^* \ell_j(E_k A^*) V_j \err_j(E_m B^*) 
		+  V_j^* \ell_j(B^* E_k A^*) V_j \err_j(E_m)
	 \Big] \ .
\end{aligned}\end{equation}
On the other hand, 
\begin{align*}
	  &   \ip{B,(\cK_\rho A)^*}_{L^2(\tau)}
	\\& = \sum_{j \in \cJ} \tau_j[(\partial_j B^*) (\hrho_j \# \partial_j A)^*]
	\\& = \sum_{j \in \cJ}\sum_{k,m}  \theta_j^{km} \tau_j\Big[\Big(  V_j \err_j(B^*) - \ell_j(B^*) V_j \Big) \err_j(E_m) \Big( V_j \err_j(A) - \ell_j(A) V_j  \Big)^*\ell_j(E_k)\Big]
	\\& = \sum_{j \in \cJ}\sum_{k,m}  \theta_j^{km} \tau_j\Big[ 
				 V_j \err_j(B^*)  \err_j(E_m) \err_j(A^*)V_j^*\ell_j(E_k)
  			   - V_j \err_j(B^*) \err_j(E_m)  V_j^* \ell_j(A^*) \ell_j(E_k)
	\\&	   \qquad   \qquad  \qquad\quad     
	  - \ell_j(B^*) V_j \err_j(E_m)  \err_j(A^*) V_j^*\ell_j(E_k)
      +  \ell_j(B^*) V_j \err_j(E_m) V_j^*\ell_j(A^*)   \ell_j(E_k)\Big]
	\\& = \sum_{j \in \cJ}\sum_{k,m}  \theta_j^{km} \tau_j\Big[ 
				  V_j^* \ell_j(E_k) V_j \err_j(B^* E_m A^*)
  			   -  V_j^* \ell_j(A^* E_k)V_j \err_j(B^* E_m) 
	\\&	   \qquad   \qquad  \qquad\quad     
	           -  V_j^* \ell_j(E_k B^*) V_j \err_j(E_m A^*)
      		   +  V_j^* \ell_j(A^* E_k B^*) V_j \err_j(E_m) \Big] \ .
\end{align*}
Thus, using \eqref{eq:symmetry-equiv}, and then changing $j$ by $j^*$ and using that $\theta_j^{km} = \theta_{j^*}^{mk}$ by Assumption \ref{ass:theta}, we obtain 
\begin{align*}
	  &\ip{B,(\cK_\rho A)^*}_{L^2(\tau)}
	 \\&  = \sum_{j \in \cJ}\sum_{k,m} \theta_j^{km} \tau_j\Big[ 
				  V_j \ell_{j^*}(B^* E_m A^*) V_j^* \err_{j^*}(E_k)
  			   -  V_j \ell_{j^*}(B^* E_m) V_j^* \err_{j^*}(A^* E_k) 
	\\&	   \qquad   \qquad  \qquad\quad     
	           -  V_j \ell_{j^*}(E_m A^*) V_j^* \err_{j^*}(E_k B^*)
      		   +  V_j \ell_{j^*}(E_m) V_j^* \err_{j^*}(A^* E_k B^*) \Big]
	 \\&  = \sum_{j \in \cJ}\sum_{k,m} \theta_j^{mk} \tau_j \Big[ 
				  V_j^* \ell_{j}(B^* E_m A^*) V_j \err_{j}(E_k)
  			   -  V_j^* \ell_{j}(B^* E_m) V_j \err_{j}(A^* E_k) 
	\\&	   \qquad   \qquad  \qquad\quad     
	           -  V_j^* \ell_{j}(E_m A^*) V_j \err_{j}(E_k B^*)
      		   +  V_j^* \ell_{j}(E_m) V_j \err_{j}(A^* E_k B^*) \Big]
\end{align*}
which coincides with \eqref{eq:Krho-adjoint} after interchanging $m$ and $k$.
\end{proof}

The following result expressing the unique solvability of the continuity equation is now an immediate consequence.
\begin{corollary}\label{cor:Krho-bij}
For $\rho \in \Dens_+$, the linear mapping $\cK_\rho$ is a bijection on $\cA_0$ that depends smoothly ($C^\infty$) on $\rho$.
\end{corollary}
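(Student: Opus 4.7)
The statement is essentially bookkeeping on top of Lemma \ref{lem:Ran-Ker}, so the plan is to (i) use the self-adjointness, reality and range/kernel identifications of $\cK_\rho$ to deduce bijectivity on $\cA_0$, and then (ii) verify smoothness of the assignment $\rho \mapsto \cK_\rho$ from smoothness of functional calculus on $\Dens_+$.

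\textbf{Bijectivity.} First I would observe that because $\cL^\dagger$ is real (the semigroup $(\cP_t^\dagger)_t$ is real) and $\cK_\rho$ is real by Lemma \ref{lem:Ran-Ker}, both operators preserve $\cA_h$. A simple decomposition of $A = A_1 + i A_2$ into hermitian components shows that
\begin{align*}
	\cA_0 = \Ran(\cL^\dagger) \cap \cA_h = \Ran(\cL^\dagger\big|_{\cA_h}) = \Ran(\cK_\rho\big|_{\cA_h}) \ ,
\end{align*}
using Lemma \ref{lem:Ran-Ker} for the last equality. Next, restricting $\cK_\rho$ to $\cA_h$ yields a self-adjoint operator with respect to the real scalar product $(A,B) \mapsto \tau[AB]$, so $\cA_h$ decomposes orthogonally as
\begin{align*}
	\cA_h = \Ker(\cK_\rho\big|_{\cA_h}) \oplus \Ran(\cK_\rho\big|_{\cA_h})
		  = \Ker(\cK_\rho\big|_{\cA_h}) \oplus \cA_0 \ .
\end{align*}
In particular $\cA_0 \cap \Ker(\cK_\rho) = \{0\}$, which shows that $\cK_\rho\big|_{\cA_0} : \cA_0 \to \cA_0$ is injective (note that it maps into $\cA_0$ since $\cK_\rho(\cA_h) \subseteq \cA_h$ and $\cK_\rho(\cA) \subseteq \Ran(\cK_\rho)$). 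Since $\cA_0$ is finite-dimensional, injectivity implies bijectivity.

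\textbf{Smoothness.} For smoothness in $\rho$, I would argue that $\rho \mapsto \hrho_j = \theta_j(\ell_j(\rho),\err_j(\rho))$ is $C^\infty$ on $\Dens_+$: the maps $\ell_j, \err_j$ are linear, and on strictly positive matrices the two-variable functional calculus $(R,S) \mapsto \theta_j(R,S)$ is smooth whenever $\theta_j$ is smooth on $(0,\infty)^2$ (by Assumption \ref{ass:theta}), for instance via the spectral representation in \eqref{eq:funct-calc} together with the analyticity of spectral projections away from eigenvalue crossings, or more robustly via contour integration against the resolvents of $\ell_j(\rho)$ and $\err_j(\rho)$. Smoothness of $\cK_\rho = \sum_{j} \partial_j^\dagger(\hrho_j \# \partial_j \cdot)$ in $\rho$ then follows, and smoothness of $\cK_\rho^{-1} : \cA_0 \to \cA_0$ is inherited since inversion is smooth on the open set of invertible linear maps on the fixed subspace $\cA_0$.

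\textbf{Where the real content sits.} There is almost no hard work in the corollary itself: the entire analytic input (non-negativity, self-adjointness, the $\Ran = \Ran(\cL^\dagger)$ identification, and reality) has already been packaged into Lemma \ref{lem:Ran-Ker}. The one place one has to pay attention is making sure that the restriction to $\cA_h$ behaves well, i.e., that kernels and ranges in the complex setting intersect $\cA_h$ in the expected way; this is the reason for invoking reality of both $\cL^\dagger$ and $\cK_\rho$ at the start. For smoothness, the only non-trivial fact is that two-variable spectral calculus on $\Dens_+$ is smooth, which is standard.
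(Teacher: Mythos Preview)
Your proof is correct and follows essentially the same route as the paper's own (very terse) argument: use Lemma \ref{lem:Ran-Ker} to see that $\cK_\rho$ maps $\cA_0$ into itself and that $\cA_0$ lies in the range of the self-adjoint operator $\cK_\rho$, hence is orthogonal to its kernel, so the restriction is injective and (by finite dimension) bijective; smoothness comes from smoothness of $\theta_j$. You add helpful detail the paper omits---notably the reality argument showing $\Ran(\cL^\dagger)\cap\cA_h = \Ran(\cL^\dagger|_{\cA_h})$ and the explicit orthogonal decomposition---but there is no genuine difference in strategy.
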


\begin{proof}
It follows from Lemma \ref{lem:Ran-Ker} that $\cK_\rho$ maps $\cA_0$ into itself. Since the restriction of a self-adjoint operator to its range is injective, the result follows. Smooth dependence on $\rho$ follows from the smoothness of $\theta$.
\end{proof}

The following elementary variational characterization is of interest.

\begin{proposition}\label{prop:variational}
Fix $\rho \in \Dens_+$ and $\nu \in \cA_0$. Among all vector fields $\bbB \in \cB$ satisfying the continuity equation
\begin{align}\label{eq:cont-eqVF}
 \nu + \dive(\hrho \# \bbB) = 0
\end{align}
there is a unique one that is a gradient. Moreover, among all vector fields $\bbB$ solving \eqref{eq:cont-eqVF}, this vector field is the unique minimizer of the ``kinetic energy functional'' $\sE_\rho$ given by
\begin{align*}
  \sE_\rho(\bbB) = \sum_{j \in \cJ}  \ip{\hrho_j \# B_j, B_j}_{L^2(\tau_j)} \ .
\end{align*}
\end{proposition}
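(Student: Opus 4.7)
The plan is to combine the bijectivity of $\cK_\rho$ from Corollary~\ref{cor:Krho-bij} with a Hilbert-space orthogonality argument based on the scalar product on $\cB$ naturally associated to $\hrho$. Since $\rho \in \Dens_+$ and each $\ell_j, \err_j$ is a unital $*$-homomorphism, $\ell_j(\rho)$ and $\err_j(\rho)$ are strictly positive in $\cB_j$; by Assumption~\ref{ass:theta} each $\theta_j$ is then strictly positive on $\spec(\ell_j(\rho)) \times \spec(\err_j(\rho))$. Proposition~\ref{prop:ip} therefore shows that
$$
\langle \bbB, \bbB' \rangle_\rho := \sum_{j \in \cJ} \ip{B_j, \hrho_j \# B_j'}_{L^2(\tau_j)}
$$
is a genuine scalar product on $\cB$, and by construction $\sE_\rho(\bbB) = \langle \bbB, \bbB \rangle_\rho$.

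Existence and uniqueness of a gradient solution will follow at once from Corollary~\ref{cor:Krho-bij}: the equation $\cK_\rho A = \nu$ admits a unique solution $A_\star \in \cA_0$, and then $\bbB_\star := \nabla A_\star$ is a gradient vector field satisfying \eqref{eq:cont-eqVF}. If $\nabla A_1$ and $\nabla A_2$ are any two gradient solutions, then $\cK_\rho (A_1 - A_2) = 0$, and Lemma~\ref{lem:Ran-Ker} gives $A_1 - A_2 \in \Ker(\nabla)$, so $\nabla A_1 = \nabla A_2$.

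Next I will reformulate the continuity equation as an orthogonality condition. Pairing $\nu + \dive(\hrho \# \bbB) = 0$ with an arbitrary $A \in \cA$ in the Hilbert--Schmidt inner product and using that $\partial_j^\dagger$ is the $L^2(\tau)$-adjoint of $\partial_j$, one finds
$$
\ip{A, \nu}_{L^2(\tau)} = \sum_{j \in \cJ} \ip{\partial_j A, \hrho_j \# B_j}_{L^2(\tau_j)} = \langle \nabla A, \bbB \rangle_\rho \qquad \text{for all } A \in \cA.
$$
Consequently, a vector field $\bbC \in \cB$ is ``divergence-free'' in the sense that $\dive(\hrho \# \bbC) = 0$ if and only if $\bbC$ is orthogonal, with respect to $\langle \cdot,\cdot\rangle_\rho$, to the linear subspace $\mathrm{Gr}(\rho) := \{ \nabla A : A \in \cA \} \subseteq \cB$.

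Finally, for the minimization, any solution $\bbB$ can be decomposed as $\bbB = \bbB_\star + \bbC$ with $\bbC$ divergence-free. Since $\bbB_\star \in \mathrm{Gr}(\rho)$ and $\bbC$ is $\langle \cdot,\cdot\rangle_\rho$-orthogonal to $\mathrm{Gr}(\rho)$, the Pythagorean identity yields
$$
\sE_\rho(\bbB) = \sE_\rho(\bbB_\star) + \sE_\rho(\bbC) \geq \sE_\rho(\bbB_\star),
$$
with equality iff $\sE_\rho(\bbC) = 0$, iff $\bbC = 0$ by positive definiteness of $\langle \cdot,\cdot\rangle_\rho$, iff $\bbB = \bbB_\star$. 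There is no serious obstacle here: all the ingredients---bijectivity of $\cK_\rho$ on $\cA_0$, positive definiteness of the form $\langle \cdot,\cdot\rangle_\rho$, and the integration-by-parts identity associated with \eqref{eq:def-div}---are already established; the only task is to assemble them cleanly.
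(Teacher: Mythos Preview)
Your proof is correct and follows essentially the same route as the paper: existence and uniqueness of the gradient solution via Corollary~\ref{cor:Krho-bij} and Lemma~\ref{lem:Ran-Ker}, and the minimization via orthogonality of gradients and divergence-free fields in the $\hrho$-weighted inner product. The paper itself only sketches the minimization step by referring to \cite[Theorem~3.17]{CM12}; your Pythagorean argument is exactly the standard fill-in for that reference.
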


\begin{proof}
Existence of a gradient vector $\bbB$ field solving \eqref{eq:cont-eqVF} follows from Corollary \ref{cor:Krho-bij}. To prove uniqueness, suppose that $\dive(\hrho \# \nabla A) = - \nu = \dive(\hrho \# \nabla \tilde A)$ for some $A, \tilde A \in \cA$. This means that $\cK_\rho A = \cK_\rho \tilde A$, hence Lemma \ref{lem:Ran-Ker} yields $\nabla  A = \nabla \tilde A$. The remaining part follows along the lines of the proof of \cite[Theorem 3.17]{CM12}.
\end{proof}

We are now ready to define a class of Riemannian metrics that are the main object of study in this paper.

\begin{definition}[Quantum transport metric]\label{def:quantum-transport}
Fix $\rho \in \Dens_+$ and let $\theta = (\theta_j)_j$ satisfy Assumption \ref{ass:theta}. 
The associated \emph{quantum transport metric} is the Riemannian metric on $\cM_\rho$ induced by the operator $\cK_\rho$, i.e., for $\dot\rho_1, \dot\rho_2 \in \cA_0$,
\begin{align*}
 \ip{\dot\rho_1, \dot\rho_2}_\rho = \ip{\cK_\rho^{-1} \dot\rho_1, \dot\rho_2}_{L^2(\tau)} \ ,
\end{align*}
or, more explicitly,
\begin{equation}\begin{aligned}
	\label{eq:scalar-product}
	\ip{\dot\rho_1, \dot \rho_2}
	&  =  \ip{ \nabla A_1, \hrho \# \nabla A_2 }_{L^2(\tau)}
   \\& = \sum_j \bip{ \partial_j A_1, \theta_j\big(\ell_j(\rho), \err_j(\rho)\big) \# \partial_j A_2 }_{L^2(\tau_j)}  \quad \text{ for $\rho \in \Dens_+$}\ ,
\end{aligned}\end{equation}
where, for $i = 1,2$, $A_i$ is the unique solution in $\cA_0$ to the continuity equation
\begin{align*}
	 \dot\rho_i + \dive(\hrho \# \nabla A_i) = 0 \ .
\end{align*}
\end{definition}

It follows from Lemma \ref{lem:Ran-Ker} and Corollary \ref{cor:Krho-bij} that $\cK_\rho$ indeed induces a Riemannian metric on $\cM_\rho$.

\subsection{Gradient flows of entropy functionals}\label{sec:grad-flow}

In this section we shall show that various evolution equations of interest can be interpreted as gradient flow equations with respect to suitable quantum transport metrics introduced in Section \ref{sec:Riemann-dens}.

We consider the operator $\cK_\rho : \cA_0 \to \cA_0$ given by
\begin{align*}
   \cK_\rho A := \sum_{j \in \cJ} \partial_j^\dagger(\widehat\rho_j \# \partial_j A)  \ ,
\end{align*}
where $\widehat\rho_j  = \theta_j(\ell_j(\rho), \err_j(\rho))$ is defined in terms of a well-chosen function $\theta_j$ that depends on the context and will be determined below.

\begin{theorem}[Gradient flow structure for the relative entropy]\label{thm:grad-flow-ent}
Consider the operator $\cK_\rho$ defined using the functions $\theta_j$ given by $\theta_j(r,s) := \Lambda(e^{\omega_j/2}r, e^{-\omega_j/2}s)$, where $\Lambda(r,s) = \frac{r-s}{\log r - \log s}$ is the logarithmic mean. 
Then we have the identity
	\begin{align*}
		 \cL^\dagger \rho = - \cK_\rho \rmD \Ent_{\sigma}(\rho) 
	\end{align*}
for all $\rho \in \Dens_+$, thus the gradient flow equation for the relative von Neumann entropy functional $\Ent_\sigma$ with respect to the Riemannian metric on $\cM_\sigma$ induced by $(\cK_\rho)_{\rho \in \cM_\sigma}$ is the Kolmogorov forward equation $\partial_t \rho = \cL^\dagger \rho$. 
\end{theorem}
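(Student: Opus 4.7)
The strategy is to extend Proposition~\ref{prop:Lindblad-grad-flow} to the present setting, in which the $\partial_j$ are $(\ell_j,\err_j)$-skew derivations rather than plain commutators. Since $\rmD\Ent_\sigma(\rho) = \log\rho - \log\sigma$ by \eqref{eq:diff-entropy}, the desired identity $\cL^\dagger\rho = -\cK_\rho \rmD\Ent_\sigma(\rho)$ amounts to the pointwise algebraic equality
\begin{align*}
\sum_{j\in\cJ}\partial_j^\dagger\bigl(\hrho_j \,\#\,\partial_j(\log\rho-\log\sigma)\bigr) = -\cL^\dagger\rho\ .
\end{align*}
Before addressing this, I would first check that the $\theta_j$ fall under Assumption~\ref{ass:theta}: smoothness and strict positivity of $\Lambda$ on $(0,\infty)^2$ are standard, while the symmetry $\theta_j(r,s)=\theta_{j^*}(s,r)$ follows by combining $\omega_{j^*}=-\omega_j$ (equation~\eqref{eq:omega-symmetry}) with the elementary symmetry $\Lambda(a,b)=\Lambda(b,a)$ of the logarithmic mean.

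The technical core of the argument, and what I expect to be the main obstacle, is to establish a chain rule for the logarithm adapted to the skew derivations $\partial_j$, namely
\begin{align*}
\hrho_j\,\#\,\partial_j(\log\rho-\log\sigma) = e^{-\omega_j/2}\, V_j\err_j(\rho) - e^{\omega_j/2}\,\ell_j(\rho)V_j\ .
\end{align*}
My plan is to mimic the proof of Lemma~\ref{lem:chain-log}, keeping careful track of the asymmetry introduced by $\ell_j\neq\err_j$. First, the eigenvalue relation $\partial_j\log\sigma = \omega_j V_j$ from Lemma~\ref{prop:symmetry-properties}, together with the fact that $\ell_j$ and $\err_j$ are unital, allows one to rewrite $\partial_j(\log\rho-\log\sigma)$ as $V_j\err_j\bigl(\log(e^{-\omega_j/2}\rho)\bigr) - \ell_j\bigl(\log(e^{\omega_j/2}\rho)\bigr) V_j$. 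Next, using the spectral decomposition $\rho=\sum_k\lambda_k E_k$ and the fact that $\ell_j,\err_j$ are $*$-homomorphisms, the families $\{\ell_j(E_k)\}_k$ and $\{\err_j(E_k)\}_k$ are resolutions of $\one_{\cB_j}$ by mutually orthogonal projections. Expanding
\begin{align*}
\hrho_j = \sum_{k,m}\Lambda\bigl(e^{\omega_j/2}\lambda_k,\,e^{-\omega_j/2}\lambda_m\bigr)\,\ell_j(E_k)\ot\err_j(E_m)\ ,
\end{align*}
the contraction against $\partial_j(\log\rho-\log\sigma)$ collapses by orthogonality of these projections, and the remaining scalar prefactor on each surviving term simplifies via the defining identity $\Lambda(a,b)(\log b - \log a) = b-a$ of the logarithmic mean. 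Reassembling $\sum_k\ell_j(E_k) = \sum_m\err_j(E_m) = \one_{\cB_j}$ yields the displayed chain rule.

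Once the chain rule is in hand, I would apply $\partial_j^\dagger$, whose $L^2(\tau)$-expression $\partial_j^\dagger B = \err_j^\dagger(V_j^*B) - \ell_j^\dagger(BV_j^*)$ is immediate from \eqref{eq:dual-algebra}, and sum over $j$. This produces four terms which I expect to match, termwise, the second formula for $-\cL^\dagger\rho$ in Proposition~\ref{prop:generator-formula}; the matching is then a direct algebraic verification with no further use of the structural assumptions. Finally, since $\cL^\dagger\rho\in\Ran(\cL^\dagger)=\Ran(\cK_\rho)\subseteq\cA_0$ by Lemma~\ref{lem:Ran-Ker}, the evolution $\partial_t\rho = \cL^\dagger\rho$ stays in $\cM_\sigma$ and, in view of the identity just proved, is a genuine gradient flow of $\Ent_\sigma$ in the Riemannian structure of Definition~\ref{def:quantum-transport}.
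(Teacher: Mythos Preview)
Your proposal is correct and follows essentially the same route as the paper: the paper first proves the chain rule $\hrho_j\,\#\,\partial_j(\log\rho-\log\sigma) = e^{-\omega_j/2} V_j\err_j(\rho) - e^{\omega_j/2}\ell_j(\rho)V_j$ (stated as Lemma~\ref{lem:chain-log-general}) by exactly the spectral/orthogonality argument you outline, then applies $\partial_j^\dagger$ and matches the result against the expression for $-\cL^\dagger\rho$ in Proposition~\ref{prop:generator-formula}. Your additional remarks (verifying Assumption~\ref{ass:theta} for these $\theta_j$, and noting that the flow stays in $\cM_\sigma$) are not made explicit in the paper but are correct and welcome.
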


This result generalises the gradient flow structure from \cite{2016-Carlen-Maas,MiMi17} as described in Section \ref{sec:QMS}. 
The proof relies on the following version of the chain rule.

\begin{lemma}[Chain rule for the logarithm]
\label{lem:chain-log-general}
Define $\theta_j(r,s) := \Lambda(e^{\omega_j/2}r, e^{-\omega_j/2}s)$, where $\Lambda(r,s) = \frac{r-s}{\log r - \log s}$ is the logarithmic mean. 
Then, for all $\rho \in \Dens_+$ we have
	\begin{align}\label{eq:chain-rule-log}
		 e^{-\omega_j/2}V_j \err_j(\rho)  - e^{\omega_j/2}\ell_j(\rho) V_j
		 &  =    \hrhop \# \partial_{j}(\log \rho - \log \sigma) \ .
	\end{align}
\end{lemma}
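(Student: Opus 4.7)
The plan is to generalize the argument of Lemma \ref{lem:chain-log} to the present setting in which $\partial_j$ is an $(\ell_j, \err_j)$-skew derivation rather than a pure commutator. The key new ingredient is that, since $\ell_j$ and $\err_j$ are unital $*$-homomorphisms, they map the spectral projections of $\rho$ to two families of orthogonal projections summing to $\one_{\cB_j}$, and they commute with functional calculus: $\ell_j(f(\rho)) = f(\ell_j(\rho))$ and similarly for $\err_j$, for every scalar function $f$.

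First I would fix the spectral decomposition $\rho = \sum_\ell \lambda_\ell E_\ell$ with $\lambda_\ell > 0$, $E_\ell E_m = \delta_{\ell m} E_\ell$, and $\sum_\ell E_\ell = \one_\cA$. Applying $\ell_j$ and $\err_j$ gives analogous orthogonality and completeness relations. In particular, the multiple operator sum \eqref{eq:mult-funct-calc} representing $\hrhop = \theta_j(\ell_j(\rho), \err_j(\rho))$ takes the explicit form
\begin{align*}
\hrhop = \sum_{\ell, m} \Lambda\bigl(e^{\omega_j/2}\lambda_\ell,\, e^{-\omega_j/2}\lambda_m\bigr)\, \ell_j(E_\ell) \ot \err_j(E_m) \ ,
\end{align*}
where $\Lambda$ is the logarithmic mean.

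Next I would use the eigenvalue identity \eqref{eq:modular-ev} of Lemma \ref{prop:symmetry-properties}, namely $\partial_j \log \sigma = \omega_j V_j$, together with the two partitions of unity $\sum_\ell \ell_j(E_\ell) = \sum_m \err_j(E_m) = \one_{\cB_j}$, to match index structures and arrive at
\begin{align*}
\partial_j (\log \rho - \log \sigma) = \sum_{\ell, m} \bigl( \log(e^{-\omega_j/2}\lambda_m) - \log(e^{\omega_j/2}\lambda_\ell) \bigr) \ell_j(E_\ell) V_j \err_j(E_m) \ .
\end{align*}
Here $\partial_j \log \rho = V_j \err_j(\log \rho) - \ell_j(\log \rho) V_j$ is expanded using the $*$-homomorphism property, and the two partitions of unity are inserted in the free slots on either side of $V_j$.

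Finally, I would compute the contraction $\hrhop \# \partial_j(\log \rho - \log \sigma)$. The orthogonality relations $\ell_j(E_\ell)\ell_j(E_p) = \delta_{\ell p}\ell_j(E_\ell)$ and $\err_j(E_q)\err_j(E_m) = \delta_{qm}\err_j(E_m)$ collapse the resulting four-fold sum to a double sum, and the defining property $\Lambda(\xi,\eta)(\log \xi - \log \eta) = \xi - \eta$ of the logarithmic mean yields
\begin{align*}
\hrhop \# \partial_j(\log \rho - \log \sigma) = \sum_{\ell, m} \bigl( e^{-\omega_j/2}\lambda_m - e^{\omega_j/2}\lambda_\ell \bigr) \ell_j(E_\ell) V_j \err_j(E_m) \ .
\end{align*}
Reassembling with $\err_j(\rho) = \sum_m \lambda_m \err_j(E_m)$, $\ell_j(\rho) = \sum_\ell \lambda_\ell \ell_j(E_\ell)$, and the partitions of unity recovers the desired right-hand side $e^{-\omega_j/2} V_j \err_j(\rho) - e^{\omega_j/2} \ell_j(\rho) V_j$. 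There is no genuine obstacle; the proof is bookkeeping built around the logarithmic-mean identity and the homomorphism property of $\ell_j, \err_j$, the mild subtlety being that $\ell_j(E_\ell)$ and $\err_j(E_m)$ need not be minimal projections of $\ell_j(\rho)$ and $\err_j(\rho)$, which is harmless because the multiple operator sum \eqref{eq:mult-funct-calc} only uses completeness and orthogonality.
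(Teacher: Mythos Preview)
Your proposal is correct and follows essentially the same approach as the paper's own proof: both use the spectral decomposition of $\rho$, express $\hrhop$ as the double sum $\sum_{\ell,m}\Lambda(e^{\omega_j/2}\lambda_\ell,e^{-\omega_j/2}\lambda_m)\,\ell_j(E_\ell)\otimes\err_j(E_m)$, expand $\partial_j(\log\rho-\log\sigma)$ via the modular eigenvalue relation (the paper invokes \eqref{relmod3} directly, you invoke the equivalent form \eqref{eq:modular-ev}), collapse the contraction using orthogonality of the images of the spectral projections, and finish with the defining identity of the logarithmic mean. Your remark about $\ell_j(E_\ell)$ and $\err_j(E_m)$ not being minimal projections is a valid observation that the paper leaves implicit.
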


\begin{proof}
Using \eqref{relmod3} we infer that	
	\begin{align*}
		\partial_{j}( \log \rho - \log \sigma  ) 
		   = V_j \log \big(e^{-\omega_j/2} \err_j(\rho)\big) - \log \big(e^{\omega_j/2} \ell_j(\rho)\big) V_j \ .
	\end{align*}
We consider the spectral decomposition $\rho = \sum_k \lambda_k E_k$ as before, and observe that
	\begin{align*}
		\hrho_{j} = \theta_j\big(\ell_j(\rho), \err_j(\rho)\big)
		 =  \sum_{k,m} \Lambda\big(e^{\omega_j/2} \lambda_k, e^{-\omega_j/2} \lambda_m\big) \ell_j(E_k) \ot \err_j(E_m) \ .
	\end{align*}
Using this identity, we obtain
\begin{align*}
&		\hrhop \# \big( \partial_{j}( \log \rho - \log \sigma  )  \big)
		\\& = \sum_{k, m, p}  \Lambda\big(e^{\omega_j/2} \lambda_k, e^{-\omega_j/2} \lambda_m\big) \ell_j(E_k) 
\\& \qquad \times
		 \Big( \log (e^{-\omega_j/2} \lambda_p)  V_j \err_j(E_p) - \log (e^{\omega_j/2} \lambda_p) \ell_j(E_p) V_j \Big) \err_j(E_m)
		 \\& = \sum_{k, m}  \Lambda(e^{\omega_j/2} \lambda_k, e^{-\omega_j/2} \lambda_m) 
		 \Big( \log (e^{-\omega_j/2} \lambda_m) - \log (e^{\omega_j/2} \lambda_k)  \Big)  \ell_j(E_k) V_j \err_j(E_m)
		 \\& = \sum_{k, m}   
		 \big( e^{-\omega_j/2} \lambda_m - e^{\omega_j/2} \lambda_k  \big)  \ell_j(E_k) V_j \err_j(E_m)
		 \\& =  e^{-\omega_j/2} V_j \err_j(\rho) - e^{\omega_j/2} \ell_j(\rho) V_j\ ,
\end{align*}
which yields \eqref{eq:chain-rule-log}.
\end{proof}

\begin{proof}[Proof of Theorem \ref{thm:grad-flow-ent}]
Since $\rmD \Ent_{\sigma}(\rho) = \log \rho - \log \sigma$, the chain rule from Lemma \ref{lem:chain-log-general} yields, using Proposition \ref{prop:sigma-adjoint},
	\begin{align*}
		\cK_\rho \rmD \Ent_{\sigma}(\rho)
		& =  \sum_{j \in \cJ} 
 	\partial_{j}^\dagger \big( \hrhop \#  \partial_{j} (\log \rho - \log \sigma) \big)  
		\\& =   \sum_{j \in \cJ} 
 	\partial_{j}^\dagger \big(   e^{-\omega_j/2}V_j \err_j(\rho)  - e^{\omega_j/2}\ell_j(\rho) V_j \big)   
		\\& =   \sum_{j \in \cJ} 
		 e^{-\omega_j/2} \Big\{\err_j^\dagger\big(V_j^*  V_j \err_j(\rho) \big) 
		 - \ell_j^\dagger\big(  V_j \err_j(\rho)  V_j^*\big) \Big\}
	\\& \qquad \quad	 - e^{\omega_j/2} \Big\{\err_j^\dagger\big(V_j^* \ell_j(\rho) V_j \big) 
		 - \ell_j^\dagger\big(  \ell_j(\rho) V_j V_j^*\big) \Big\} \ ,
	\end{align*}
which equals the expression for $- \cL^\dagger \rho$ given in Proposition \ref{prop:generator-formula}.
\end{proof}

Let us now consider the special case where $\sigma = \one$. Then \eqref{modeig} reduces to $\omega_j = 0$ for all $j \in \cJ$, and we will be able to formulate a natural nonlinear generalization of Theorem \ref{thm:grad-flow-ent}. Let $f \in C^2((0,\infty); \R)$ be strictly convex, and consider the functional $\cF : \Dens_+ \to \R$ given by
\begin{align*}
 \cF(\rho) = \tau[f(\rho)]\ ,
\end{align*}
where $f(\rho)$ is interpreted in the sense of functional calculus. 
Let $\phi \in C^1((0,\infty);\R)$ be strictly increasing, and consider the operator $\cK_\rho$ as defined before, with $\theta_j = \theta$ given by
\begin{align}\label{eq:theta-general}
\theta(\lambda,\mu) =  \left\{ \begin{array}{ll}
\frac{\phi(\lambda) - \phi(\mu)}{f'(\lambda) - f'(\mu)},
 & \text{if $\lambda \neq \mu$},\medskip\\
 \frac{\phi'(\lambda)}{f''(\lambda)},
 & \text{otherwise}.\end{array} \right.
\end{align}

The following result is a non-commutative analogue  of a seminal result by Otto \cite{O01}, which states that the porous medium equation is the gradient flow equation for the R\'enyi entropy in with respect to the $2$-Kantorovich metric. 

\begin{theorem}[Gradient flow structures with general entropy functionals]\label{thm:grad-flow}
Consider a differentiable structure with $\sigma = \one$, and let $\theta_j$ be given by \eqref{eq:theta-general}.
Then we have the identity
	\begin{align}\label{eq:grad-flow}
	 \cL \phi(\rho) = \cL^\dagger \phi(\rho) = - \cK_\rho \rmD \cF(\rho)
	\end{align}
for $\rho \in \Dens_+$, thus the gradient flow equation for $\cF$ with respect to the Riemannian metric on $\cM_\one$ induced by $(\cK_\rho)_{\rho \in \cM_\one}$ is given by
\begin{align*}
 \partial_t\rho = \cL \phi(\rho) \ .
\end{align*}
 \end{theorem}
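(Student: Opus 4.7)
The plan is to follow the proof of Theorem~\ref{thm:grad-flow-ent}, replacing the logarithmic chain rule of Lemma~\ref{lem:chain-log-general} by one adapted to the pair $(\phi, f')$. First, applying the trace chain rule \eqref{eq:chain-trace} yields, for any $\nu \in \cA_0$,
\[
\ddt\Big|_{t=0} \tau\big[f(\rho + t\nu)\big] = \tau[f'(\rho)\nu]\ ,
\]
so $\rmD \cF(\rho) = f'(\rho)$. Next, two applications of the chain rule of Proposition~\ref{prop:chain-rule} give
\[
\partial_j f'(\rho) = \delta f'\big(\ell_j(\rho), \err_j(\rho)\big) \# \partial_j \rho \qquad \text{and} \qquad \partial_j \phi(\rho) = \delta \phi\big(\ell_j(\rho), \err_j(\rho)\big) \# \partial_j \rho\ .
\]

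The key algebraic observation is that the specific form \eqref{eq:theta-general} of $\theta_j = \theta$ is engineered precisely so that
\[
\theta(\lambda,\mu)\, \delta f'(\lambda,\mu) = \frac{\phi(\lambda) - \phi(\mu)}{f'(\lambda) - f'(\mu)} \cdot \frac{f'(\lambda) - f'(\mu)}{\lambda - \mu} = \delta\phi(\lambda,\mu)\ ,
\]
with the diagonal values matching by continuity. Invoking the composition identity \eqref{eq:fc2} for contractions, this scalar identity lifts to the non-commutative chain rule
\[
\hrho_j \# \partial_j f'(\rho) = \big(\theta \cdot \delta f'\big)\big(\ell_j(\rho), \err_j(\rho)\big) \# \partial_j \rho = \partial_j \phi(\rho)\ ,
\]
which plays here the role that Lemma~\ref{lem:chain-log-general} played in the proof of Theorem~\ref{thm:grad-flow-ent}.

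To conclude, since $\sigma = \one$ forces $\omega_j = 0$, Proposition~\ref{prop:sigma-adjoint} yields $\partial_{j,\sigma}^\dagger = \partial_j^\dagger$, and Proposition~\ref{prop:generator-formula} gives $\cL A = -\sum_{j \in \cJ} \partial_j^\dagger \partial_j A$. Combining with the chain rule above,
\[
\cK_\rho \rmD \cF(\rho) = \sum_{j \in \cJ} \partial_j^\dagger\big(\hrho_j \# \partial_j f'(\rho)\big) = \sum_{j \in \cJ} \partial_j^\dagger \partial_j \phi(\rho) = -\cL \phi(\rho)\ ,
\]
while Proposition~\ref{prop:D-detailed-balance} at $\sigma = \one$ gives $\cL = \cL^\dagger$, so both formulations of the claimed identity coincide. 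I do not anticipate a substantial obstacle; the only bookkeeping to check is that $\theta$ meets Assumption~\ref{ass:theta}: positivity and the symmetry \eqref{eq:theta-sym} are immediate from the symmetric expression together with the strict monotonicity of $\phi$ and of $f'$ (which is the strict convexity of $f$), and smoothness on $(0,\infty)^2$ follows from the assumed regularity of $\phi$ and $f$, the apparent singularity on the diagonal being removable with value $\phi'(\lambda)/f''(\lambda)$.
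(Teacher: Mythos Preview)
Your proof is correct and follows essentially the same approach as the paper: compute $\rmD\cF(\rho)=f'(\rho)$ via \eqref{eq:chain-trace}, use the chain rule of Proposition~\ref{prop:chain-rule} together with the scalar identity $\theta\cdot\delta f'=\delta\phi$ and \eqref{eq:fc2} to obtain $\hrho_j\#\partial_j f'(\rho)=\partial_j\phi(\rho)$, and then sum to get $\cK_\rho\rmD\cF(\rho)=-\cL\phi(\rho)$. Your additional remarks (explicitly noting $\omega_j=0$, invoking Proposition~\ref{prop:D-detailed-balance} for $\cL=\cL^\dagger$, and verifying Assumption~\ref{ass:theta}) are welcome extra bookkeeping that the paper leaves implicit.
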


\begin{proof}
The first identity in \eqref{eq:grad-flow} follows immediately from the construction of $\cL$ since $\sigma = \one$.
The chain rule \eqref{eq:chain-trace} implies that the derivative of $\cF$ is given by
\begin{align*}
 \rmD \cF(\rho)  = f'(\rho) \ .
\end{align*}
Recalling \eqref{eq:qd}, we note that $\theta_j$ is defined to satisfy the identity $\theta \cdot \delta f' = \delta \phi$.
Using \eqref{eq:fc2}, \eqref{eq:theta-general}, and the chain rule from Proposition \ref{prop:chain-rule} we infer that
\begin{align*}
 \hrho_j \# \partial_j \rmD \cF(\rho) 
  & = \theta\big(\ell_j(\rho), \err_j(\rho)\big) \#\Big( \delta f'\big(\ell_j(\rho), \err_j(\rho)\big) \# \partial_j\rho\Big)
\\&= \delta \phi\big(\ell_j(\rho), \err_j(\rho)\big) \# \partial_j\rho
\\& = \partial_j\phi(\rho) \ .
\end{align*}
We obtain
\begin{align*}
 \cK_\rho  \rmD \cF(\rho)  
    & = \sum_{j \in \cJ} \partial_{j}^\dagger \big( \hrhop \#  \partial_{j} \rmD \cF(\rho) \big)  
= \sum_{j \in \cJ} \partial_{j}^\dagger \partial_j\phi(\rho)  
= - \cL \phi(\rho)\; , 
\end{align*}
which is the desired identity.
\end{proof}

\begin{remark}\label{rem:sign-change}
The result remains true if $f$ is required to be strictly concave and $\phi$ is required to be strictly decreasing. Note that $\theta$ is positive in this case, so that $(\cK_\rho)_\rho$ induces a Riemannian metric.
\end{remark}

\begin{remark}\label{rem:special-cases}
This result contains various known results as special cases.
Take $f(\lambda) = \lambda \log \lambda$ and $\phi(r) = r$. Then the functional $\cF$ is the {von Neumann entropy} $\cF(\rho) = \tau[\rho \log \rho]$, and we recover the special case of Theorem \ref{thm:grad-flow-ent} with $\sigma = \one$. 
It also contains the gradient flow structure for the fermionic Fokker-Planck equation from \cite{CM12}. 
In the special case where $\cL$ is the generator of a reversible Markov chain, we recover the gradient flow structure for discrete porous medium equations obtained in \cite{ErMa14}. 
\end{remark}

\begin{remark}\label{rem:simplifications}
In some situations the expression for $\hrho_j = \theta(\ell_j(\rho), \err_j(\rho))$ can be simplified.
If $f(\lambda) = \lambda \log \lambda$ and $\phi(\lambda) = \lambda$, it follows that 
$\theta(\lambda,\mu) = \frac{\lambda - \mu}{\log \lambda - \log \mu}$ is the \emph{logarithmic mean}.  The integral representation $\theta(\lambda,\mu)
 = \int_0^1 \lambda^{1-s}\mu^s \dd s$ allows one to express $\hrho_j$ in terms of the functional calculus for $\ell_j(\rho)$ and $\err_j(\rho)$:
\begin{align*}
 \hrho_j = \theta(\ell_j(\rho), \err_j(\rho))
       = \int_0^1 \ell_j(\rho)^{1-s} \ot \err_j(\rho)^s \dd s\ .
\end{align*}
More generally, take $m \in \R\setminus \{0,1\}$, and set $\phi(\lambda) = \lambda^m$ and $f(\lambda) = \frac{1}{m-1}\lambda^m$. We shall consider the \emph{power difference means} defined by
\begin{align*}
 \theta_m(\lambda,\mu) := \frac{m-1}{m}\frac{\lambda^m - \mu^m}{\lambda^{m-1} - \mu^{m-1}}\ ,
\end{align*}
with the convention that $\theta_m(\lambda, \lambda) = \lambda$. A systematic study of the operator means associated to these functions has been carried out in \cite{HiKo99}. Various classical means are contained as special cases:
\begin{align*}
 \displaystyle\theta_{m}(\lambda,\mu)=  \left\{ \begin{array}{ll}
 \medskip\displaystyle \frac{2\lambda\mu}{\lambda+\mu}\ , & \text{$m = -1$ (harmonic mean)}\ ,\\
 \medskip\displaystyle \frac{\lambda\mu(\log \lambda - \log \mu)}{\lambda - \mu}\ , & \text{$m \to 0$}\ ,\\
 \medskip\displaystyle \sqrt{\lambda \mu}\ , & \text{$m = \frac12$ (geometric mean)}\ ,\\
 \medskip\displaystyle \frac{\lambda - \mu}{\log \lambda - \log \mu}\ ,& \text{$m \to 1$ (logarithmic mean)}\ ,\\
 \displaystyle\frac{\lambda+\mu}{2}\ ,& \text{$m = 2$ (arithmetic mean)}\ .\\
\end{array} \right.
\end{align*}
The following integral representation holds:
\begin{align}\label{eq:integral-rep}
 \theta_m(\lambda,\mu)= 
   \int_0^1 \Big( (1-\alpha) \lambda^{m-1} + \alpha \mu^{m-1} \Big)^{\frac{1}{m-1}} \dd \alpha\ .
\end{align}
If $m = 2$ and $\ell_j = \err_j = I_\cA$, one has $\widehat\rho_j \# A = \frac12(\rho A + A \rho)$, which corresponds to the anti-commutator case studied in \cite{CGT16}.

Another special case is obtained by taking $\phi(\lambda) = \lambda$ and $f(\lambda) = \lambda^2/2$, which yields $\theta(\lambda, \mu) \equiv 1$, so that $\cK_\rho = -\cL$ for all $\rho$, and $\cF(\rho) =  \frac12\tau[\rho^2] = \frac12 \| \rho\|_{L^2(\tau)}^2$. In this case, the distance associated to $\cK_\rho $ may be regarded as a non-commutative analogue of the Sobolev $H^{-1}$-metric.

\end{remark}

\subsection{Geodesics}

As before we consider the operator $\cK_\rho : \cA_0 \to \cA_0$ given by
\begin{align*}
   \cK_\rho A := \partial_j^\dagger(\widehat\rho_j \# \partial_j A)  \ .
\end{align*}
For fixed $\bar\rho \in \Dens_+$ we will compute the geodesic equations associated to the Riemannian structure on $\cM_{\bar\rho}$ induced by the operator $(\cK_\rho)_\rho$. The Riemannian distance $d_\cK$ is given by 
\begin{align*}
 d_\cK(\tilde\rho_0, \tilde\rho_1)^2 &= \inf_{\rho, A} \bigg\{ 
  \int_0^1 \ip{\cK_{\rho_t} A_t, A_t}_{L^2(\cA,\tau)} \dd t \ : \
 		  \partial_t \rho_t = \cK_{\rho_t} A_t\ , 
		  \rho_0 = \tilde\rho_0\ ,
		  \rho_1 = \tilde\rho_1 \bigg\}
 \\& = \inf_{\rho, A} \bigg\{  
 		\int_0^1 \ip{\widehat\rho_t \# \nabla A_t, \nabla A_t}_{L^2(\cA,\tau)} \dd t \,:\,
		\\& \qquad \qquad\qquad 
 		  \partial_t \rho_t +  \dive(\hrho_t\# \nabla A_t)= 0\,, 
		  \rho_0 = \tilde\rho_0\ ,
		  \rho_1 = \tilde\rho_1 \bigg\}\,,
\end{align*}
where the infimum runs over smooth curves $\{\rho_t\}_{t \in [0,1]}$ in $\cM_{\bar\rho}$ and $\{A_t\}_{t \in [0,1]}$ in $\cA_0$ satisfying the stated conditions.

The geodesic equations are the Euler-Lagrange equations associated to this constrained minimization problem, given by 
\begin{equation}\begin{aligned}\label{eq:EL}
  \partial_s \rho_s & = \cK_{\rho_s} A_s \\
  \partial_s A_s  & = - \frac12  \ip{\rmD\cK_{\rho_s} A_s, A_s}_{L^2(\cA, \tau)} \ .
\end{aligned}\end{equation}
Note that the latter equation is equivalent to 
\begin{align*}
\partial_s \tau[A_s B]  & = - \frac12  \ip{\rmD_B \cK_{\rho_s} A_s, A_s}_{L^2(\cA, \tau)}
\end{align*}
for $B \in \cA_0$, where $\rmD_B \cK_{\rho} = \lim_{\eps \to 0} \eps^{-1}\big(\cK_{\rho + \eps B} - \cK_{\rho}\big)$ denotes the directional derivative.

\begin{proposition}[Geodesic equations]\label{prop:geod}
The geodesic equations for $(\rho_s, A_s)_s$ are given by
\begin{align}
	\label{eq:geode1}
 \partial_s \rho_s  + \dive(\hrho_s \# \nabla A_s) &= 0\ ,\\
 \label{eq:geode2}
 \partial_s A_s + \Phi(\rho_s, A_s) &= 0 \ ,
\end{align}
where 
\begin{align*}
 \Phi(\rho, A)  
	   &= \sum_{j\in \cJ} \sum_{k,m,p}
		 \delta\theta_{j}\Big(\treellr{\bullet}{\lambda_p}{\bullet}{\lambda_k}{\lambda_m}\Big)
    	E_m \ell_j^\dagger\Big( (\partial_j A) \err_j(E_p)  (\partial_j A)^* \Big) E_k 
		\\&= 
		\sum_{j\in \cJ} \sum_{k,m,p} 
	   	  \delta\theta_{j}\Big(\treelrr{\bullet}{\lambda_p}{\bullet}{\lambda_m}{\lambda_k}\Big)
		    	E_k \err_j^\dagger\Big( (\partial_j A)^* \ell_j(E_p)  \partial_j A \Big) E_m 
		\ .
\end{align*}
Here, $\rho = \sum_k \lambda_k E_k$ denotes the spectral decomposition of $\rho$.
\end{proposition}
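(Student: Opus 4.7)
The first equation \eqref{eq:geode1} is simply the first line of \eqref{eq:EL} rewritten using \eqref{eq:def-K}, which reads $\cK_{\rho_s} A_s = -\dive(\hrho_s \# \nabla A_s)$. All the content is in the second equation.

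I would unpack the second line of \eqref{eq:EL} as saying that $\Phi(\rho,A) \in \cA_0$ is the unique element characterized by
\begin{align*}
\tau\big[\Phi(\rho,A)\, B\big] = \frac{1}{2} \ddr\Big|_{r=0} \ip{\cK_{\rho + rB} A, A}_{L^2(\cA,\tau)}
\qquad\text{for all } B \in \cA_0 \ .
\end{align*}
Starting from the representation
\begin{align*}
\ip{\cK_\rho A, A}_{L^2(\cA,\tau)} = \sum_{j \in \cJ} \tau_j\big[(\partial_j A)^* \theta_j(\ell_j(\rho),\err_j(\rho)) \# \partial_j A\big],
\end{align*}
the multi-variable chain rule of Proposition \ref{prop:chain-double-simple} produces $\ddr\big|_{r=0}$ as a sum of two contributions, one from perturbing the first slot of $\theta_j$ (a divided difference of the shape \eqref{eq:tree-1}) and one from perturbing the second slot (shape \eqref{eq:tree-2}).

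Next I would substitute the spectral decomposition $\rho = \sum_k \lambda_k E_k$, expand each divided difference via \eqref{eq:mult-funct-calc}, and use \eqref{eq:dual-algebra} in the form $\tau_j[\ell_j(B)\, X] = \tau[B\, \ell_j^\dagger(X)]$ (and analogously with $\err_j$) to pull $B$ out of each trace. Reading off the coefficient of $B$ yields $\Phi(\rho,A) = \tfrac{1}{2}\big(\tilde\Phi^{(1)}(\rho,A) + \tilde\Phi^{(2)}(\rho,A)\big)$, where $\tilde\Phi^{(1)}$ is precisely the first displayed expression in the statement and $\tilde\Phi^{(2)}$ is the second. Up to this point the proof is a chain-rule computation combined with routine index bookkeeping.

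The main obstacle is to remove the factor $\tfrac{1}{2}$ by proving $\tilde\Phi^{(1)} = \tilde\Phi^{(2)}$, which is also the equality of the two forms asserted in the statement. I would establish it by the relabeling $j \mapsto j^*$ in $\tilde\Phi^{(2)}$ and invoking the standing identifications $V_{j^*} = V_j^*$ and $\cB_{j^*} = \cB_j$, the symmetry $\theta_{j^*}(r,s) = \theta_j(s,r)$ from Assumption \ref{ass:theta}, and the trace symmetry \eqref{eq:symmetry-equiv}. Together these convert the $\err_j^\dagger$-block $\err_j^\dagger\big((\partial_j A)^* \ell_j(E_p) \partial_j A\big)$ into the $\ell_j^\dagger$-block $\ell_j^\dagger\big((\partial_j A)\, \err_j(E_p) (\partial_j A)^*\big)$ and flip the divided difference of $\theta_j$ from its second slot to its first, while the outer spectral indices swap roles so that the bracketing $E_m(\cdot)E_k$ lands in the position required by the first expression. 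Tracking how the $j \leftrightarrow j^*$ swap interacts with the spectral indices is the most delicate part of the bookkeeping, and it is the only step in which the symmetry \eqref{eq:symmetry-equiv} is genuinely used.
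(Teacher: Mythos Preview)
Your proposal is correct and follows essentially the same route as the paper: differentiate $\ip{\cK_{\rho+rB}A,A}$ via Proposition~\ref{prop:chain-double-simple}, obtain two terms from the two slots of $\theta_j$, use \eqref{eq:dual-algebra} to isolate $B$, and prove the two resulting expressions coincide by the relabeling $j\mapsto j^*$ together with \eqref{eq:theta-sym} and \eqref{eq:symmetry-equiv}. The only cosmetic difference is ordering: the paper first establishes the equality of the two terms at the trace level (i.e., in the weak form $\tau[B\,\cdot\,]$) and then extracts the strong form, whereas you extract the two candidate strong forms first and then argue they agree; since \eqref{eq:symmetry-equiv} is a trace identity, your final step must in any case pass through testing against $B$, so the two presentations amount to the same computation.
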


\begin{remark}\label{rem:geod}
In the sequel we will use \eqref{eq:geode2} in the weak formulation:
\begin{equation}\begin{aligned}\label{eq:geod-weak}
 \partial_s \tau[A_sB] 
 	& = - \sum_{j \in \cJ}
   	 \tau_j\Big[(\partial_j A_s)^* 
    \cN_{\rho,B}^{(\eta),j} \# (\partial_j A_s)
 \Big] \ ,
\end{aligned}\end{equation}
for all $B \in \cA$ and $\eta = 1,2$,
where 
\begin{align*}
 \cN_{\rho,B}^{(1),j} = \delta\theta_j\Big( \widetreellr{\bullet}{\err_j(\rho)}{\ell_j(B)}{\ell_j(\rho)}{\ell_j(\rho)} \Big) \ ,  \qquad
 \cN_{\rho,B}^{(2),j} = \delta\theta_j\Big( \widetreelrr{\bullet}{\ell_j(\rho)}{\err_j(B)}{\err_j(\rho)}{\err_j(\rho)} \Big) \ .
\end{align*}
\end{remark}

\begin{remark}\label{rem:simplified}
If $\theta_j(r,s) := \Lambda(e^{\omega_j/2}r, e^{-\omega_j/2}s)$ where $\Lambda$ is the logarithmic mean, the expression above can be simplified. In this case we have the integral representation 
\begin{align*}
\delta\Lambda\Big(\begin{adjustbox}{trim=0.9cm 0.5cm 0.1cm 0.9cm}
\phantom{$\theta_j$( \ }
\begin{tikzpicture}
\tikzstyle{level 1}=[level distance=0.3cm, sibling distance=0.6cm]
\tikzstyle{level 2}=[level distance=0.3cm, sibling distance=0.6cm]
  \node {\tiny{$\bullet$}} [grow'=up] 
    child {node {\tiny{$x$}}} 
    child {node {\tiny{$\bullet$}} 
      child {node {\tiny{$y$}}} 
      child {node {\tiny{$z$}}} 
    }; 
\end{tikzpicture}
\end{adjustbox}\Big)  
    = \int_0^1  \int_0^1 \int_0^\alpha 
  \bigg[ x^{1-\alpha} 
 	  \cdot   \frac{y^{\alpha - \beta}}{(1-s) + s y}
     \cdot \frac{z^\beta}{(1-s) + s z} 
     \bigg] \dd \beta \dd \alpha \dd s\ ,
\end{align*}
so that 
\begin{align*}
\delta\theta_j\Big(\begin{adjustbox}{trim=0.9cm 0.5cm 0.1cm 0.9cm}
\phantom{$\theta_j$( \ }
\begin{tikzpicture}
\tikzstyle{level 1}=[level distance=0.3cm, sibling distance=0.6cm]
\tikzstyle{level 2}=[level distance=0.3cm, sibling distance=0.6cm]
  \node {\tiny{$\bullet$}} [grow'=up] 
    child {node {\tiny{$x$}}} 
    child {node {\tiny{$\bullet$}} 
      child {node {\tiny{$y$}}} 
      child {node {\tiny{$z$}}} 
    }; 
\end{tikzpicture}
\end{adjustbox}\Big)  
    & = \int_0^1  \int_0^1 \int_0^\alpha e^{-\omega_j \alpha}
  \bigg[ x^{1-\alpha} 
 	  \cdot   \frac{y^{\alpha - \beta}}{(1-s) + s e^{-\omega_j/2} y} \cdot
 \\& \qquad\qquad\qquad\qquad\quad     \cdot \frac{z^\beta}{(1-s) + s e^{-\omega_j/2} z} 
     \bigg] \dd \beta \dd \alpha \dd s\ ,
\end{align*}
which implies that
\begin{align*}
  \Phi(\rho, A) & = \sum_{j \in \cJ}\int_0^1  \int_0^1 \int_0^\alpha   e^{\omega_j \alpha}
  \bigg[ 
 	     \frac{\rho^{\alpha - \beta}}{(1-s)I + s  e^{\omega_j/2}\rho}
		 \ell_j^\dagger\Big( (\partial_j A) \err_j(\rho^{1-\alpha})  (\partial_j A)^* \Big)
		 \\& \qquad\qquad\qquad\qquad\qquad \times
	     \frac{\rho^\beta}{(1-s)I + s  e^{\omega_j/2}\rho}      
     \bigg] \dd \beta \dd \alpha \dd s 
   \\   & = \sum_{j \in \cJ}\int_0^1  \int_0^1 \int_0^\alpha 
  e^{-\omega_j\alpha}  \bigg[ 
 	     \frac{\rho^{\alpha - \beta}}{(1-s)I + s e^{-\omega_j/2}  \rho}
		 \err_j^\dagger\Big( (\partial_j A)^* \ell_j(\rho^{1-\alpha})  \partial_j A \Big)
	   \\& \qquad\qquad\qquad\qquad\qquad \times \frac{\rho^\beta}{(1-s)I + s e^{-\omega_j/2} \rho}      
     \bigg] \dd \beta \dd \alpha \dd s \ .
\end{align*}
\end{remark}

\begin{proof}[Proof of Proposition \ref{prop:geod}]
Proposition \ref{prop:chain-double-simple} yields
\begin{align*}
  \partial_\eps\big|_{\eps=0} \theta_j\Big(\ell_j(& \rho + \eps B), \err_j(\rho + \eps B)\Big)
\\& = 	 
  \delta\theta_j\Big( \widetreellr{\bullet}{\err_j(\rho)}{\ell_j(B)}{\ell_j(\rho)}{\ell_j(\rho)} \Big)
+   \delta\theta_j\Big( \widetreelrr{\bullet}{\ell_j(\rho)}{\err_j(B)}{\err_j(\rho)}{\err_j(\rho)} \Big) \ ,
\end{align*}
and therefore
\begin{align*}
 & \partial_\eps\big|_{\eps=0}
     \ip{\cK_{\rho + \eps B} A, A}_{L^2(\cA, \tau)} 
\\& = 	 
    \sum_{j \in \cJ}
   	 \tau_j\Big[(\partial_j A)^* 
	 \Big\{
  \delta\theta_j\Big( \widetreellr{\partial_j A}{\err_j(\rho)}{\ell_j(B)}{\ell_j(\rho)}{\ell_j(\rho)} \Big)
+   \delta\theta_j\Big( \widetreelrr{\partial_j A}{\ell_j(\rho)}{\err_j(B)}{\err_j(\rho)}{\err_j(\rho)} \Big) \Big\} \Big] \ .
\end{align*}
Since $A$ is self-adjoint, it follows using \eqref{eq:theta-sym} and \eqref{eq:symmetry-equiv} that
\begin{align*}
 &  \tau_j\Big[ (\partial_{j^*} A)^*  \delta\theta_{j^*}\Big( \widetreelrr{\partial_{j^*} A}{\ell_{j^*}(\rho)}{\err_{j^*}(B)}{\err_{j^*}(\rho)}{\err_{j^*}(\rho)} \Big) \Big]
 \\& = \sum_{k,m,p}  
 \delta\theta_{j^*}\Big(\treelrr{\bullet}{\lambda_p}{\bullet}{\lambda_k}{\lambda_m}\Big)
 \tau_j\Big[  (\partial_{j^*} A)^*  \ell_{j^*}(E_p) \big(\partial_{j^*}A \big)\err_{j^*}(E_k B E_m)\Big]
 \\& = \sum_{k,m,p}
 \delta\theta_{j}\Big(\treellr{\bullet}{\lambda_p}{\bullet}{\lambda_k}{\lambda_m}\Big)
\\& \qquad \times\tau_j\Big[  \Big(\err_{j^*}(A) V_j - V_j\ell_{j^*}(A)\Big)  \ell_{j^*}(E_p) \Big(V_j^* \err_{j^*}(A) - \ell_{j^*}(A)V_j^*\Big)  \err_{j^*}(E_k B E_m)\Big]
 \\& = \sum_{k,m,p} 
  \delta\theta_{j}\Big(\treellr{\bullet}{\lambda_p}{\bullet}{\lambda_k}{\lambda_m}\Big)
\\& \qquad \times\tau_j\Big[  \Big(\ell_j(A) V_j - V_j\err_j(A)\Big)  \err_j(E_p) \Big(V_j^* \ell_j(A) - \err_j(A)V_j^*\Big)  \ell_j(E_k B E_m)\Big] 
 \\& = \sum_{k,m,p} 
  \delta\theta_{j}\Big(\treellr{\bullet}{\lambda_p}{\bullet}{\lambda_k}{\lambda_m}\Big)
\tau_j\Big[ (\partial_j A)^*  \ell_j(E_k B E_m) (\partial_j A) \err_j(E_p)  \Big] 
\\& =   
   	 \tau_j\Big[(\partial_j A)^* 
  \delta\theta_j\Big( \widetreellr{\partial_j A}{\err_j(\rho)}{\ell_j(B)}{\ell_j(\rho)}{\ell_j(\rho)} \Big)
\Big] \ .
\end{align*}
This implies the equality of the two sums in \eqref{eq:geod-weak}, and it also follows that 
\begin{align}\label{eq:derivative-of-K}
  \partial_\eps\big|_{\eps=0}
     \ip{\cK_{\rho + \eps B} A, A}_{L^2(\cA, \tau)} 
  = 2 \sum_{j \in \cJ}
   	 \tau_j\Big[(\partial_j A)^* 
  \delta\theta_j\Big( \widetreellr{\partial_j A}{\err_j(\rho)}{\ell_j(B)}{\ell_j(\rho)}{\ell_j(\rho)} \Big)
\Big] \ ,
\end{align}
which yields the weak formulation \eqref{eq:geod-weak} in view of \eqref{eq:EL}.
To obtain \eqref{eq:geode2}, we compute using \eqref{eq:dual-algebra},
\begin{align*}
&  	 \tau_j\Big[(\partial_j A)^* 
  \delta\theta_j\Big( \widetreellr{\partial_j A}{\err_j(\rho)}{\ell_j(B)}{\ell_j(\rho)}{\ell_j(\rho)} \Big)
\Big] 
  \\ &  = \sum_{k,m,p} 
  \delta\theta_{j}\Big(\treellr{\bullet}{\lambda_p}{\bullet}{\lambda_k}{\lambda_m}\Big)
\tau_j\Big[  \ell_j(B E_m) (\partial_j A) \err_j(E_p)  (\partial_j A)^* \ell_j(E_k) \Big] 
  \\ &  = \sum_{k,m,p} 
  \delta\theta_{j}\Big(\treellr{\bullet}{\lambda_p}{\bullet}{\lambda_k}{\lambda_m}\Big)
\tau_j\Big[  B \ell_j^\dagger\Big( \ell_j(E_m) (\partial_j A) \err_j(E_p)  (\partial_j A)^* \ell_j(E_k) \Big) \Big] 
  \\ &  = \sum_{k,m,p} 
  \delta\theta_{j}\Big(\treellr{\bullet}{\lambda_p}{\bullet}{\lambda_k}{\lambda_m}\Big)
\tau_j\Big[  B  E_m \ell_j^\dagger\Big( (\partial_j A) \err_j(E_p)  (\partial_j A)^* \Big) E_k  \Big] 
 \\& = \tau_j\Big[  B \Phi_j(\rho, A) \Big] \ ,
\end{align*}
where
\begin{align*}
	\Phi_j(\rho, A)  
	   = \sum_{k,m,p} 
	   	  \delta\theta_{j}\Big(\treellr{\bullet}{\lambda_p}{\bullet}{\lambda_k}{\lambda_m}\Big)
    	E_m \ell_j^\dagger\Big( (\partial_j A) \err_j(E_p)  (\partial_j A)^* \Big) E_k 
 \ .
\end{align*}
An analogous computation shows that
\begin{align*}
	\Phi_j(\rho, A)  
	   =  \sum_{k,m,p} 
	   	  \delta\theta_{j^*}\Big(\treelrr{\bullet}{\lambda_p}{\bullet}{\lambda_m}{\lambda_k}\Big)
		    	E_k \err_{j^*}^\dagger\Big( (\partial_{j^*} A)^* \ell_{j^*}(E_p) \partial_{j^*} A \Big) E_m \ .
\end{align*}
We thus obtain 
\begin{align*}
 \ip{\rmD \cK_{\rho} A, A}_{L^2(\cA, \tau)} 
   = 2 \Phi(\rho, A) \ ,
\end{align*}
hence the result follows from the Euler-Lagrange equations \eqref{eq:EL}.
\end{proof}

We will use the geodesic equations to compute the Hessian of some interesting functionals on $\cM_\rho$. Note that the Hessian is obtained from the formula
\begin{align*}
  \Hess_\cK \sE(\rho_0)[A_0, A_0]  
    := \partial_s^2\big|_{s=0} \sE(\rho_s) 
\end{align*}
for $A_0 \in \cA_0$, where $(\rho_s, A_s)_s$ evolves according to the geodesic equations \eqref{eq:EL} with initial conditions  $\rho\big|_{s = 0} = \rho_0$ and $\partial_s \big|_{s = 0} \rho_s = \cK_{\rho_0} A_0$.

\begin{proposition}\label{prop:Hessian-Ent}
For $\bar\rho \in \Dens_+$, let $\sE : \cM_{\bar\rho} \to \R$ be a smooth functional, and let us write $\cM(\rho) := \cK_\rho \rmD \cF(\rho)$ for the Riemannian gradient of $\cF$ induced by $(\cK_\rho)_\rho$. Then, the Hessian of $\sE$ is given by 
\begin{align}\label{eq:Hess1}
 \Hess_\cK \sE(\rho)[A, A] 
 =  \tau[ A \rmD_{\cK_{\rho} A} \cM(\rho) ] 
  -  
   	 \tau \Big[(\nabla A)^* 
    \cN_{\rho,\cM(\rho)}^{(\eta)} \# (\nabla A) \Big]
\end{align}
for $A \in \cA_0$ and $\eta = 1, 2$, where $\rmD_{B} \cM(\rho) = \lim_{\eps \to 0} \eps^{-1}\big(\cM(\rho + \eps B) - \cM(\rho)\big)$ denotes the directional derivative.
In particular, if $\cM(\rho) = -\cL^\dagger \rho$ (as is the case in setting of Theorem \ref{thm:grad-flow-ent}, where $\sE(\rho) = \Ent_\sigma(\rho)$), we have
\begin{align}\label{eq:Hess2}
 \Hess_\cK \sE(\rho)[A, A] 
  = 	- \tau[( \nabla \cL A)^* \hrho \# \nabla A] 
			+  \tau\Big[(\nabla A)^* 
    \cN_{\rho,\cL^\dagger \rho}^{(\eta)} \# (\nabla A) \Big] \ .
\end{align}
\end{proposition}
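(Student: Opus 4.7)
The plan is to compute $\partial_s^2 \sE(\rho_s)\big|_{s=0}$ directly along the geodesic from Proposition \ref{prop:geod}, with initial data $\rho_0 = \rho$ and $A_0 = A$; by \eqref{eq:geode1} this makes $\partial_s\big|_{s=0}\rho_s = \cK_\rho A$ as required. Since $\cK_{\rho_s}$ is self-adjoint on $L^2(\cA,\tau)$ by Lemma \ref{lem:Ran-Ker}, the first geodesic equation $\partial_s \rho_s = \cK_{\rho_s} A_s$ yields
\begin{align*}
	\partial_s \sE(\rho_s) = \tau\big[\rmD\sE(\rho_s)\, \cK_{\rho_s} A_s\big] = \tau\big[A_s\, \cM(\rho_s)\big]\ .
\end{align*}

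Differentiating once more via the product rule splits $\partial_s^2 \sE(\rho_s)$ into the two pieces $\tau[A_s\, \partial_s\cM(\rho_s)] + \tau[(\partial_s A_s)\, \cM(\rho_s)]$. The chain rule gives $\partial_s \cM(\rho_s) = \rmD_{\cK_{\rho_s} A_s}\cM(\rho_s)$ for the first piece. For the second, fix $s_0$ and apply the weak geodesic equation \eqref{eq:geod-weak} with the $s$-independent choice $B = \cM(\rho_{s_0}) \in \cA$; evaluating at $s = s_0$ and letting $s_0 \to 0$ produces $\tau[(\partial_s A_s) \cM(\rho_s)]\big|_{s=0} = -\sum_{j \in \cJ} \tau_j\big[(\partial_j A)^* \cN_{\rho, \cM(\rho)}^{(\eta), j} \# \partial_j A\big]$. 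Summing these two contributions yields exactly \eqref{eq:Hess1}.

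To pass from \eqref{eq:Hess1} to \eqref{eq:Hess2} in the case $\cM(\rho) = -\cL^\dagger \rho$, the linearity of $\cL^\dagger$ gives $\rmD_{\cK_\rho A}\cM(\rho) = -\cL^\dagger \cK_\rho A$. Then $\tau[A \cL^\dagger B] = \tau[(\cL A) B]$ (valid because $\cL A$ is self-adjoint by the reality of $\cL$), combined with the expansion $\cK_\rho A = \sum_{j \in \cJ} \partial_j^\dagger(\hrho_j \# \partial_j A)$ and the adjoint identity $\tau[C\, \partial_j^\dagger D] = \tau_j[(\partial_j C)^* D]$ for self-adjoint $C$, rewrites the first term of \eqref{eq:Hess1} as $-\tau[(\nabla \cL A)^*\, \hrho \# \nabla A]$. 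The quadratic-form term flips sign because $B \mapsto \cN_{\rho, B}^{(\eta), j}$ is $\R$-linear in its second argument, so $\cN_{\rho, -\cL^\dagger \rho}^{(\eta), j} = -\cN_{\rho, \cL^\dagger \rho}^{(\eta), j}$. Since the whole argument consists of two careful differentiations followed by bookkeeping via \eqref{eq:geod-weak}, no real obstacle arises; the only delicate point is that $B$ in \eqref{eq:geod-weak} must be held $s$-independent before being frozen at the evaluation point.
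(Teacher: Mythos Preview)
Your proof is correct and follows essentially the same route as the paper: compute $\partial_s\sE(\rho_s)=\tau[A_s\,\cM(\rho_s)]$ via self-adjointness of $\cK_{\rho_s}$, differentiate once more, and handle the two resulting terms by the chain rule and the weak geodesic equation \eqref{eq:geod-weak}. Your explicit remark that $B$ must be frozen before invoking \eqref{eq:geod-weak} is a welcome clarification of a step the paper leaves implicit; the phrase ``letting $s_0\to 0$'' is slightly imprecise (you are simply evaluating at $s_0=0$, no limit is needed), but harmless.
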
 

\begin{proof}
Let $(\rho_s,A_s)_s$ satisfy the geodesic equations \eqref{eq:geode1}--\eqref{eq:geode2}. Then: 
\begin{align*}
 \partial_s \sE(\rho_s) 
     &   = \tau[ \rmD \sE(\rho_s) \partial_s \rho_s ]
		 = \tau[ \rmD \sE(\rho_s) \cK_{\rho_s} A_s ]
		 = \tau[ A_s \cK_{\rho_s} \rmD \sE(\rho_s) ]
		 = \tau[ A_s \cM(\rho_s) ] \ .
\end{align*}
Thus, by \eqref{eq:geod-weak},
\begin{align*}
 \partial_s^2 \sE(\rho_s) 
	& =  \tau[ A_s \partial_s \cM(\rho_s) ] 
			+\tau[ (\partial_s A_s) \cM(\rho_s) ] 
  \\& =  \tau[ A_s \rmD_{\cK_{\rho_s} A_s} \cM(\rho_s) ] 
  -  \sum_{j \in \cJ}
   	 \tau_j\Big[(\partial_j A_s)^* 
    \cN_{\rho_s,\cM(\rho_s)}^{(\eta),j} \# (\partial_j A_s) \Big]
\ ,
\end{align*}
for $\eta = 1, 2$,
which proves \eqref{eq:Hess1}.

If $\cM(\rho) = -\cL^\dagger \rho$ we have $\rmD_{B} \cM(\rho) = - \cL^\dagger B$, hence the expression above simplifies to
\begin{align*}
 \partial_s^2 \sE(\rho_s) 
	& = - \tau[ A_s  \cL^\dagger \cK_{\rho_s} A_s  ]  +  
	 \sum_{j \in \cJ}
   	 \tau_j\Big[(\partial_j A_s)^* 
    \cN_{\rho_s,\cL^\dagger \rho_s}^{(\eta),j} \# (\partial_j A_s) \Big]
	\\& = 	- \tau[( \nabla \cL A_s)^* \hrho_s \# \nabla A_s] 
  +  
	 \sum_{j \in \cJ}
   	 \tau_j\Big[(\partial_j A_s)^* 
    \cN_{\rho_s,\cL^\dagger \rho_s}^{(\eta),j} \# (\partial_j A_s) \Big]
			 \ .
\end{align*}
\end{proof}

\begin{remark}\label{rem:Hess-Ent}
In the setting of the theorem above, we remark that the following equivalent expression holds as well:
\begin{align*}
 \Hess_\cK \sE(\rho)[A, A] 
 =  \tau[ A \rmD_{ \cK_{\rho} A} \cM(\rho) ] 
  - \tau[ \Phi(\rho, A) \cM(\rho) ] \ .
\end{align*}
\end{remark}

\section{Preliminaries on quasi-entropies}\label{sec:quasi-entropies}

In this section we collect some known results on trace functionals that will be useful in the study of quantum transport metrics. Special cases of the results in this section already played a key role in the proof of functional inequalities in \cite{2016-Carlen-Maas}.

Let $\cA$ be a finite-dimensional von Neumann algebra endowed with a positive tracial linear functional $\tau$.
We consider the mapping $\cJ_{\theta,p} : \cA_+ \times \cA_+ \times \cA \to \R$ given by 
\begin{align*} 
 \qquad \cJ_{\theta,p}(R,S;A) := \bip{A, \theta^{-p}(R,S) \# A }
   = \sum_{k,\ell} \theta^{-p}(\lambda_k, \mu_\ell) \tau\big[A^* E_{k}^R A E_{\ell}^S \big]\ ,
\end{align*}
where $\theta : (0,\infty) \times (0,\infty) \to (0,\infty)$ and $p \in \R$, and $R = \sum_k \lambda_k E_k^R$ and $S = \sum_\ell \mu_\ell E_\ell^S$ denotes the spectral decomposition. The main cases of interest to us are $p = \pm 1$.

In this section we shall assume that the function $\theta$ is \emph{$1$-homogeneous}, i.e., $\theta(\lambda r,\lambda s) = \lambda \theta(r,s)$ for all $\lambda, r, s > 0$. 
Clearly, this assumption is satisfied if and only if there exists a function $f : (0,\infty) \to (0,\infty)$ such that $\theta(r,s) = s f(r/s)$ for all $r,s > 0$, in which case we have $f(r) = \theta(r,1)$. 
To simplify notation, we write $k(r) = 1/f(r)$. 

\begin{remark}[Relation to the relative modular operator]
	It is instructive to see how the definition of $\theta(R,S)$ can be formulated in terms of the relative modular operator, if $\theta$ is $1$-homogeneous.
	Given $S \in \cA_+$, let ${\sfL}_S$ and ${\sfR}_S$ denote the left- and right-multiplication operators defined by $\sfL_S(A) = S A$ and $\sfR_S(A) = A S$. Then the relative modular operator $\Delta_{R,S} : \cA \to \cA$ defined by $\Delta_{R,S} A = RAS^{-1}$ can be expressed as $\Delta_{R,S} = \sfL_R \circ \sfR_{S^{-1}} = \sfR_{S^{-1}} \circ \sfL_R$. 
	Let $\{\xi_k\}$ (resp. $\{\eta_\ell\}$) be an orthonormal basis of $\C^n$ consisting of eigenvectors of $R$ (resp. $S$), let $\{\lambda_k\}$ (resp. $\{\mu_\ell\}$) be the corresponding eigenvalues, and set $E_{k\ell} := \phys{\xi_k}{\eta_\ell}$. 
	It follows that $\Delta_{R,S}(E_{k \ell}) = \frac{\lambda_k}{\mu_\ell} E_{k \ell}$, hence the $E_{k\ell}$'s form a complete basis of eigenvectors of $\Delta_{R,S}$. 
	Moreover, the $E_{k\ell}$'s are orthonormal with respect to the Hilbert--Schmidt inner product $\ip{A,B}_{L^2(\Tr)} = \Tr[A^* B]$ on $\M_n(\C)$. 
	Consequently, the spectral decomposition of $\Delta_{R,S}$ is given by  
	 \begin{align*}
	 	\Delta_{R,S} = \sum_{k, \ell} \frac{\lambda_k}{\mu_\ell} \phys{E_{k\ell}}{E_{k\ell}} \ ,
	 \end{align*}
	 and for functions $f : (0,\infty) \to \R$ we find $f(\Delta_{R,S})(A) = \sum_{k, \ell}f (\lambda_k / \mu_\ell)  \ip{E_{k\ell}, A}_{L^2(\Tr)} E_{k\ell}$.
	 Note that 
	 \begin{align*}
	 	 \ip{E_{k\ell}, A}_{L^2(\Tr)} E_{k \ell}
	 	  = \sum_m \ip{E_{k\ell}\eta_m, A\eta_m } E_{k \ell}
	 	  = \ip{\xi_k, A\eta_\ell } E_{k \ell}
		  = E_{k}^R A E_{\ell}^S \ ,
	 \end{align*}
	 where $E_{k}^R = \phys{\xi_k}{\xi_k}$ and $E_{\ell}^S = \phys{\eta_\ell}{\eta_\ell}$.
	 It follows that 
	 \begin{align*}
	 	f(\Delta_{R,S})(A)
	 	 =  \sum_{k, \ell} f(\lambda_k / \mu_\ell) E_{k}^R A E_{\ell}^S \ ,
	 \end{align*}
	 and therefore, since $f(r/s) s = \theta(r,s)$,
	 \begin{align*}
	 	\theta(R,S) \# A = \big(\sfR_S \circ f(\Delta_{R,S}) \big)(A) \ .
	 \end{align*}
\end{remark}

\begin{example}\label{ex:power-means}
	Let us recall our main examples of interest. A central role is played by the  \emph{tilted logarithmic mean} $\theta_{1,\beta}$ given by
\begin{align*}
	\theta_{1,\beta}(r,s) = \int_0^1 (e^{-\beta/2}r)^{1-\alpha} (e^{\beta/2}s)^\alpha \dd \alpha 
	 			= \frac{e^{-\beta/2}r - e^{\beta/2}s}{- \beta + \log r - \log s} \ , \quad
	  	f_{1,\beta}(r) = \frac{e^{-\beta/2} r - e^{\beta/2}}{-\beta + \log r} \ ,
\end{align*}
for $\beta \in \R$.
More generally, in view of Remark \ref{rem:simplifications} we are interested in the class of power difference quotients $\theta_m$ given by $f_{m,\beta}(r) = \theta_{m,\beta}(r,1)$, where
\begin{align*}
 \theta_{m,\beta}(r,s)
& =  
\int_0^1 \Big( (1-\alpha)(e^{-\beta/2}r)^{m-1}  + \alpha (e^{\beta/2}s)^{m-1} \Big)^{\frac{1}{m-1}} \dd \alpha 
   \\   & = \frac{m-1}{m}\frac{(e^{-\beta/2}r)^m - (e^{\beta/2}s)^m}{(e^{-\beta/2}r)^{m-1} - (e^{\beta/2}s)^{m-1}} \ .
\end{align*}
\end{example}

Consider the mapping $\Upsilon_{f,p} : \cA_+\times \cA_+ \times \cA \to \R$ given by
\begin{align*}
	\Upsilon_{f,p}(R,S;A) :=  
	 \cJ_{\theta,p}(R,S;A) = \bip{A, \theta^{-p}(R,S) \# A  }
\end{align*}
Our goal is to characterize its convexity and contractivity properties in terms of $f$ and $m$. For this purpose we recall that a function $f : (0,\infty) \to (0,\infty)$ is said to be \emph{operator monotone}, whenever $f(A) \leq f(B)$ for all positive matrices $A \leq B$ in all dimensions. Each operator monotone function is continuous, non-decreasing and concave. We set $f(0) := \inf_{t > 0} f(t)$. 

The following result has been obtained in \cite[Theorem 2.1]{HP12CQE}. The implication ``$(2) \Rightarrow (1)$'', as well as the reverse implication for fixed $p =1$ had already been proved in \cite{HP12QEQ}.

\begin{theorem}[Characterization of convexity of $\Upsilon_{f,p}$]\label{thm:op-mon-cvx}
Let $f : (0,\infty) \to (0,\infty)$ be a function and let $p \in \R \setminus \{0 \}$. The following assertions are equivalent.
\begin{enumerate}
\item The function $\Upsilon_{f,p}$ is jointly convex in its three variables;
\item The function $f$ is operator monotone and $p \in (0,1]$.
\end{enumerate}
\end{theorem}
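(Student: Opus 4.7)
The plan is to prove both implications using L\"owner's integral representation of operator monotone functions combined with Lieb's concavity theorem (and its Ando-type extension, which asserts that $(R,S,A) \mapsto \tau[A^* R^{-\gamma} A S^{-\delta}]$ is jointly convex whenever $\gamma,\delta \geq 0$ and $\gamma+\delta\leq 1$).

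For the direction $(2) \Rightarrow (1)$, I would first verify that the hypothesis forces $f^p$ to be operator monotone on $(0,\infty)$: this follows from the integral representation
\[ t^p = \frac{\sin p\pi}{\pi} \int_0^\infty \frac{s^{p-1}\,t}{t+s}\, ds \]
for $p \in (0,1)$, combined with the observation that $t \mapsto f(t)/(f(t)+s) = 1 - s/(f(t)+s)$ is operator monotone whenever $f$ is (since $f+s$ is positive and operator monotone, so its reciprocal is operator antitone). Consequently $g := 1/f^p$ is positive and operator convex, and the integral representation of such functions on $(0,\infty)$ yields a decomposition
\[ g(t) = \alpha_0 + b_0 t + \int_0^\infty \left( \frac{1}{s} - \frac{1}{s+t} \right) d\mu(s) \]
with $\alpha_0, b_0 \geq 0$ and a suitable positive measure $\mu$. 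Taking the perspective transform $(r,s_1) \mapsto s_1^{-p}\,g(r/s_1) = \theta^{-p}(r,s_1)$ then expresses $\Upsilon_{f,p}(R,S;A)$ as a positive superposition of elementary functionals of type $\tau[A^* R^{-\gamma} A S^{-\delta}]$ (with $\gamma+\delta \leq 1$ by virtue of $p\leq 1$) together with resolvent-type integrals derived from these. Each building block is jointly convex in $(R,S,A)$ by the Lieb-Ando theorem, and joint convexity is preserved under positive superposition and integration.

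For the reverse direction $(1) \Rightarrow (2)$, I would proceed by specific test configurations. Restricting to $R = S = t\one$ with $A \in \cA_h$ varying, one obtains $\Upsilon_{f,p}(t\one, t\one; A) = t^{-p} f(1)^{-p}\, \tau[A^* A]$; a direct Hessian computation on the quadratic form $(t, A) \mapsto t^{-p}\,\tau[A^*A]$ shows that joint convexity forces $p \in [0,1]$, and the trivial case $p = 0$ is excluded since $\Upsilon_{f,0}$ is independent of $f$. Once $p \in (0,1]$ is pinned down, operator monotonicity of $f$ is extracted by choosing $R, S$ as $2\times 2$ block matrices and $A$ with off-diagonal entries; joint convexity of $\Upsilon_{f,p}$ at such configurations translates into operator inequalities characterizing operator monotonicity, following the strategy by which Petz established the Morozova-Chentsov classification of monotone metrics.

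The main obstacle is the direction $(2) \Rightarrow (1)$: although the decomposition strategy via perspective transforms is natural, the arithmetic of exponents in the elementary building blocks has to be controlled so that the hypothesis $\gamma+\delta \leq 1$ of the Lieb-Ando theorem is satisfied for each piece, and this is precisely where the bound $p \leq 1$ plays its indispensable role.
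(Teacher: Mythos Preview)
The paper does not prove this theorem: it is quoted from Hiai--Petz \cite[Theorem 2.1]{HP12CQE}, with the case $p=1$ (and the partial converse) already in \cite{HP12QEQ}. So there is no ``paper's own proof'' to compare against; your sketch is being measured against the Hiai--Petz argument.

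Your outline for $(1)\Rightarrow(2)$ is sound: the scalar reduction $R=S=t\one$ does force $p\in(0,1]$ exactly as you compute, and the extraction of operator monotonicity via $2\times2$ test configurations is the standard Petz/Morozova--Chentsov route.

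For $(2)\Rightarrow(1)$ there is a concrete error. You assert that $g:=f^{-p}$ is operator convex and then invoke the representation
\[
g(t)=\alpha_0+b_0 t+\int_0^\infty\Big(\frac1s-\frac{1}{s+t}\Big)\,d\mu(s),
\]
but this is the L\"owner--Nevanlinna representation of \emph{operator monotone increasing} functions, not of operator convex ones. Since $f^p$ is operator monotone increasing (L\"owner--Heinz applied to $f$), the reciprocal $g=f^{-p}$ is \emph{decreasing}; it cannot carry a term $b_0t$ with $b_0\ge0$, and indeed such a term would produce a building block $\tau[A^*R\,A\,S^{-p-1}]$ with a positive power of $R$, which is outside the Lieb--Ando range. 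The correct structural fact is that $g$ is positive and operator monotone \emph{decreasing}, hence admits a representation
\[
g(t)=a+\int_{(0,\infty)}\frac{1}{t+\lambda}\,d\nu(\lambda),
\]
which after the perspective transform yields $\theta^{-p}(r,s)=a\,s^{-p}+\int \frac{s^{1-p}}{r+\lambda s}\,d\nu(\lambda)$. For $p=1$ each integrand gives $\tau[A^*(\sfL_R+\lambda\sfR_S)^{-1}A]$, and joint convexity follows from the concavity of $(R,S)\mapsto\sfL_R+\lambda\sfR_S$ combined with the elementary joint convexity of $(X,A)\mapsto\tau[A^*X^{-1}A]$; this is essentially the Hiai--Petz route (equivalently, via Kubo--Ando means). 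For $p<1$ the extra factor $s^{1-p}$ requires an additional step beyond a single Lieb--Ando application, so your ``positive superposition of Lieb--Ando blocks'' claim is not yet justified as written.
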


Applying this result to the functions $f = f_{m,\beta}$, we obtain the following result.

\begin{corollary}[Characterization of convexity of $\Upsilon_{f,p}$ for power difference quotients]\label{cor:conv-char}\mbox{}\\ For $m \in \R \setminus \{0\}$ and $\beta \in \R$, let $f = f_{m,\beta}$ and $\theta = \theta_{m,\beta}$ be as in Example \ref{ex:power-means}.
Then, the associated mapping $\Upsilon_{f,p}$ is jointly convex if and only if $m \in [-1,2]$, $p \in (0,1]$, and $\beta \in \R$.
In particular, the mapping 
\begin{align*}
	(R,S,A) \mapsto
	  \bip{A^*, \theta_{1,\beta}^{-1}(R,S) \# A  }
	= \tau\bigg[\int_0^\infty A^*  \frac{1}{x + e^{-\beta/2}R}   A \frac{1}{x + e^{\beta/2}S} \dd x \bigg]
\end{align*}
is jointly convex for all $\beta \in \R$.
\end{corollary}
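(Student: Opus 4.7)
The plan is to reduce the claim to Theorem~\ref{thm:op-mon-cvx} via a characterization of the operator monotonicity of the functions $f_{m,\beta}$. By Theorem~\ref{thm:op-mon-cvx}, joint convexity of $\Upsilon_{f_{m,\beta},p}$ on $\cA_+\times \cA_+\times \cA$ is equivalent to $p\in(0,1]$ together with operator monotonicity of $f_{m,\beta}:(0,\infty)\to(0,\infty)$. So it suffices to show that $f_{m,\beta}$ is operator monotone if and only if $m\in[-1,2]$, for every $\beta\in\R$.

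First I would reduce to the untilted case $\beta=0$. Using the $1$-homogeneity of $\theta_{m,\beta}$, direct computation gives
\begin{align*}
 f_{m,\beta}(r)=\theta_{m,\beta}(r,1)
  &=\frac{m-1}{m}\cdot\frac{(e^{-\beta/2}r)^m-(e^{\beta/2})^m}{(e^{-\beta/2}r)^{m-1}-(e^{\beta/2})^{m-1}}\\
  &=e^{\beta/2}\,f_m\bigl(e^{-\beta} r\bigr),\qquad f_m(u):=\frac{m-1}{m}\frac{u^m-1}{u^{m-1}-1},
\end{align*}
with the usual limiting interpretation for $m\in\{0,1\}$. Since operator monotonicity is invariant under positive dilation of the argument and multiplication by a positive constant, $f_{m,\beta}$ is operator monotone iff $f_m$ is. Then I would invoke the classical characterization of the \emph{power difference means} due to Hiai--Kosaki \cite{HiKo99}: the function $f_m$ is operator monotone on $(0,\infty)$ if and only if $m\in[-1,2]$. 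Combining these two steps with Theorem~\ref{thm:op-mon-cvx} gives the stated equivalence.

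For the ``In particular'' statement, the case $m=1$ falls in the allowed range, so $\Upsilon_{f_{1,\beta},1}$ is jointly convex for every $\beta\in\R$. It remains to match this abstract convexity with the explicit integral formula. Writing $a=e^{-\beta/2}r$, $b=e^{\beta/2}s$, the identity $\theta_{1,\beta}(r,s)=\Lambda(a,b)$, where $\Lambda(a,b)=(a-b)/(\log a-\log b)$ is the logarithmic mean, combined with the standard resolvent representation
\begin{align*}
\frac{1}{\Lambda(a,b)}=\frac{\log a-\log b}{a-b}=\int_0^\infty\frac{\dd x}{(x+a)(x+b)},
\end{align*}
yields, for commuting spectral projections, $\theta_{1,\beta}^{-1}(R,S)\#A=\int_0^\infty (x+e^{-\beta/2}R)^{-1}A\,(x+e^{\beta/2}S)^{-1}\dd x$, and hence the trace pairing
\begin{align*}
\bigl\langle A^*,\theta_{1,\beta}^{-1}(R,S)\#A\bigr\rangle
=\tau\!\left[\int_0^\infty A^*\,\frac{1}{x+e^{-\beta/2}R}\,A\,\frac{1}{x+e^{\beta/2}S}\dd x\right].
\end{align*}

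The main obstacle is the operator monotonicity statement for $f_m$ on $m\in[-1,2]$: this is nontrivial and I would simply cite \cite{HiKo99}. Everything else---the reduction via scaling, the application of Theorem~\ref{thm:op-mon-cvx}, and the integral representation of the reciprocal of the logarithmic mean---is a straightforward calculation.
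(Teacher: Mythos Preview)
Your proposal is correct and follows essentially the same approach as the paper: reduce to Theorem~\ref{thm:op-mon-cvx}, use the scaling identity $f_{m,\beta}(r)=e^{\beta/2}f_{m,0}(e^{-\beta}r)$ to make operator monotonicity independent of $\beta$, cite \cite[Proposition~4.2]{HiKo99} for the characterization $m\in[-1,2]$, and then specialize to $m=p=1$ via the resolvent identity $\Lambda(a,b)^{-1}=\int_0^\infty (x+a)^{-1}(x+b)^{-1}\dd x$. The paper's proof is the same in all essential respects.
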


\begin{proof}
	Since $f_{m,\beta}(s) = e^{\beta/2} f_{m,0}(e^{-\beta}s)$, the operator monotonicity of $f_{m,\beta}$ does not depend on $\beta$.
	It has been proved in \cite[Proposition 4.2]{HiKo99}, that $f_{m,0}$ is operator monotone if and only if $m \in [-1,2]$. 
	Hence, the first assertion follows from Theorem \ref{thm:op-mon-cvx}.
		The second assertion is the special case $m = p = 1$, noting that 
		\begin{align*}
			\frac{1}{\theta_{1,\beta}(r,s)} = \int_0^\infty \frac{1}{x + e^{-\beta/2}r} \, \frac{1}{x + e^{\beta/2}s} \dd x \ .
		\end{align*}
\end{proof}

\begin{remark}\label{rem:log-direct}
In the case where $\theta = \theta_{1,\beta}$, the operator monotonicity of $f_{1,\beta}$ can be checked elementarily, by writing $f_{1,\beta}(r) = \int_0^1 e^{-\beta(1/2-\alpha)} r^\alpha \dd \alpha$, and applying the L\"owner-Heinz Theorem (e.g., \cite[Theorem 2.6]{Car11}), which asserts that the function $r \mapsto r^\alpha$ is operator monotone for $\alpha \in [0,1]$.
\end{remark}

The following result is proved in \cite[Theorem 5]{HP12QEQ}.

\begin{theorem}[Contractivity of $\Theta_{f,p}$ under CPTC maps]\label{thm:contr-CPTC}
Suppose that $f : (0,\infty) \to (0,\infty)$ is operator monotone. Then, for any $R, S\in \cA_+$ and $A \in \cA$, and for any completely positive and trace preserving map $\cT : \cA \to \cA$, we have
\begin{align}\label{eq:contr-CPTC}
		\Upsilon_{f,1}\big(\cT(R),\cT(S);\cT(A)\big) \leq 	\Upsilon_{f,1}(R,S;A) \ .
\end{align}
\end{theorem}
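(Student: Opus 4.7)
This is the data processing inequality for the quasi-entropy $\Upsilon_{f,1}$, due to Hiai--Petz. My plan is to combine the joint convexity of $\Upsilon_{f,1}$ (Theorem~\ref{thm:op-mon-cvx} with $p=1$) with a Stinespring dilation of $\cT$ that factors it through an isometric dilation and a partial trace. Embedding $\cA \subseteq B(\H)$ for a finite-dimensional Hilbert space $\H$ and writing $\cT(X) = \sum_{i=1}^N K_i X K_i^*$ with $\sum_i K_i^* K_i = I$, one sets $\cK = \C^N$ with orthonormal basis $(e_i)$ and defines the isometry $V : \H \to \H \otimes \cK$ by $Vv = \sum_i K_i v \otimes e_i$. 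Then $\cT(X) = \Tr_\cK(V X V^*)$, so it suffices to establish two intermediate statements: $\Upsilon_{f,1}$ is preserved by the conjugation $X \mapsto V X V^*$, and it is contracted by $\Tr_\cK$.

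For the isometric step, the key observation is that $V^*V = I_\H$, so $P := VV^*$ is the projection onto $V\H$, and $VAV^* (I - P) = 0 = (I - P) VAV^*$. Hence in the double operator sum
\[
	\theta^{-1}(VRV^*, VSV^*) \# VAV^* = \sum_{i,j} \theta^{-1}(\alpha_i, \beta_j) F_i\, VAV^*\, G_j
\]
(with spectral decompositions $VRV^* = \sum_i \alpha_i F_i$, $VSV^* = \sum_j \beta_j G_j$), every term involving the zero eigenvalue (whose projection is $I - P$) vanishes, while the remaining terms are parametrized by the spectra of $R$ and $S$ with projections $V E^R_k V^*$ and $V E^S_\ell V^*$. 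A short calculation using $V^*V = I$ and cyclicity of the trace then gives $\Upsilon_{f,1}(VRV^*, VSV^*; VAV^*) = \Upsilon_{f,1}(R,S;A)$. For the partial-trace step, I would apply the twirl identity
\[
	\frac{(\Tr_\cK X) \otimes I_\cK}{d_\cK} = \int_{\mathrm{U}(\cK)} (I_\H \otimes U) X (I_\H \otimes U^*) \dd U
\]
simultaneously to $R', S', A' \in B(\H \otimes \cK)$, invoke the invariance of $\Upsilon_{f,1}$ under simultaneous unitary conjugation of its three arguments (immediate from the double operator sum), and apply Jensen's inequality through the joint convexity of $\Upsilon_{f,1}$. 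A scaling identity $\Upsilon_{f,1}(X \otimes I_\cK/d_\cK, Y \otimes I_\cK/d_\cK; Z \otimes I_\cK/d_\cK) = \Upsilon_{f,1}(X,Y;Z)$, which follows from the $(-1)$-homogeneity of $\theta^{-1}$ and $\Tr_{B(\cK)}[I_\cK] = d_\cK$, removes the tensor factor and yields $\Upsilon_{f,1}(\Tr_\cK R', \Tr_\cK S'; \Tr_\cK A') \leq \Upsilon_{f,1}(R', S'; A')$. Composing the two steps gives the theorem.

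The main obstacle is the dilation step, where $VRV^*$ and $VSV^*$ are rank-deficient, so $\theta^{-1}$ is formally evaluated at zero eigenvalues where it may blow up. As indicated above, this is ultimately harmless because $VAV^*$ is sandwiched between $P$ and $P$, so all spectral contributions corresponding to the zero eigenvalue (whose projection is $I - P$) are annihilated in the double operator sum, and only the finite contributions from strictly positive eigenvalues survive. With this interpretation the identity $\Upsilon_{f,1}(VRV^*, VSV^*; VAV^*) = \Upsilon_{f,1}(R,S;A)$ holds rigorously; a reader who prefers can also regularize by replacing $VRV^*$, $VSV^*$ with $VRV^* + \eps I$, $VSV^* + \eps I$, run the argument, and pass to $\eps \downarrow 0$ using continuity of $\Upsilon_{f,1}$ on the strictly positive cone.
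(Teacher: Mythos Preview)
The paper does not give its own proof of this theorem: it simply cites \cite[Theorem~5]{HP12QEQ}. Your sketch reproduces, in outline, the standard Hiai--Petz argument (Stinespring dilation, unitary invariance of $\Upsilon_{f,1}$, and the twirl/Jensen step via the joint convexity from Theorem~\ref{thm:op-mon-cvx}), so in substance you are supplying exactly the proof the paper defers to the reference.

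Two small technical remarks. First, your regularization $VRV^* + \eps I$ is the right device, but note it is needed not only for the dilation step but also for the partial-trace step, since Theorem~\ref{thm:op-mon-cvx} is stated for strictly positive arguments; the projection identity $VAV^* = P(VAV^*)P$ then shows that $\Upsilon_{f,1}(VRV^* + \eps I, VSV^* + \eps I; VAV^*)$ reduces to $\sum_{k,\ell} \theta^{-1}(\lambda_k+\eps,\mu_\ell+\eps)\,\tau[A^* E_k^R A E_\ell^S]$ and converges to $\Upsilon_{f,1}(R,S;A)$ as $\eps\downarrow 0$, while on the other side $\Tr_\cK(VRV^* + \eps I) = \cT(R) + \eps\, d_\cK\, I$ is strictly positive and the limit is controlled by continuity on $\cA_+$. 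Second, your argument is carried out for the matrix trace $\Tr$ on $B(\H)$ and $B(\H\otimes\cK)$; to match the paper's functional $\tau$ on $\cA$, one either specializes to $\tau$ proportional to $\Tr$ (harmless, as both sides scale identically) or works directly with the abstract Stinespring/partial-trace formulation compatible with $\tau$, as in the cited reference.
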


In the case where $f = f_{m, \beta}$ as in Example \ref{ex:power-means}, we obtain the following result.

\begin{corollary}[Contractivity of $\Theta_{f,p}$ for power difference quotients]
\label{cor:contr-CPTC}
Let $m \in [-1,2]$ and $\beta \in \R$, and let $f = f_{m,\beta}$ and $\theta = \theta_{m,\beta}$ be as in Example \ref{ex:power-means}. 
Then, for any $R, S\in \cA_+$ and $A \in \cA$, and for any completely positive and trace preserving map $\cP : \cA \to \cA$, \eqref{eq:contr-CPTC} holds.
In particular, for $m = 1$ we obtain
\begin{align*}
 \tau\bigg[\int_0^\infty \cT(A)^* & \frac{1}{x  + e^{-\beta/2}\cT(R)} \cT(A) \frac{1}{x + e^{\beta/2}\cT(S)} \dd x \bigg] \\
&  \quad \leq
 \tau\bigg[\int_0^\infty A^*  \frac{1}{x + e^{-\beta/2}R}   A \frac{1}{x + e^{\beta/2}S} \dd x \bigg] \ .
\end{align*}
\end{corollary}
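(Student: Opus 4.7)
The plan is to deduce this corollary essentially as a direct application of Theorem~\ref{thm:contr-CPTC}, combined with the operator monotonicity analysis already carried out in the proof of Corollary~\ref{cor:conv-char}. The whole argument rests on verifying that the function $f_{m,\beta}$ is operator monotone precisely when $m \in [-1,2]$, independently of $\beta$.

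First, I would reduce to the case $\beta = 0$. Using the $1$-homogeneity of $\theta_{m,0}$, one checks that
\begin{equation*}
	f_{m,\beta}(r) = \theta_{m,\beta}(r,1) = \theta_{m,0}\big(e^{-\beta/2}r,\,e^{\beta/2}\big) = e^{\beta/2}\, f_{m,0}\big(e^{-\beta} r\big).
\end{equation*}
Since operator monotonicity is preserved under positive scalings of the variable and positive scaling of the value, $f_{m,\beta}$ is operator monotone if and only if $f_{m,0}$ is. By \cite[Proposition~4.2]{HiKo99} (as already invoked in the proof of Corollary~\ref{cor:conv-char}), this holds exactly when $m \in [-1,2]$.

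With operator monotonicity of $f = f_{m,\beta}$ in hand for $m \in [-1,2]$ and $\beta \in \R$, the inequality \eqref{eq:contr-CPTC} is then an immediate application of Theorem~\ref{thm:contr-CPTC}: for any completely positive trace preserving map $\cT$ and any $R,S \in \cA_+$, $A \in \cA$,
\begin{equation*}
 \Upsilon_{f,1}\big(\cT(R),\cT(S);\cT(A)\big) \leq \Upsilon_{f,1}(R,S;A).
\end{equation*}

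Finally, for the displayed special case $m=1$, I would unwind the definitions. In this case $\theta = \theta_{1,\beta}$, and using the integral representation
\begin{equation*}
	\frac{1}{\theta_{1,\beta}(r,s)} = \int_0^\infty \frac{1}{x + e^{-\beta/2}r} \cdot \frac{1}{x + e^{\beta/2}s}\,\dd x,
\end{equation*}
together with the spectral definition of $\theta^{-1}(R,S)\# A$, one gets
\begin{equation*}
  \Upsilon_{f_{1,\beta},1}(R,S;A) = \tau\bigg[\int_0^\infty A^* \frac{1}{x+e^{-\beta/2}R}\,A\,\frac{1}{x+e^{\beta/2}S}\,\dd x\bigg],
\end{equation*}
and the stated inequality is a literal restatement of \eqref{eq:contr-CPTC}. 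There is no real obstacle here; the work lies entirely in the two inputs (operator monotonicity of $f_{m,0}$ from Hiai--Kosaki, and the Lieb-type monotonicity Theorem~\ref{thm:contr-CPTC}), both of which are cited and do not need to be reproduced.
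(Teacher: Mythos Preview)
Your proposal is correct and follows essentially the same approach as the paper: the paper's proof is a one-liner that invokes Theorem~\ref{thm:contr-CPTC} together with the operator monotonicity of $f_{m,\beta}$ already established in the proof of Corollary~\ref{cor:conv-char}, and you have simply unpacked those same ingredients (the reduction to $\beta=0$, the Hiai--Kosaki result, and the integral representation for $m=1$) in more detail.
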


\begin{proof}
	This follows from Theorem \ref{thm:contr-CPTC}, as the operator monotonicity of $f_{m,p}$ had already been noted in Corollary \ref{cor:conv-char}. 
\end{proof}

\section{The Riemannian distance}

Fix a differentiable structure $(\cA, \nabla, \sigma)$ in the sense of Definition \ref{ass:setup} and a collection of functions $(\theta_j)_j$ satisfying Assumption \ref{ass:theta}. For simplicity we restrict ourselves to the ergodic case, so that $\cM_\rho = \Dens_+$ for all $\rho \in \Dens_+$. 

In this section we study basic properties of the Riemannian distance $\cW$ associated to the operators $(\cK_\rho)_\rho$ defined in \eqref{eq:def-K}. For $\rho_0, \rho_1 \in \Dens_+$ this distance is given by
\begin{equation}\begin{aligned}\label{eq:Riem-dist}
\cW(\rho_0, \rho_1)^2 
  & = 
 \inf_{} \bigg\{ \int_0^1 \ip{\cK_{\rho_t} A_t, A_t} \dd t \ : \
 		  \partial_t \rho_t = \cK_{\rho_t}A_t\ , 
		  \rho_t|_{t=0} = \rho_0\ ,
		  \rho_t|_{t=1} = \rho_1 \bigg\}
 \\ & = 
 \inf_{} \bigg\{ 
\int_0^1  \tau\big[(\nabla A_t)^* \hrho_t \# \nabla A_t \big] \dd t \ : \ \\& \qquad\qquad\qquad 
		  \partial_t \rho_t + \dive(\hrho_t\# \nabla A_t)= 0\ , 	\  \rho_t|_{t=0,1} = \rho_{0,1}  \bigg\}\ ,
\end{aligned}\end{equation} 
where the infimum runs over smooth curves $(\rho_t)_{t \in [0,1]}$ in $\Dens_+$  and $(A_t)_{t \in [0,1]}$ in $\cA_0$ satisfying the stated conditions.

In the classical theory of optimal transport, it is a useful fact that the following equivalent formulations hold for the $2$-Kantorovich distance on $\R^n$: 
\begin{equation}\begin{aligned}
	\label{eq:classical-distance}
	 W_2(\rho_0, \rho_1)^2 & = 
 \inf_{} \bigg\{ \int_0^1 |\nabla \psi_t(x)|^2  \dd \rho_t(x) \dd t \ : \
 		  \partial_t \rho_t + \dive(\rho_t \nabla \psi_t) = 0 \ , 
		  \rho_t|_{t=0,1} = \rho_{0,1} \bigg\}
		  \\& = \inf_{} \bigg\{ \int_0^1 \frac{|P_t(x)|^2}{\rho_t(x)}  \dd x \dd t \ : \
 		  \partial_t \rho_t + \dive P_t = 0 \ , 
		  \rho_t|_{t=0,1} = \rho_{0,1} \bigg\} \ .
\end{aligned}\end{equation}	
The latter formulation has the advantage that the minimisation problem is convex, due to the convexity of the function $(p,r) \mapsto \frac{|p|^2}{r}$  on $\R^n \times (0,\infty)$.

Using the convexity results presented in Section \ref{sec:quasi-entropies} we will show that an analogous result holds in the non-commutative setting. 
We use the shorthand notation 
\begin{align*}
	  \ip{\bbB, \bbC}_\rho = \sum_j \tau_j[B_j^* (\hrho_j \# C_j) ] \ , \qquad
 \ip{\bbB, \bbC}_{-1,\rho} = \sum_j \tau_j[B_j^* (\lrho_j \# C_j)]  \ ,
\end{align*}
to denote the scalar products that will frequently appear below. The corresponding norms are given by $\|\bbB\|_\rho = \sqrt{ \ip{\bbB, \bbB}_\rho}$ and $\|\bbB\|_{-1, \rho} = \sqrt{ \ip{\bbB, \bbB}_{-1,\rho}}$.
It will occasionally be convenient to write
\begin{align*}
	\sA(\rho; \bbB, \bbC)
	= \ip{\bbB, \bbC}_{-1,\rho} \quad \text{and} \quad
	\sA(\rho, \bbB)
	= \|\bbB\|_{-1, \rho}^2 \ .
\end{align*}

We start with a non-commutative analogue of \eqref{eq:classical-distance}.

\begin{lemma}\label{lem:BB-form}
For $\rho_0, \rho_1 \in \Dens_+$ we have
\begin{align}\label{eq:BB-form}
	\cW(\rho_0, \rho_1)^2 
	& = 
 \inf_{} \bigg\{ 
\int_0^1   \| \bbB_t \|_{-1, \rho_t}^2 \dd t \ : \ 
		  \partial_t \rho_t + \dive \bbB_t = 0\ , 	\  \rho_t|_{t=0,1} = \rho_{0,1}  \bigg\} \ ,
\end{align}
where the infimum runs over all smooth curves $(\rho_t)_{t \in [0,1]}$ in $\Dens_+$  and $(\bbB_t)_{t \in [0,1]}$ in $\cB$.
\end{lemma}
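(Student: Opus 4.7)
The proof follows the standard strategy of the Benamou--Brenier reformulation: on the space of vector fields $\bbB \in \cB$ solving a given continuity equation, the energy $\bbB \mapsto \|\bbB\|_{-1,\rho}^2$ should be minimized by a \emph{gradient} vector field of the form $\hrho \# \nabla A$, and for such fields the two energies appearing in the statement coincide.

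\textbf{Easy inequality ($\leq$ in \eqref{eq:BB-form}).} Given any smooth curve $(\rho_t, A_t)$ admissible in the definition \eqref{eq:Riem-dist} of $\cW$, set $\bbB_t = \hrho_t \# \nabla A_t$. The continuity equation $\partial_t \rho_t + \dive \bbB_t = 0$ is then equivalent to $\partial_t \rho_t = \cK_{\rho_t} A_t$, so $(\rho_t, \bbB_t)$ is admissible on the right-hand side of \eqref{eq:BB-form}. It remains to show that the two actions agree, namely that
\begin{align*}
 \| \hrho \# \nabla A \|_{-1,\rho}^2
   = \sum_{j \in \cJ} \tau_j\bigl[(\hrho_j \# \partial_j A)^* \lrho_j \# (\hrho_j \# \partial_j A)\bigr]
   = \sum_{j \in \cJ} \tau_j\bigl[(\partial_j A)^* \hrho_j \# \partial_j A \bigr]
   = \|\nabla A\|_\rho^2 \ .
\end{align*}
The second equality follows from the identity $\lrho_j \# (\hrho_j \# C) = (1/\theta_j \cdot \theta_j)(\ell_j(\rho), \err_j(\rho)) \# C = C$ (a direct consequence of \eqref{eq:fc2}), combined with a short spectral computation using the cyclicity of $\tau_j$ to commute $\hrho_j^*$ with $\partial_j A$ under the trace.

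\textbf{Hard inequality ($\geq$ in \eqref{eq:BB-form}) --- the main step.} Let $(\rho_t, \bbB_t)$ be admissible for the right-hand side of \eqref{eq:BB-form}. Since $-\partial_t \rho_t = \dive \bbB_t \in \Ran(\dive) = \Ran(\cK_{\rho_t})$ by Lemma \ref{lem:Ran-Ker}, Corollary \ref{cor:Krho-bij} yields a unique $A_t \in \cA_0$ with $\cK_{\rho_t} A_t = -\partial_t \rho_t$, and the curve $(\rho_t, A_t)$ is admissible in \eqref{eq:Riem-dist}. Setting $\bbB_t^{\rm grad} := \hrho_t \# \nabla A_t$, the vector field $\bbC_t := \bbB_t - \bbB_t^{\rm grad}$ satisfies $\dive \bbC_t = 0$ by construction. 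I will conclude via a Pythagoras-type identity for the $\ip{\cdot,\cdot}_{-1,\rho}$ inner product, namely that gradient vector fields are orthogonal to divergence-free ones. For any $A \in \cA_0$ and any $\bbC \in \cB$, Proposition \ref{prop:ip} (which shows that the map $Y \mapsto \hrho_j \# Y$ is self-adjoint on $L^2(\cB_j, \tau_j)$) combined with $\hrho_j \# (\lrho_j \# C_j) = C_j$ gives
\begin{align*}
 \ip{\hrho \# \nabla A, \bbC}_{-1, \rho}
   = \sum_{j \in \cJ} \ip{\hrho_j \# \partial_j A, \lrho_j \# C_j}_{L^2(\tau_j)}
   = \sum_{j \in \cJ} \ip{\partial_j A, C_j}_{L^2(\tau_j)}
   = - \ip{A, \dive \bbC}_{L^2(\tau)} \ ,
\end{align*}
which vanishes whenever $\dive \bbC = 0$. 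Applied to $\bbC = \bbB_t - \bbB_t^{\rm grad}$, this yields $\|\bbB_t\|_{-1,\rho_t}^2 = \|\bbB_t^{\rm grad}\|_{-1,\rho_t}^2 + \|\bbC_t\|_{-1,\rho_t}^2 \geq \|\bbB_t^{\rm grad}\|_{-1,\rho_t}^2 = \|\nabla A_t\|_{\rho_t}^2$; integrating in $t$ and taking the infimum over $(\rho_t, A_t)$ completes the proof.

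\textbf{Main technical point.} The only nontrivial ingredient is the orthogonality calculation above, which hinges on rewriting the twisted inner product $\ip{\cdot, \cdot}_{-1,\rho}$ in terms of the standard trace pairing by pushing the operator $\hrho_j \#$ across via self-adjointness. Since $\rho_t \in \Dens_+$ ensures that $\theta_j > 0$ on $\spec(\ell_j(\rho_t)) \times \spec(\err_j(\rho_t))$, both $\hrho_t$ and $\lrho_t$ are well-defined throughout the curve, and no regularity issue arises.
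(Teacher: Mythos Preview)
Your approach is essentially identical to the paper's: both arguments split an arbitrary vector field into a gradient piece $\hrho \# \nabla A$ plus a divergence-free remainder and invoke orthogonality (Pythagoras) in the weighted inner product, using the self-adjointness of $\hrho_j \#$ on $L^2(\tau_j)$. Two cosmetic slips to fix: (i) your inequality labels are swapped --- constructing $\bbB_t$ from $A_t$ with equal action shows $\inf_{\bbB} \leq \cW^2$, i.e.\ the ``$\geq$'' direction in \eqref{eq:BB-form}, not ``$\leq$''; (ii) you want $\cK_{\rho_t} A_t = \partial_t \rho_t$ (not $-\partial_t \rho_t$), so that $\dive \bbB_t^{\rm grad} = -\cK_{\rho_t} A_t = -\partial_t\rho_t = \dive \bbB_t$ and $\bbC_t = \bbB_t - \bbB_t^{\rm grad}$ is actually divergence-free.
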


\begin{proof}
Any admissible curve $(A_t)$ in \eqref{eq:Riem-dist} yields an admissible curve $(\bbB_t)$ in \eqref{eq:BB-form} given by $\bbB_t = \hrho_t \nabla A_t$, that satisfies $\|\nabla A_t\|_{\rho_t} =  \| \bbB_t \|_{-1, \rho_t}$. This implies the inequality ``$\geq$'' in \eqref{eq:BB-form}. 

To prove the reverse inequality, we take an admissible curve $(\rho_t, \bbB_t)_t$ in \eqref{eq:BB-form}. We consider the linear space of gradient vector fields $\cG = \{\nabla A  \ :  A \in \cA_0 \}$, and let $\cD_t \subseteq \cG$ denote its orthogonal complement in $\cB$ with respect to the scalar product product $\ip{\cdot, \cdot}_{\rho_t}$. 
Consider the orthogonal decomposition 
\begin{align*}
	\lrho_j \# \bbB_t = \nabla A_t + D_t \in \cG \oplus  \cD_t \ .
\end{align*}
Since $\ip{\nabla \tilde A, D_t}_{\rho_t} = 0$ for all $\tilde A \in \cA_0$, it follows that $\dive (\hrho \# D_t) = 0$. Therefore, $\partial_t \rho_t + \dive (\hrho_t\# \nabla A_t) = 0$. Moreover, 
\begin{align*}
	\tau\big[(\nabla A_t)^* \hrho_t \# \nabla A_t \big]
		= \| \nabla A_t \|_{\rho_t}^2
		\leq \| \lrho_j \# \bbB_t \|_{\rho_t}^2
		= \| \bbB_t \|_{-1,\rho_t}^2 \ ,
\end{align*}
which yields the inequality ``$\leq$'' in \eqref{eq:BB-form}.
\end{proof}

\begin{proposition}[Extension of the distance to the boundary]\label{prop:dist-formula}
Suppose that $\theta_j(a,b) \geq C\min\{a,b\}^p$ for some $C > 0$ and $p<2$. 
Then the distance function $\cW : \Dens_+ \times \Dens_+ \to \R$ extends continuously to a metric on $\Dens$.
\end{proposition}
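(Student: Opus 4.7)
The strategy is to use the Benamou--Brenier-style formulation from Lemma~\ref{lem:BB-form} and to exhibit explicit admissible curves of finite action connecting interior points to arbitrary boundary points of $\Dens$, with the action bound hinging on the growth hypothesis $\theta_j(a,b) \geq C\min(a,b)^p$ with $p < 2$. The central estimate I would prove first is: for any $\rho_0 \in \Dens$, $\rho_1 \in \Dens_+$, and any $\bbB \in \cB$ with $\dive \bbB = \rho_0 - \rho_1$, one has $\cW(\rho_0, \rho_1)^2 \leq C(\rho_1, p)\, \|\bbB\|_{L^2(\cB)}^2$. Such a $\bbB$ exists by ergodicity and Proposition~\ref{prop:domains-Dirichlet}, since $\rho_0 - \rho_1 \in \cA_0 = \Ran(\dive)$. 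Pick $\alpha > 1/(2-p)$ and consider the nonlinear interpolation $\rho_t := (1 - t^\alpha)\rho_0 + t^\alpha\rho_1$ together with the rescaled vector field $\bbB_t := \alpha t^{\alpha - 1}\bbB$, which indeed solves $\partial_t\rho_t + \dive \bbB_t = 0$. Since $\rho_t \geq t^\alpha \rho_1 \geq c\,t^\alpha\,\one$ where $c > 0$ is the smallest eigenvalue of $\rho_1$, the unital $*$-homomorphisms satisfy $\ell_j(\rho_t), \err_j(\rho_t) \geq c\, t^\alpha\,\one$. The spectral-sum argument from the proof of Lemma~\ref{lem:Schur-bound} then gives $\tau_j[B^*\, \theta_j(\ell_j(\rho_t), \err_j(\rho_t)) \# B] \geq C(c t^\alpha)^p\, \tau_j[B^* B]$ for every $B \in \cB_j$, and hence $\|\bbB_t\|_{-1,\rho_t}^2 \leq C' c^{-p}\, \alpha^2\, t^{2\alpha - 2 - \alpha p}\,\|\bbB\|_{L^2(\cB)}^2$. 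The choice $\alpha > 1/(2 - p)$ makes $2\alpha - 2 - \alpha p > -1$, so the action integral over $t \in [0, 1]$ converges.

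With this key bound in hand, $\cW$ extends to $\Dens \times \Dens$ via the triangle inequality through any fixed interior reference $\bar\rho \in \Dens_+$: $\cW(\rho_0, \rho_1) \leq \cW(\rho_0, \bar\rho) + \cW(\bar\rho, \rho_1) < \infty$. Symmetry follows from time-reversal invariance of \eqref{eq:BB-form}, and the triangle inequality on $\Dens$ follows by concatenating admissible curves. For positive-definiteness I would test against $A \in \cA_h$: from the continuity equation $\dot\rho_t = \sum_j \partial_j^\dagger B_j^{(t)}$ one has $\tau[A(\rho_1 - \rho_0)] = \int_0^1 \sum_{j \in \cJ} \tau_j[(\partial_j A)^* B_j^{(t)}]\, \dd t$, and Cauchy--Schwarz with respect to $\ip{\cdot,\cdot}_{\rho_t}$, together with the uniform upper bound of $\theta_j$ on bounded subsets of $[0,\infty)^2$, yields $|\tau[A(\rho_1 - \rho_0)]| \leq K(A)\, \cW(\rho_0, \rho_1)$. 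If $\cW(\rho_0, \rho_1) = 0$ this forces $\tau[A(\rho_1 - \rho_0)] = 0$ for every $A \in \cA_h$, hence $\rho_0 = \rho_1$.

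For continuity at the boundary, by the triangle inequality it suffices to show that $\cW(\rho^{(n)}, \rho) \to 0$ whenever $\rho^{(n)} \to \rho$ in $\Dens$. Fix $\bar\rho \in \Dens_+$ and set $\rho^{(n)}_\eps := (1 - \eps)\rho^{(n)} + \eps\bar\rho$ and $\rho_\eps := (1 - \eps)\rho + \eps\bar\rho$, both in $\Dens_+$ for $\eps > 0$. Applying the key bound to the pair $(\rho^{(n)}, \rho^{(n)}_\eps)$---the relevant $\bbB$ satisfies $\dive \bbB = \eps(\rho^{(n)} - \bar\rho)$ and can be chosen with $\|\bbB\|_{L^2(\cB)} \leq K_0\, \eps$ uniformly in $n$ (since the $\rho^{(n)}$ lie in the compact set $\Dens$), while the smallest eigenvalue of $\rho^{(n)}_\eps$ is at least $\eps\, c_{\bar\rho}$---yields $\cW(\rho^{(n)}, \rho^{(n)}_\eps)^2 \leq K\, \eps^{2 - p}$ uniformly in $n$, and analogously for $(\rho, \rho_\eps)$. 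Since $\cW$ is a smooth Riemannian distance on $\Dens_+$, $\cW(\rho^{(n)}_\eps, \rho_\eps) \to 0$ as $n \to \infty$ for each fixed $\eps > 0$. Combining the three contributions by the triangle inequality, and sending first $n \to \infty$ and then $\eps \to 0$ (possible precisely because $p < 2$), yields $\cW(\rho^{(n)}, \rho) \to 0$.

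The main obstacle is the identification of the correct nonlinear time parameterization $\alpha > 1/(2 - p)$, which must simultaneously control the two competing singularities as $t \to 0$: the inverse weight $\lrho_t$ blows up like $t^{-\alpha p}$ while the necessary velocity $\partial_t\rho_t$ behaves like $t^{\alpha - 1}$, and the integrability condition on the resulting exponent $2\alpha - 2 - \alpha p > -1$ is exactly what necessitates the hypothesis $p < 2$.
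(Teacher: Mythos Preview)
Your argument is correct and reaches the same conclusion, but the route differs from the paper's in one key technical point: the handling of the singularity of $\lrho_t$ as $\rho_t$ approaches the boundary. The paper uses the \emph{linear} interpolation $\rho_t = (1-t)\rho_0 + t\tilde\rho$ (with $\tilde\rho \in \Dens_+$) together with the \emph{arclength} bound $\cW \leq \int_0^1 \|\bbB_t\|_{-1,\rho_t}\,\dd t$, which yields the integrand $t^{-p/2}$ and hence convergence precisely for $p<2$. You instead keep the \emph{energy} formulation $\cW^2 \leq \int_0^1 \|\bbB_t\|_{-1,\rho_t}^2\,\dd t$ from Lemma~\ref{lem:BB-form} and compensate by a nonlinear reparametrization $t \mapsto t^\alpha$ with $\alpha > 1/(2-p)$, producing the integrand $t^{2\alpha - 2 - \alpha p}$, which is integrable under exactly the same condition $p<2$. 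Both devices are standard and equivalent in effect; the paper's version avoids introducing the auxiliary parameter $\alpha$, while yours stays entirely within the squared-action framework of Lemma~\ref{lem:BB-form} and makes the trade-off between the two singularities (blow-up of $\lrho_t$ versus vanishing of $\partial_t\rho_t$) very explicit.

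The remaining differences are organizational. The paper extends $\cW$ by a Cauchy-sequence argument (showing that $\cW(\rho_0^n,\rho_1^n)$ converges whenever $\rho_i^n \to \rho_i$) and defers positive-definiteness to the forward reference Proposition~\ref{prop:Wass-1-comparison}; you instead extend by exhibiting finite-action curves to the boundary directly and prove positive-definiteness via the same duality estimate that underlies Proposition~\ref{prop:Wass-1-comparison}. Your three-term triangle-inequality continuity argument (perturbing both endpoints into $\Dens_+$ by an amount $\eps$ and using the uniform bound $\cW(\rho,\rho_\eps)^2 \lesssim \eps^{2-p}$) is a clean repackaging of the paper's Cauchy estimate $\cW(\rho_0^n,\rho_0^m) \lesssim \eps^{1-p/2}$.
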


\begin{proof}
Let $\rho_0, \rho_1 \in \Dens$ and let $\{\rho_0^n\}_n, \{\rho_1^n\}_n$ be sequences in $\Dens_+$ satisfying $\tau\big[ |\rho_i^n  -\rho_i|^2 \big] \to 0$ as $n \to \infty$ for $i = 0, 1$. 
We claim that the sequence $\{ \cW(\rho_0^n, \rho_1^n) \}_n$ is Cauchy.

To prove this, it suffices to show that $\cW(\rho_i^n, \rho_i^m) \to 0$ as $n, m\to \infty$ for $i=0,1$, since
\begin{align*}
 |\cW(\rho_0^n, \rho_1^n) - \cW(\rho_0^m, \rho_1^m)|
  \leq  \cW(\rho_0^n, \rho_0^m) +  \cW(\rho_1^n, \rho_1^m) \ . 
\end{align*} 
Fix $\eps \in (0,1)$, and set $\tilde\rho := (1- \eps) \rho_0 + \eps \one$. Take $N \geq 1$ so large that $\tau\left[ |\rho_0^n  - \rho_0|^2 \right] \leq \eps^2$ whenever $n \geq N$. 
For $n \geq N$ we consider the linear interpolation $\rho^n_t = (1-t) \rho_0^n + t \tilde \rho$. 
Then $\dot \rho_t^n = \tilde \rho - \rho_0^n$ for all $t \in (0,1)$. 
Since $\cK_\one$ is invertible on $\cA_0$ by Lemma \ref{lem:Ran-Ker} and  ergodicity, we may define $A := \cK_\one^{-1} (\tilde \rho - \rho_0^n) \in \cA_0$, and we have $\dot \rho_t^n = \dive(\nabla A)$.
Since $\rho^n_t \geq t \eps \one$ for $t \in [0,1]$, we have $\frac{1}{\theta_j}\big|_{\spec(\rho^n_t)} \leq C (t\eps)^{-p}$, and thus $\tau [ (\nabla A)^* \lrho_t^n \# \nabla A] \leq C  (t\eps)^{-p} \tau [ |\nabla A|^2 ]$ by  Lemma \ref{lem:Schur-bound}.
It follows that
\begin{align*}
 \cW(\rho_0^n, \tilde \rho) 
& \leq \int_0^1 
  \sqrt{ \tau [ (\nabla A)^* \lrho_t^n \# \nabla A] } \dd t 
\leq C \eps^{-p/2} \|\nabla A\|_{L^2(\tau)}  \ ,
\end{align*}
since $p<2$.
Using the boundedness of $\nabla \circ \cK_\one^{-1}$ we obtain
\begin{align*}
C^{-1} \|\nabla A\|_{L^2(\tau)}
& \leq  \| \tilde \rho - \rho_0^n \|_{L^2(\tau)}
 \\   & = \| \rho_0 - \rho_0^n \|_{L^2(\tau)}  +  \eps \| \one - \rho_0 \|_{L^2(\tau)}
    \\& \leq \eps \big(1 + \| \one - \rho_0 \|_{L^2(\tau)}\big) \ .     
\end{align*}
We infer that $\cW(\rho_0^n, \tilde \rho)  \leq C \eps^{1-p/2}$ for some $C < \infty$ depending on $\rho_0$. 
It follows that $\cW(\rho_0^n, \rho_0^m) \leq C \eps^{1-p/2}$ for $n, m \geq N$. Since $p < 2$, this proves the claim.

We can thus extend $\cW$ to $\Dens$ by setting $\cW(\rho_0, \rho_1) = \lim_{n \to \infty}\cW(\rho_0^n, \rho_1^n)$. It immediately follows that $\cW$ is symmetric and the triangle inequality extends to $\Dens$. The fact that $\cW(\rho_0, \rho_1) \neq 0$ whenever $\rho_0$ and $\rho_1$ are distinct, follows from Proposition \ref{prop:Wass-1-comparison} below.
\end{proof}

Our next aim is to prove Proposition \ref{prop:Wass-1-comparison} below, which yields a lower bound on the distance $\cW$  in terms of a non-commutative analogue of the $1$-Kantorovich metric. To formulate the result, we use the notation
\begin{align*}
 \| \bbB \|_{\cB,2} := \sqrt{\frac12\sum_j   \| \ell_j^\dagger( B_j B_j^*) +  \err_j^\dagger(B_j^* B_j) \|_{\cA} }  
\end{align*}
for $\bbB = (B_j)_{j \in \cJ} \in \cB$.

\begin{lemma}\label{lem:Lip-bound}
There exists $M < \infty$ such that $\| \bbB \|_\rho \leq M \| \bbB \|_{\cB,2}$ for all $\rho \in \Dens_+$ and $\bbB \in \cB$.
If $\theta_j(r,s) \leq \frac12(r+s)$ for all $r,s > 0$, then this estimate holds with $M = 1$:
\begin{align*}
	\| \bbB \|_\rho \leq \| \bbB \|_{\cB,2} \ .
\end{align*}
\end{lemma}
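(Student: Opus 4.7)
The plan is to prove the sharp case ($M=1$) under the hypothesis $\theta_j(r,s)\leq\tfrac12(r+s)$ first, and then derive the general estimate by rerunning the same argument with a uniform boundedness bound on $\theta_j$ in place of the pointwise one. The key mechanism is that pointwise upper bounds on $\theta_j$ may be passed through the double operator sum defining $\hrho_j\#B_j$, because the scalar weights appearing there are manifestly non-negative.

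For the sharp case, I would begin with the spectral decompositions $\ell_j(\rho)=\sum_k \lambda_k L_k$ and $\err_j(\rho)=\sum_m \mu_m R_m$ in $\cB_j$, which give
\begin{equation*}
\tau_j\bigl[B_j^*(\hrho_j \# B_j)\bigr] = \sum_{k,m}\theta_j(\lambda_k,\mu_m)\,\tau_j\bigl[B_j^* L_k B_j R_m\bigr].
\end{equation*}
Since $L_k, R_m$ are self-adjoint projections, cyclicity of $\tau_j$ yields $\tau_j[B_j^* L_k B_j R_m] = \|L_k B_j R_m\|_{L^2(\tau_j)}^2 \geq 0$, so one may replace $\theta_j(\lambda_k,\mu_m)$ by $\tfrac12(\lambda_k+\mu_m)$ term by term. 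Reassembling the sum using $\sum_k L_k=\sum_m R_m=\one_{\cB_j}$ and invoking cyclicity once more produces
\begin{equation*}
\tau_j\bigl[B_j^*(\hrho_j \# B_j)\bigr] \leq \tfrac12\,\tau_j\bigl[\ell_j(\rho) B_j B_j^* + \err_j(\rho) B_j^* B_j\bigr].
\end{equation*}
The $(\tau,\tau_j)$-compatibility of $\ell_j$, combined with self-adjointness of $\rho$, gives $\tau_j[\ell_j(\rho)Y]=\tau[\rho\,\ell_j^\dagger(Y)]$ and analogously for $\err_j$, so the right-hand side equals $\tfrac12\tau[\rho X_j]$ with $X_j:=\ell_j^\dagger(B_jB_j^*)+\err_j^\dagger(B_j^*B_j)$. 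Both summands of $X_j$ are positive (a standard consequence of $\ell_j,\err_j$ being unital $*$-homomorphisms, since then $\ell_j^\dagger,\err_j^\dagger$ are positivity-preserving), so $X_j\geq 0$ and the density-matrix estimate $\tau[\rho X_j]\leq \|X_j\|_\cA$ applies. Summing over $j\in\cJ$ yields $\|\bbB\|_\rho^2\leq \tfrac12\sum_j\|X_j\|_\cA=\|\bbB\|_{\cB,2}^2$.

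For the general case, I would use that in the finite-dimensional setting the condition $\tau[\rho]=1$ forces $\|\rho\|_\cA$ (hence the spectra of $\ell_j(\rho)$ and $\err_j(\rho)$) to lie in a fixed compact interval $[0,R]$ independent of $\rho\in\Dens_+$. Continuity of $\theta_j$ on $[0,\infty)^2$ then furnishes a uniform $C<\infty$ with $\theta_j(\lambda,\mu)\leq C$ on $[0,R]^2$ for every $j$. Rerunning the spectral calculation with $C$ in place of $\tfrac12(\lambda_k+\mu_m)$ gives $\tau_j[B_j^*(\hrho_j\#B_j)]\leq C\tau_j[B_j^*B_j]=C\tau[\err_j^\dagger(B_j^*B_j)]\leq C\tau[\one_\cA]\|X_j\|_\cA$ after exploiting positivity of $\ell_j^\dagger(B_jB_j^*)$; summing over $j$ proves the claim with $M=\sqrt{2C\tau[\one_\cA]}$. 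The only conceptually nontrivial observation throughout is the non-negativity of the weights $\tau_j[B_j^*L_kB_jR_m]$, which is what makes the pointwise replacement of $\theta_j$ inside the double operator sum legitimate; the rest of the proof is routine manipulation using cyclicity, $(\tau,\tau_j)$-compatibility, and finite-dimensional compactness.
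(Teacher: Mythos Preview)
Your proof is correct. For the sharp case $M=1$, your argument is essentially identical to the paper's: the paper cites Lemma~\ref{lem:Schur-bound} for the non-negativity of the weights $\tau_j[B_j^* L_k B_j R_m]$, while you spell out the same fact inline, and the rest of the chain of inequalities coincides.

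For the general case the two arguments diverge slightly. The paper gives a soft argument: it sets $M_j := \sup\{\tau_j[\,|\hrho_j\#B|\,] : \rho\in\Dens_+,\ \|B\|_{\cB_j}\le 1\}$, observes this is finite by finite-dimensionality, bounds $\tau_j[B_j^*\hrho_j\#B_j]\le \|B_j\|_{\cB_j}\tau_j[|\hrho_j\#B_j|]\le M_j\|B_j\|_{\cB_j}^2$, and then invokes equivalence of norms on $\cB$ to pass from $\sum_j\|B_j\|_{\cB_j}^2$ to $\|\bbB\|_{\cB,2}^2$. Your route is more explicit: you use the uniform operator-norm bound on density matrices to confine the spectra of $\ell_j(\rho),\err_j(\rho)$ to a fixed compact interval, bound $\theta_j$ uniformly there by continuity, and rerun the sharp-case mechanism with this constant. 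Your approach has the virtue of producing an explicit $M$ and of making transparent exactly where finite-dimensionality enters (compactness of $\Dens$ in operator norm and continuity of $\theta_j$), whereas the paper's norm-equivalence step is quicker but less informative about the constant.
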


\begin{proof} 
Recalling that $\|\cdot\|_{\cB_j}$ denotes the norm on $\cB_j$, we define 
\begin{align*}
M_j := \sup \Big\{ \tau_j[ \, | \hrho_j \# B | \, ] \ : \rho \in \Dens_+   \ , \ \|B\|_{\cB_j} \leq 1 \Big\} \ , \quad \text{and} \quad \tilde M := \sup_{j \in \cJ} M_j \ .
\end{align*}
Since our setting is finite-dimensional, $\tilde M$ is finite and all norms on $\cB$ are equivalent. Thus, for a suitable constant $M < \infty$, it follows that
\begin{align*}
  \| \bbB\|_{\rho}^2 
  &=  \sum_j \tau_j[B_j^* \hrho_j \# B_j ]
 \leq \sum_j \|B_j^*\|_{\cB_j}\tau_j[ | \hrho_j \# B_j |]
 \leq \tilde M \sum_j \|B_j\|_{\cB_j}^2 
 \leq  M \| \bbB \|_{\cB,2}^2 \ ,
\end{align*}
which proves the first statement.

Suppose now that $\theta_j(r,s) \leq \frac12(r+s)$. Since 
$\rho$ is positive and the operators $\ell_j$ and $\err_j$ preserve positivity, we obtain using Lemma \ref{lem:Schur-bound},
\begin{align*}
  \| \bbB\|_{\rho}^2 
   &= \sum_j \tau_j[B_j^* \hrho_j \# B_j ]
   \\& \leq \frac12\sum_j \tau_j[ \ell_j(\rho) B_j B_j^*
   				 + \err_j(\rho) B_j^* B_j  ]
   \\& = \frac12\sum_j \tau\big[\rho \big(\ell_j^\dagger( B_j B_j^*)  
   				 + \err_j^\dagger(B_j^* B_j) \big) \big]
   \\& \leq \frac12\sum_j \| \ell_j^\dagger( B_j B_j^*)  
   				 + \err_j^\dagger(B_j^* B_j) \|_{\cA} \ ,
\end{align*}
which yields the result.
\end{proof}

 For $\rho_0, \rho_1 \in \Dens$ we set
\begin{align}\label{eq:W1}
	W_1(\rho_0, \rho_1) 
	  := \sup \bigg\{ \tau[(\rho_1 - \rho_0)A] : A \in \cA \ , \|\nabla A\|_{\cB,2} \leq 1 \bigg\} \ .
\end{align}
By analogy with the dual Kantorovich formulation of the commutative $1$-Kantorovich metric $W_1$ in terms of Lipschitz functions, this metric can be seen as a non-commutative analogue of $W_1$. The following result generalizes a result from \cite{EM12} from the discrete to the non-commutative setting; see also \cite{RD17} for non-commutative results of this type.

\begin{proposition}\label{prop:Wass-1-comparison}
Let $M$ be as in Lemma \ref{lem:Lip-bound} and set $N := \sup\{ \|\nabla A\|_{\cB,2} \ : \ \|A\|_\cA \leq 1 \}$. Then, for $\rho_0, \rho_1 \in \Dens$ we have
\begin{align*}
 N^{-1} \tau[ |\rho_0 - \rho_1 |]  \leq W_1(\rho_0, \rho_1) \leq M \cW(\rho_0,\rho_1) \ .
\end{align*}
\end{proposition}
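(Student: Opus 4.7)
The plan is to prove the two inequalities independently.

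First, I would establish the lower bound $N^{-1} \tau[|\rho_0 - \rho_1|] \leq W_1(\rho_0, \rho_1)$ by testing the supremum defining $W_1$ against the self-adjoint contraction $A := \sgn(\rho_1 - \rho_0) \in \cA_h$, obtained from functional calculus applied to the self-adjoint element $\rho_1 - \rho_0$. By construction $\|A\|_\cA \leq 1$, so the definition of $N$ gives $\|\nabla A\|_{\cB,2} \leq N$; hence $A/N$ is admissible in the $W_1$ supremum, and $\tau[(\rho_1 - \rho_0)(A/N)] = N^{-1}\tau[|\rho_0 - \rho_1|]$ yields the bound.

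For the upper bound $W_1 \leq M\cW$, I would work with the Benamou--Brenier formulation \eqref{eq:BB-form} of $\cW^2$. Fix a self-adjoint $A$ with $\|\nabla A\|_{\cB,2} \leq 1$ and an admissible pair $(\rho_t, \bbB_t)$ solving $\dot\rho_t + \dive\bbB_t = 0$ with the prescribed endpoints. The fundamental theorem of calculus, the continuity equation, and the duality $\tau[A^* \partial_j^\dagger B] = \tau_j[(\partial_j A)^* B]$ (with $A^* = A$) combine to give
\begin{align*}
	\tau[(\rho_1 - \rho_0)A] = \int_0^1 \sum_{j \in \cJ} \tau_j[(\partial_j A)^* B_{t,j}] \dd t \ .
\end{align*}
The core estimate is the duality Cauchy--Schwarz inequality
\begin{align*}
	\Bigl| \sum_{j \in \cJ} \tau_j[(\partial_j A)^* B_j] \Bigr| \leq \|\nabla A\|_\rho \,\|\bbB\|_{-1,\rho} \ ,
\end{align*}
which follows because the block operator $T_\rho : \cB \to \cB$ defined by $(T_\rho \bbC)_j := \hrho_j \# C_j$ is positive and self-adjoint on $\prod_j L^2(\cB_j, \tau_j)$ (cf.\ Lemma \ref{lem:Ran-Ker}), with inverse $\bbB \mapsto (\lrho_j \# B_j)_j$ by \eqref{eq:fc2}; then one applies the standard Cauchy--Schwarz to $\langle T_\rho^{1/2}(\nabla A), T_\rho^{-1/2}\bbB\rangle$. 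Combining this with the uniform bound $\|\nabla A\|_{\rho_t} \leq M \|\nabla A\|_{\cB,2} \leq M$ from Lemma \ref{lem:Lip-bound} and Cauchy--Schwarz in $t$ gives
\begin{align*}
  |\tau[(\rho_1 - \rho_0)A]| \leq M\Bigl(\int_0^1 \|\bbB_t\|_{-1,\rho_t}^2 \dd t\Bigr)^{1/2} \ .
\end{align*}
Infimizing over admissible curves and supremizing over $A$ yields $W_1(\rho_0, \rho_1) \leq M\cW(\rho_0, \rho_1)$.

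The only non-routine ingredient is the duality Cauchy--Schwarz inequality above; it is essentially a two-line argument once $T_\rho$ and $T_\rho^{-1}$ are identified, but it encodes the fact that $\|\cdot\|_\rho$ and $\|\cdot\|_{-1,\rho}$ are genuinely dual with respect to the standard pairing on $\cB$. A minor technical caveat is that $\cW(\rho_0, \rho_1)$ is a priori defined only on $\Dens_+$, so if $\rho_0$ or $\rho_1$ lies on the boundary I would handle the extension by approximation, invoking the continuity result of Proposition \ref{prop:dist-formula} together with the obvious continuity of $W_1$ in its arguments.
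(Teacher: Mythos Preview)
Your proposal is correct and follows essentially the same approach as the paper. The only cosmetic difference is that the paper works with the gradient formulation \eqref{eq:Riem-dist} of $\cW$ (taking near-optimal curves $(\rho_t,B_t)$ with $\partial_t\rho_t+\dive(\hrho_t\#\nabla B_t)=0$) and then applies the ordinary Cauchy--Schwarz inequality in the weighted inner product $\langle\cdot,\cdot\rangle_{\rho_t}$, whereas you use the equivalent formulation \eqref{eq:BB-form} with general vector fields and the dual Cauchy--Schwarz for the pair $\|\cdot\|_\rho,\|\cdot\|_{-1,\rho}$; the paper's version avoids the need to discuss $T_\rho^{\pm1/2}$ explicitly, but the substance is identical.
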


\begin{proof}
The first inequality follows from the definitions, since $\tau[|B|] = \sup_{\|A\|_\cA \leq 1} \tau[AB]$ for $B \in \cA$.

Fix $\eps > 0$, take $\bar\rho_0, \bar\rho_1 \in \Dens$, and let $(\rho_t, B_t)_t$ be a solution to the continuity equation with approximately optimal action, i.e.,  
\begin{align*}
 \partial_t \rho_t + \dive(\hrho_t \# \nabla B_t) = 0 \tand
 \bigg(\int_0^1 \| \nabla B_t\|_{\rho_t}^2 \dd t \bigg)^{\frac12} \leq \cW(\bar\rho_0, \bar\rho_1 ) + \eps\;.
\end{align*}
For any $A \in \cA_h$ we obtain using Lemma \ref{lem:Lip-bound}
\begin{align*}
  \big|  \tau[A (\bar\rho_0 - \bar\rho_1) ]  \big|
  & = \bigg|\int_0^1 \tau[ A \dot\rho_t ] \dd t\bigg|
   \\& = \bigg|\int_0^1 \tau[ A \dive(\hrho_t \# \nabla B_t)  ] \dd t\bigg|
   \\& = \bigg|\int_0^1 \ip{ \nabla A, \nabla B_t }_{\rho_t} \dd
  t\bigg| 
   \\& \leq \bigg( \int_0^1 \| \nabla A \|_{\rho_t}^2\dd t
  \bigg)^{1/2} \bigg( \int_0^1 \| \nabla B_t \|_{\rho_t}^2 \dd t
  \bigg)^{1/2} 
   \\& \leq  M \|\nabla A \|_{\cB,2} \big(  \cW(\bar\rho_0, \bar\rho_1) + \eps\big) \;.
\end{align*}
Since $\eps > 0$ is arbitrary, the result follows by definition of $W_1$.
\end{proof}

In the remainder of this section we impose the following natural additional conditions in addition to Assumption \ref{ass:theta}.

\begin{assumption}\label{ass:theta-ass}
The functions $\theta_j: [0,\infty) \times [0,\infty) \to [0,\infty)$ are $1$-homogeneous (which implies that $\theta_j(r,s) = s f_j(r/s)$ for some function $f_j$).
The functions $f_j$ are assumed to be operator monotone.
\end{assumption}

Under this assumption, we will prove some crucial convexity properties for the action functional and the squared distance.

\begin{proposition}[Convexity of the action]\label{prop:convex-action}
Let $\rho^i\in\Dens$ and $\bbB^i\in \cB$ for $i=0,1$. 
For $s\in[0,1]$ set $\rho^s := (1-s) \rho^0 + s \rho^1$ and $\bbB^s := (1-s) \bbB^0 + s \bbB^1$. 
Then we have
\begin{align*}
    \sA(\rho^s,\bbB^s) 
    	\leq (1-s)\sA(\rho^0, \bbB^0) 
			+ s \sA(\rho^1,\bbB^1)\ .
\end{align*}
\end{proposition}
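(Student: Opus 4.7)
The plan is to reduce the claim to the joint convexity of the quasi-entropy functionals $\Upsilon_{f_j,1}$ established in Section~\ref{sec:quasi-entropies}. The key observation is that the action admits a term-by-term representation precisely of the form treated there. Indeed, recalling that $\lrho_j = \frac{1}{\theta_j}(\ell_j(\rho), \err_j(\rho))$, we may write
\begin{equation*}
\sA(\rho, \bbB)
  = \sum_{j \in \cJ} \bip{B_j, \tfrac{1}{\theta_j}(\ell_j(\rho), \err_j(\rho)) \# B_j}_{L^2(\tau_j)}
  = \sum_{j \in \cJ} \Upsilon_{f_j, 1}\bigl(\ell_j(\rho), \err_j(\rho); B_j\bigr),
\end{equation*}
using the $1$-homogeneity $\theta_j(r,s) = s f_j(r/s)$ from Assumption~\ref{ass:theta-ass} to identify the expression with the quasi-entropy $\Upsilon_{f_j,1}$ of Section~\ref{sec:quasi-entropies}.

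Next I would invoke Theorem~\ref{thm:op-mon-cvx} (with $p = 1$): since $f_j$ is operator monotone by Assumption~\ref{ass:theta-ass}, the mapping
\begin{equation*}
(R, S, A) \mapsto \Upsilon_{f_j, 1}(R, S; A)
\end{equation*}
is jointly convex on $(\cB_j)_+ \times (\cB_j)_+ \times \cB_j$. Composition with an affine mapping preserves joint convexity, and since $\ell_j, \err_j : \cA \to \cB_j$ are unital $*$-homomorphisms (in particular $\R$-linear), the assignment
\begin{equation*}
(\rho, B_j) \mapsto \bigl(\ell_j(\rho), \err_j(\rho), B_j\bigr)
\end{equation*}
is affine from $\Dens_+ \times \cB_j$ into $(\cB_j)_+ \times (\cB_j)_+ \times \cB_j$ (positivity of $\ell_j(\rho), \err_j(\rho)$ follows from the fact that $\ell_j, \err_j$ are $*$-homomorphisms preserving positivity). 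Hence each summand $(\rho, B_j) \mapsto \Upsilon_{f_j,1}(\ell_j(\rho), \err_j(\rho); B_j)$ is jointly convex on $\Dens_+ \times \cB_j$, and summing over $j \in \cJ$ yields joint convexity of $\sA(\rho, \bbB)$ on $\Dens_+ \times \cB$. Applied to $\rho^s = (1-s)\rho^0 + s \rho^1$ and $\bbB^s = (1-s)\bbB^0 + s \bbB^1$, this delivers the asserted inequality whenever $\rho^0, \rho^1 \in \Dens_+$.

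Finally, to handle boundary points $\rho^0, \rho^1 \in \Dens \setminus \Dens_+$ (as allowed by the statement), I would pass to the limit along the regularizations $\rho^i_\eps := (1-\eps)\rho^i + \eps \sigma \in \Dens_+$. For each $\eps > 0$ the convexity inequality holds for $(\rho^0_\eps, \bbB^0)$ and $(\rho^1_\eps, \bbB^1)$, with interpolant $\rho^s_\eps = (1-\eps)\rho^s + \eps \sigma$. Since $\theta_j$ is continuous and strictly positive on $(0,\infty) \times (0,\infty)$, the map $\rho \mapsto \lrho_j$ is continuous on $\Dens_+$, so $\sA(\rho^i_\eps, \bbB^i) \to \sA(\rho^i, \bbB^i)$ as $\eps \downarrow 0$ whenever the latter is finite, while if $\sA(\rho^i, \bbB^i) = +\infty$ the inequality is trivial; in the intermediate step, lower semicontinuity of $\sA(\cdot, \bbB^s)$ along $\rho^s_\eps \to \rho^s$ suffices (and is a direct consequence of the joint convexity already established together with Fatou's lemma applied to the spectral sum). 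I do not expect any real obstacle here; the heart of the argument is entirely contained in the identification of $\sA$ with a sum of standard quasi-entropies, at which point Theorem~\ref{thm:op-mon-cvx} does all the work.
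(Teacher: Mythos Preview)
Your proposal is correct and follows essentially the same route as the paper: the paper's proof is the one-line statement ``This follows immediately from Theorem~\ref{thm:op-mon-cvx} in view of Assumption~\ref{ass:theta-ass},'' and your argument simply unpacks this by identifying $\sA(\rho,\bbB)$ as a sum of quasi-entropies $\Upsilon_{f_j,1}(\ell_j(\rho),\err_j(\rho);B_j)$ and invoking the joint convexity from Theorem~\ref{thm:op-mon-cvx}. Your additional handling of boundary points $\rho^i\in\Dens\setminus\Dens_+$ via regularization is more careful than the paper, which does not address this explicitly.
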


\begin{proof}
This follows immediately from Theorem \ref{thm:op-mon-cvx} in view of Assumption \ref{ass:theta-ass}.
\end{proof}

\begin{theorem}[Convexity of the squared distance]
\label{thm:convex-distance}
\label{conj:convex}
  For $i = 0, 1$, let $\rho_0^i, \rho_1^i \in \Dens$, and for $s \in
  [0,1]$ set $\rho_0^s := (1-s) \rho_0^0 + s \rho_0^1$ and $\rho_1^s := (1-s) \rho_1^0 + s \rho_1^1$. Then:
\begin{align*}
 \cW(\rho_0^s,\rho_1^s)^2 
   \leq (1 - s) \cW(\rho_0^0, \rho_1^0)^2 
   			+ s \cW(\rho_0^1, \rho_1^1)^2\ .
\end{align*}
\end{theorem}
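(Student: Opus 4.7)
The plan is to deduce this from the Benamou--Brenier-type formulation of $\cW$ given in Lemma \ref{lem:BB-form} together with the joint convexity of the action functional established in Proposition \ref{prop:convex-action}. The mechanism is completely parallel to the classical Otto calculus proof of geodesic convexity of $W_2^2$ on Euclidean space: take near-optimal pairs $(\rho, \bbB)$ for each endpoint pair, interpolate linearly in both variables, use linearity of the continuity equation, and apply pointwise convexity of the action.

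First I would reduce to the case in which all four endpoints $\rho_0^0, \rho_0^1, \rho_1^0, \rho_1^1$ lie in $\Dens_+$. Indeed, if the desired convexity inequality holds whenever the endpoints are strictly positive, then given arbitrary endpoints in $\Dens$, approximate each $\rho_k^i$ by a sequence in $\Dens_+$ (e.g.\ by $(1-\delta)\rho_k^i + \delta\one$), apply the strictly positive case, and pass to the limit using the continuous extension of $\cW$ to $\Dens$ provided by Proposition \ref{prop:dist-formula}. The linear interpolation $\rho_k^s$ converges to its limit in $\Dens$ as well, so the right-hand side converges.

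With all endpoints in $\Dens_+$, fix $\eps > 0$. By the Benamou--Brenier formulation \eqref{eq:BB-form}, for $i = 0, 1$ there exist smooth admissible curves $(\rho_t^i)_{t \in [0,1]}$ in $\Dens_+$ and $(\bbB_t^i)_{t \in [0,1]}$ in $\cB$ satisfying $\partial_t \rho_t^i + \dive \bbB_t^i = 0$ with boundary values $\rho_t^i|_{t=0} = \rho_0^i$ and $\rho_t^i|_{t=1} = \rho_1^i$, such that
\begin{align*}
 \int_0^1 \| \bbB_t^i \|_{-1, \rho_t^i}^2 \dd t \leq \cW(\rho_0^i, \rho_1^i)^2 + \eps \ .
\end{align*}
For $s \in [0,1]$ define $\rho_t^s := (1-s)\rho_t^0 + s\rho_t^1$ and $\bbB_t^s := (1-s)\bbB_t^0 + s\bbB_t^1$. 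Since $\partial_t$ and $\dive$ are linear, $(\rho_t^s, \bbB_t^s)$ satisfies the continuity equation, and $\rho_t^s \in \Dens_+$ as a convex combination of elements of $\Dens_+$. The boundary values are exactly $\rho_0^s$ and $\rho_1^s$, so the pair is admissible for $\cW(\rho_0^s, \rho_1^s)^2$.

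Proposition \ref{prop:convex-action} applied pointwise in $t$ gives
\begin{align*}
 \|\bbB_t^s\|_{-1,\rho_t^s}^2 = \sA(\rho_t^s, \bbB_t^s) \leq (1-s)\sA(\rho_t^0, \bbB_t^0) + s\sA(\rho_t^1, \bbB_t^1) \ .
\end{align*}
Integrating over $t \in [0,1]$ and invoking the Benamou--Brenier formulation once more,
\begin{align*}
 \cW(\rho_0^s, \rho_1^s)^2
  &\leq \int_0^1 \|\bbB_t^s\|_{-1,\rho_t^s}^2 \dd t \\
  &\leq (1-s)\bigl(\cW(\rho_0^0, \rho_1^0)^2 + \eps\bigr) + s\bigl(\cW(\rho_0^1, \rho_1^1)^2 + \eps\bigr) \ .
\end{align*}
Letting $\eps \to 0$ concludes the proof. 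The main obstacle is the potential singularity of the action functional $\sA(\rho, \bbB) = \|\bbB\|_{-1,\rho}^2$ near the boundary of $\Dens$ (as $\lrho$ involves $1/\theta_j$), which is precisely why the reduction via Proposition \ref{prop:dist-formula} and the restriction of the Benamou--Brenier infimum to curves in $\Dens_+$ must be carried out carefully; every other step reduces to a pointwise convexity statement which is the content of Proposition \ref{prop:convex-action}, itself a consequence of the operator monotonicity hypothesis on $\theta_j$ and the Hiai--Petz characterization quoted in Theorem \ref{thm:op-mon-cvx}.
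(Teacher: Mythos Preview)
Your proof is correct and follows essentially the same approach as the paper: reduce to strictly positive endpoints by continuity, take $\eps$-optimal competitors in the Benamou--Brenier formulation of Lemma~\ref{lem:BB-form}, interpolate linearly, apply the pointwise convexity of the action from Proposition~\ref{prop:convex-action}, and let $\eps \to 0$. The paper's argument is slightly more terse but structurally identical.
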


\begin{proof}
Fix $\eps > 0$. By continuity, it suffices to prove the inequality for $\rho_0^i, \rho_1^i \in \Dens_+$ and $i = 0,1$. Let $(\rho^i_t, \bbB^i_t)_t$ be such that $\partial_t \rho_t^i + \dive \bbB_t^i = 0$ and $\int_0^1 \sA(\rho_t^i, \bbB_t^i) \dd t \leq \cW(\rho_0^i, \rho_1^i)^2 + \eps$. For $s \in [0,1]$ we define
\begin{align*}
	\rho_t^s := (1-s) \rho_t^0 + s \rho_t^1 
		\quad \text{ and } 	\quad
    \bbB_t^s := (1-s) \bbB_t^0 + s \bbB_t^1 \ . 
\end{align*} 
It follows that $\partial_t \rho_t^s + \dive \bbB_t^s = 0$, and by Lemma \ref{lem:BB-form} and Proposition \ref{prop:convex-action} we obtain
\begin{align*}
	 \cW(\rho_0^s,\rho_1^s)^2 
	 & \leq \int_0^1 \sA(\rho_t^s, \bbB_t^s) \dd t
	 \\& \leq (1-s) \int_0^1 \sA(\rho_t^0, \bbB_t^0) \dd t
	 		+ s \int_0^1 \sA(\rho_t^1, \bbB_t^1) \dd t
	 \\& \leq (1 - s) \cW(\rho_0^0, \rho_1^0)^2 
   			+ s \cW(\rho_0^1, \rho_1^1)^2 + 2 \eps \ .
\end{align*}
Since $\eps > 0$ is arbitrary, the desired inequality follows. 
\end{proof}

Using these convexity properties, the existence of constant speed geodesics for the metric $\cW$ follows by standard arguments; cf. \cite[Theorem 3.2]{EM12}) for a proof in the commutative setting and \cite{RD17} for a proof in a non-commutative context.

\begin{theorem}[Existence of $\cW$-geodesics]\label{thm:existence-geod}
For any $\bar\rho_0, \bar\rho_1 \in \Dens$ there exists a curve $\rho : [0,1] \to \Dens$ satisfying $\rho_0 = \bar\rho_0$, $\rho_1 = \bar\rho_1$, and $\cW(\rho_s, \rho_t) = | s-t | \cW(\rho_0, \rho_1)$ for all $s, t \in [0,1]$.
\end{theorem}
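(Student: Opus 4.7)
The plan is to use the direct method of the calculus of variations: take a minimizing sequence in the action formulation \eqref{eq:BB-form}, reparametrize each curve to constant speed, extract a uniform limit via Arzel\`a--Ascoli, and verify the constant-speed property of the limit from the triangle inequality alone. The convexity results of Proposition~\ref{prop:convex-action} and Theorem~\ref{thm:convex-distance} guarantee the well-behavedness of the underlying action landscape, while the concrete estimates rely on scaling of $\sA$ and compactness of $(\Dens, \cW)$.

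In detail, fix $\bar\rho_0, \bar\rho_1 \in \Dens$ and set $d := \cW(\bar\rho_0, \bar\rho_1)$. For each $n \geq 1$ I would, using Lemma~\ref{lem:BB-form} together with the continuous extension in Proposition~\ref{prop:dist-formula}, choose a pair $(\rho^n, \bbB^n)$ satisfying $\partial_t \rho^n_t + \dive \bbB^n_t = 0$ on $[0,1]$ with endpoints converging in norm to $\bar\rho_0, \bar\rho_1$ and with $\int_0^1 \sA(\rho^n_t, \bbB^n_t)\, \dd t \leq d^2 + 1/n$. Setting $L_n := \int_0^1 \sqrt{\sA(\rho^n_t, \bbB^n_t)}\, \dd t$, one then reparametrizes time by $\varphi_n(t) := L_n^{-1} \int_0^t \sqrt{\sA(\rho^n_s, \bbB^n_s)}\, \dd s$ and rescales $\bbB^n$ by the reciprocal of the Jacobian so that the continuity equation is preserved (possible thanks to the quadratic homogeneity of $\sA$ in $\bbB$); after this reparametrization the new speed $\sqrt{\sA(\rho^n_t, \bbB^n_t)}$ is identically $L_n$. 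The Cauchy--Schwarz inequality in time gives $L_n^2 \leq \int_0^1 \sA \leq d^2 + 1/n$, while $L_n \geq \cW(\rho^n_0, \rho^n_1) \to d$ (the reparametrized path is admissible, and the endpoints converge), so $L_n \to d$.

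From the admissibility of the rescaled piece of $(\rho^n, \bbB^n)$ between $s$ and $t$ one obtains $\cW(\rho^n_s, \rho^n_t) \leq \int_s^t \sqrt{\sA(\rho^n_\tau, \bbB^n_\tau)}\, \dd\tau = (t-s) L_n$ for $0 \leq s \leq t \leq 1$, so the family $\{\rho^n\}$ is uniformly Lipschitz as a set of maps $[0,1] \to (\Dens, \cW)$. The metric space $(\Dens, \cW)$ is compact: $\Dens$ is a bounded closed subset of the finite-dimensional space $\cA_h$, so compact in the norm topology, and by Proposition~\ref{prop:Wass-1-comparison} combined with Proposition~\ref{prop:dist-formula} the norm topology coincides with the $\cW$-topology on $\Dens$. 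Arzel\`a--Ascoli therefore yields a subsequence converging uniformly in $\cW$ to a curve $\rho^\infty \in C([0,1]; \Dens)$ with $\rho^\infty_0 = \bar\rho_0$, $\rho^\infty_1 = \bar\rho_1$, and $\cW(\rho^\infty_s, \rho^\infty_t) \leq (t-s)\, d$ for $s \leq t$.

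The constant-speed property is now forced by the triangle inequality: for $0 \leq s \leq t \leq 1$,
\begin{equation*}
d = \cW(\rho^\infty_0, \rho^\infty_1) \leq \cW(\rho^\infty_0, \rho^\infty_s) + \cW(\rho^\infty_s, \rho^\infty_t) + \cW(\rho^\infty_t, \rho^\infty_1) \leq \bigl(s + (t-s) + (1-t)\bigr)\, d = d,
\end{equation*}
so every inequality above is an equality, yielding $\cW(\rho^\infty_s, \rho^\infty_t) = (t-s)\, d = |s-t|\, \cW(\bar\rho_0, \bar\rho_1)$. The main technical obstacle is executing the arc-length reparametrization robustly: when $\sqrt{\sA(\rho^n_t, \bbB^n_t)}$ degenerates on a set of positive measure $\varphi_n$ need not be strictly increasing, and when $\bar\rho_i \in \partial \Dens$ the approximating curves cannot be chosen with endpoints exactly at $\bar\rho_i$. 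Both issues are handled by regularization---perturbing the speed integrand by a small positive constant to make $\varphi_n$ strictly increasing, and replacing $\bar\rho_i$ by interior approximants $\bar\rho_i^\eps \in \Dens_+$ as in the proof of Proposition~\ref{prop:dist-formula}---then removing the regularization by another application of Arzel\`a--Ascoli, which is again justified by the uniform Lipschitz bound and the compactness of $(\Dens, \cW)$.
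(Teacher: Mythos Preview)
Your proposal is correct and is a standard execution of the existence argument the paper defers entirely to references (the paper gives no proof beyond ``follows by standard arguments; cf.\ \cite[Theorem 3.2]{EM12} and \cite{RD17}''). Your metric-level approach---constant-speed reparametrization, Arzel\`a--Ascoli on the compact space $(\Dens,\cW)$, and the triangle-inequality squeeze---is one of those standard routes, and your remarks on the boundary and reparametrization regularizations are appropriate.
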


\section{Geodesic convexity of the entropy}

In this section we will analyse geodesic convexity of the relative entropy functional $\Ent_\sigma$.
Throughout this section we fix a differential structure $(\cA, \nabla, \sigma)$ and assume that the associated quantum Markov semigroup $(\cP_t)$ is ergodic. We consider the transport metric $\cW$ defined in Theorem \ref{thm:grad-flow-ent} using the functions $\theta_j$ given by $\theta_j(r,s) := \Lambda(e^{\omega_j/2}r, e^{-\omega_j/2}s)$, so that the Kolmogorov forward equation $\partial_t \rho = \cL^\dagger \rho$ is the gradient flow of the relative von Neumann entropy $\Ent_\sigma$ with respect to the Riemannian metric induced by $(\cK_\rho)_\rho$. 

The following terminology will be useful.

\begin{definition}\label{def:convex}
Let $(\cX,d)$ be a metric space. A functional $\cF : \cX \to \R \cup \{ + \infty \} $ is said to be 
\begin{itemize}
\item \emph{weakly geodesically $\lambda$-convex} if any pair $x_0, x_1 \in \cX$ can be connected by a geodesic $(\gamma_t)_{t \in [0,1]}$ in $(\cX, d)$ along which $\cF$ satisfies the $\lambda$-convexity inequality
\begin{align}\label{eq:lambda-convex}
  \cF(\gamma_t) \leq
  (1-t) \cF(\gamma_0) + t \cF(\gamma_1) - \frac\kappa{2} t(1-t) d(x_0, x_1)^2\ .
\end{align}
\item \emph{strongly geodesically $\lambda$-convex} if \eqref{eq:lambda-convex} holds for any geodesic $(\gamma_t)_{t \in [0,1]}$ in $(\cX, d)$.
\end{itemize}
\end{definition}

The following result, shows in particular that these concepts are equivalent in our setting and provides several equivalent characterizations of geodesic $\lambda$-convexity. We shall use the notation
\begin{align*}
  \ddtr f(t) = \limsup_{h \downarrow 0} \frac{f(t+h) - f(t)}{h}\ .
\end{align*}
We refer to \cite{EM12} for a version of this result in the discrete setting, and to \cite{RD17} for the Lindblad setting.

\begin{theorem}[Characterizations of geodesic $\lambda$-convexity]\label{thm:Ric-equiv}
  Let $\lambda \in \R$. For a differential structure $(\cA, \nabla, \sigma)$ the following assertions are equivalent:
\begin{enumerate}
\item $\Ent_\sigma$ is weakly geodesically $\lambda$-convex on $(\Dens,\cW)$;
\item $\Ent_\sigma$ is strongly geodesically $\lambda$-convex on $(\Dens,\cW)$;
\item For all $\rho, \nu \in \Dens$, the following `evolution
  variational inequality' holds for all $t \geq 0$:
\begin{align}\label{eq:EVI}
  \frac{1}{2}\ddtr \cW^2(\cP_t^\dagger \rho, \nu) + \frac{\lambda}{2}
\cW^2(\cP_t^\dagger \rho, \nu) \leq \Ent_\sigma(\nu) - \cH(\cP_t^\dagger \rho)\;;
\end{align}
\item \label{it:Hess}For all $\rho \in \Dens_+$ and $A \in \cA_0$ we have 
\begin{align*}
 \Hess_\cK \Ent_\sigma(\rho)[A, A]  \geq \lambda \tau[A \cK_\rho A] \ .
\end{align*}
\end{enumerate}
\end{theorem}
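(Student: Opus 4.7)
The plan is to close the cycle $(4) \Rightarrow (2) \Rightarrow (1) \Rightarrow (4)$ and to prove separately $(2) \Leftrightarrow (3)$. The implication $(2) \Rightarrow (1)$ is immediate given the existence of geodesics (Theorem \ref{thm:existence-geod}). For $(4) \Rightarrow (2)$, take any constant-speed $\cW$-geodesic $(\rho_s)_{s \in [0,1]}$ in $\Dens_+$; by Proposition \ref{prop:geod} there is a potential $A_s \in \cA_0$ with $\partial_s \rho_s = \cK_{\rho_s} A_s$ and constant speed $\|\dot\rho_s\|_{\rho_s}^2 = \tau[A_s \cK_{\rho_s} A_s] = \cW(\rho_0,\rho_1)^2$. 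Combining the second-order geodesic identity $\partial_s^2 \Ent_\sigma(\rho_s) = \Hess_\cK \Ent_\sigma(\rho_s)[A_s,A_s]$ (Proposition \ref{prop:Hessian-Ent}) with hypothesis $(4)$ yields $\partial_s^2 \Ent_\sigma(\rho_s) \geq \lambda \cW(\rho_0,\rho_1)^2$, and two integrations recover the $\lambda$-convexity inequality \eqref{eq:lambda-convex}.

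For $(1) \Rightarrow (4)$, fix $\rho \in \Dens_+$ and $A \in \cA_0$, and let $(\gamma_s)_{s \in (-\delta,\delta)}$ be the smooth geodesic through $\rho$ with $\dot\gamma_0 = \cK_\rho A$ furnished by local well-posedness of the geodesic equations \eqref{eq:geode1}--\eqref{eq:geode2}. For $\eps < \delta$, local uniqueness forces the geodesic from $\gamma_{-\eps}$ to $\gamma_\eps$ produced by weak $\lambda$-convexity to coincide with $\gamma|_{[-\eps,\eps]}$; expanding the convexity inequality at the midpoint $s = 0$ of this reparametrized geodesic and using the Taylor expansions $\Ent_\sigma(\gamma_{\pm \eps}) = \Ent_\sigma(\rho) \pm \eps \, \partial_s|_0 \Ent_\sigma + \tfrac{\eps^2}{2}\Hess_\cK\Ent_\sigma(\rho)[A,A] + O(\eps^3)$ and $\cW(\gamma_{-\eps}, \gamma_\eps)^2 = 4\eps^2 \tau[A \cK_\rho A] + O(\eps^3)$ produces, in the limit $\eps \downarrow 0$, the inequality $\Hess_\cK \Ent_\sigma(\rho)[A,A] \geq \lambda \tau[A \cK_\rho A]$.

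For $(2) \Leftrightarrow (3)$: the direction $(2) \Rightarrow (3)$ follows from Otto calculus. Let $\rho_t = \cP_t^\dagger \rho$, and connect $\rho_t$ to $\nu$ by a constant-speed geodesic $(\eta_s)_s$ with initial potential $B_0 \in \cA_0$. The first variation formula for the squared distance, combined with $\dot\rho_t = -\cK_{\rho_t} \rmD\Ent_\sigma(\rho_t)$ from Theorem \ref{thm:grad-flow-ent}, yields $\tfrac{1}{2}\ddtr \cW^2(\rho_t,\nu) = \ip{B_0, \cK_{\rho_t}\rmD\Ent_\sigma(\rho_t)}_{L^2(\tau)}$, while $\lambda$-convexity of $\Ent_\sigma$ along $(\eta_s)$, differentiated at $s = 0$, gives
\begin{equation*}
\ip{B_0, \cK_{\rho_t}\rmD\Ent_\sigma(\rho_t)}_{L^2(\tau)} = \partial_s|_{s=0}\Ent_\sigma(\eta_s) \leq \Ent_\sigma(\nu) - \Ent_\sigma(\rho_t) - \tfrac{\lambda}{2} \cW^2(\rho_t,\nu),
\end{equation*}
which combines to \eqref{eq:EVI}. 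The converse $(3) \Rightarrow (2)$ is the classical metric-space result (due to Daneri-Savar\'e) that an $\mathrm{EVI}_\lambda$ gradient flow on a geodesic space forces the driving functional to be strongly geodesically $\lambda$-convex.

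The main obstacle is boundary regularity: Propositions \ref{prop:geod} and \ref{prop:Hessian-Ent} are formulated on $\Dens_+$, and the geodesic through $\rho \in \Dens_+$ with prescribed velocity may leave $\Dens_+$ for large parameter values; moreover the theorem concerns all of $\Dens$, where $\rmD \Ent_\sigma$ degenerates on $\partial \Dens_+$. We handle this by proving each implication first for endpoints in $\Dens_+$ (where all tools apply) and then approximating boundary points via $(1-\delta)\rho_i + \delta \sigma$ and passing $\delta \downarrow 0$ using the continuous extension of $\cW$ to $\Dens$ (Proposition \ref{prop:dist-formula}) together with continuity of $\Ent_\sigma$.
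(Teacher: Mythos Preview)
Your cycle $(4)\Rightarrow(2)\Rightarrow(1)\Rightarrow(4)$ together with $(2)\Leftrightarrow(3)$ differs from the paper's $(4)\Rightarrow(3)\Rightarrow(2)\Rightarrow(1)\Rightarrow(4)$, and the difference is not cosmetic: the paper's detour through EVI is there precisely to avoid a gap that your direct route does not close.

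The problem is in $(4)\Rightarrow(2)$. Strong geodesic $\lambda$-convexity on $(\Dens,\cW)$ requires the inequality \eqref{eq:lambda-convex} along \emph{every} metric geodesic in $\Dens$. Your Hessian argument applies only to smooth solutions of the geodesic equations from Proposition~\ref{prop:geod}, hence only to geodesics that lie entirely in $\Dens_+$. Nothing in the paper rules out that a $\cW$-geodesic between two interior points touches $\partial\Dens_+$, and the approximation $(1-\delta)\rho_i+\delta\sigma$ of the \emph{endpoints} does not help: even if the perturbed endpoints are interior, the resulting geodesic could still reach the boundary, and in any case there is no mechanism forcing the perturbed geodesics to converge to a \emph{prescribed} geodesic between the original endpoints. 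So what your argument actually yields is $\lambda$-convexity along those geodesics that happen to be smooth and interior --- at best weak convexity, and only for interior endpoints.

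The paper's route is tailored to this difficulty. One first proves $(4)\Rightarrow(3)$: the Hessian bound on $\Dens_+$ is fed into the Daneri--Savar\'e machinery (cf.\ \cite{DS08}, \cite[Theorem~4.5]{EM12}) to obtain the EVI, which is a pointwise differential inequality along the flow and therefore extends to all of $\Dens$ by continuity. Then $(3)\Rightarrow(2)$ is the abstract metric-space result \cite[Theorem~3.2]{DS08} asserting that an $\mathrm{EVI}_\lambda$ flow on a geodesic space forces strong $\lambda$-convexity of the driving functional along \emph{all} geodesics --- including non-smooth ones and ones that hit the boundary. Your proposed $(2)\Rightarrow(3)$ via the first-variation formula suffers from the same regularity issue (it presupposes a smooth geodesic from $\rho_t$ to $\nu$ with a potential $B_0$), whereas the paper never needs this direction. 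Your $(1)\Rightarrow(4)$ and $(2)\Rightarrow(1)$ are fine and match the paper.
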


\begin{proof}

``$(4) \Rightarrow (3)$'' This can be proved by an argument from \cite{DS08}; see \cite[Theorem 4.5]{EM12} for a proof in a similar setting.

``$(3) \Rightarrow (2)$'': This follows from an application of \cite[Theorem
3.2]{DS08} to the metric space $(\Dens, \cW)$.

``$(2) \Rightarrow (1)$'': Since $(\Dens, \cW)$ is a geodesic space, this implication is immediate.

``$(1) \Rightarrow (4)$'': Obvious. 
\end{proof}

In the classical setting, the Ricci curvature on a Riemannian manifold $\cM$ is bounded from below by $\lambda \in \R$ if and only if the entropy (with respect to the volume measure) is geodesically $\lambda$-convex in the space of probability measures $\cP(\cM)$ endowed with the Kantorovich metric $W_2$. This characterisation is the starting point for the synthetic theory of metric measure spaces with lower Ricci curvature bounds, which has been pioneered by Lott, Sturm and Villani.

By analogy, we make the following definition in the non-commutative setting, which extends the corresponding definition in the discrete setting \cite{EM12}.

\begin{definition}[Ricci curvature]\label{def:Ricci}
Let $\lambda \in \R$. We say that a differential structure $(\cA, \nabla, \sigma)$ has Ricci curvature bounded from below by $\lambda$ if the equivalent conditions of Theorem \ref{thm:Ric-equiv} hold. In this case, we write $\Ric(\cA, \nabla, \sigma) \geq \lambda$.
\end{definition}

It is possible to characterize Ricci curvature in terms of a gradient estimate in the spirit of Bakry--\'Emery; see \cite{ErFa18} for the corresponding statement in the setting of finite Markov chains and  \cite{RD17} for an implementation in the Lindblad setting. 

\begin{theorem}[Gradient estimate]\label{thm:gradient-estimate}
Let $\lambda \in \R$. A differential structure $(\cA, \nabla, \sigma)$ satisfies $\Ric(\cA, \nabla, \sigma) \geq \lambda$ if and only if the following gradient estimate holds for all $\rho \in \Dens$, $A \in \cA_0$ and $t \geq 0$:
\begin{align}\label{eq:gradient-estimate}
	\| \nabla \cP_t A \|_{\rho}^2 \leq e^{-2 \lambda t} 
		\| \nabla A \|_{\cP_t^\dagger \rho}^2 \ .
\end{align}
\end{theorem}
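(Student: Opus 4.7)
The plan is to establish the equivalence by a Bakry--\'Emery type time-interpolation argument, reducing both implications to the infinitesimal Hessian criterion~(4) of Theorem~\ref{thm:Ric-equiv}. For fixed $\rho \in \Dens_+$, $A \in \cA_0$, and $t \geq 0$, I would introduce the interpolant
\[
\phi(s) := \|\nabla \cP_{t-s} A\|_{\cP_s^\dagger \rho}^2, \qquad s \in [0,t],
\]
whose endpoints are $\phi(0) = \|\nabla \cP_t A\|_\rho^2$ and $\phi(t) = \|\nabla A\|_{\cP_t^\dagger \rho}^2$. Then the gradient estimate~\eqref{eq:gradient-estimate} becomes equivalent to the statement that $s \mapsto e^{-2\lambda s}\phi(s)$ is non-decreasing on $[0,t]$, that is, to the Gr\"onwall-type differential inequality $\phi'(s) \geq 2\lambda \phi(s)$.

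The heart of the argument is the identity
\[
\phi'(s) = 2\,\Hess_\cK \Ent_\sigma(\rho_s)[A_s, A_s], \qquad \rho_s := \cP_s^\dagger \rho, \quad A_s := \cP_{t-s} A.
\]
Writing $\phi(s) = \sum_{j \in \cJ}\tau_j\bigl[(\partial_j A_s)^* \, \theta_j(\ell_j(\rho_s),\err_j(\rho_s)) \# \partial_j A_s\bigr]$ and using $\dot A_s = -\cL A_s$, the differentiation in the $A_s$-factors produces $-2\sum_j \tau_j[(\partial_j \cL A_s)^* \widehat{\rho_s}_j \# \partial_j A_s]$. For the $\rho_s$-dependence, I would apply Proposition~\ref{prop:chain-double-simple} to the two-variable map $(R,S)\mapsto \theta_j(R,S)$ at $R = \ell_j(\rho_s)$, $S = \err_j(\rho_s)$, with $\dot\rho_s = \cL^\dagger \rho_s$. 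This yields two tree-type contractions corresponding exactly to the operators $\cN^{(1),j}_{\rho_s,\cL^\dagger \rho_s}$ and $\cN^{(2),j}_{\rho_s,\cL^\dagger \rho_s}$ of Remark~\ref{rem:geod}. The symmetry computation in the proof of Proposition~\ref{prop:geod} (invoking~\eqref{eq:symmetry-equiv} together with $\theta_j(r,s) = \theta_{j^*}(s,r)$) shows that these two contributions produce equal values when paired against $(\partial_j A_s)^* \cdots \partial_j A_s$ and summed over $j \in \cJ$, so that the total $\rho_s$-derivative collapses to $2\sum_j \tau_j[(\partial_j A_s)^* \cN^{(\eta),j}_{\rho_s,\cL^\dagger \rho_s} \# \partial_j A_s]$ for either choice of $\eta$. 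Matching the two pieces against the Hessian formula~\eqref{eq:Hess2} completes the identity.

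Given this identity, the ``$\Rightarrow$'' direction is immediate: under $\Ric(\cA,\nabla,\sigma) \geq \lambda$, Theorem~\ref{thm:Ric-equiv}(4) yields $\phi'(s) \geq 2\lambda \ip{A_s,\cK_{\rho_s} A_s} = 2\lambda \phi(s)$, and the Gr\"onwall argument delivers~\eqref{eq:gradient-estimate}. For ``$\Leftarrow$'', fix $\rho \in \Dens_+$, $A \in \cA_0$ and consider
\[
\Psi(t) := e^{-2\lambda t}\|\nabla A\|_{\cP_t^\dagger \rho}^2 - \|\nabla \cP_t A\|_\rho^2,
\]
which satisfies $\Psi(t) \geq 0$ for $t \geq 0$ and $\Psi(0) = 0$, hence $\Psi'(0) \geq 0$. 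The same chain-rule computations applied at $t = 0$ evaluate $\Psi'(0)$ as $2\bigl(\Hess_\cK\Ent_\sigma(\rho)[A,A] - \lambda \ip{A,\cK_\rho A}\bigr)$, which recovers exactly the Hessian bound of condition~(4). The main obstacle will be the bookkeeping in the chain-rule computation above --- carefully distributing the derivatives of $\theta_j(\ell_j(\rho_s),\err_j(\rho_s))$ among the two arguments, tracking the factor of two produced by the collapse $\cN^{(1),j} + \cN^{(2),j} \rightsquigarrow 2\cN^{(\eta),j}$, and verifying consistency with~\eqref{eq:Hess2}. Once this identification is cleanly in place, no further ideas are needed beyond those already developed in Sections~\ref{sec:Dirichlet} and~\ref{sec:Riemann}.
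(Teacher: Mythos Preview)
Your proposal is correct and follows essentially the same approach as the paper: a Bakry--\'Emery interpolation in the variable $s\in[0,t]$, the identification of the derivative with the Hessian via \eqref{eq:derivative-of-K} and \eqref{eq:Hess2}, and the converse by differentiating the gradient estimate at $t=0$. The only cosmetic difference is that the paper builds the factor $e^{-2\lambda s}$ directly into the interpolant (working with $f(s)=e^{-2\lambda s}\phi(s)$ and checking $f'\ge 0$), whereas you keep $\phi$ bare and phrase the conclusion as $\phi'\ge 2\lambda\phi$; these are equivalent.
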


\begin{proof}
We follow a standard semigroup interpolation argument. 
Clearly, \eqref{eq:gradient-estimate} holds for any $\rho \in \Dens$ if and only if it holds for any $\rho \in \Dens_+$. 

Fix $t > 0$, $\rho \in \Dens_+$ and $A \in \cA_0$, and define $f: [0,t] \to \R$ by 
\begin{align*}
f(s)
 :=   e^{-2 \lambda s} \ip{\cK_{\cP_s^\dagger \rho} \cP_{t-s} A, \cP_{t-s} A}_{L^2(\cA,\tau)} 
	= e^{-2 \lambda s} \| \nabla \cP_{t-s} A \|_{\cP_s^\dagger\rho}^2 \ .
\end{align*} 
Writing $\rho_s = \cP_s^\dagger \rho$ and $A_s = \cP_s A$, it follows by \eqref{eq:derivative-of-K} and Proposition \ref{prop:Hessian-Ent} that
\begin{align*}
	f'(s) & = e^{-2\lambda s} 
	\tau\big[  (\nabla A_{t-s})^* (\cN^{(1)}_{\rho_s, \cL^\dagger \rho_s} 
		+ \cN^{(2)}_{\rho_s, \cL^\dagger \rho_s}) \# \nabla A_{t-s}
	\\& \qquad\qquad 
		- 2 (\nabla \cL A_{t-s})^* \hrho_s \# \nabla A_{t-s} 
		- 2 \lambda (\nabla A_{t-s})^* \hrho_s \# \nabla A_{t-s}   \big]
	\\& = 2 e^{-2\lambda s}  
	\Big( 
			 \Hess_\cK \Ent_\sigma(\rho_s)[A_{t-s}, A_{t-s}] 
			 	- \lambda\tau[A_{t-s} \cK_{\rho_s} A_{t-s}] \Big) \ .
\end{align*}

Assume now that $\Ric(\cA, \nabla, \sigma)$. Applying \eqref{it:Hess} from Theorem \ref{thm:Ric-equiv}, we obtain $f'(s) \geq 0$ for all $s$. This implies that $f(t) \geq f(0)$, which is \eqref{eq:gradient-estimate}. 

To prove the converse, set $g(t) = e^{2\lambda t}\| \nabla \cP_t A \|_{\rho}^2 $ and $h(t) = \| \nabla A \|_{\cP_t^\dagger \rho}^2$. 
Then \eqref{eq:gradient-estimate} implies hat $g(t) \leq h(t)$ for all $t \geq 0$. Since $g(0) = h(0)$, we infer that $g'(0) \leq h'(0)$. 
Since
\begin{align*}
	g'(0) & = 2 \tau\big[ (\nabla \cL A)^* \hrho \# \nabla A\big] + 2 \lambda \|  A \|_{\rho}^2  \ , \\
	h'(0) & = \tau\Big[(\nabla A)^* 
    \big(\cN_{\rho,\cL^\dagger \rho}^{(1)} + \cN_{\rho,\cL^\dagger \rho}^{(2)}\big) \# \nabla A \Big] \ ,
\end{align*}
we obtain $\Hess_\cK \Ent_\sigma(\rho)[A, A]  \geq \lambda \tau[A \cK_\rho A]$ in view of the expression for the Hessian in Proposition \ref{prop:Hessian-Ent}.
\end{proof}

An immediate consequence of a Ricci curvature bound is the following contractivity estimate for the associated semigroup, which was independently proved by Rouz\'e in \cite{Rouze:2019}.

\begin{proposition}[$\lambda$-Contractivity]\label{prop:contr}
If $\Ric(\cA, \nabla, \sigma) \geq \lambda$, then the $\lambda$-contractivity bound
\begin{align*}
	\cW(\cP_t^\dagger \rho_0, \cP_t^\dagger \rho_1) \leq
		e^{-\lambda t} \cW(\rho_0, \rho_1) 
\end{align*}
holds for all $\rho_0, \rho_1 \in \Dens$ and $t \geq 0$. 
\end{proposition}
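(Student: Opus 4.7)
The plan is to derive the contraction bound from the evolution variational inequality \eqref{eq:EVI}, which is equivalent to $\Ric(\cA, \nabla, \sigma)\geq \lambda$ by Theorem~\ref{thm:Ric-equiv}. By Proposition~\ref{prop:dist-formula}, it suffices to prove the estimate for $\rho_0, \rho_1 \in \Dens_+$ and then extend to $\Dens$ by continuity. Writing $\rho_s^i := \cP_s^\dagger \rho_i$ and $\phi(s) := \cW^2(\rho_s^0, \rho_s^1)$, the target becomes $\phi(t) \leq e^{-2\lambda t}\phi(0)$, after which Gronwall and taking square roots deliver the claim.

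First, I would apply the EVI \eqref{eq:EVI} twice at each $s \geq 0$: once with initial datum $\rho_s^0$ (so that $\cP_h^\dagger \rho_s^0 = \rho_{s+h}^0$) and frozen reference state $\nu = \rho_s^1$, and once with the roles swapped. Evaluating both at $h = 0$ gives
\begin{align*}
\frac{1}{2}\ddhr\Big|_{h=0} \cW^2(\rho^0_{s+h}, \rho^1_s)
	&\leq \Ent_\sigma(\rho_s^1) - \Ent_\sigma(\rho_s^0) - \tfrac{\lambda}{2}\phi(s)\ ,\\
\frac{1}{2}\ddhr\Big|_{h=0} \cW^2(\rho^0_s, \rho^1_{s+h})
	&\leq \Ent_\sigma(\rho_s^0) - \Ent_\sigma(\rho_s^1) - \tfrac{\lambda}{2}\phi(s)\ .
\end{align*}
The entropy terms cancel upon adding, yielding a bound whose right-hand side is $-\lambda\phi(s)$.

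Next, I would combine the two partial right derivatives into a bound on $\ddtr \phi(s)$. Writing
\[
\phi(s+h) - \phi(s)
= \big[\cW^2(\rho^0_{s+h},\rho^1_{s+h}) - \cW^2(\rho^0_{s+h}, \rho^1_s)\big]
+ \big[\cW^2(\rho^0_{s+h}, \rho^1_s) - \cW^2(\rho^0_s, \rho^1_s)\big] \ ,
\]
dividing by $h>0$, taking $\limsup_{h\downarrow 0}$, and using the triangle inequality together with the $\cW$-continuity of the flow (which follows from its smoothness on $\Dens_+$ as a Riemannian gradient flow), one obtains
\[
\ddtr \phi(s) \leq \ddhr\Big|_{h=0} \cW^2(\rho^0_{s+h}, \rho^1_s) + \ddhr\Big|_{h=0} \cW^2(\rho^0_s, \rho^1_{s+h}) \leq -2\lambda\phi(s)\ .
\]
Gronwall's inequality on $\phi$ then gives $\phi(t) \leq e^{-2\lambda t}\phi(0)$, which is the contractivity estimate.

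The step that requires the most care is the metric-space comparison between $\ddtr\phi$ and the sum of the two "partial" right derivatives. In the abstract setting this is a standard lemma (see e.g.\ \cite{DS08}), but in the present smooth Riemannian setting on $\Dens_+$ there is a cleaner alternative: one can apply the smooth first variation formula for the squared Riemannian distance along a constant-speed geodesic $\gamma_r$ connecting $\rho_s^0$ to $\rho_s^1$, which expresses $\ddtr\phi(s)$ as $-2\langle \dot\gamma_0,\cL^\dagger\rho_s^0\rangle + 2\langle \dot\gamma_1,\cL^\dagger\rho_s^1\rangle$, and the required inequality follows from the EVI in the same manner. Either route leads to the same contraction bound.
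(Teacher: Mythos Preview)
Your argument is correct and is precisely the standard derivation of contractivity from the EVI that the paper invokes by citing \cite[Proposition~3.1]{DS08}; you have simply unpacked that reference. The one delicate point---combining the two ``partial'' right upper derivatives into a bound on $\ddtr\phi$---is indeed the crux of that abstract lemma, and your two suggested routes (the metric-space argument from \cite{DS08} or the smooth first-variation formula on $\Dens_+$) are both valid here.
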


\begin{proof}
This is a well-known consequence of the evolution variational inequality \eqref{eq:EVI}; see \cite[Proposition 3.1]{DS08}.
\end{proof}

Using the techniques developed in this paper, we can explicitly compute the  Ricci curvature for the depolarizing channel defined in Section \ref{sec:depolarizing-generator}. The result has been obtained independently by Rouz\'e in \cite{Rouze:2019}.

\begin{theorem}[Ricci bound for the depolarizing channel]\label{thm:Ricci-depolarizing}\footnote{As pointed out by Brannan, Gao, and Junge  (\texttt{arxiv:2007.06138v1}), the published curvature bound   $\Ric(\cA, \nabla, \tau) \geq \gamma$ is \emph{incorrect}. 
Here we state the published bound up to a factor $\frac12$ using a minor modification of the original proof.  
On the matrix algebra $\M_n(\C)$, it is possible to improve the curvature bound $\frac{\gamma}{2}$ to  $\frac{\gamma}{2}(1 + \frac1{n})$, using the scalar inequality $\partial_1 \Lambda(a,b) + \partial_2 \Lambda(a,b) \geq 1 \geq \frac1n\Lambda(a,b)$ for $0 < a, b \leq n$.  
}
Let $\gamma > 0$, and let $(\cA, \nabla, \tau)$ be a differential structure for the generator of the depolarizing channel given by $\cL A = \gamma (\tau[A]\one - A)$. Then $\Ric(\cA, \nabla, \tau) \geq \frac{\gamma}{2}$.
\end{theorem}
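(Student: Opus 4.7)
My plan is to verify the gradient estimate of Theorem \ref{thm:gradient-estimate} with $\lambda = \gamma/2$, which by that theorem is equivalent to the Ricci bound. Since the generator takes the simple form $\cL A = \gamma(\tau[A]\one - A)$ and is self-adjoint in $L^2(\cA,\tau)$, we have $\cL^\dagger = \cL$, and the semigroup can be written down explicitly:
\begin{equation*}
	\cP_t A = e^{-\gamma t} A + (1 - e^{-\gamma t})\tau[A]\one, \qquad
	\cP_t^\dagger \rho = e^{-\gamma t}\rho + (1 - e^{-\gamma t})\one \quad (\rho \in \Dens).
\end{equation*}

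The first key step is to compute $\nabla \cP_t A$. Because each $\ell_j$ and $\err_j$ is unital, $r_j(\one) = \ell_j(\one) = \one_{\cB_j}$, so $\partial_j \one = V_j - V_j = 0$. Hence $\partial_j(\tau[A]\one) = 0$, and
\begin{equation*}
	\partial_j \cP_t A = e^{-\gamma t} \partial_j A,
	\qquad \text{so}\qquad
	\|\nabla \cP_t A\|_\rho^2 = e^{-2\gamma t} \|\nabla A\|_\rho^2
\end{equation*}
independently of $\rho$. It now remains to compare $\|\nabla A\|_\rho^2$ with $\|\nabla A\|_{\cP_t^\dagger \rho}^2$.

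The second key step is concavity of the map $\rho \mapsto \|\nabla A\|_\rho^2$ on $\Dens_+$. Since $\sigma = \one$ we have $\omega_j = 0$, so the weight function is the logarithmic mean $\theta_j = \Lambda$. Using the integral representation $\Lambda(r,s) = \int_0^1 r^{1-\alpha} s^\alpha \dd\alpha$, one rewrites
\begin{equation*}
	\|\nabla A\|_\rho^2
	= \sum_{j \in \cJ} \int_0^1 \tau_j\bigl[(\partial_j A)^* \ell_j(\rho)^{1-\alpha}(\partial_j A)\, \err_j(\rho)^\alpha\bigr]\dd \alpha.
\end{equation*}
Because $\rho \mapsto \ell_j(\rho)$ and $\rho \mapsto \err_j(\rho)$ are positive linear, Lieb's concavity theorem (equivalently, the $p=-1$ version of the considerations of Section \ref{sec:quasi-entropies}) shows that the integrand is jointly concave in $\rho$ for each $\alpha \in [0,1]$, so the same holds after integration and summation. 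Applying this concavity to the convex combination $\cP_t^\dagger \rho = e^{-\gamma t}\rho + (1-e^{-\gamma t})\one$, and using $\|\nabla A\|_\one^2 \geq 0$ by Lemma \ref{lem:Schur-bound}, we deduce
\begin{equation*}
	\|\nabla A\|_{\cP_t^\dagger \rho}^2
	\geq e^{-\gamma t}\|\nabla A\|_\rho^2 + (1 - e^{-\gamma t})\|\nabla A\|_\one^2
	\geq e^{-\gamma t}\|\nabla A\|_\rho^2.
\end{equation*}
Combining this with the identity $\|\nabla \cP_t A\|_\rho^2 = e^{-2\gamma t}\|\nabla A\|_\rho^2$ yields
\begin{equation*}
	\|\nabla \cP_t A\|_\rho^2 \leq e^{-\gamma t}\|\nabla A\|_{\cP_t^\dagger \rho}^2
	= e^{-2(\gamma/2) t}\|\nabla A\|_{\cP_t^\dagger \rho}^2,
\end{equation*}
which is precisely the gradient estimate of Theorem \ref{thm:gradient-estimate} with $\lambda = \gamma/2$.

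The only nontrivial input is the concavity of $\rho \mapsto \|\nabla A\|_\rho^2$, and this is essentially where the factor $1/2$ in the bound is forced: concavity of the quadratic form applied to the convex combination defining $\cP_t^\dagger \rho$ produces only a single factor $e^{-\gamma t}$, while the trivial identity for $\|\nabla \cP_t A\|_\rho^2$ carries a factor $e^{-2\gamma t}$, so only half of the exponential rate can be transferred to the right-hand side of the gradient estimate.
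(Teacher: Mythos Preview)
Your proof is correct and takes a genuinely different route from the paper. The paper verifies condition \eqref{it:Hess} of Theorem~\ref{thm:Ric-equiv} by computing the Hessian directly via Proposition~\ref{prop:Hessian-Ent}: from $\partial_j\cL A=-\gamma\partial_j A$ the first term of \eqref{eq:Hess2} gives $\gamma\|\nabla A\|_\rho^2$, and the second term is identified as $\tfrac{\gamma}{2}\tau[(\nabla A)^*(\partial_1\Lambda+\partial_2\Lambda-\Lambda)(\rho,\rho)\#\nabla A]$ using the Euler identity $a\partial_1\Lambda(a,b)+b\partial_2\Lambda(a,b)=\Lambda(a,b)$, after which $\partial_i\Lambda\geq 0$ yields the bound. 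You instead verify the gradient estimate of Theorem~\ref{thm:gradient-estimate}: the same commutation gives $\|\nabla\cP_tA\|_\rho^2=e^{-2\gamma t}\|\nabla A\|_\rho^2$, and Lieb's concavity applied to $\cP_t^\dagger\rho=e^{-\gamma t}\rho+(1-e^{-\gamma t})\one$ furnishes the missing factor $e^{\gamma t}$. Your approach is more conceptual and avoids the tree calculus for the Hessian entirely; on the other hand, the paper's explicit Hessian identity is what makes the footnoted sharpening to $\tfrac{\gamma}{2}(1+\tfrac1n)$ visible (via $\partial_1\Lambda+\partial_2\Lambda\geq 1\geq\tfrac1n\Lambda$), whereas your concavity argument, dropping the term $(1-e^{-\gamma t})\|\nabla A\|_\one^2$, does not immediately yield that refinement.

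One small remark: your parenthetical pointer to ``the $p=-1$ version of the considerations of Section~\ref{sec:quasi-entropies}'' is not quite right, since Theorem~\ref{thm:op-mon-cvx} characterises \emph{convexity} of $\Upsilon_{f,p}$ for $p\in(0,1]$ and says nothing about $p=-1$. What you actually use is Lieb's concavity theorem for $(R,S)\mapsto\tau[B^*R^{1-\alpha}BS^\alpha]$, which is standard but not a statement of Section~\ref{sec:quasi-entropies}; you should cite it directly.
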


\begin{proof}
Since $\cL A = \gamma (\tau[A]\one - A)$ and $\partial_j \one = 0$, we have $\partial_j \cL A = - \gamma \partial_j A$, independently of the choice of the operators $\partial_j$. We will show that the result follows from this identity. 

First we note that
\begin{align}\label{eq:Pol1}
 - \tau[( \nabla \cL A)^* \hrho \# \nabla A] 
   = \gamma \tau[( \nabla A)^* \hrho \# \nabla A] 
   = \gamma
  \tau\big[(\nabla A)^*  \Lambda(\rho, \rho) \# (\nabla A) \big]    \ . 
\end{align}
Moreover, since $\partial_1\Lambda(a,b) = \int_0^1 (1-s) a^{-s} b^s \dd s$ we obtain (using the notation from \eqref{eq:geod-weak}),
\begin{align*}
\cN_{\rho,\cL \rho}^{(1),j}
  & = \gamma \delta\Lambda\Big( \widetreellr{\bullet}{\rho}{(\one -  \rho)}{\rho}{\rho} \Big)
  \\& = \gamma\sum_{k,m,p} \frac{\Lambda(\lambda_k, \lambda_p) - \Lambda(\lambda_m, \lambda_p)}{\lambda_k - \lambda_m}
   E_k(\one - \rho) E_m \ot E_p
  \\& = \gamma(\one - \rho) \sum_{k,p} \partial_1\Lambda(\lambda_k, \lambda_p)
   E_k \ot E_p
  \\& = \gamma \big( (\one - \rho)  \ot \one \big)\partial_1\Lambda(\rho, \rho)
\end{align*}
Similarly, we have $\cN_{\rho,\cL \rho}^{(2),j} =  \gamma
 \big( \one \ot (\one - \rho) \big)\partial_2\Lambda(\rho, \rho)$. Using the scalar identity $a\partial_1 \Lambda(a,b) + b\partial_2 \Lambda(a,b) = \Lambda(a,b)$, it follows that 
\begin{align*}
\cN_{\rho,\cL \rho}^{(1),j} + \cN_{\rho,\cL \rho}^{(2),j} = \gamma 
(\partial_1\Lambda + \partial_2\Lambda - \Lambda)(\rho, \rho) \ .
\end{align*}
Therefore, for $\eta =1 , 2$, we obtain using Lemma \ref{lem:Schur-bound},
\begin{equation}\begin{aligned}\label{eq:Pol2}
 \tau\big[(\nabla A)^* 
    \cN_{\rho,\cL^\dagger \rho}^{(\eta)} \# (\nabla A) \big] 
 & = \frac12 \tau\big[(\nabla A)^* 
    \big(\cN_{\rho,\cL^\dagger \rho}^{(1)} + \cN_{\rho,\cL^\dagger \rho}^{(2)}\big) \# (\nabla A) \big] 
\\& = \frac{\gamma}{2} 
  \tau\big[(\nabla A)^* (\partial_1 \Lambda + \partial_2 \Lambda - \Lambda)(\rho, \rho) \# (\nabla A) \big]  \ .
\end{aligned}\end{equation}
Combining \eqref{eq:Pol1} and \eqref{eq:Pol2}, and using that $\partial_1 \Lambda(a,b),  \partial_2 \Lambda(a,b) \geq 0$,
it follows from \eqref{eq:Hess2} that
\begin{align*}
 \Hess_\cK \Ent(\rho)[A, A] 
& =  - \tau[( \nabla \cL A)^* \hrho \# \nabla A] 
 	 +  \tau\big[(\nabla A)^* 
    \cN_{\rho,\cL^\dagger \rho}^{(\eta)} \# (\nabla A) \big] 
\\& = \frac{\gamma}{2} 
  \tau\big[(\nabla A)^* (\Lambda + \partial_1 \Lambda + \partial_2 \Lambda)(\rho, \rho) \# (\nabla A) \big]     
\\& \geq \frac{\gamma}{2} 
  \tau\big[(\nabla A)^* \Lambda(\rho, \rho) \# (\nabla A) \big]     
\\&  = \frac{\gamma}{2}  \tau[(\nabla A)^* \hrho \# \nabla A] 
 = \frac{\gamma}{2}   \ip{\cK_\rho A, A}_{L^2(\tau)} \ ,
\end{align*}
which proves the result. 
\end{proof}

\newpage 

Since the spectral gap of $\cL$ equals $\gamma$, it follows from the results in Section \ref{sec:functional} that the obtained constant is optimal.

\subsection{Geodesic convexity via intertwining}

In this subsection we provide a useful technique for proving Ricci curvature bounds, which has the advantage that it does not require an explicit computation of the Hessian of the entropy. Instead, it relies on the following intertwining property between the gradient and the quantum Markov semigroup.

\begin{definition}[Intertwining property]\label{def:intertwining}
For $\lambda \in \R$, we say that a collection of linear operators $(\vec{\cP_t})_{t \geq 0}$ on $\cB$ is $\lambda$-intertwining for the quantum Markov semigroup $(\cP_t)_{t \geq 0}$, if the following conditions hold:
\begin{enumerate}
\item \label{it:it} For all $A \in \cA$ and $t \geq 0$, we have $\nabla \cP_t A = \vec{\cP_t} \nabla A$;
\item \label{it:it-ineq} For all $\rho \in \Dens_+$, $\bbB = (B_j) \in \cB$ and $t \geq 0$, we have	
\begin{align}
	\label{eq:intertwining}
\sA\big(\rho, \vec{\cP_t^\dagger} \bbB \big)
	     \leq e^{-2\lambda t }  
    	 \sA\big(\rho, (\cP_t^\dagger B_j)_j \big) \ .
\end{align}
\end{enumerate}
\end{definition}

By duality, the intertwining relation \eqref{it:it} implies the identity
\begin{align}\label{eq:intertwining-dual}
\cP_t \dive (\bbA) = \dive(\vec{\cP_t^\dagger} \bbA)\ , \qquad \text{for } \bbA \in \cB.
\end{align}
The following lemma allows us to check the $\lambda$-intertwining property in several examples of interest.

\begin{lemma}\label{lem:intertwining}
Let $\lambda \in \R$, and suppose that $\partial_j \cL A = (\cL - \lambda ) \partial_j A$ for all $A \in \cA$. Then the semigroup $(\vec{\cP_t})_t$ defined by $(\vec{\cP_t} \bbB)_j = e^{-\lambda t} \cP_t B_j$ is $\lambda$-intertwining for the quantum Markov semigroup $(\cP_t)_{t \geq 0}$.
\end{lemma}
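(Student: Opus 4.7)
The plan is to verify the two conditions of Definition \ref{def:intertwining} for the candidate semigroup $(\vec{\cP_t})_{t\geq 0}$ with $(\vec{\cP_t}\bbB)_j = e^{-\lambda t} \cP_t B_j$. The first is the intertwining identity $\nabla \cP_t A = \vec{\cP_t} \nabla A$, which is equivalent to showing that for every $j \in \cJ$ and every $A \in \cA$,
\begin{equation}\label{eq:plan-int}
 \partial_j \cP_t A = e^{-\lambda t} \cP_t \partial_j A \qquad (t\geq 0).
\end{equation}
The second is the inequality \eqref{eq:intertwining}.

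For \eqref{eq:plan-int} my plan is a standard semigroup uniqueness argument. Fix $A \in \cA$ and $j \in \cJ$, and set
\begin{equation*}
 F_j(t) := \partial_j \cP_t A - e^{-\lambda t} \cP_t \partial_j A.
\end{equation*}
Then $F_j(0) = 0$. Differentiating in $t$ and using the assumed commutation relation $\partial_j \cL = (\cL - \lambda)\partial_j$, one computes
\begin{equation*}
 F_j'(t)
 = \partial_j \cL \cP_t A + \lambda e^{-\lambda t}\cP_t\partial_j A - e^{-\lambda t}\cL \cP_t \partial_j A
 = (\cL - \lambda)\partial_j \cP_t A - (\cL - \lambda)\bigl(e^{-\lambda t}\cP_t\partial_j A\bigr)
 = (\cL - \lambda) F_j(t).
\end{equation*}
Since this is a linear ODE in a finite-dimensional space with zero initial datum, $F_j \equiv 0$, which gives \eqref{eq:plan-int}.

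For condition (2), the crucial observation is that the adjoint relation \eqref{eq:intertwining-dual} forces $(\vec{\cP_t^\dagger}\bbB)_j = e^{-\lambda t}\cP_t^\dagger B_j$ (this is immediate from pairing against $\dive$ and using that the scalar $e^{-\lambda t}$ is self-adjoint). The map $\bbC \mapsto \sA(\rho,\bbC) = \|\bbC\|_{-1,\rho}^2$ is quadratic, so
\begin{equation*}
 \sA\bigl(\rho, \vec{\cP_t^\dagger}\bbB\bigr)
 = \sA\bigl(\rho, (e^{-\lambda t}\cP_t^\dagger B_j)_j\bigr)
 = e^{-2\lambda t}\, \sA\bigl(\rho, (\cP_t^\dagger B_j)_j\bigr),
\end{equation*}
so \eqref{eq:intertwining} is satisfied, and in fact with equality.

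There is no real obstacle in this lemma: the content lies entirely in the hypothesis $\partial_j \cL = (\cL - \lambda)\partial_j$, and the rest is a one-line ODE argument together with the homogeneity of the action functional. The only small point to state carefully is the identification of $\vec{\cP_t^\dagger}$ via \eqref{eq:intertwining-dual}, but this poses no difficulty since the scaling factor $e^{-\lambda t}$ is a scalar.
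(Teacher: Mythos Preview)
Your proof is correct and follows essentially the same route as the paper: verify condition (1) by passing from the generator identity $\partial_j\cL = (\cL-\lambda)\partial_j$ to the semigroup identity $\partial_j\cP_t = e^{-\lambda t}\cP_t\partial_j$ (the paper invokes spectral theory, you give the equivalent ODE argument), and then observe that $(\vec{\cP_t^\dagger}\bbB)_j = e^{-\lambda t}\cP_t^\dagger B_j$ so that condition (2) holds with equality by homogeneity of $\sA$. One small remark: the form of $\vec{\cP_t^\dagger}$ is more cleanly obtained directly as the componentwise $L^2(\tau_j)$-adjoint of $\vec{\cP_t}$ rather than via \eqref{eq:intertwining-dual}, since the latter does not determine $\vec{\cP_t^\dagger}$ on all of $\cB$.
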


\begin{proof}
By spectral theory, the stated condition on the generator is equivalent to the semigroup property $\partial_j \cP_t A = e^{-\lambda t} \cP_t \partial_j A$ for all $t \geq 0$. Thus, the semigroup $(\vec{\cP_t})_t$ satisfies \eqref{it:it} in Definition \ref{def:intertwining}. Since $(\vec{\cP_t^\dagger} \bbB)_j = e^{-\lambda t} \cP_t^\dagger B_j$, condition \eqref{it:it-ineq} follows as well.
\end{proof}

\begin{theorem}[Lower Ricci bound via intertwining]\label{thm:intertwining-convexity}
Let $(\cA, \nabla, \sigma)$ be a differential structure, and let $\lambda \in \R$. If there exists a collection of linear operators $(\vec{\cP_t})_{t \geq 0}$ on $\cB$ that is $\lambda$-intertwining for the associated QMS $(\cP_t)_{t \geq 0}$, then $\Ric(\cA, \nabla, \sigma) \geq \lambda$.
\end{theorem}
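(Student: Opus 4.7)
The plan is to verify the gradient estimate from Theorem \ref{thm:gradient-estimate},
\[
\|\nabla \cP_t A\|_\rho^2 \leq e^{-2\lambda t}\|\nabla A\|_{\cP_t^\dagger \rho}^2
 \qquad (A \in \cA,\ \rho \in \Dens_+,\ t \geq 0),
\]
which by that theorem is equivalent to $\Ric(\cA, \nabla, \sigma) \geq \lambda$. The argument combines both clauses of Definition \ref{def:intertwining} with the monotonicity of quasi-entropies under CPTP maps furnished by Corollary \ref{cor:contr-CPTC} (whose operator-monotonicity hypothesis is supplied by Assumption \ref{ass:theta-ass}).

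The first ingredient is a duality between the norms $\|\cdot\|_\rho$ and $\|\cdot\|_{-1, \rho}$: for every $\bbB \in \cB$,
\[
\|\bbB\|_\rho = \sup_{\bbC \in \cB \setminus \{0\}} \frac{|\langle \bbC, \bbB\rangle_{L^2}|}{\|\bbC\|_{-1, \rho}},
\]
with $\langle \bbC, \bbB\rangle_{L^2} := \sum_j \tau_j[C_j^* B_j]$. This is obtained by identifying $\hrho \# \bbB$ as the Riesz representative of the anti-linear functional $\bbC \mapsto \langle \bbC, \bbB\rangle_{L^2}$ in the inner product space $(\cB, \langle \cdot, \cdot\rangle_{-1, \rho})$, and noting that $\|\hrho \# \bbB\|_{-1, \rho} = \|\bbB\|_\rho$. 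The resulting Cauchy--Schwarz inequality $|\langle \bbC, \bbB\rangle_{L^2}| \leq \|\bbC\|_{-1, \rho}\cdot\|\bbB\|_\rho$ holds for all $\bbC, \bbB \in \cB$ and $\rho \in \Dens_+$.

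Applying this duality to $\bbB = \nabla \cP_t A$ and using condition~(\ref{it:it}) of Definition \ref{def:intertwining} to rewrite
$\langle \bbC, \nabla \cP_t A\rangle_{L^2} = \langle \bbC, \vec{\cP_t} \nabla A\rangle_{L^2} = \langle \vec{\cP_t^\dagger} \bbC, \nabla A\rangle_{L^2}$
(where $\vec{\cP_t^\dagger}$ is the $L^2$-adjoint of $\vec{\cP_t}$), and then invoking the duality inequality at density $\cP_t^\dagger \rho$, one obtains
\[
\|\nabla \cP_t A\|_\rho \leq \sup_{\bbC} \frac{\|\vec{\cP_t^\dagger} \bbC\|_{-1, \cP_t^\dagger \rho}}{\|\bbC\|_{-1, \rho}} \cdot \|\nabla A\|_{\cP_t^\dagger \rho}.
\]
It thus suffices to bound the supremum by $e^{-\lambda t}$. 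Condition~(\ref{it:it-ineq}) of Definition \ref{def:intertwining}, applied at density $\cP_t^\dagger \rho$, yields
\[
\sA\big(\cP_t^\dagger \rho,\, \vec{\cP_t^\dagger} \bbC\big) \leq e^{-2\lambda t}\,\sA\big(\cP_t^\dagger \rho,\, (\cP_t^\dagger C_j)_j\big),
\]
and Corollary \ref{cor:contr-CPTC} supplies the data-processing bound $\sA(\cP_t^\dagger \rho, (\cP_t^\dagger C_j)_j) \leq \sA(\rho, \bbC) = \|\bbC\|_{-1,\rho}^2$. Chaining these estimates proves the gradient estimate.

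The main obstacle is the data-processing step: Corollary \ref{cor:contr-CPTC} requires, for each $j$, a \emph{single} CPTP map $\cT_j : \cB_j \to \cB_j$ that simultaneously sends $\ell_j(\rho) \mapsto \ell_j(\cP_t^\dagger \rho)$, $\err_j(\rho) \mapsto \err_j(\cP_t^\dagger \rho)$, and $C_j \mapsto \cP_t^\dagger C_j$. In the Lindblad-type setting of Lemma \ref{lem:intertwining} (with $\cB_j = \cA$ and $\ell_j = \err_j = I$) the choice $\cT_j = \cP_t^\dagger$ works directly, exploiting only the operator monotonicity of $f_j$. In the general framework of Definition \ref{ass:setup}, such a $\cT_j$ has to be constructed from $\cP_t^\dagger$ together with the conditional expectations $\ell_j^\dagger$ and $\err_j^\dagger$ (for example via $\cT_j := \ell_j \circ \cP_t^\dagger \circ \ell_j^\dagger$, using $\ell_j^\dagger \ell_j = I_\cA$); verifying the three intertwining identities, and in particular the one for $\err_j$, will crucially rely on the algebraic compatibility~\eqref{eq:symmetry-equiv} and the $\sigma$-DBC structure encoded in Theorem~\ref{thm:structure}.
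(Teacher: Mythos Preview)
Your approach is correct and takes a genuinely different route from the paper. The paper establishes the EVI characterization (condition (3) in Theorem~\ref{thm:Ric-equiv}) by the Dolbeault--Nazaret--Savar\'e technique: given a $\cW$-geodesic $(\rho_s,\bbB_s)$ from $\nu$ to $\bar\rho$, it builds the competitor curve $\rho_s^t=\cP_{st}^\dagger\rho_s$ connecting $\nu$ to $\cP_t^\dagger\bar\rho$, bounds its action, and lets $t\downarrow 0$. You instead verify the gradient estimate of Theorem~\ref{thm:gradient-estimate} directly, via the duality $\|\bbB\|_\rho=\sup_{\bbC\neq 0}|\langle\bbC,\bbB\rangle_{L^2}|/\|\bbC\|_{-1,\rho}$. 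Your route is shorter, avoids geodesic existence and the boundary issues implicit in the EVI argument, and makes the role of the two intertwining conditions transparent. Both proofs, however, rest on the \emph{same} analytical core: the chain
\[
\sA\big(\cP_t^\dagger\rho,\,\vec{\cP_t^\dagger}\bbC\big)\ \overset{(\ref{it:it-ineq})}{\le}\ e^{-2\lambda t}\,\sA\big(\cP_t^\dagger\rho,(\cP_t^\dagger C_j)_j\big)\ \overset{\text{Cor.~\ref{cor:contr-CPTC}}}{\le}\ e^{-2\lambda t}\,\sA(\rho,\bbC),
\]
which appears verbatim in the paper's proof.

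Your final paragraph overstates the remaining difficulty. The very formulation of condition~(\ref{it:it-ineq}) already presupposes that $\cP_t^\dagger$ acts on each $\cB_j$; the paper then just takes $\cT_j=\cP_t^\dagger$ in Corollary~\ref{cor:contr-CPTC}, implicitly using that $\ell_j,\err_j$ commute with $\cP_t^\dagger$. This holds in all the intended applications (Lindblad with $\ell_j=\err_j=I$; the fermionic case, where $\Gamma$ commutes with $\cL$). Your tentative construction $\cT_j:=\ell_j\circ\cP_t^\dagger\circ\ell_j^\dagger$ will generally \emph{not} send $\err_j(\rho)$ to $\err_j(\cP_t^\dagger\rho)$ nor $C_j$ to $\cP_t^\dagger C_j$; drop it and simply note, as the paper does, that once $(\cP_t^\dagger B_j)_j$ is meaningful the data-processing step follows with $\cT_j=\cP_t^\dagger$.
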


\begin{proof}[Proof of Theorem \ref{thm:intertwining-convexity}]
The proof is a variation on an argument by Dolbeault, Nazaret and Savar\'e \cite{DNS09}.

Fix $\bar\rho, \nu \in \Dens$, and let $(\rho_s, \bbB_s)_{s \in [0,1]}$ be a solution to the continuity equation
\begin{align*}
 \partial_s \rho_s + \dive \bbB_s = 0\ , \qquad \rho_0 = \nu\ , \quad \rho_1 = \bar\rho\ ,
\end{align*}
that minimizes the action functional \eqref{eq:BB-form}. This implies that $(\rho_s)_s$ is a constant speed geodesic, and
\begin{align}\label{eq:all-s}
	 \sA(\rho_s, \bbB_{s}) = \cW(\nu, \bar\rho)^2
\end{align}
for all $s \in [0,1]$.
We define 
$ \rho_s^t := \cP_{st}^\dagger \rho_s$, so that $ \partial_s \rho_s^t 
   = \cP_{st}^\dagger (\partial_s \rho_s) - t \cL^\dagger \cP_{st}^\dagger \rho_s$.
Using this identity, we obtain
\begin{align*}
 \partial_s \rho_s^t 
  & = \cP_{st}^\dagger (\partial_s \rho_s) - t \cL^\dagger \cP_{st}^\dagger \rho_s
\\& = - \cP_{st}^\dagger (\dive \bbB_s) - t \cL^\dagger \rho_s^t
\\& = - \dive(\vec{\cP_{st}^\dagger} \bbB_s) - t \cL^\dagger \rho_s^t\ .
\end{align*}
Write $\widetilde\nabla = (\widetilde\partial_j)_j$, where $\widetilde\partial_j = e^{-\omega_j/2}V_j \err_j(\rho)  - e^{\omega_j/2}\ell_j(\rho) V_j$. It then follows from Lemma \ref{lem:chain-log-general} and Theorem \ref{thm:grad-flow-ent} that $\cL^\dagger = \dive \widetilde\nabla$. Hence, we infer that the curve $(\rho_s^t)_{s \in [0,1]}$ satisfies the continuity equation $\partial_s \rho_s^t + \dive \bbB_s^t = 0$, where 
\begin{align*}
 \bbB_s^t = \vec{\cP_{st}^\dagger}\bbB_s - t \widetilde\nabla \rho_s^t\ .
\end{align*}
Using the bilinearity of $\cA(\rho_s^t, \cdot, \cdot)$, we obtain
\begin{equation}
	\begin{aligned}
\label{eq:cW-bound}
\cW(\nu, \cP_t^\dagger \bar\rho)^2 
  & \leq \int_0^1 \sA(\rho_s^t, \bbB_s^t) \; ds
 \\& = \int_0^1 \sA(\rho_s^t, \vec{\cP_{st}^\dagger}\bbB_s) 
  		- 2t \sA(\rho_s^t, \bbB_s^t, \widetilde\nabla \rho_s^t)
  		- t^2 \sA(\rho_s^t, \widetilde\nabla \rho_s^t) \dd s\ .
	\end{aligned}
\end{equation}
Using \eqref{eq:intertwining} and Corollary \ref{cor:contr-CPTC} we infer that
\begin{align*}
	 \sA(\rho_s^t, \vec{\cP_{st}^\dagger}\bbB_s)
& \leq
	e^{-2\lambda st} 
	 \sA\big(\cP_{st}^\dagger \rho_s, (\cP_{st}^\dagger B_{j,s})_j\big)
  \leq 
e^{-2\lambda st} 
	 \sA\big(\rho_s, \bbB_s\big)
\end{align*}
hence \eqref{eq:all-s} yields
\begin{align*}
	\int_0^1 
	 \sA(\rho_s^t, \vec{\cP_{st}^\dagger}\bbB_s)
	\dd s 
	\leq \frac{1 - e^{-2\lambda t}}{2\lambda t} \cW(\nu, \bar\rho)^2 \ ,
\end{align*}
A direct computation using Lemma \ref{lem:chain-log-general} shows that 
\begin{align*}
	\partial_s \Ent_\sigma(\rho_s^t)
	&  = \tau[ (\log \rho_s^t -  \log \sigma) \partial_s \rho_s^t ]
	   = - \tau[(\log \rho_s^t -  \log \sigma) \dive \bbB_s^t ]
	\\&  = \tau[ (\nabla (\log \rho_s^t -  \log \sigma))^* \bbB_s^t ]
  = \tau[ (\lrho_s^t \# \widetilde\nabla \rho_s^t)^* \bbB_s^t ]
  =  \cA(\rho_s^t; \bbB_s^t, \widetilde\nabla \rho_s^t) \ . 
\end{align*}
Estimating the final term in \eqref{eq:cW-bound} by $0$, we infer that 
\begin{align*}
	\frac{1}{2t} \Big(\cW(\nu, \cP_t^\dagger \bar\rho)^2 -  \cW(\nu, \bar\rho)^2\Big)
	\leq \frac1{2t}\Big( \frac{1 - e^{-2\lambda t}}{2\lambda t} - 1\Big) \cW(\nu, \bar\rho)^2 - \int_0^1 \partial_s \Ent_\sigma(\rho_s^t) \dd s\ .
\end{align*} 
Since $t \mapsto \Ent_\sigma(\rho_s^t)$ is continuous, we observe that the right-hand side converges as $t \downarrow 0$.
Letting $t \downarrow 0$ we infer that 
\begin{align*}
	\frac{1}{2}\ddtr\bigg|_{t = 0}\ \cW(\nu, \cP_t \bar\rho)^2 
	\leq \frac{\lambda}{2} \cW(\nu, \bar\rho)^2
	  + \Ent_\sigma(\bar\rho) - \Ent_\sigma(\nu)\ ,
\end{align*} 
which proves the evolutional variational inequality from Theorem \ref{thm:Ric-equiv} for $t = 0$. By the semigroup property, the inequality holds for all $t \geq 0$, hence the result follows.
\end{proof}

\begin{remark}\label{rem:tensorisation}
As pointed out by an anonymous referee, the condition from Lemma \ref{lem:intertwining} is preserved under taking tensor products of quantum Markov semigroups. 
Therefore, Theorem \ref{thm:intertwining-convexity} yields a lower Ricci curvature bound for tensor product semigroups of this type.
It is an interesting open question whether such a tensorisation property holds for arbitrary quantum Markov semigroups, as is known to be true in the Markov chain setting \cite{EM12}.
\end{remark}

We finish the section with the example of the Fermionic Ornstein-Uhlenbeck equation from Section \ref{sec:fermion}, which was already discussed in \cite{2016-Carlen-Maas}. For the convenience of the reader we provide the details.

\begin{proposition}[Intertwining for fermions]\label{prop:fermi-intertwining}
In the fermionic setting, we have the commutation relations
$
	[\partial_j, \cL] = - \partial_j 
$
for $j = 1, \ldots, n$.
Consequently, the intertwining property holds with $\lambda = 1$.
\end{proposition}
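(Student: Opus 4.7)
The plan is to prove the commutation relation $[\partial_j,\cL]=-\partial_j$ by a direct computation using the CAR and the properties of $\Gamma$, and then to deduce the intertwining property from Lemma~\ref{lem:intertwining}.

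First I would put $\cL$ into the divergence form $\cL=-\sum_k\partial_k^\dagger\partial_k$ guaranteed by Proposition~\ref{prop:generator-formula} (with $\sigma=\one$ and $\omega_j=0$). Since $V_k=Q_k=V_k^*$, $\ell_k=\Gamma$, $\err_k=\Id$, and $\Gamma$ is a trace-preserving $\ast$-automorphism, one reads off $\partial_k^\dagger B=Q_kB+\Gamma(B)Q_k$. A short calculation using $\Gamma^2=\Id$ and $Q_k^2=\one$ then yields
\[
  \partial_k^\dagger\partial_kA \;=\; 2A-2Q_k\Gamma(A)Q_k,
\]
so that $\cL A=2\sum_k Q_k\Gamma(A)Q_k-2nA$.

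The core step is to expand $\partial_j\cL A$ and $\cL\partial_j A$ and take the difference. Two useful intermediate identities are $\Gamma(Q_k\Gamma(A)Q_k)=Q_kAQ_k$, needed when computing $\Gamma(\cL A)$, and $\Gamma(\partial_j A)=AQ_j-Q_j\Gamma(A)$, needed when pushing $\Gamma$ into $\cL\partial_j A=2\sum_k Q_k\Gamma(\partial_jA)Q_k-2n\partial_jA$. After substitution the $-2n$-tails cancel and what remains is four sums of the form $\sum_k Q_?Q_?(\cdot)Q_?$. Grouping them in pairs, the CAR $Q_jQ_k+Q_kQ_j=2\delta_{jk}$ collapses $\sum_k(Q_jQ_k+Q_kQ_j)\Gamma(A)Q_k$ to $2\Gamma(A)Q_j$ and $\sum_k Q_kA(Q_jQ_k+Q_kQ_j)$ to $2Q_jA$, so all quadratic-in-$Q$ terms disappear and what is left is a scalar multiple of $\Gamma(A)Q_j-Q_jA=-\partial_jA$. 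The only obstacle is the bookkeeping of $\Gamma$-signs across roughly a dozen monomials; once these are tracked carefully the cancellation is automatic and no input beyond the CAR is required.

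Having $[\partial_j,\cL]=-\partial_j$ in hand, the hypothesis of Lemma~\ref{lem:intertwining} holds with $\lambda=1$, so the family $\vec{\cP_t}\bbB:=(e^{-t}\cP_tB_j)_j$ is $1$-intertwining for $(\cP_t)_t$: condition (1) of Definition~\ref{def:intertwining} follows by exponentiating the commutation relation, while condition (2) holds with equality, since $\sA(\rho,(e^{-t}\cP_t^\dagger B_j)_j)=e^{-2t}\sA(\rho,(\cP_t^\dagger B_j)_j)$ by bilinearity of $\sA(\rho,\cdot)$ in its second argument. Combining this with Theorem~\ref{thm:intertwining-convexity} then yields the Bakry--\'Emery-type bound $\Ric(\cA,\nabla,\one)\ge 1$ for the fermionic Ornstein--Uhlenbeck semigroup, which is known to be sharp.
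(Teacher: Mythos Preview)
Your approach is correct and takes a genuinely different route from the paper's. The paper argues structurally: $\partial_j$ is the fermionic annihilation operator (it maps the $k$-particle space $\cH^k$ into $\cH^{k-1}$) and $-\cL$ is a multiple of the number operator (diagonal on each $\cH^k$), so the commutation relation drops out of these spectral facts without any CAR manipulation. Your argument is more hands-on: you derive the explicit formula $\cL A = 2\sum_k Q_k\Gamma(A)Q_k - 2nA$ from the divergence form, expand $\partial_j\cL A - \cL\partial_j A$, and collapse the cross terms via $Q_jQ_k + Q_kQ_j = 2\delta_{jk}$. This is more elementary and self-contained, at the cost of some bookkeeping; the paper's argument is shorter and more conceptual but presupposes familiarity with the Fock-type grading of the Clifford algebra.

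One caution on constants: with the normalizations of Section~\ref{sec:fermion} one has $\cL = -4\mathcal{N}$, and carrying your computation through in full gives $[\partial_j,\cL] = -4\,\partial_j$, hence $\lambda = 4$ rather than $1$. The paper's own proof contains the same slip (it writes $\cL A = -kA$ on $\cH^k$, whereas $\cL A = -4kA$). This affects neither method, only the numerical value of $\lambda$.
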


\begin{proof}
	We use the well-known fact that the differential operator $\partial_j$ is the \emph{annihilation operator}: it maps the $k$-particle space $\cH^k$ into the $(k-1)$-particle space $\cH^{k-1}$ for any $0 \leq k \leq n$ (with the convention that $\cH^{-1} = \{0\})$. On the other hand, $-\cL$ is the \emph{number operator}, which satisfies $\cL A = - k A$ for all $A \in \cH^k$. Hence, for $A \in \cH^{k}$, we have $\partial_j \cL A = - k \partial_j A$, whereas $\cL\partial_j A = - (k-1)\partial_j A$. This yields the desired commutation relation $[\partial_j, \cL] = - \partial_j$ on $\cH^{k}$, which extends to $\Cln$ by linearity.
The result thus follows from Lemma \ref{lem:intertwining}.
\end{proof}

We immediately obtain the following result.

\begin{corollary}
The differential structure for the fermionic Ornstein-Uhlenbeck equation in Section \ref{sec:fermion} satisfies $\Ric(\Cln, \nabla, \tau) \geq 1$ in any dimension $n \geq 1$. 
\end{corollary}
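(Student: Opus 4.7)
The plan is to simply combine Proposition~\ref{prop:fermi-intertwining} with Theorem~\ref{thm:intertwining-convexity}: the former supplies exactly the hypothesis needed to apply the latter with $\lambda = 1$. In more detail, Proposition~\ref{prop:fermi-intertwining} provides the commutation relation $[\partial_j, \cL] = -\partial_j$ for each $j \in \{1,\dots,n\}$, which via Lemma~\ref{lem:intertwining} produces the collection of operators $(\vec{\cP_t})_{t \geq 0}$ on $\cB = \Cln^{\,n}$ given coordinatewise by $(\vec{\cP_t}\bbB)_j := e^{-t}\cP_t B_j$, and this collection satisfies both conditions of Definition~\ref{def:intertwining} with $\lambda = 1$.

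With this $1$-intertwining family in hand, Theorem~\ref{thm:intertwining-convexity} directly yields $\Ric(\Cln,\nabla,\tau) \geq 1$, and the proof is complete.

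There is no real obstacle here; all the substantive content has already been established. The only mild verification is that we are in the setting of Theorem~\ref{thm:intertwining-convexity}, i.e.\ that $(\Cln, \nabla, \tau)$ is a differential structure in the sense of Definition~\ref{ass:setup} with $\tau$ playing the role of the invariant state $\sigma = \one$ (so that $\omega_j = 0$ and the relative modular operator is the identity), and that the associated QMS is ergodic. Both points follow from the description in Section~\ref{sec:fermion}: the $V_j = Q_j$ together with $\ell_j = \Gamma$, $\err_j = I$ satisfy \eqref{eq:symmetry-equiv} and \eqref{relmod3}, and $\cL = -4\mathcal{N}$ has $\Ker(\cL) = \lin\{\one\}$ since the number operator has $\{0\}$ as a simple eigenvalue spanned by $\one$.
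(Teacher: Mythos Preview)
Your proposal is correct and matches the paper's approach: the paper states the corollary as an immediate consequence of Proposition~\ref{prop:fermi-intertwining} (which already records that the intertwining property holds with $\lambda=1$) together with Theorem~\ref{thm:intertwining-convexity}, and you have spelled out precisely this implication. The additional checks you mention (that $(\Cln,\nabla,\tau)$ is a valid differential structure with $\sigma=\one$ and that the semigroup is ergodic) are appropriate but routine.
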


It follows from the results in the following section that the constant $1$ is optimal.

\section{Functional inequalities}\label{sec:functional}

One of the advantages of the framework of this paper is that it allows one to prove a sequence of implications between several useful functional inequalities. Throughout this section we assume that $(\cP_t)_t$ is ergodic.

Recall that 
\begin{align*}
 \Ent_\sigma(\rho) := \Tr[\rho ( \log \rho - \log \sigma) ]\ ,\qquad
  \cI_\sigma(\rho) := -\Tr[(\log \rho - \log \sigma)\cL^\dagger\rho ]
\ ,
\end{align*}
and note that $\ddt \Ent_\sigma(\cP_t^\dagger \rho) = - \cI_\sigma(\cP_t^\dagger \rho)$ for $\rho \in \Dens_+$.
The quantity $\cI_\sigma$ is a quantum version of the Fisher information (or entropy production) relative to $\sigma$; we refer to \cite{Petz-Ghinea:2011} for an introduction to several notions of Fisher information in the quantum setting.

The  gradient flow structure from Theorem \ref{thm:grad-flow-ent} implies that $\cL^\dagger \rho = \dive (\hrho \#\nabla (\log \rho - \log \sigma) )$, which yields $\cI_\sigma(\rho) = \|\nabla(\log \rho - \log \sigma)\|_\rho^2$. Recall that for  $\rho \in \Dens $ and $A \in \cA$ we denote the associated Bogolioubov--Kubo--Mori scalar product and norm by
\begin{align*}
\ip{A, B}_{L^2_{\rm BKM}(\rho)} =  \int_0^1 \tau \big[A^* \rho^{1-s} B \rho^s \big] \dd s 
 \ , \qquad 
 \|A\|_{L^2_{\rm BKM}(\rho)}
   = \sqrt{\ip{A, A}_{ BKM} } \ .
\end{align*}

The results presented in this section have been obtained in the classical discrete setting of finite Markov chains in \cite{EM12}, and in the setting of Lindblad operators in \cite{RD17}. Here we state and prove the results in the more general framework that includes arbitrary differential structures $(\cA, \nabla, \sigma)$. 
The proofs closely follow the original arguments by Otto and Villani \cite{OV00}, which were adapted in \cite{EM12,RD17}. In our finite-dimensional setting, most of the results follow directly from Riemannian considerations, though some additional care is needed due to the degeneracy of the metric at the boundary $\Dens \setminus \Dens_+$.

\begin{definition}\label{def:MLSI}
  A differential structure $(\cA, \nabla, \sigma)$ satisfies 
\begin{enumerate}
\item  a \emph{modified logarithmic Sobolev inequality}  with constant $\lambda>0$ if for all $\rho\in\PX$,
  \begin{align}
  \tag{MLSI($\lambda$)}
  \label{eq:MLSI}
    \Ent_\sigma(\rho) \leq \frac{1}{2\lambda}\cI_\sigma(\rho)\ .
  \end{align}

\item an \emph{H$\cW$I inequality}  with
  constant $\kappa\in\R$ if for all
  $\rho\in\PX$,
  \begin{align}\label{eq:HWI}
  \tag{{H$\cW$I}($\kappa$)}
    \Ent_\sigma(\rho) \leq \cW(\rho,\sigma)\sqrt{\cI_\sigma(\rho)}-\frac{\kappa}{2}\cW(\rho,\sigma)^2\ .
  \end{align}

\item a \emph{modified Talagrand inequality}
   with constant $\lambda > 0$ if for all $\rho\in\Dens$,
  \begin{align}\label{eq:T}
  \tag{{T}$_{\cW}$($\lambda$)}
    \cW(\rho,\sigma) \leq \sqrt{\frac{2}{\lambda}\Ent_\sigma(\rho)}\ .
  \end{align}

\item a \emph{$T_1$-transport inequality}
   with constant $\lambda > 0$ if for all $\rho\in\Dens$,
  \begin{align}\label{eq:T-one}
  \tag{{T}$_{1}$($\lambda$)}
    W_1(\rho,\sigma) \leq \sqrt{\frac{2}{\lambda}\Ent_\sigma(\rho)}\ .
  \end{align}

\item a \emph{Poincar\'e inequality} (or \emph{spectral gap inequality}) with constant $\lambda > 0$ 
  if for all $A \in \cA_h$ with $\tau[\int_0^1 \sigma^{1-s} A \sigma^s \dd s] = 0$,
  \begin{align}\label{eq:P}
  \tag{{P}($\lambda$)}
 \|A\|_{L^2_{\rm BKM}(\sigma)}^2
 \leq \frac{1}{\lambda}\| \nabla A\|_\sigma^2  \ .
  \end{align}
\end{enumerate}  
\end{definition}

It is well known and an easy consequence of Gronwall's inequality, that \ref{eq:MLSI} is equivalent to the exponential decay of the entropy with rate $2\lambda$:
\begin{align}\label{eq:ent-rate}
  \Ent_\sigma(\cP_t^\dagger \rho) \leq e^{-2\lambda t} \Ent_\sigma(\rho)\ .
\end{align} 
There are other approaches to some of these inequalites and variants of them; see, e.g., 
 \cite{Bardet:2017,Bardet-Rouze:2018,CL1,KT12,PWPR06}. 

Recall that for an absolutely continuous curve $(\rho_t)_t \in (\Dens, \cW)$, its \emph{metric derivative} 
\begin{equation*}
\abs{\rho_t'} := \lim_{h\to0}\frac{\cW(\rho_{t+h},\rho_t)}{\abs{h}}
\end{equation*}
exists for a.e. $t\in[0,T]$; see \cite[Theorem 1.1.2]{AGS08}.

\begin{proposition}\label{prop:HW-derivative}
  Let $\rho,\nu\in\Dens_+$. For all $t \geq 0$ we have
\begin{align}
  \label{eq:W-derivative}
  \ddtr\cW(\cP_t^\dagger \rho,\nu)  \leq \sqrt{\cI_\sigma(\cP_t^\dagger\rho)}\;.
\end{align}
In particular, the metric derivative of the heat flow with respect to $\cW$ satisfies $\abs{(\cP_t^\dagger\rho)'} \leq\sqrt{\cI_\sigma(\cP_t^\dagger\rho)}$.
\end{proposition}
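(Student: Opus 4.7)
The plan is to exploit the gradient flow structure from Theorem \ref{thm:grad-flow-ent} together with the infimum definition of $\cW$ to show that $\cP_t^\dagger\rho$ is an admissible curve whose squared action density is exactly the Fisher information $\cI_\sigma$; then a triangle-inequality argument turns this into the desired right-derivative bound.

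\textbf{Step 1: Admissibility of the heat flow and its action.} Fix $\rho \in \Dens_+$. By Theorem \ref{thm:grad-flow-ent}, setting $\rho_t := \cP_t^\dagger\rho$ and $A_t := \log \rho_t - \log \sigma$, the Kolmogorov forward equation reads
\begin{align*}
 \partial_t \rho_t = \cL^\dagger \rho_t = -\cK_{\rho_t} A_t = \dive\bigl(\widehat{\rho_t}\#\nabla A_t\bigr),
\end{align*}
so the pair $(\rho_t, -A_t)$ solves the continuity equation $\partial_t \rho_t + \dive(\widehat{\rho_t}\#\nabla(-A_t))=0$. Moreover, by the definition of $\cK_{\rho_t}$ and the definition of $\cI_\sigma$,
\begin{align*}
 \bigl\|\nabla A_t\bigr\|_{\rho_t}^2
   = \ip{\cK_{\rho_t} A_t, A_t}_{L^2(\tau)}
   = -\ip{\cL^\dagger \rho_t, A_t}_{L^2(\tau)}
   = \cI_\sigma(\rho_t).
\end{align*}
Since $\cP_t^\dagger$ is ergodic and preserves $\Dens_+$, the curve $(\rho_t)_{t\geq 0}$ remains in $\Dens_+$, and $s\mapsto \cI_\sigma(\rho_s)$ is continuous.

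\textbf{Step 2: Bounding the short-time distance by the Fisher information.} Fix $t \geq 0$ and $h > 0$, and reparametrize the segment $s \in [t,t+h]$ by $u \in [0,1]$ via $\tilde\rho_u := \rho_{t+uh}$. Then $\partial_u \tilde\rho_u = \dive(\widehat{\tilde\rho_u}\#\nabla \tilde A_u)$ with $\tilde A_u := -h\,A_{t+uh}$. Using this admissible pair in the definition \eqref{eq:Riem-dist} of $\cW^2$ and changing variables back to $s = t+uh$,
\begin{align*}
 \cW(\rho_t,\rho_{t+h})^2
 \leq \int_0^1 \bigl\|\nabla \tilde A_u\bigr\|_{\tilde\rho_u}^2 \dd u
 = h^2 \int_0^1 \cI_\sigma(\rho_{t+uh}) \dd u
 = h \int_t^{t+h} \cI_\sigma(\rho_s) \dd s.
\end{align*}
Hence
\begin{align*}
 \frac{\cW(\rho_t,\rho_{t+h})}{h} \leq \sqrt{\frac{1}{h}\int_t^{t+h} \cI_\sigma(\rho_s) \dd s}
 \ \xrightarrow[h\downarrow 0]{}\  \sqrt{\cI_\sigma(\rho_t)},
\end{align*}
by continuity of $s\mapsto \cI_\sigma(\rho_s)$. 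In particular, the metric derivative satisfies $|\rho_t'|\leq \sqrt{\cI_\sigma(\rho_t)}$.

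\textbf{Step 3: Triangle inequality.} For $\nu \in \Dens_+$, the triangle inequality for $\cW$ yields
\begin{align*}
 \cW(\rho_{t+h},\nu) - \cW(\rho_t,\nu) \leq \cW(\rho_{t+h},\rho_t),
\end{align*}
and dividing by $h>0$ and taking $\limsup_{h\downarrow 0}$, Step 2 gives
\begin{align*}
 \ddtr \cW(\cP_t^\dagger\rho,\nu) \leq \limsup_{h\downarrow 0}\frac{\cW(\rho_{t+h},\rho_t)}{h} \leq \sqrt{\cI_\sigma(\cP_t^\dagger\rho)}.
\end{align*}

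The only mildly delicate point is Step 2, which requires that the reparametrized curve $(\tilde\rho_u)$ stays in $\Dens_+$ so that the action $\int \|\nabla \tilde A_u\|_{\tilde\rho_u}^2 \dd u$ is finite and genuinely admissible in \eqref{eq:Riem-dist}; this is guaranteed by ergodicity of $(\cP_t^\dagger)$ and the assumption $\rho \in \Dens_+$. Everything else is just gradient flow bookkeeping combined with the standard metric-space estimate relating distance to action.
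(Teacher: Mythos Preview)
Your proof is correct and follows essentially the same route as the paper: both use the gradient flow identity from Theorem \ref{thm:grad-flow-ent} to recognize that the heat flow is an admissible curve in the Benamou--Brenier formulation with action density $\cI_\sigma(\rho_t)$, and then combine this with the triangle inequality for $\cW$. The only cosmetic differences are that the paper applies the triangle inequality first and bounds $\cW(\rho_t,\rho_{t+s})$ directly by the length integral $\int_t^{t+s}\sqrt{\cI_\sigma(\rho_r)}\dd r$, whereas you reparametrize explicitly to bound $\cW^2$ by the energy and then take a square root; these are equivalent via the standard Cauchy--Schwarz/arc-length reparametrization argument.
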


\begin{proof}
  Set $\rho_t:=\cP_t^\dagger\rho$. 
  Using the triangle inequality for $\cW$  we obtain
  \begin{align*}
    \ddtr\cW(\rho_t,\nu) 
     &= 
    \limsup_{s\downarrow 0}\frac{1}{s}\big(\cW(\rho_{t+s},\nu)-\cW(\rho_t,\nu)\big)
     \\ & 
     \leq \limsup_{s\downarrow 0} \frac{1}{s}\cW(\rho_t,\rho_{t+s})\ .
  \end{align*}
  In view of the gradient flow identity $\partial_t \rho = \dive (\hrho \#\nabla (\log \rho - \log \sigma) )$, the definition of $\cW$ yields  
\begin{align*}
    \limsup_{s\downarrow 0}\frac{1}{s} \cW(\rho_t,\rho_{t+s}) 
    & \leq
    \limsup_{s\downarrow 0}\frac{1}{s}\int_t^{t+s} 
    	\| \nabla( \log\rho_r - \log \sigma ) \|_{\rho_r}   \dd r \\
    &= \limsup_{s\downarrow 0}\frac{1}{s}\int\limits_t^{t+s}\sqrt{\cI_\sigma(\rho_r)}\dd r \\
    &= \sqrt{\cI_\sigma(\rho_t)}\ .
\end{align*}
  The last equality follows from the  continuity of $r\mapsto \sqrt{\cI_\sigma(\rho_r)}$.
\end{proof}

The following result is a non-commutative analogue of a well-known result by Otto and Villani \cite{OV00}.

\begin{theorem}\label{thm:Ric2HWI}
  Assume that $\Ric(\cA,\nabla,\sigma)\geq\kappa$ for some $\kappa\in\R$. Then
  $\HcWI(\kappa)$ holds as well.
\end{theorem}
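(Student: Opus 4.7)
The plan is to run the classical Otto--Villani argument \cite{OV00} in the non-commutative setting: combine the geodesic $\kappa$-convexity of $\Ent_\sigma$ (granted by $\Ric(\cA,\nabla,\sigma) \geq \kappa$ via Theorem~\ref{thm:Ric-equiv}) with a Cauchy--Schwarz chain-rule estimate along a $\cW$-geodesic from $\rho$ to $\sigma$. One may assume $\rho \in \Dens_+$ with $\cI_\sigma(\rho) < \infty$, since otherwise the inequality is either trivial or follows by a continuity/approximation argument using continuity of $\cW$ and $\Ent_\sigma$ on $\Dens$.

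First I would invoke Theorem~\ref{thm:existence-geod} to fix a constant-speed $\cW$-geodesic $(\rho_s)_{s \in [0,1]}$ with $\rho_0 = \rho$ and $\rho_1 = \sigma$, together with a velocity field $(\bbB_s)_s$ solving the continuity equation $\partial_s \rho_s + \dive \bbB_s = 0$ and satisfying $\sA(\rho_s, \bbB_s) = \cW(\rho,\sigma)^2$ for a.e. $s$. Since $\Dens_+$ is open and $s \mapsto \rho_s$ is continuous, $\rho_s \in \Dens_+$ for $s \in [0, s_0]$ for some $s_0 > 0$. Applying the strong geodesic $\kappa$-convexity of $\Ent_\sigma$ and using $\Ent_\sigma(\sigma) = 0$, after rearranging one obtains
\begin{align*}
  \Ent_\sigma(\rho) - \Ent_\sigma(\rho_s)
    \geq s\,\Ent_\sigma(\rho) + \frac{\kappa}{2} s(1-s)\,\cW(\rho,\sigma)^2 \ .
\end{align*}

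For the matching upper bound I would reuse the chain rule derived in the proof of Theorem~\ref{thm:intertwining-convexity}, which itself follows from Lemma~\ref{lem:chain-log-general}: for any absolutely continuous solution of the continuity equation,
\begin{align*}
  \partial_s \Ent_\sigma(\rho_s) = \sA\big(\rho_s;\,\bbB_s,\,\widetilde\nabla \rho_s\big)\ , \qquad
  \widetilde\partial_j \rho_s = \hrho_{s,j} \# \partial_j(\log \rho_s - \log \sigma)\ .
\end{align*}
Since $\lrho_{s,j} \# \widetilde\partial_j \rho_s = \partial_j(\log \rho_s - \log \sigma)$, one computes $\|\widetilde\nabla \rho_s\|_{-1,\rho_s}^2 = \cI_\sigma(\rho_s)$, and Cauchy--Schwarz for $\ip{\cdot,\cdot}_{-1,\rho_s}$ combined with the constant-speed property gives $|\partial_s \Ent_\sigma(\rho_s)| \leq \cW(\rho,\sigma)\sqrt{\cI_\sigma(\rho_s)}$. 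Integrating on $[0,s]$ for $s \in (0, s_0]$ and combining with the $\kappa$-convexity inequality, division by $s$ yields
\begin{align*}
  \Ent_\sigma(\rho) + \frac{\kappa}{2}(1-s)\cW(\rho,\sigma)^2
    \leq \cW(\rho,\sigma) \cdot \frac{1}{s} \int_0^s \sqrt{\cI_\sigma(\rho_r)} \dd r \ .
\end{align*}
Letting $s \to 0^+$ and using continuity of $\cI_\sigma$ on a neighborhood of $\rho \in \Dens_+$ produces the {H$\cW$I}($\kappa$) inequality.

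The main obstacle is justifying the chain rule and the Cauchy--Schwarz estimate rigorously along a $\cW$-geodesic, whose velocity field is a priori only $L^2$ in $s$. The route I would take is to approximate $(\rho_s, \bbB_s)$ by smooth admissible pairs whose action approaches $\cW(\rho,\sigma)^2$, apply the chain of estimates to each approximant, and pass to the limit; since the final step only needs the inequality on a neighborhood of $s=0$, where $\rho_s$ stays in the open set $\Dens_+$ and all quantities involved are continuous, no boundary regularity at the endpoint $s=1$ is required.
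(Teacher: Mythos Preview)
Your argument is correct, but it follows a different route from the paper's proof. You use the \emph{geodesic convexity} characterization (item (2) in Theorem~\ref{thm:Ric-equiv}) and differentiate $\Ent_\sigma$ along a $\cW$-geodesic from $\rho$ to $\sigma$, controlling the derivative via Cauchy--Schwarz. The paper instead uses the \emph{EVI} characterization (item (3)) and the heat flow $\rho_t = \cP_t^\dagger\rho$: evaluating the EVI at $t=0$ with $\nu=\sigma$ gives $\Ent_\sigma(\rho) \leq -\tfrac12\ddtrz\cW(\rho_t,\sigma)^2 - \tfrac{\kappa}{2}\cW(\rho,\sigma)^2$, and the remaining term is bounded by $\cW(\rho,\sigma)\sqrt{\cI_\sigma(\rho)}$ via the triangle inequality and the metric-derivative estimate of Proposition~\ref{prop:HW-derivative}.

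The trade-off is exactly the one you identify as your ``main obstacle'': working along a $\cW$-geodesic forces you to address regularity of the minimizing pair $(\rho_s,\bbB_s)$ and the validity of the chain rule for $\Ent_\sigma$ along it --- manageable in this finite-dimensional Riemannian setting (the metric is smooth on $\Dens_+$ and the geodesic stays there for small $s$), but requiring justification that the paper sidesteps entirely. By running the heat flow instead of a geodesic, the paper works with a curve that is automatically smooth in $\Dens_+$, and the needed derivative bound (Proposition~\ref{prop:HW-derivative}) falls out of a short triangle-inequality computation. Your approach is closer to the original Otto--Villani argument on Riemannian manifolds; the paper's is the EVI/heat-flow variant better suited to metric settings.
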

\begin{proof}
Fix $\rho\in\Dens$. 
If $\cI_\sigma(\rho)=+\infty$ there is nothing to prove, so we will assume without loss of generality that $\rho \in \Dens_+$.
Set $\rho_t :=\cP_t^\dagger\rho$. From Theorem \ref{thm:Ric-equiv} and the lower bound on the Ricci curvature we know that the curve $(\rho_t)$ satisfies EVI($\kappa$), i.e., equation \eqref{eq:EVI}. 
Choosing $\nu=\sigma$ and $t = 0$ in the EVI($\kappa$) yields
  \begin{align*}
    \Ent_\sigma(\rho) \leq -\frac12 \ddtrz \cW(\rho_t,\sigma)^2 -\frac{\kappa}{2}\cW(\rho,\sigma)^2\ .
  \end{align*}
It remains to show that
  \begin{align*}
    -\frac12 \ddtrz\cW(\rho_t,\sigma)^2~\leq~\cW(\rho,\sigma) \sqrt{\cI_\sigma(\rho)}\ .
  \end{align*}
To see this, we use the triangle inequality to estimate
  \begin{align*}
    -\frac12 \ddtrz \cW(\rho_t,\sigma)^2 
    &= \liminf_{t\downarrow 0}\frac{1}{2t}
    	\left(\cW(\rho,\sigma)^2-\cW(\rho_{t},\sigma)^2\right)
    \\& \leq \limsup_{t\downarrow 0}\frac{1}{2t}
    		\left(\cW(\rho,\rho_t)^2 
			+ 2\cW(\rho,\rho_t)\cdot \cW(\rho,\sigma)\right)\ ,
  \end{align*}
  Using Proposition \ref{prop:HW-derivative} with $\nu = \rho$ and $t=0$ we see that
  the second term on the right-hand side is bounded by $\cW(\rho,\sigma) \sqrt{\cI_\sigma(\rho)}$, while the first term vanishes.
\end{proof}

The following result is now a simple consequence.

\begin{theorem}[Quantum Bakry--\'{E}mery Theorem]\label{thm:Ric2LSI}
Suppose that $\Ric(\cA,\sigma,\nabla)\geq\lambda$ for some $\lambda>0$. Then the modified logarithmic Sobolev inequality  $\mLSI(\lambda)$ holds.
\end{theorem}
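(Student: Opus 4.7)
The plan is to deduce MLSI($\lambda$) from the quantum H$\cW$I inequality that has just been established in Theorem \ref{thm:Ric2HWI}. Since $\Ric(\cA,\nabla,\sigma) \geq \lambda$ by hypothesis, that theorem yields, for every $\rho \in \Dens$,
\begin{equation*}
\Ent_\sigma(\rho) \leq \cW(\rho,\sigma)\sqrt{\cI_\sigma(\rho)} - \frac{\lambda}{2}\cW(\rho,\sigma)^2 \ .
\end{equation*}

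The main step is then a scalar Young-type inequality. For any $a, b \geq 0$ and $\lambda > 0$ one has $ab \leq \frac{a^2}{2\lambda} + \frac{\lambda b^2}{2}$. Applying this with $a = \sqrt{\cI_\sigma(\rho)}$ and $b = \cW(\rho,\sigma)$ gives
\begin{equation*}
\cW(\rho,\sigma)\sqrt{\cI_\sigma(\rho)} \leq \frac{1}{2\lambda}\cI_\sigma(\rho) + \frac{\lambda}{2}\cW(\rho,\sigma)^2 \ .
\end{equation*}
Inserting this bound into the H$\cW$I inequality causes the two $\frac{\lambda}{2}\cW(\rho,\sigma)^2$ terms to cancel, leaving
\begin{equation*}
\Ent_\sigma(\rho) \leq \frac{1}{2\lambda}\cI_\sigma(\rho) \ ,
\end{equation*}
which is precisely MLSI($\lambda$).

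The argument is essentially a one-line consequence of the previous theorem, so no real obstacle arises at this stage; the conceptual work has already been done in proving Theorem \ref{thm:Ric2HWI}. The only minor point to mention is that the inequality is trivial whenever $\cI_\sigma(\rho) = +\infty$, and otherwise $\rho \in \Dens_+$ so that the H$\cW$I inequality applies directly. This mirrors the classical Otto--Villani derivation of the logarithmic Sobolev inequality from H$\cW$I via completion of the square.
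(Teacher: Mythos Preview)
Your proof is correct and follows essentially the same approach as the paper: invoke the H$\cW$I inequality from Theorem~\ref{thm:Ric2HWI} and then apply Young's inequality $ab \leq \frac{a^2}{2\lambda} + \frac{\lambda b^2}{2}$ to absorb the $\cW$-term. The paper's version is slightly more terse, but the argument is identical.
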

\begin{proof}
Take $\rho \in \Dens_+$. It follows from Theorem \ref{thm:Ric2HWI} that $(\cA,\sigma,\nabla)$ satisfies $\HcWI(\lambda)$. Using this inequality followed by Young's inequality we obtain
\begin{align*}
 \Ent_\sigma(\rho) \leq \cW(\rho,\sigma) \sqrt{\cI_\sigma(\rho)} - \frac{\lambda}{2} \cW(\rho,\sigma)^2  \leq \frac{1}{2\lambda}\cI_\sigma(\rho) \ ,
\end{align*} 
which is $\mLSI(\lambda)$.
\end{proof}

\begin{theorem}[Quantum Otto--Villani Theorem]\label{thm:LSI2T}
Suppose that the differential structure $(\cA,\nabla,\sigma)$ satisfies $\mLSI(\lambda)$ for some $\lambda>0$. 
Then the Talagrand inequality $\mTal(\lambda)$ holds as well.
\end{theorem}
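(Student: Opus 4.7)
The plan is to implement the classical Otto--Villani argument \cite{OV00} in our setting by integrating the metric derivative of the entropy gradient flow along $[0,\infty)$.

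First, by the continuous extension of $\cW$ to $\Dens$ (Proposition \ref{prop:dist-formula}, which applies since the logarithmic mean satisfies $\theta_j(r,s) \geq e^{-|\omega_j|/2}\min\{r,s\}$), together with the density of $\Dens_+$ in $\Dens$ and the continuity of $\Ent_\sigma$, it suffices to prove $\mTal(\lambda)$ for $\rho \in \Dens_+$. Set $\rho_t := \cP_t^\dagger\rho$ and recall from Theorem \ref{thm:grad-flow-ent} that $(\rho_t)_{t\geq 0}$ is the $\cW$-gradient flow of $\Ent_\sigma$. By ergodicity, $\rho_t \to \sigma$ as $t\to\infty$ in any norm on $\cA$, and hence (again by Proposition \ref{prop:dist-formula}) also in $\cW$.

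Now, by Proposition \ref{prop:HW-derivative} the metric derivative satisfies $|\rho_t'| \leq \sqrt{\cI_\sigma(\rho_t)}$, so that for every $T > 0$,
\begin{align*}
	\cW(\rho,\rho_T)
	\leq \int_0^T |\rho_t'|\dd t
	\leq \int_0^T \sqrt{\cI_\sigma(\rho_t)}\dd t\ .
\end{align*}
Passing to the limit $T\to\infty$ (using $\rho_T\to\sigma$ in $\cW$) yields $\cW(\rho,\sigma) \leq \int_0^\infty \sqrt{\cI_\sigma(\rho_t)}\dd t$. The key computation is now the standard chain of inequalities: since $\Ent_\sigma(\rho_t) > 0$ for all $t$ (it vanishes only in the limit), $\mLSI(\lambda)$ gives $\sqrt{\cI_\sigma(\rho_t)} \geq \sqrt{2\lambda\,\Ent_\sigma(\rho_t)}$, and combining this with the dissipation identity $\ddt \Ent_\sigma(\rho_t) = -\cI_\sigma(\rho_t)$ one obtains
\begin{align*}
	\sqrt{\cI_\sigma(\rho_t)}
	= \frac{-\ddt\Ent_\sigma(\rho_t)}{\sqrt{\cI_\sigma(\rho_t)}}
	\leq \frac{-\ddt\Ent_\sigma(\rho_t)}{\sqrt{2\lambda\,\Ent_\sigma(\rho_t)}}
	= -\sqrt{\frac{2}{\lambda}}\,\ddt\sqrt{\Ent_\sigma(\rho_t)}\ .
\end{align*}
Integrating over $[0,\infty)$ and using $\Ent_\sigma(\rho_t)\to 0$ (which follows from the exponential decay \eqref{eq:ent-rate} implied by $\mLSI(\lambda)$) gives $\int_0^\infty \sqrt{\cI_\sigma(\rho_t)}\dd t \leq \sqrt{(2/\lambda)\,\Ent_\sigma(\rho)}$, whence $\mTal(\lambda)$.

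The main technical obstacle is the passage to the limit $T\to\infty$: one must know that $\rho_T\to\sigma$ \emph{with respect to the (possibly degenerate) metric $\cW$}, not merely in a linear topology on $\cA$. This is precisely where the continuous extension of $\cW$ to all of $\Dens$ provided by Proposition \ref{prop:dist-formula} enters in an essential way; without it, the argument would only yield a bound on $\cW(\rho,\rho_T)$ for finite $T$, which is insufficient. A secondary subtlety is the integrability of $\sqrt{\cI_\sigma(\rho_t)}$ near $t=0$ when $\rho$ is only assumed in $\Dens$ rather than $\Dens_+$, which is why we first prove the inequality on $\Dens_+$ and conclude by density.
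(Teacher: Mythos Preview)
Your proof is correct and follows essentially the same Otto--Villani strategy as the paper: reduce to $\rho\in\Dens_+$, run the heat flow $\rho_t=\cP_t^\dagger\rho$, control the growth of $\cW(\rho,\rho_t)$ via Proposition~\ref{prop:HW-derivative}, and convert the Fisher information bound into an entropy bound using $\mLSI(\lambda)$. The only cosmetic difference is that the paper packages the same estimate as monotonicity of the auxiliary function $F(t)=\cW(\rho,\rho_t)+\sqrt{(2/\lambda)\Ent_\sigma(\rho_t)}$, whereas you integrate the metric-derivative bound directly; the two computations are line-by-line equivalent.
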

\begin{proof}
It suffices to prove $\mTal(\lambda)$ for $\rho \in \Dens_+$, since the inequality for general $\rho \in \Dens$ can then be obtained by approximation.

Fix $\rho \in \Dens_+$ and set $\rho_t = \cP_t^\dagger \rho$. As  $t\to\infty$, we use \eqref{eq:ent-rate} to infer that
  \begin{align}\label{eq:asymptotic}
    \Ent_\sigma(\rho_t)\to 0 \quad\mbox{ and }\quad \cW(\rho,\rho_t)\to
    \cW(\rho,\sigma)\;.
  \end{align}  
Define $F:\R_+\to\R_+$ by
  \begin{align*}
    F(t) := \cW(\rho,\rho_t) + \sqrt{\frac{2}{\lambda}\Ent_\sigma(\rho_t)}\;.
  \end{align*}
We have $F(0) = \sqrt{\frac{2}{\lambda}\Ent_\sigma(\rho)}$ and $F(t)\to\cW(\rho,\sigma)$ as $t\to\infty$ by \eqref{eq:asymptotic}. Hence it is sufficient to show that $\ddtr F(t) \leq 0$ for all $t \geq 0$. If $\rho_t\neq \sigma$, we use Proposition
  \ref{prop:HW-derivative} and the identity $\ddt \Ent_\sigma(\rho_t) = - \cI_\sigma(\rho_t)$ to obtain
  \begin{align*}
    \ddtr F(t) 
    	\leq \sqrt{\cI_\sigma(\rho_t)} 
		- \frac{\cI_\sigma(\rho_t)}{\sqrt{2\lambda\Ent_\sigma(\rho_t)}} 
		 \leq 0\ ,
  \end{align*}
where the last inequality follows from $\mLSI(\lambda)$. 
If $\rho_t = \sigma$, then the same inequality holds, since this implies that $\rho_r = \sigma$ for all $r\geq t$.  
\end{proof}

It is known that the modified logarithmic Sobolev inequality implies a Poincar\'e inequality by a linearization argument. 
The following result shows that Poincar\'e inequality is in fact implied by the Talagrand inequality, which is weaker than the MLSI in view of the previous theorem.
The BKM metric in the left-hand side of \ref{eq:P} appears since it also appears in the second order expansion of the relative entropy of $\Ent_\sigma(\rho)$ around $\rho = \sigma$; see \eqref{eq:Ent-expansion}.

\begin{proposition}\label{prop:T2P}
 Assume that the triple $(\cA,\sigma,\nabla)$ satisfies \ref{eq:T} for some $\lambda>0$. Then the Poincar\'e inequality \ref{eq:P} and the  $T_1$-transport inequality \ref{eq:T-one} hold as well. 
Moreover, $\Ric(\cA,\sigma,\nabla) \geq \lambda$ implies $\Poinc(\lambda)$.
\end{proposition}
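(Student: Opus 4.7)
The proposition comprises three claims. The implication $T_\cW(\lambda) \Rightarrow T_1(\lambda)$ is immediate from Proposition \ref{prop:Wass-1-comparison}: that result gives $W_1 \leq M \cW$, with $M = 1$ provided $\theta_j(r,s) \leq \tfrac12(r+s)$ (Lemma \ref{lem:Lip-bound}), and combining with Talagrand yields $W_1(\rho,\sigma) \leq \sqrt{(2/\lambda)\Ent_\sigma(\rho)}$. The implication $\Ric(\cA,\nabla,\sigma) \geq \lambda \Rightarrow P(\lambda)$ is obtained by chaining $\Ric \geq \lambda \Rightarrow \mLSI(\lambda) \Rightarrow T_\cW(\lambda) \Rightarrow P(\lambda)$, invoking Theorems \ref{thm:Ric2LSI} and \ref{thm:LSI2T} together with the Talagrand-to-Poincar\'e argument below. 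The substantive claim is thus $T_\cW(\lambda) \Rightarrow P(\lambda)$.

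For the latter, the plan is to linearize Talagrand at $\rho = \sigma$ following the Otto--Villani strategy. Given $A \in \cA_h$ with $\tau[\sigma A] = 0$, set $B := \cM A$, where $\cM A = \int_0^1 \sigma^{1-s} A \sigma^s \dd s$. Since $\tau[B] = \tau[\sigma A] = 0$, $B \in \cA_0$, and $\rho_\eps := \sigma + \eps B \in \Dens_+$ for small $\eps > 0$. Using \eqref{eq:Ent-expansion} together with $\partial_\eps \log \rho_\eps|_{\eps=0} = \cM^{-1} B$ and cyclicity of the trace,
\begin{align*}
 \Ent_\sigma(\rho_\eps)
   = \tfrac{\eps^2}{2} \tau[B\, \cM^{-1} B] + O(\eps^3)
   = \tfrac{\eps^2}{2} \tau[A\, \cM A] + O(\eps^3)
   = \tfrac{\eps^2}{2}\|A\|_{L^2_{\rm BKM}(\sigma)}^2 + O(\eps^3).
\end{align*}
Since $\sigma$ is an interior point of $\Dens_+$ and $\rho \mapsto \cK_\rho$ is smooth there (Assumption \ref{ass:theta} ensures $\theta_j$ is smooth and strictly positive on $(0,\infty)^2$), one obtains the Riemannian Taylor expansion
\begin{align*}
 \cW(\rho_\eps,\sigma)^2 = \eps^2 \ip{B, \cK_\sigma^{-1} B}_{L^2(\tau)} + O(\eps^3).
\end{align*}
The upper bound follows by substituting the linear curve $\rho_t := \sigma + t\eps B$ and constant potential $A_t := \eps\,\cK_\sigma^{-1} B$ into the infimum defining $\cW^2$ (Definition \ref{def:quantum-transport}); the matching lower bound uses the smoothness of the Riemannian structure at $\sigma$.

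Inserting both expansions into Talagrand, dividing by $\eps^2$, and sending $\eps \downarrow 0$, we obtain $\ip{B, \cK_\sigma^{-1} B}_{L^2(\tau)} \leq \tfrac{1}{\lambda}\ip{B, \cM^{-1} B}_{L^2(\tau)}$ for every $B$ of the form $\cM A$ with $A = A^*$ and $\tau[\sigma A] = 0$. Since $\cM$ maps the subspace $\{A \in \cA_h : \tau[\sigma A] = 0\}$ bijectively onto $\cA_0 \cap \cA_h$, the inequality holds on all of $\cA_0$. As $\cK_\sigma$ and $\cM$ are both positive-definite self-adjoint operators on $(\cA_0, \ip{\cdot,\cdot}_{L^2(\tau)})$ (Lemma \ref{lem:Ran-Ker}), inversion yields the operator inequality $\lambda\, \cM \leq \cK_\sigma$ on $\cA_0$. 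Finally, for arbitrary $A \in \cA_h$ with $\tau[\sigma A] = 0$, set $\tilde A := A - \tau[A]\one \in \cA_0$. Using $\cK_\sigma \one = 0$ and $L^2(\tau)$-self-adjointness, $\ip{A, \cK_\sigma A}_{L^2(\tau)} = \ip{\tilde A, \cK_\sigma \tilde A}_{L^2(\tau)}$; meanwhile a direct expansion using $\tau[\sigma A] = 0$ and $\tau[\sigma] = 1$ gives $\ip{\tilde A, \cM \tilde A}_{L^2(\tau)} = \|A\|^2_{L^2_{\rm BKM}(\sigma)} + \tau[A]^2 \geq \|A\|^2_{L^2_{\rm BKM}(\sigma)}$. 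Chaining these estimates yields $P(\lambda)$. The main technical obstacle is justifying the two-sided Wasserstein Taylor expansion at $\sigma$, which relies on the non-degeneracy of $(\cK_\rho)$ at interior points of $\Dens_+$.
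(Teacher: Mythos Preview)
Your overall plan for $T_\cW(\lambda) \Rightarrow P(\lambda)$ --- linearize Talagrand at $\sigma$ by comparing the second-order expansions of $\cW^2$ and $\Ent_\sigma$ --- is natural, but your execution has a genuine gap at the inversion step. You assert that $\cM$ is a positive-definite self-adjoint operator on $(\cA_0,\ip{\cdot,\cdot}_{L^2(\tau)})$, but $\cM$ does \emph{not} preserve $\cA_0$ unless $\sigma=\one$: indeed $\tau[\cM A]=\tau[\sigma A]$, and $\cM\one=\sigma\notin\R\one$. Hence from $\cK_\sigma^{-1}\le\frac1\lambda\cM^{-1}$ as quadratic forms on $\cA_0$ you cannot simply ``invert'' to get $\lambda\,\cM\le\cK_\sigma$ on $\cA_0$. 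Writing $P$ for the $L^2(\tau)$-projection onto $\cA_0$, what you actually have is $\cK_\sigma^{-1}\le\frac1\lambda P\cM^{-1}P$; inverting yields $\lambda\,(P\cM^{-1}P)^{-1}\le\cK_\sigma$, and by Schur complement $(P\cM^{-1}P)^{-1}\le P\cM P$, which is the wrong direction. Your linearized inequality \emph{is} equivalent to Poincar\'e on the correct test space $\{A:\tau[\sigma A]=0\}$ --- one can see this by diagonalizing both forms via the generalized eigenvalue problem $\cK_\sigma A=\mu\,\cM A$ --- so the approach can be salvaged, but the $\tilde A\in\cA_0$ detour as written is both unnecessary and unjustified.

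The paper avoids all of this by a different route: rather than Taylor-expanding $\cW^2$, it takes an action-minimizing curve from $\rho^\eps=\sigma+\eps\nu$ to $\sigma$, writes $\|A\|_{L^2_{\rm BKM}(\sigma)}^2=\frac1\eps\tau[A(\rho^\eps-\sigma)]$ with $\nu=\cM A$, and uses the continuity equation together with Cauchy--Schwarz along the geodesic to bound this directly by $\|\nabla A\|_\sigma\cdot\frac1\eps\cW(\rho^\eps,\sigma)$ in the limit $\eps\to0$. Talagrand then controls $\cW(\rho^\eps,\sigma)/\eps$ by $\lambda^{-1/2}\|A\|_{L^2_{\rm BKM}(\sigma)}$, yielding $P(\lambda)$ for $A$ with $\tau[\sigma A]=0$ without any operator inversion or two-sided expansion of $\cW$.
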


\begin{proof}
The fact that \ref{eq:T} implies the $T_1$-inequality is an immediate consequence of Proposition \ref{prop:Wass-1-comparison}. 

Suppose that \ref{eq:T} holds and let us show $\Poinc(\lambda)$.
Fix $\nu \in \cA_0$ and set $\rho^\eps: = \sigma + \eps \nu$. 
Then $\rho^\eps \in \Dens_+$ for sufficiently small $\eps>0$.
For such $\eps > 0$, let $(\rho_t^\eps, \bbB_t^\eps)_t$ be an action minimizing curve connecting $\rho_0^\eps = \rho^\eps$ and $\rho_1^\eps = \sigma$. Thus we have $\partial_t \rho_t^\eps + \dive(\hrho_t^\eps \# \bbB_t^\eps) = 0$ and $\int_0^1 \tau[(\bbB_t^\eps)^*\hrho_t^\eps \# \bbB_t^\eps] \dd t = \cW(\rho^\eps, \sigma)^2$. 

Write $A = \int_0^\infty (x + \sigma)^{-1} \nu (x + \sigma)^{-1} \dd x$ so that $\nu = \int_0^1 \sigma^{1-s} A \sigma^s \dd s$.
Using the continuity equation we obtain
\begin{align*}
   \|A\|_{L^2_{\rm BKM}(\sigma)}^2
    &  = \frac{1}{\eps} \tau[A^* (\rho^\eps - \sigma) ]
= \frac{1}{\eps}
        		\tau[A^* \dive(\hrho_t^\eps \# \bbB_t^\eps) ]   
		= -\frac{1}{\eps} \int_0^1 
    	 \tau[ (\nabla A)^* \hrho_t^\eps \# \bbB_t^\eps ] \dd t\ .
\end{align*}
The Cauchy-Schwarz inequality yields
\begin{align*}
 \|A\|_{L^2_{\rm BKM}(\sigma)}^2
    & \leq \frac{1}{\eps}
    \bigg( \int_0^1 
    	 \| \nabla A \|_{\rho_t^\eps}^2 \dd t \bigg)^{1/2}
        \bigg( \int_0^1 
    	 \| \bbB_t^\eps\|_{\rho_t^\eps}^2  \dd t \bigg)^{1/2}
 \\ &= \frac{1}{\eps} \bigg( \int_0^1 
    	 \| \nabla A \|_{\rho_t^\eps}^2 \dd t \bigg)^{1/2}
  \cW(\rho^\eps,\sigma) \ ,
\end{align*}
since $(\rho_t^\eps)_t$ is a $\cW$-geodesic.
Using $\mTal(\lambda)$ we obtain
  \begin{align*}
 \limsup_{\eps \to 0} \frac{\cW(\rho^\eps,\sigma)}{\eps}
   & \leq  \limsup_{\eps \to 0} \frac{1}{\eps}\sqrt{\frac{2}{\lambda}\Ent_\sigma(\rho^\eps)} \leq \frac{1}{\sqrt{\lambda}} \|A\|_{L^2_{\rm BKM}(\sigma)}\ ,
  \end{align*}
since $\Ent_\sigma(\rho^\eps) = \frac12 \eps^2 \|A\|_{L^2_{\rm BKM}(\sigma)}^2 + o(\eps^2)$ by \eqref{eq:Ent-first} and \eqref{eq:Ent-expansion}.
It remains to show that, as $\eps \to 0$,
\begin{align*}
 \int_0^1 \| \nabla A \|_{\rho_t^\eps}^2 \dd t \to  \| \nabla A \|_{\sigma}^2 \ .
 \end{align*}
To see this, note that $\tau[|\rho^\eps - \sigma|] \to 0$, hence $\cW(\rho^\eps, \sigma) \to 0$. Since $\cW(\rho_t^\eps, \sigma) = (1-t) \cW(\rho^\eps, \sigma)$, it follows that $\cW(\rho_t^\eps, \sigma) \to 0$ as $\eps \to 0$ for all $t \in [0,1]$, which implies that $\| \nabla A \|_{\rho_t^\eps}^2 \to \| \nabla A \|_{\sigma}^2$ for all $t \in [0,1]$. The result now follows using dominated convergence, since
$\| \nabla A \|_{\rho_t^\eps}^2 \leq \|\nabla A\|_\cB$ by Lemma \ref{lem:Lip-bound}.

The final assertion of the proposition follows by combining this result with Theorem
  \ref{thm:Ric2LSI} and Theorem \ref{thm:LSI2T}.
\end{proof}

\subsection*{Acknowledgement} {\small E.C. gratefully acknowledges support through NSF grant DMS-174625. J.M. gratefully acknowledges support by the European Research Council (ERC) under the European Union's Horizon 2020 research and innovation programme (grant agreement No 716117), and by the Austrian Science Fund (FWF), Project SFB F65.
We are grateful to the anonymous referees for carefully reading the original manuscript and making useful comments.}

\bibliographystyle{plain}

\bibliography{CarlenMaas.bib}

 \end{document}

 =